\newtheorem{thm}{Theorem}
\newtheorem{theorem}{Theorem}[subsection]
\newtheorem{proposition}[theorem]{Proposition}
\newtheorem{lemma}[theorem]{Lemma}
\newtheorem{corollary}[theorem]{Corollary}
\theoremstyle{definition}
\newtheorem{definition}[theorem]{Definition}
\newtheorem{example}[theorem]{Example}
\newtheorem{examples}[theorem]{Examples}
\theoremstyle{remark}
\newtheorem{remark}[theorem]{Remark}
\newtheorem{remarks}[theorem]{Remarks}
\numberwithin{equation}{subsection}
\newcommand{\C}{{\rm C}}
\newcommand{\N}{{\rm N}}
\newcommand{\R}{{\rm R}}
\newcommand{\ad}{{\rm ad}}
\newcommand{\aff}{{\rm aff}}
\newcommand{\ant}{{\rm ant}}
\newcommand{\car}{{\rm char}}
\newcommand{\diag}{{\rm diag}}
\newcommand{\et}{{\rm \acute{e}t}}
\newcommand{\id}{{\rm id}}
\newcommand{\gp}{{\rm gp.}}
\newcommand{\gpsch}{{\rm gp.sch.}}
\newcommand{\ptsch}{{\rm pt.sch.}}
\newcommand{\red}{{\rm red}}
\newcommand{\reg}{{\rm reg}}
\newcommand{\sab}{{\rm sab}}
\newcommand{\Ad}{{\rm Ad}}
\newcommand{\Aff}{\operatorname{Aff}}
\newcommand{\Alb}{\operatorname{Alb}}
\newcommand{\Aut}{\operatorname{Aut}}
\newcommand{\Cl}{\operatorname{Cl}}
\newcommand{\Der}{\operatorname{Der}}
\newcommand{\End}{\operatorname{End}}
\newcommand{\Ext}{\operatorname{Ext}}
\newcommand{\Hilb}{\operatorname{Hilb}}
\newcommand{\Hom}{\operatorname{Hom}}
\newcommand{\Int}{\operatorname{Int}}
\newcommand{\Ker}{\operatorname{Ker}}
\newcommand{\GL}{\operatorname{GL}}
\newcommand{\Lie}{\operatorname{Lie}}
\newcommand{\NS}{\operatorname{NS}}
\newcommand{\Pic}{\operatorname{Pic}}
\newcommand{\Spec}{\operatorname{Spec}}
\newcommand{\Stab}{\operatorname{Stab}}
\newcommand{\Sym}{\operatorname{Sym}}
\newcommand{\bA}{\mathbb{A}}
\newcommand{\bG}{\mathbb{G}}
\newcommand{\bP}{\mathbb{P}}
\newcommand{\bR}{\mathbb{R}}
\newcommand{\bZ}{\mathbb{Z}}
\newcommand{\cD}{\mathcal{D}}
\newcommand{\cG}{\mathcal{G}}
\newcommand{\cI}{\mathcal{I}}
\newcommand{\cJ}{\mathcal{J}}
\newcommand{\cL}{\mathcal{L}}
\newcommand{\cM}{\mathcal{M}}
\newcommand{\cO}{\mathcal{O}}
\newcommand{\cR}{\mathcal{R}}
\newcommand{\fa}{\mathfrak{a}}
\newcommand{\fg}{\mathfrak{g}}
\newcommand{\fm}{\mathfrak{m}}
\newcommand{\fz}{\mathfrak{z}}
\begin{document}

\title[Structure of algebraic groups]
{Some structure theorems for algebraic groups}

\author{Michel Brion}

\address{Institut Fourier, CS 40700, 38058 Grenoble Cedex 9, France}

\email{Michel.Brion@univ-grenoble-alpes.fr}

\urladdr{https://www-fourier.ujf-grenoble.fr/~mbrion/}

\subjclass{Primary 14L15, 14L30, 14M17; Secondary 14K05, 14K30, 14M27, 20G15}

\date{}

%\dedicatory{This paper is dedicated to our advisors.}

%\keywords{}

\begin{abstract}
These are extended notes of a course given at Tulane University 
for the 2015 Clifford Lectures. Their aim is to present structure
results for group schemes of finite type over a field, with 
applications to Picard varieties and automorphism groups. 
\end{abstract}

\maketitle

\setcounter{tocdepth}{2}

\tableofcontents

\section{Introduction}
\label{sec:i}

The algebraic groups of the title are the group schemes of finite
type over a field; they occur in many questions of algebraic 
geometry, number theory and representation theory. To analyze
their structure, one seeks to build them up from algebraic groups 
of a specific geometric nature, such as smooth, connected, affine,
proper... A first result in this direction asserts that
\textit{every algebraic group $G$ has a largest connected normal 
subgroup scheme $G^0$, the quotient $G/G^0$ is finite and 
\'etale, and the formation of $G^0$ commutes with field extensions}. 
The main goal of this expository text is to prove two more
advanced structure results:

\begin{thm}\label{thm:affi}
Every algebraic group $G$ over a field $k$ has a smallest 
normal subgroup scheme $H$ such that the quotient $G/H$ 
is affine. Moreover, $H$ is smooth, connected and contained 
in the center of $G^0$; in particular, $H$ is commutative. 
Also, $\cO(H) = k$ and $H$ is the largest subgroup scheme 
of $G$ satisfying this property. The formation of $H$ commutes 
with field extensions.
\end{thm}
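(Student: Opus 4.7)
Call an algebraic group $H$ \emph{anti-affine} if $\cO(H) = k$. The basic observation is that any homomorphism $H \to F$ from an anti-affine group to an affine group $F$ is trivial: a morphism to an affine scheme factors through $\Spec \cO(H) = \Spec k$. The plan is to produce $H$ as the largest anti-affine subgroup scheme of $G$, verify its structural properties, and finally show that $G/H$ is affine.

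First I would establish closure properties. An anti-affine $H$ is connected, since the quotient $H \to H/H^0$ is finite, hence affine, hence trivial; after base change to $\bar k$ it is also reduced, since $H_{\bar k}/(H_{\bar k})_\red$ is infinitesimal, hence affine, hence trivial. Thus $H$ is smooth. If $H_1, H_2 \subseteq G$ are anti-affine and one normalizes the other, then $\cO(H_1 \times H_2) = \cO(H_1) \otimes_k \cO(H_2) = k$; since $\cO(H_1 H_2)$ injects into $\cO(H_1 \times H_2)$ via pullback along multiplication, $H_1 H_2$ is again anti-affine. By Noetherian induction there is a unique maximal anti-affine subgroup scheme $H \subseteq G$. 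Maximality forces normality: each conjugate $gHg^{-1}$ is anti-affine, hence contained in $H$. Centrality in $G^0$ follows by applying the key observation to the commutator morphism: for each $h \in H$, the map $G^0 \to H$, $g \mapsto ghg^{-1}h^{-1}$, factors through an affine quotient of $G^0$ (using the adjoint representation on $\Lie H$ and analogous affine targets attached to $G^0$), hence is trivial.

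The main obstacle is showing that $G/H$ is affine. Here I would invoke Chevalley's structure theorem: after base change to $\bar k$, $(G^0)_\red$ sits in an extension $1 \to L \to (G^0)_\red \to A \to 1$ with $L$ smooth, connected, normal and affine, and $A$ an abelian variety. Since abelian varieties are anti-affine (by properness), it suffices to show that $H$ surjects onto $A$. To this end, one produces enough anti-affine subgroups of $G^0$ mapping onto $A$: a natural source comes from semi-abelian extensions of $A$ inside $G^0$, obtained by pushing out $L \to G^0$ along characters of a maximal torus of $L$ and trivializing the resulting line bundles on $A$ by pullback along suitable isogenies. All such extensions lie in the maximal $H$, forcing a surjection $H \twoheadrightarrow A$; then $G^0 = H \cdot L$, so $G^0/H \cong L/(L \cap H)$ is affine, and $G/H$ is affine since $G/G^0$ is finite.

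For minimality, if $N \subseteq G$ is normal with $G/N$ affine, then $H \hookrightarrow G \to G/N$ is trivial by the key observation, so $H \subseteq N$. The characterization of $H$ as the largest subgroup with $\cO(\cdot) = k$ is built into the construction. For a field extension $k'/k$, anti-affinity is preserved since $\cO(H_{k'}) = \cO(H) \otimes_k k' = k'$; Galois descent together with spreading out shows that maximality is also preserved. The main technical hurdle I anticipate is producing the surjection $H \twoheadrightarrow A$, which forces the use of Chevalley's theorem and semi-abelian constructions.
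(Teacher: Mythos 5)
There is a genuine gap at the step you yourself flag as the main obstacle: the affineness of $G/H$. The mechanism you propose does not work as described. Pushing out the Chevalley extension $1 \to L \to G^0 \to A \to 1$ along a character of a maximal torus of $L$ is not defined (the character does not extend to $L$, and its kernel is not normal in $G^0$); even when a push-out along a normal subgroup does produce a semi-abelian \emph{quotient} of $G^0$, an anti-affine subgroup of a quotient does not lift to a subgroup of $G^0$. Moreover, ``trivializing the resulting line bundles by pullback along suitable isogenies'' is exactly what cannot be done in the interesting cases: the anti-affine semi-abelian extensions of $A$ are those classified by points of $\widehat{A}$ of infinite order, which remain non-trivial after pullback by every isogeny. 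The assertion you need --- that the largest anti-affine subgroup surjects onto $A$ --- is essentially the Rosenlicht decomposition $G = G_{\aff}\cdot G_{\ant}$, and the standard proof of that decomposition (including the one in this paper) \emph{uses} the affineness of $G/G_{\ant}$; so as it stands your argument is either circular or missing its key construction. A secondary issue: your centrality argument fixes $h$ and considers $g \mapsto ghg^{-1}h^{-1}$ on $G^0$, which is not a homomorphism in $g$, so ``factors through an affine quotient of $G^0$'' needs justification; the clean statement is that $G^0/Z(G^0)$ is affine (proved via the action of $G^0$ on the infinitesimal neighborhoods of $e$), whence the homomorphism $H \to G^0/Z(G^0)$ from the anti-affine group $H$ is trivial.

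The affineness of $G/H$ can be obtained much more cheaply, and without Chevalley's theorem (which is the harder of the two results and is logically downstream in most treatments). Take $H := \Ker\bigl(\varphi_G : G \to \Spec\cO(G)\bigr)$; one checks that $H$ is precisely the kernel of the left regular representation of $G$ on $\cO(G)$. Since $\cO(G)$ is a union of finite-dimensional $G$-submodules $V_i$ and $G$ is noetherian, the decreasing family of kernels $\Ker(G \to \GL(V_i))$ stabilizes at $H$, so $G/H$ embeds as a closed subgroup of some $\GL(V_i)$ and is affine. Minimality, $\cO(H)=k$, and compatibility with field extensions (via $\cO(G_K)=\cO(G)\otimes_k K$) then all follow from the universal property of the affinization, and your remaining observations (smoothness, connectedness, centrality via rigidity, maximality among anti-affine subgroups) slot in afterwards. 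I would encourage you to rebuild the proof around this representation-theoretic step rather than around Chevalley's theorem.
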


\begin{thm}\label{thm:che}
Every algebraic group $G$ over $k$ has a smallest normal subgroup 
scheme $N$ such that $G/N$ is proper. Moreover, $N$ is affine 
and connected. If $k$ is perfect and $G$ is smooth, then $N$ is 
smooth as well, and its formation commutes with field extensions.
\end{thm}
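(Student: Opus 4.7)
The plan is to reduce Theorem~\ref{thm:che} to the classical form of Chevalley's theorem---every smooth connected algebraic group $\bar G$ over an algebraically closed field admits a unique exact sequence $1 \to L \to \bar G \to A \to 1$ with $L$ smooth, connected, and affine, and $A$ an abelian variety---and then to descend the subgroup so produced from $\bar k$ to $k$.

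I would begin with the easy reductions. Since $G/G^0$ is finite \'etale, hence proper, any normal subgroup $N$ of $G^0$ with $G^0/N$ proper automatically yields a $G/N$ that is proper (as an extension of proper by proper), and conversely the smallest such $N$ is canonical and so stable under the conjugation action of $G$, hence normal in $G$. When $k$ is perfect and $G$ is smooth, this already leaves us in the situation where $G$ is smooth and connected; in the general case I would pass to $\bar k$ and replace $G_{\bar k}$ by its maximal smooth connected subgroup $(G_{\bar k})_{\red}^0$, at the cost of quotienting by a finite (infinitesimal or component) group, which is automatically proper.

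Next, I would apply classical Chevalley over $\bar k$ to obtain $L_{\bar k} \subseteq G_{\bar k}$, and argue its smallness as follows: any morphism from an affine connected group to an abelian variety is constant, so if $N'$ is normal in $G_{\bar k}$ with $G_{\bar k}/N'$ proper, then the composite $L_{\bar k} \to G_{\bar k} \to G_{\bar k}/N'$ lands in a finite subgroup of the target, forcing $L_{\bar k} \subseteq N'$ by connectedness of $L_{\bar k}$. This uniqueness shows that $L_{\bar k}$ is stable under $\mathrm{Gal}(\bar k/k)$ and therefore descends to a closed subgroup $N \subseteq G$; affineness, connectedness, and smoothness (in the smooth-over-perfect case) are preserved under faithfully flat descent. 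The smallness of $N$ in $G$ then follows from the smallness of $N_{\bar k}$ in $G_{\bar k}$ together with the faithful flatness of $\Spec \bar k \to \Spec k$, and the same uniqueness characterization gives that the formation of $N$ commutes with field extensions.

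The main obstacle is the classical Chevalley theorem itself, which requires constructing $L$---essentially as the identity component of the kernel of the Albanese morphism $\alpha\colon G \to \Alb(G)$---and proving that $L$ is affine, typically via the fact that no abelian variety receives a nonconstant morphism from an affine connected group. A secondary obstacle is the non-smooth or non-perfect case, where one must check that enlarging the affine subgroup of $(G_{\bar k})_{\red}^0$ to a normal subgroup of $G_{\bar k}$ by the relevant infinitesimal and component data yields an affine connected subgroup; this uses that affineness is stable under extensions by finite group schemes, and that connectedness is preserved under the thickenings involved.
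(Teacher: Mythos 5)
Your reduction to the smooth connected case, the characterization of minimality via ``no nonconstant map from a connected affine group to an abelian variety,'' and the appeal to classical Chevalley over $\bar{k}$ all track the paper's strategy. But the descent step is where the argument breaks, and it breaks in a way that cannot be patched: you claim that Galois-stability of $L_{\bar k}$ under $\mathrm{Gal}(\bar k/k)$ lets it descend to a closed subgroup of $G$. Galois descent only controls the separable part $k_s/k$; the purely inseparable extension $\bar k/k_s$ carries no Galois action ($\mathrm{Aut}(\bar k/k)$ has fixed field $k_i$, not $k$), and the smooth connected affine normal subgroup produced by Chevalley over $\bar k$ genuinely fails to be defined over $k$ when $k$ is imperfect. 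The paper's Example \ref{ex:ray} is a counterexample to your construction: for $G = \R_{K/k}(A')$ with $K/k$ purely inseparable, $G_{\bar k}$ has a nontrivial smooth connected affine normal subgroup with abelian-variety quotient, yet $G$ admits \emph{no} smooth connected affine normal subgroup with proper quotient at all. Consequently the $N$ of the theorem is in general not smooth --- which is exactly why the statement only asserts smoothness when $k$ is perfect --- whereas your descent, if it worked, would always produce a smooth $N$.

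The paper's workaround is Lemma \ref{lem:ray}: given the Chevalley subgroup $L'$ over a finite purely inseparable extension $K$ with $K^{p^n}\subseteq k$, one pulls back $L'^{(n)}$ under the $n$th relative Frobenius of $G_K$; the resulting ideal sheaf is generated by $p^n$th powers and hence comes from a subgroup scheme $L$ defined over $k$, which contains $L'$ with finite (infinitesimal) quotient. This $L$ is affine and has abelian-variety quotient, so the Albanese kernel $N\subseteq L$ is affine --- but $L$, and hence $N$, need not be smooth. A second, smaller gap: in the non-smooth case you propose passing to $(G_{\bar k})^0_\red$, but this again only lives over $\bar k$ (over an imperfect field $G_\red$ need not even be a subgroup scheme, cf.\ Example \ref{ex:red}); the paper instead writes $G$ as an extension of the smooth group $G/\Ker(F^n_{G/k})$ by an infinitesimal (hence affine) kernel, via Proposition \ref{prop:frob}, and takes the preimage of the affine normal subgroup of the smooth quotient.
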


In particular, every smooth connected algebraic group over a perfect 
field is an extension of an \textit{abelian variety}
(i.e., a smooth connected proper algebraic group) by a
smooth connected algebraic group which is affine, or equivalently
\textit{linear}. Both building blocks, abelian varieties and linear 
algebraic groups, have been extensively studied; see e.g. the books 
\cite{Mum} for the former, and \cite{Bo,Sp} for the latter.

Also, every algebraic group over a field is an extension of
a linear algebraic group by an \textit{anti-affine} algebraic
group $H$, i.e., every global regular function on $H$ is constant.
Clearly, every abelian variety is anti-affine; but the converse
turns out to be incorrect, unless $k$ is algebraic over a finite 
field (see \S \ref{subsec:saag}). Still, the structure 
of anti-affine groups over an arbitrary field can be reduced to that 
of abelian varieties; see \cite{Br1,SS} and also 
\S \ref{subsec:saag} again.

As a consequence, taking for $G$ an anti-affine group which is not 
an abelian variety, one sees that the natural maps $H \to G/N$
and $N \to G/H$ are generally not isomorphisms with the notation 
of the above theorems. But when $G$ is smooth and connected, one may 
combine these theorems to obtain more information on its structure,
see \S \ref{subsec:trd}.

The above theorems have a long history. Theorem \ref{thm:affi}
was first obtained by Rosenlicht in 1956 for smooth connected
algebraic groups, see \cite[Sec.~5]{Ro1}. The version presented
here is due to Demazure and Gabriel, see \cite[III.3.8]{DG}.
In the setting of smooth connected algebraic groups again,
Theorem \ref{thm:che} was announced by Chevalley in the early 
1950's. But he published his proof in 1960 only (see \cite{Ch2}),
as he had first to build up a theory of Picard and Albanese 
varieties. Meanwhile, proofs of Chevalley's theorem had been 
published by Barsotti and Rosenlicht (see \cite{Ba}, and
\cite[Sec.~5]{Ro1} again). The present version of Theorem 
\ref{thm:che} is a variant of a result of Raynaud (see 
\cite[IX.2.7]{Ra}).

The terminology and methods of algebraic geometry have much evolved
since the 1950's; this makes the arguments of Barsotti, Chevalley 
and Rosenlicht rather hard to follow. For this reason, modern proofs
of the above results have been made available recently: first,
a scheme-theoretic version of Chevalley's proof of his structure 
theorem by Conrad (see \cite{Co1}); then a version of Rosenlicht's 
proof for smooth connected algebraic groups over algebraically closed 
fields (see \cite[Chap.~2]{BSU} and also \cite{Mi}). 

In this text, we present scheme-theoretic proofs of Theorems 
\ref{thm:affi} and \ref{thm:che}, with (hopefully) modest 
prerequisites. More specifically, we assume familiarity with 
the contents of Chapters 2 to 5 of the book \cite{Li}, which will
be our standard reference for algebraic geometry over an arbitrary
field. Also, we do not make an explicit use of sheaves for the
fpqc or fppf topology, even if these notions are in the background 
of several arguments.

To make the exposition more self-contained, we have gathered
basic notions and results on group schemes over a field in 
Section \ref{sec:bnr}, referring to the books \cite{DG} and 
\cite{SGA3} for most proofs. Section \ref{sec:ptaffi} 
is devoted to the proof of Theorem \ref{thm:affi}, and Section 
\ref{sec:ptche} to that of Theorem \ref{thm:che}. Although 
the statements of both theorems are very similar, the first one 
is actually much easier. Its proof only needs a few preliminary 
results: some criteria for an algebraic group to be affine 
(\S \ref{subsec:aag}), the notion of affinization of 
a scheme (\S \ref{subsec:tat}) and a version of the 
rigidity lemma for ``anti-affine'' schemes (\S \ref{subsec:aaag}).
In contrast, the proof of Theorem \ref{thm:che} is based on
quite a few results on abelian varieties. Some of them are 
taken from \cite{Mum}, which will be our standard reference on 
that topic; less classical results are presented in
\S\S \ref{subsec:av} and \ref{subsec:at}.

Section \ref{sec:sfd} contains applications and developments
of the above structure theorems, in several directions. 
We begin with the Rosenlicht decomposition, which reduces 
somehow the structure of smooth connected algebraic groups 
to the linear and anti-affine cases (\S \ref{subsec:trd}). 
We then show in \S \ref{subsec:echs} that every homogeneous 
space admits a projective equivariant compactification. 
\S \ref{subsec:cag} gathers some known results on the structure 
of commutative algebraic groups. In \S \ref{subsec:sav}, 
we provide details on semi-abelian varieties, i.e., 
algebraic groups obtained as extensions of an abelian variety 
by a torus; these play an important r\^ole in various aspects 
of algebraic and arithmetic geometry. \S \ref{subsec:saag} 
is devoted to the classification of anti-affine algebraic groups,
based on results from \S\S \ref{subsec:cag} and 
\ref{subsec:sav}. The final \S \ref{subsec:cagc} contains 
developments on algebraic groups in positive characteristics, 
including a recent result of Totaro (see \cite[\S 2]{To}).

Further applications, of a geometric nature, are presented 
in Sections \ref{sec:tps} and \ref{sec:tags}. We give a brief 
overview of the Picard schemes of proper schemes in 
\S \ref{subsec:dbp}, referring to \cite{Kl} for a detailed 
exposition. \S \ref{subsec:spv} is devoted to structure
results for the Picard variety of a proper variety $X$,
in terms of the geometry of $X$. Likewise, \S \ref{subsec:bra}
surveys the automorphism group schemes of proper schemes.
\S \ref{subsec:bl} presents a useful descent property for
actions of connected algebraic groups. In the final 
\S \ref{subsec:vpcag}, based on \cite{Br2}, we show that
every smooth connected algebraic group over a perfect field
is the connected automorphism group of some normal projective
variety.

Each section ends with a paragraph of notes and references, 
which also contains brief presentations of recent work, and
some open questions. A general problem, which falls out of 
the scope of these notes, asks for a version of Theorem 
\ref{thm:che} in the setting of group schemes over (say) 
discrete valuation rings. A remarkable analogue of Theorem 
\ref{thm:affi} has been obtained by Raynaud in that setting 
(see \cite[VIB.12.10]{SGA3}). But Chevalley's structure theorem 
admits no direct generalization, as abelian varieties degenerate 
to tori. So finding a meaningful analogue of that theorem over 
a ring of formal power series is already an interesting challenge.

\medskip

\noindent
{\bf Notation and conventions}. 
Throughout this text, we fix a ground field $k$ with algebraic closure 
$\bar{k}$; the characteristic of $k$ is denoted by $\car(k)$.

We denote by $k_s$ the separable closure of $k$ in $\bar{k}$ 
and by $\Gamma$ the Galois group of $k_s$ over $k$.
Also, we denote by $k_i$ the perfect closure of $k$ in $\bar{k}$,
i.e., the largest subfield of $\bar{k}$ that is purely inseparable 
over $k$. If $\car(k) =0$ then $k_s = \bar{k}$ and $k_i = k$; 
if $\car(k) = p > 0$ then $k_i = \bigcup_{n\geq 0} k^{1/p^n}$.

We consider separated schemes over $\Spec(k)$ unless otherwise 
stated; we will call them $k$-schemes, or just schemes 
if this creates no confusion. Morphisms and products of schemes 
are understood to be over $\Spec(k)$. For any $k$-scheme $X$,
we denote by $\cO(X)$ the $k$-algebra of global sections
of the structure sheaf $\cO_X$.  Given a field extension
$K$ of $k$, we denote the $K$-scheme $X \times \Spec(K)$ 
by $X_K$.

We identify a scheme $X$ with its \textit{functor of points}
that assigns to any scheme $S$ the set $X(S)$ of morphisms
$f : S \to X$. When $S$ is affine, i.e., $S = \Spec(R)$ for an
algebra $R$, we also use the notation $X(R)$ for $X(S)$.
In particular, we have the set $X(k)$ of $k$-\textit{rational}
points.

A \textit{variety} is a geometrically integral scheme 
of finite type. The function field of a variety $X$ will be
denoted by $k(X)$.

\section{Basic notions and results}
\label{sec:bnr}

\subsection{Group schemes}
\label{subsec:gs}

\begin{definition}\label{def:gs}
A \textit{group scheme} is a scheme $G$ equipped 
with morphisms $m : G \times G \to G$,
$i : G \to G$ and with a $k$-rational point $e$, 
which satisfy the following condition:

For any scheme $S$, the set $G(S)$ is a group with multiplication 
map $m(S)$, inverse map $i(S)$ and neutral element $e$.
\end{definition}

This condition is equivalent to the commutativity of the following
diagrams:
\[
\xymatrix{
G \times G \times G \ar[r]^-{m \times \id}\ar[d]_{\id \times m} 
& G \times G \ar[d]^m \\
G \times G \ar[r]^-m & G \\}
\]
(i.e., $m$ is associative),
\[
\xymatrix{
G \ar[r]^-{e \times \id} \ar[dr]_{\id} & 
G \times G \ar[d]^m & 
\ar[l]_-{\id \times e} \ar[dl]^{\id} G \\
& G \\
}
\]
(i.e., $e$ is the neutral element), and
\[
\xymatrix{
G \ar[r]^-{\id \times i} \ar[dr]_{e \circ f} & 
G \times G \ar[d]^m & 
\ar[l]_-{i \times \id} \ar[dl]^{e \circ f} G \\
& G \\
}
\]
(i.e., $i$ is the inverse map). Here $f : G \to \Spec(k)$ 
denotes the structure map.

We will write for simplicity $m(x,y) = xy$ and $i(x) = x^{-1}$ 
for any scheme $S$ and points $x,y \in G(S)$.

\begin{remarks}\label{rem:gf}
(i) For any $k$-group scheme $G$, the base change under a field
extension $K$ of $k$ yields a $K$-group scheme $G_K$. 

(ii) The assignment $S \mapsto G(S)$ defines a \textit{group functor}, 
i.e., a contravariant functor from the category of schemes to 
that of groups. In fact, the group schemes are exactly those group 
functors that are representable (by a scheme). 

(iii) Some natural group functors are not representable. For example, 
consider the functor that assigns to any scheme $S$ the group
$\Pic(S)$  of isomorphism classes of invertible sheaves on $S$,
and to any morphism of schemes $f : S' \to S$, the pull-back
map $f^* : \Pic(S) \to \Pic(S')$. This yields a commutative group 
functor that we still denote by $\Pic$. For any local ring $R$,
we have $\Pic(\Spec(R)) = 0$. If $\Pic$ is represented by a scheme 
$X$, then every morphism $\Spec(R) \to X$ is constant for $R$ 
local; hence every morphism $S \to X$ is locally constant. 
As a consequence, $\Pic(\bP^1) = \Hom(\bP^1,X) = 0$, 
a contradiction.
\end{remarks}

\begin{definition}\label{def:sgs}
Let $G$ be a group scheme. A \textit{subgroup scheme} 
of $G$ is a (locally closed) subscheme $H$ 
such that $H(S)$ is a subgroup of $G(S)$ for any 
scheme $S$. We say that $H$ is \textit{normal in} $G$, if $H(S)$
is a normal subgroup of $G(S)$ for any scheme $S$.
We then write $H \trianglelefteq G$.
\end{definition}

\begin{definition}\label{def:mor}
Let $G$, $H$ be group schemes. A morphism 
$f : G \to H$ is called a \textit{homomorphism} if 
$f(S) : G(S) \to H(S)$ is a group homomorphism for any 
scheme $S$. 

The \textit{kernel} of the homomorphism $f$ is the group functor 
$\Ker(f)$ such that $\Ker(f)(S) = \Ker(f(S) : G(S) \to H(S))$. 
It is represented by a closed normal subgroup scheme of $G$, 
the fiber of $f$ at the neutral element of $H$.
\end{definition}

\begin{definition}\label{def:ag}
An \textit{algebraic group} over $k$ is a $k$-group scheme 
of finite type.
\end{definition}

This notion of algebraic group is somewhat more general 
than the classical one. More specifically, the 
``algebraic groups defined over $k$'' in the sense of 
\cite{Bo,Sp} are the  \textit{geometrically reduced} $k$-group 
schemes of finite type. Yet both notions coincide in characteristic
$0$, as a consequence of the following result of Cartier:

\begin{theorem}\label{thm:car}
When $\car(k) = 0$, every algebraic group over $k$ is reduced.
\end{theorem}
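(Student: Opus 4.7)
\emph{Reductions.} The strategy is to reduce reducedness of $G$ to a statement about the completed local ring at the identity, then invoke Cartier's theorem on formal groups in characteristic zero. First I would base-change to $\bar k$: reducedness descends along the faithfully flat map $k \to \bar k$, so it suffices to treat $G_{\bar k}$, and we may assume $k = \bar k$. The non-reduced locus $N \subseteq G$ is closed, and left translation $L_g$ by any $g \in G(k)$ is an isomorphism, so $L_g(N) = N$. If $e \notin N$, then $N$ avoids every $k$-point of $G$; since $k$-points are dense in the finite-type scheme $G$ (as $k = \bar k$), this forces $N = \emptyset$. It therefore suffices to prove that $A := \cO_{G,e}$ is reduced, and for this it is enough that the completion $\hat A$ be reduced, as $A \hookrightarrow \hat A$ is faithfully flat.

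\emph{Formal-group structure at $e$.} The group laws induce on $\hat A$ a complete local Hopf algebra structure: a comultiplication $\Delta : \hat A \to \hat A \,\widehat{\otimes}\, \hat A$, a counit $\varepsilon : \hat A \to k$, and an antipode. Equivalently, $\hat A$ is the algebra of functions of the formal group attached to $G$ at the identity. Writing $\fm \subset \hat A$ for the maximal ideal and $d = \dim_k \fm/\fm^2$, the theorem reduces to Cartier's result on formal groups: \emph{in characteristic zero, every complete Noetherian local Hopf $k$-algebra is isomorphic to a formal power series ring $k[[t_1, \ldots, t_d]]$}; such a ring is a regular domain, hence reduced.

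\emph{Main obstacle.} The substantive content is the Cartier formal-group theorem. Picking a regular system of parameters $x_1, \ldots, x_d \in \fm$ lifting a $k$-basis of $\fm/\fm^2$, completeness together with Nakayama produces a surjection $\varphi : k[[t_1, \ldots, t_d]] \twoheadrightarrow \hat A$; the hard step is to show $\varphi$ is injective. I would pass to the associated graded $\operatorname{gr}_\fm \hat A$, which inherits a graded connected Hopf algebra structure whose degree-$1$ primitives are the classes $\bar x_1, \ldots, \bar x_d$, and then invoke a structure theorem of Milnor--Moore / Leray--Borel type: over a field of characteristic zero, a graded commutative connected Hopf algebra is freely generated, as a commutative algebra, by its primitives. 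Divisibility by arbitrary factorials enters this argument in a crucial way; and the conclusion genuinely fails in characteristic $p > 0$, witness $\mu_p = \Spec k[t]/(t^p)$. Lifting freeness from $\operatorname{gr}_\fm \hat A$ back to $\hat A$ yields $\hat A \cong k[[t_1, \ldots, t_d]]$ and completes the proof.
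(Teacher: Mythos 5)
Your proposal is correct in outline, but note that the paper does not actually prove Theorem \ref{thm:car}: it only cites \cite[II.6.1.1]{DG}, \cite[VIB.1.6.1]{SGA3} and the ``direct proof'' in \cite[p.~101]{Mum}. Your route --- reduce to $\bar k$, translate the non-reduced locus to the identity, pass to the completed local Hopf algebra $\hat A$, and invoke the characteristic-zero smoothness of formal groups via the Leray--Borel/Milnor--Moore freeness theorem for connected graded commutative Hopf algebras --- is essentially the formal-group argument underlying the Demazure--Gabriel and SGA3 treatments, and it works. Two small points deserve emphasis. First, in the last step the general freeness theorem allows free generators in arbitrary degrees; what saves you is that $\operatorname{gr}_{\fm}\hat A$ is by construction generated in degree $1$, so freeness forces all generators into degree $1$ and hence $\operatorname{gr}_{\fm}\hat A \cong k[t_1,\dots,t_d]$ (in fact, for reducedness alone you only need $\operatorname{gr}_{\fm}\hat A$ to be a domain). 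Second, you can avoid appealing to closedness of the non-reduced locus by arguing as in the paper's Proposition \ref{prop:smooth}: once $\cO_{G,e}$ is reduced, so is $\cO_{G,g}$ for every $g\in G(k)$, and a Noetherian ring all of whose localizations at maximal ideals are reduced is reduced. By contrast, the ``direct proof'' the paper points to in Mumford is much shorter and bypasses all Hopf-algebra structure theory: one takes a nilpotent $x\in\fm$ with $x^n=0$, $x^{n-1}\neq 0$, writes $\Delta(x)=x\otimes 1+1\otimes x+y$ with $y\in\fm\otimes\fm$, expands $\Delta(x)^n=0$ and isolates the term $n\,x^{n-1}\otimes x$ modulo a suitable ideal; the invertibility of $n$ in characteristic $0$ then yields a contradiction via the Krull intersection theorem. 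Your approach costs more machinery (the graded Hopf structure theorem) but buys the stronger and independently useful statement that $\hat\cO_{G,e}$ is a power series ring, i.e.\ the smoothness of $G$, rather than mere reducedness.
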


\begin{proof}
See \cite[II.6.1.1]{DG} or \cite[VIB.1.6.1]{SGA3}. A self-contained
proof is given in \cite[p.~101]{Mum}.
\end{proof}

\begin{example}\label{ex:add}
The \textit{additive group} $\bG_a$ is the affine line $\bA^1$ 
equipped with the addition. More specifically, we have 
$m(x,y) = x + y$ and $i(x) = -x$ identically, and $e = 0$.

Consider a subgroup scheme $H \subseteq \bG_a$. If $H \neq \bG_a$,
then $H$ is the zero scheme $V(P)$ for some non-constant polynomial 
$P \in \cO(\bG_a) = k[x]$;
we may assume that $P$ has leading coefficient $1$. We claim 
that $P$ is an \textit{additive polynomial}, i.e.,
\[ P(x + y) = P(x) + P(y) \] 
in the polynomial ring $k[x,y]$.

To see this, note that $P(0) = 0$ as $0 \in H(k)$, and 
\[ P(x + y) \in (P(x), P(y)) \] 
(the ideal of $k[x,y]$ generated by $P(x)$ and $P(y)$), 
as the addition $\bG_a \times \bG_a \to \bG_a$ sends $H \times H$ 
to $H$. Thus, there exist $A(x,y)$, $B(x,y) \in k[x,y]$ such that 
\[ P(x + y) - P(x) - P(y) = A(x,y) \, P(x) + B(x,y) \, P(y). \]
Dividing $A(x,y)$ by $P(y)$, we may assume that 
$\deg_y A(x,y) < \deg(P)$ with an obvious notation. 
Since $\deg_y (P(x + y) - P(x) - P(y)) < \deg(P)$, it follows 
that $B = 0$. Likewise, we obtain $A = 0$; this yields the claim.

We now determine the additive polynomials. The derivative of any 
such polynomial $P$ satisfies $P'(x + y) = P'(x)$, hence $P'$ is
constant. When $\car(k) = 0$, we obtain $P(x) = a x$
for some $a \in k$, hence $H$ is just the (reduced) point $0$.
Alternatively, this follows from Theorem \ref{thm:car}, since $H(\bar{k})$ 
is a finite subgroup of $(\bar{k},+)$, and hence is trivial. 

When $\car(k) = p > 0$, we obtain $P(x) = a_0 x + P_1(x^p)$,
where $P_1$ is again an additive polynomial. By induction on
$\deg(P)$, it follows that 
\[
P(x) = a_0 x + a_1 x^p + \cdots + a_n x^{p^n}
\] 
for some positive integer $n$ and $a_0, \ldots, a_n \in k$. As a 
consequence, $\bG_a$ has many subgroup schemes in positive
characteristics; for example,  
\[ \alpha_{p^n} := V(x^{p^n}) \]
is a non-reduced subgroup scheme supported at $0$. 

Note finally that the additive polynomials are exactly the
endomorphisms of $\bG_a$, and their kernels yield all
subgroup schemes of that group scheme (in arbitrary characteristics).
\end{example}

\begin{example}\label{ex:mult}
The \textit{multiplicative group} $\bG_m$ is the punctured affine 
line $\bA^1 \setminus 0$ equipped with the multiplication:
we have $m(x,y) = x y$ and $i(x) = x^{-1}$ identically, and $e = 1$.

The subgroup schemes of $\bG_m$ turn out to be $\bG_m$ and 
the subschemes 
\[ \mu_n := V(x^n - 1) \] 
of $n$th roots of unity, where $n$ is a positive integer; 
these are the kernels of the endomorphisms $x \mapsto x^n$ 
of $\bG_m$. Moreover, $\mu_n$ is reduced if and only if 
$n$ is prime to $\car(k)$.  
\end{example}

\begin{example}\label{ex:GL}
Given a vector space $V$, the \textit{general linear group}
$\GL(V)$ is the group functor that assigns to any scheme $S$,
the automorphism group of the sheaf of $\cO_S$-modules 
$\cO_S \otimes_k V$. 
When $V$ is of finite dimension $n$, the choice of a basis 
identifies $V$ with $k^n$ and $\GL(V)(S)$ with $\GL_n(\cO(S))$, 
the group of invertible $n \times n$ matrices with coefficients
in the algebra $\cO(S)$. It follows that $\GL(V)$ is represented 
by an open affine subscheme of the affine scheme $\bA^{n^2}$
(associated with the linear space of $n \times n$ matrices),
the complement of the zero scheme of the determinant. 
This defines a group scheme $\GL_n$, which is smooth,
connected, affine and algebraic.
\end{example}

\begin{definition}\label{def:lin}
A group scheme is \textit{linear} if it is isomorphic to
a closed subgroup scheme of $\GL_n$ for some positive integer $n$.
\end{definition}

Clearly, every linear group scheme is algebraic and affine. 
The converse also holds, see Proposition \ref{prop:lin} below.

Some natural classes of group schemes arising from geometry,
such as automorphism group schemes and Picard schemes of
proper schemes, are generally not algebraic. Yet they turn out
to be locally of finite type; this motivates the following:

\begin{definition}\label{def:lag}
A \textit{locally algebraic group} over $k$ is a $k$-group 
scheme, locally of finite type.
\end{definition}

\begin{proposition}\label{prop:smooth}
The following conditions are equivalent for a locally algebraic 
group $G$ with neutral element $e$:

\begin{enumerate}

\item $G$ is smooth.

\item $G$ is geometrically reduced.

\item $G_{\bar{k}}$ is reduced at $e$.

\end{enumerate}

\end{proposition}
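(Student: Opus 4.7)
The implications $(1) \Rightarrow (2) \Rightarrow (3)$ are formal. Smoothness of $G$ over $k$ means $G$ is geometrically regular, hence geometrically reduced; geometric reducedness means $G_{\bar{k}}$ is reduced, in particular at the neutral element $e$.

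The content lies in $(3) \Rightarrow (1)$, and the plan is to use left translations to spread local information from $e$ across the whole of $G_{\bar{k}}$, and then to descend smoothness from $\bar{k}$ back to $k$. For each $g \in G(\bar{k})$, the left translation $\lambda_g \colon G_{\bar{k}} \to G_{\bar{k}}$ is a $\bar{k}$-automorphism of schemes sending $e$ to $g$, so the hypothesis that $G_{\bar{k}}$ is reduced at $e$ immediately yields that it is reduced at every $\bar{k}$-rational point. Since $G$ is locally of finite type, $G_{\bar{k}}$ is a Jacobson scheme whose closed points are exactly its $\bar{k}$-points; every point of $G_{\bar{k}}$ therefore admits a specialization to a closed point, and since the local ring at a generization is a localization of the local ring at its specialization, reducedness at all $\bar{k}$-points forces $G_{\bar{k}}$ to be reduced globally.

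Now $\bar{k}$ is perfect and $G_{\bar{k}}$ is reduced and locally of finite type over $\bar{k}$, so its smooth locus is a dense open subscheme. Because closed points are dense (Jacobson again), this locus contains some $\bar{k}$-point $g_0$; translating by $g_0^{-1}$ places $e$ in the smooth locus, and translating by arbitrary $g \in G(\bar{k})$ then shows that every $\bar{k}$-point is smooth. Over the algebraically closed field $\bar{k}$, smoothness at a point of a locally finite type scheme is equivalent to regularity of the local ring, and regularity is inherited by localizations, so smoothness propagates from every closed point to every generization. Hence $G_{\bar{k}}$ is smooth over $\bar{k}$, and by faithfully flat descent of smoothness along $\bar{k}/k$, the group scheme $G$ is smooth over $k$.

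The only step one has to watch is generic smoothness, which can fail for reduced schemes of finite type over an imperfect field; here it is harmless because we arrange to work over $\bar{k}$, which is perfect. No other subtlety should arise.
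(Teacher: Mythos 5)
Your argument is correct and follows essentially the same route as the paper: reduce to $\bar{k}$, use left translations to propagate reducedness from $e$ to all closed points and hence to all of $G_{\bar{k}}$, invoke generic smoothness over the perfect field $\bar{k}$ to find one smooth rational point, translate again, and descend. The only difference is that you spell out the Jacobson/specialization and regularity-of-localizations steps that the paper leaves implicit; no gap.
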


\begin{proof}
Clearly, (1)$\Rightarrow$(2)$\Rightarrow$(3). We now show 
that (3)$\Rightarrow$(1). For this, we may replace $G$ with 
$G_{\bar{k}}$ and hence assume that $k$ is algebraically closed.

Observe that for any $g \in G(k)$, the local ring $\cO_{G,g}$ 
is isomorphic to $\cO_{G,e}$ as the left multiplication
by $g$ in $G$ is an automorphism that sends $e$ to $g$. 
It follows that $\cO_{G,g}$ is reduced; hence every open
subscheme of finite type of $G$ is reduced as well. Since 
$G$ is locally of finite type, it must be reduced, too. Thus,
$G$ contains a smooth $k$-rational point $g$. By arguing
as above, we conclude that $G$ is smooth.
\end{proof}

\subsection{Actions of group schemes}
\label{subsec:ags}

\begin{definition}\label{def:action}
An \textit{action} of a group scheme $G$ on a scheme $X$ is
a morphism $a : G \times X \to X$ such that the map $a(S)$ yields 
an action of the group $G(S)$ on the set $X(S)$, for any scheme $S$.
\end{definition}

This condition is equivalent to the commutativity of the following
diagrams:
\[
\xymatrix{
G \times G \times X \ar[r]^-{m \times \id}\ar[d]_{\id \times a} 
& G \times X \ar[d]^a \\
G \times X \ar[r]^-a & X \\}
\]
(i.e., $a$ is ``associative''), and 
\[
\xymatrix{
X \ar[r]^-{e \times \id} \ar[dr]_{\id} & G \times X \ar[d]^a \\
& X \\
}
\]
(i.e., the neutral element acts via the identity).

We may view a $G$-action on $X$ as a homomorphism of group
functors 
\[ a : G \longrightarrow \Aut_X, \] 
where $\Aut_X$ denotes the \textit{automorphism group functor} 
that assigns to any scheme $S$, the group of automorphisms of 
the $S$-scheme $X \times S$. The $S$-points of $\Aut_X$ are 
those morphisms $f : X \times S \to X$ such that the map 
\[ f \times p_2 : X \times S \longrightarrow X \times S, 
\quad (x,s) \longmapsto (f(x,s),s) \] 
is an automorphism of $X \times S$; they may be viewed 
as families of automorphisms of $X$ parameterized by $S$.

\begin{definition}\label{def:Gs}
A scheme $X$ equipped with an action $a$ of $G$ will be called 
a \textit{$G$-scheme}; we then write for simplicity 
$a(g,x) = g \cdot x$ for any scheme $S$ and $g \in G(S)$, 
$x \in X(S)$.

The action is \textit{trivial} if $a$ is the second projection 
$p_2 : G \times X \to X$; equivalently, $g \cdot x = x$ identically.
\end{definition}

\begin{remark}\label{rem:action}
For an arbitrary action $a$, we have a commutative triangle
\[ 
\xymatrix{
G \times X \ar[r]^-{u} \ar[dr]_{a} & G \times X \ar[d]^{p_2} \\
& X, \\}
\]
where $u(g,x) := (g,a(g,x))$. Since $u$ is an automorphism
(with inverse the map $(g,x) \mapsto (g,a(g^{-1},x))$), it 
follows that the morphism $a$ shares many properties of the
scheme $G$. For example, $a$ is always faithfully flat; 
it is smooth if and only if $G$ is smooth.

In particular, the multiplication $m : G \times G \to G$
is faithfully flat.
\end{remark}

\begin{definition}\label{def:equi}
Let $X$, $Y$ be $G$-schemes with actions $a$, $b$.
A \textit{morphism of $G$-schemes} $\varphi : X \to Y$ is a morphism 
of schemes such that the following square commutes:
\[
\xymatrix{
G \times X \ar[r]^-a \ar[d]_{\id \times \varphi} 
& X \ar[d]^{\varphi} \\
G \times Y \ar[r]^-b & Y. \\}
\]
In other words, $\varphi(g \cdot x) = g \cdot \varphi(x)$
identically; we then say that $\varphi$ is \textit{$G$-equivariant}. 
When $Y$ is equipped with the trivial action of $G$, 
we say that $\varphi$ is \textit{$G$-invariant}.
\end{definition}

\begin{definition}\label{def:norcent}
Let $X$ be a $G$-scheme with action $a$, and $Y$ a closed subscheme of $X$. 

The \textit{normalizer} (resp. \textit{centralizer}) of $Y$ 
in $G$ is the group functor $\N_G(Y)$ (resp. $\C_G(Y)$) 
that associates with any scheme $S$, the set of those $g \in G(S)$  
which induce an automorphism of $Y \times S$ 
(resp. the identity of $Y \times S$). 

The \textit{kernel} of $a$ is the centralizer of $X$ in $G$,
or equivalently, the kernel of the corresponding homomorphism
of group functors.

The action $a$ is \textit{faithful} if its kernel is trivial;
equivalently, for any scheme $S$, every non-trivial element
of $G(S)$ acts non-trivially on $X \times S$.

The \textit{fixed point functor} of $X$ is the subfunctor 
$X^G$ that associates with any scheme $S$, the set of all
$x \in X(S)$ such that for any $S$-scheme $S'$ and any 
$g \in G(S')$, we have $g \cdot x = x$.
\end{definition}

\begin{theorem}\label{thm:norcent}
Let $G$ be a group scheme acting on a scheme $X$. 

\begin{enumerate}

\item The normalizer and centralizer of any closed subscheme 
$Y \subseteq X$ are represented by closed subgroup schemes of $G$.

\item The functor of fixed points is represented by 
a closed subscheme of $X$.

\end{enumerate}

\end{theorem}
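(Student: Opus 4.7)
The plan is to prove (2) first and then deduce (1) by variants of the same construction.

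For (2), since $X$ is separated, the diagonal $\Delta_X : X \to X \times X$ is a closed immersion, so the equalizer
\[ \Sigma := (a, p_2)^{-1}(\Delta_X) \subseteq G \times X \]
of the action $a$ and the second projection $p_2$ is a closed subscheme: the scheme-theoretic locus where $g \cdot x = x$. Unravelling the definition, an $S$-point $x : S \to X$ lies in $X^G(S)$ if and only if the morphism $\id_G \times x : G \times S \to G \times X$ factors through $\Sigma$. Equivalently, $X^G$ is represented by the largest closed subscheme $Z \hookrightarrow X$ such that $G \times Z \subseteq \Sigma$ as closed subschemes of $G \times X$. To construct such a $Z$, let $\cI \subseteq \cO_{G \times X}$ denote the ideal of $\Sigma$, and work locally over open affines $V = \Spec A \subseteq G$ and $U = \Spec B \subseteq X$: the ideal $I := \cI(V \times U) \subseteq A \otimes_k B$ admits a smallest ideal $J_{V,U} \subseteq B$ with $I \subseteq A \otimes_k J_{V,U}$, given, after choosing a $k$-basis of $A$, by the $B$-coordinates of elements of $I$ in that basis. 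Summing the $J_{V,U}$ over an open affine cover of $G$ yields an ideal on $U$; these glue to a quasi-coherent ideal sheaf $\cI_Z \subseteq \cO_X$, and $Z := V(\cI_Z)$ has the required universal property.

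For (1), the centralizer $\C_G(Y)$ is obtained by the mirror construction, now saturating in the $G$-factor: if $E \subseteq G \times Y$ denotes the equalizer of $a|_{G \times Y} : G \times Y \to X$ and the composition $G \times Y \xrightarrow{p_2} Y \hookrightarrow X$, then $\C_G(Y)$ is represented by the largest closed subscheme of $G$ whose product with $Y$ lies in $E$, and the same coefficient recipe (with the roles of $A$ and $B$ interchanged) produces the corresponding ideal in $\cO_G$. For the normalizer, I would first introduce the transporter
\[ \operatorname{Transp}_G(Y)(S) = \{\, g \in G(S) : g \cdot Y_S \subseteq Y_S \,\}, \]
represented by the largest closed $Z \subseteq G$ with $a(Z \times Y) \subseteq Y$, constructed by the same saturation applied to the closed subscheme $a|_{G \times Y}^{-1}(Y) \subseteq G \times Y$. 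Then
\[ \N_G(Y) = \operatorname{Transp}_G(Y) \cap i^{-1}\bigl(\operatorname{Transp}_G(Y)\bigr) \]
is closed in $G$ because inversion $i$ is an automorphism; the subgroup-scheme property of both $\N_G(Y)$ and $\C_G(Y)$ is automatic from the fact that their $S$-points form subgroups of $G(S)$.

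The main obstacle is the local-to-global construction of the "largest $Z$" in (2): one must verify that the ideal $\cI_Z$ built locally via coefficients is intrinsic, i.e., independent of the chosen affine covers and bases, and that the pieces glue to a quasi-coherent ideal. The cleanest viewpoint is that $\cI_Z$ is characterized as the smallest quasi-coherent ideal on $X$ for which $\cI \subseteq p_2^{*}(\cI_Z) \cdot \cO_{G \times X}$; the local coefficient description just exhibits this smallest ideal explicitly. With that lemma in hand, the universal property in (2) and all the variants in (1) follow formally.
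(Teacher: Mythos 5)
The paper itself gives no proof here: it simply cites \cite[II.1.3.6]{DG} and \cite[VIB.6.2.4]{SGA3}. Your argument is correct, and it is in substance exactly the argument of those references: encode the condition on the universal point as a closed subscheme of a product ($\Sigma\subseteq G\times X$, resp. $E\subseteq G\times Y$, resp. $a|_{G\times Y}^{-1}(Y)$), then ``saturate'' in one factor by passing to the smallest quasi-coherent ideal on that factor whose inverse-image ideal contains the given one. Two points are worth making explicit. First, the construction lives or dies on the ground field: the coefficient description of $\cI_Z$, its basis-independence, and hence the glueing, all rest on $\cO(V)$ being free as a $k$-module for $V$ affine (and similarly $\cO(U)$ for $U\subseteq Y$ affine in the centralizer/transporter case); this freeness is automatic over a field but is precisely the flatness hypothesis under which the cited references prove representability in general, and it is also where the paper's standing separatedness convention enters, to make $\Sigma$ closed. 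Second, your identity $\N_G(Y)=\operatorname{Transp}_G(Y)\cap i^{-1}(\operatorname{Transp}_G(Y))$ requires the small verification that $g$ induces an automorphism of $Y\times S$ if and only if both $g$ and $g^{-1}$ carry $Y\times S$ into itself, the point being that the induced endomorphisms $\phi_g$ and $\phi_{g^{-1}}$ are then mutually inverse by the action axioms; with that noted, the reduction is complete and the subgroup-scheme property on $S$-points is indeed formal. So your proposal is a correct, self-contained rendering of what the paper delegates to the literature.
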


\begin{proof}
See \cite[II.1.3.6]{DG} or \cite[VIB.6.2.4]{SGA3}.
\end{proof}

In particular, $\N_G(Y)$ is the largest subgroup scheme of $G$
that acts on $Y$, and $\C_G(Y)$ is the kernel of this action.
Moreover, $X^G$ is the largest subscheme of $X$ on which $G$ acts
trivially. We also say that $\N_G(Y)$ \textit{stabilizes} $Y$, 
and $\C_G(Y)$ \textit{fixes $Y$ pointwise}. 
 
When $Y$ is just a $k$-rational point $x$, we have 
$\N_G(Y) = \C_G(Y) =: \C_G(x)$. This is the \textit{stabilizer}
of $x$ in $G$, which is clearly represented by a closed
subgroup scheme of $G$: the fiber at $x$ of the 
\textit{orbit map}
\[ a_x : G \longrightarrow X, \quad g \longmapsto g \cdot x. \] 
We postpone the definition of orbits to \S \ref{subsec:hsq}, 
where homogeneous spaces are introduced; we now record classical 
properties of the orbit map:

\begin{proposition}\label{prop:orb}
Let $G$ be an algebraic group acting on a scheme of finite type
$X$ via $a$. 

\begin{enumerate}

\item The image of the orbit map $a_x$ is locally closed
for any closed point $x \in X$.

\item If $k$ is algebraically closed and $G$ is smooth, then 
there exists $x \in X(k)$ such that the image of $a_x$ is closed.

\end{enumerate}

\end{proposition}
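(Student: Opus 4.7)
The plan is to reduce (1) to the case $k = \bar{k}$ by faithfully flat descent (since local closedness of subsets is preserved and reflected by $X_{\bar{k}} \to X$), and then combine Chevalley's constructibility theorem with a covering argument by $G(k)$-translates; part (2) will then follow by choosing an orbit of minimal dimension.

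For (1), assume $k = \bar{k}$. By Chevalley's theorem, $Z := a_x(G)$ is a constructible subset of $X$, so it contains a dense open subset $U$ of its closure $Y$ (with reduced structure). For every $g \in G(k)$, left translation $\lambda_g : X \to X$, $z \mapsto g \cdot z$, is an automorphism satisfying $\lambda_g(Z) = Z$ (since $\{g \cdot (h \cdot x) : h \in G\} = \{(gh) \cdot x : h \in G\} = Z$), and therefore $\lambda_g(Y) = Y$ and $\lambda_g(U)$ is an open subset of $Y$ contained in $Z$. Setting $V := \bigcup_{g \in G(k)} \lambda_g(U)$, this is an open subset of $Y$ contained in $Z$; the crucial step is to prove $V = Z$. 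To achieve this I would first fix a $k$-point $u \in U$ (non-empty since $U$ is a non-empty scheme of finite type over the algebraically closed field $k$), then apply the Nullstellensatz to the non-empty fiber $a_x^{-1}(u) \subseteq G$ to obtain $g_0 \in G(k)$ with $u = g_0 \cdot x$, and similarly write each $k$-point $y \in Z$ as $y = h \cdot x$ with $h \in G(k)$; then $y = (hg_0^{-1}) \cdot u \in \lambda_{hg_0^{-1}}(U) \subseteq V$, so $V$ contains all closed points of $Z$. Since $Z \setminus V$ is a constructible subset of the Jacobson scheme $X$ containing no closed point, it must be empty, so $Z = V$ is open in $Y$ and thus locally closed in $X$.

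For (2), select $x \in X(k)$ so that $\dim a_x(G)$ is minimal among $\{\dim a_y(G) : y \in X(k)\}$; this minimum exists, as $X(k) \neq \emptyset$ and orbit dimensions are bounded by $\dim X$. Set $Z := a_x(G)$ and $Y := \overline{Z}$. By (1), $Z$ is open dense in $Y$, hence $\dim Z = \dim Y$, and the closed complement $\partial Z := Y \setminus Z$ has dimension strictly less than $\dim Z$ (no irreducible component of $Y$ lies in $\partial Z$). Since $\partial Z$ is closed and $G(k)$-stable, any $k$-point $y \in \partial Z$ would satisfy $a_y(G) \subseteq \partial Z$ (as $G(k) \cdot y$ is dense in $a_y(G)$ by (1) and the Jacobson property), giving $\dim a_y(G) \leq \dim \partial Z < \dim Z$ and contradicting minimality. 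Hence $\partial Z = \emptyset$ and $Z = Y$ is closed.

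I anticipate the main obstacle to be the covering step in (1): showing that every closed point of the image $Z$ lies in the $G(k)$-orbit of $x$, so that the $G(k)$-translates of $U$ cover all closed points of $Z$. The Nullstellensatz-based passage from set-theoretic image to $G(k)$-orbit handles this cleanly, but one must also invoke the Jacobson property of $X$ to conclude that the constructible set $Z \setminus V$, having no closed points, is in fact empty.
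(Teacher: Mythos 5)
Your proof is correct and follows essentially the same route as the paper: for (1), reduction to $k=\bar{k}$ by faithfully flat descent, Chevalley constructibility, and covering the image by $G(k)$-translates of a dense open subset of its closure; for (2), a minimality argument forcing the boundary of the orbit to be empty for dimension reasons. The only notable (but minor) variation is in (2), where you minimize the orbit dimension directly and work with closed points via the Jacobson property, whereas the paper minimizes over closed $G$-stable subschemes and invokes the representability of the normalizer to upgrade $G(k)$-stability of the boundary to $G$-stability; both arguments are sound.
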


\begin{proof}
(1) Consider the natural map $\pi : X_{\bar{k}} \to X$.
Since $\pi$ is faithfully flat and quasi-compact, it suffices 
to show that $\pi^{-1}(a_x(G))$ is locally closed (see e.g. 
\cite[IV.2.3.12]{EGA}). As $\pi^{-1}(a_x(G))$ is the image of 
the orbit map $(a_x)_{\bar{k}}$, we may assume $k$ algebraically closed.
Then $a_x(G)$ is constructible, and hence contains a dense open 
subset $U$ of its closure. The pull-back $a_x^{-1}(U)$ is a 
non-empty open subset of the underlying topological space of $G$; 
hence that space is covered by the translates $g a_x^{-1}(U)$, 
where $g \in G(k)$. It follows that $a_x(G)$ is covered by 
the translates $g U$, and hence is open in its closure.  

(2) Choose a closed $G$-stable subscheme $Y \subseteq X$ of 
minimal dimension and let $x \in Y(k)$. If $a_x(G)$ is not
closed, then $Z := \overline{a_x(G)} \setminus a_x(G)$ 
(equipped with its reduced subscheme structure) is 
a closed subscheme of $Y$, stable by $G(k)$. Since the 
normalizer of $Z$ is representable and $G(k)$ is dense in $G$, 
it follows that $Z$ is stable by $G$. But 
$\dim(Z) < \dim(a_x(G)) \leq \dim(Y)$, a contradiction.   
\end{proof}

\begin{example}\label{ex:actions}
Every group scheme $G$ acts on itself by left multiplication, via 
\[ \lambda : G \times G \longrightarrow G, 
\quad (x,y) \longmapsto xy. \]
It also acts by right multiplication, via 
\[ \rho : G \times G \longrightarrow G, 
\quad (x,y) \longmapsto y x^{-1} \]
and by conjugation, via 
\[ \Int : G \times G \longrightarrow G, 
\quad (x,y) \longmapsto x y x^{-1}. \]
The actions $\lambda$ and $\rho$ are both faithful. The kernel 
of $\Int$ is the \textit{center of $G$}.
\end{example}

\begin{definition}\label{def:sd}
Let $G$, $H$ be two group schemes and 
$a : G \times H \to H$ an action by group automorphisms, i.e., 
we have $g \cdot (h_1 h_2) = (g \cdot h_1) (g \cdot h_2)$ identically.  
The \textit{semi-direct product} $G \ltimes H$ is the 
scheme $G \times H$ equipped with the multiplication such that
\[ (g, h) \cdot (g',h') = (gg', (g'^{-1} \cdot h) h'), \]
the neutral element $e_G \times e_H$, and the inverse such that 
$(g,h)^{-1} = (g^{-1}, g \cdot h^{-1})$. 
\end{definition}

By using the Yoneda lemma, one may readily check that $G \ltimes H$ 
is a group scheme. Moreover, $H$ (identified with its image in 
$G \ltimes H$ under the closed immersion $h \mapsto (e_G,h)$) 
is a closed normal subgroup scheme, 
and $G$ (identified with its image under the closed immersion
$g \mapsto (g,e_H)$) is a closed subgroup scheme having a retraction
\[ r : G \ltimes H \longrightarrow G, \quad (g,h) \longmapsto g \]
with kernel $H$. The given action of $G$ on $H$ is identified
with the action by conjugation in $G \ltimes H$.

\begin{remarks}\label{rem:sd}
(i) With the above notation, $G$ is a normal subgroup scheme 
of $G \ltimes H$ if and only if $G$ acts trivially on $H$.

(ii) Conversely, consider a group scheme $G$ and two closed 
subgroup schemes $N$, $H$ of $G$ such that $H$ normalizes 
$N$ and the inclusion of $H$ in $G$ admits a retraction
$r : G \to H$ which is a homomorphism with kernel $N$. 
Form the semi-direct product $H \ltimes N$, 
where $H$ acts on $N$ by conjugation.
Then one may check that the multiplication map 
\[ H \ltimes N \longrightarrow G, 
\quad (x,y) \longmapsto xy \] 
is an isomorphism of group schemes, with inverse being 
the morphism 
\[ G \longrightarrow H \ltimes N, 
\quad z \longmapsto (r(z),r(z)^{-1}z). \]
\end{remarks}

\subsection{Linear representations}
\label{subsec:lr}

\begin{definition}\label{def:rep}
Let $G$ be a group scheme and $V$ a vector space. 
A \textit{linear representation $\rho$ of $G$ in $V$} 
is a homomorphism of group functors $\rho: G \to \GL(V)$. 
We then say that $V$ is a $G$-\textit{module}. 
\end{definition}

More specifically, $\rho$ assigns to any scheme $S$ and
any $g \in G(S)$, an automorphism $\rho(g)$ of the
sheaf of $\cO_S$-modules $\cO_S \otimes_k V$, functorially
on $S$. Note that $\rho(g)$ is uniquely determined
by its restriction to $V$ (identified with 
$1 \otimes_k V \subseteq \cO(S) \otimes_k V$, where $1$
denotes the unit element of the algebra $\cO(S)$), 
which yields a linear map $V \to \cO(S) \otimes_k V$.

A linear subspace $W \subseteq V$ is a $G$-\textit{submodule}
if each $\rho(g)$ normalizes $\cO_S \otimes_k W$.
More generally, the notions of quotients, exact sequences,
tensor operations of linear representations of abstract 
groups extend readily to the setting of group schemes.

\begin{examples}\label{ex:rep}
(i) When $V = k^n$ for some positive integer $n$, 
a linear representation of $G$ in $V$ is
a homomorphism of group schemes $\rho : G \to \GL_n$
or equivalently, a linear action of $G$ on the affine
space $\bA^n$.

(ii) Let $X$ be an affine $G$-scheme with action $a$.
For any scheme $S$ and $g \in G(S)$, we define an automorphism
$\rho(g)$ of the $\cO_S$-algebra $\cO_S \otimes_k \cO(X)$ by setting
\[ \rho(g)(f) := f \circ a(g^{-1}) \]
for any $f \in \cO(X)$. This yields a representation $\rho$ of $G$ 
in $\cO(X)$, which uniquely determines the action in view of 
the anti-equivalence of categories between affine schemes and algebras. 

For instance, if $G$ acts linearly on a finite-dimensional vector
space $V$, then $\cO(V) \cong \Sym(V^*)$ (the symmetric algebra
of the dual vector space) as $G$-modules.

(iii) More generally, given any $G$-scheme $X$, one may define 
a representation $\rho$ of $G$ in $\cO(X)$ as above. 
But in general, the $G$-action on $X$ is not uniquely determined 
by $\rho$. For instance, if $X$ is a proper $G$-variety, then 
$\cO(X) = k$ and hence $\rho$ is trivial.  
\end{examples}

\begin{lemma}\label{lem:aff}
Let $X$, $Y$ be quasi-compact schemes. Then the map
\[
\cO(X) \otimes_k \cO(Y) \longrightarrow \cO(X \times Y), 
\quad f \otimes g \longmapsto ((x,y) \mapsto f(x) \, g(y))
\]
is an isomorphism of algebras.
In particular, we have a canonical isomorphism
\[ 
\cO(X) \otimes_k R \stackrel{\cong}{\longrightarrow} \cO(X_R)
\] 
for any quasi-compact scheme $X$ and any algebra $R$.
\end{lemma}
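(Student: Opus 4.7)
The plan is to reduce to the case where $X$ and $Y$ are both affine, where the statement is essentially the defining property of the fiber product of affine schemes together with the universal property of the tensor product: for $X = \Spec(A)$ and $Y = \Spec(B)$ we have $X \times Y = \Spec(A \otimes_k B)$, so $\cO(X \times Y) = A \otimes_k B = \cO(X) \otimes_k \cO(Y)$. This is the easy base case that will feed into the general gluing argument.

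For the general case, pick a finite affine open cover $X = U_1 \cup \cdots \cup U_n$ (possible since $X$ is quasi-compact). Because all schemes here are separated, each intersection $U_i \cap U_j$ is affine as well. The sheaf axiom for $\cO_X$ gives an exact sequence of $k$-vector spaces
\[
0 \longrightarrow \cO(X) \longrightarrow \prod_i \cO(U_i) \longrightarrow \prod_{i,j} \cO(U_i \cap U_j).
\]
Since $k$ is a field, tensoring over $k$ with the vector space $\cO(Y)$ preserves exactness, and because the products are finite they commute with $\otimes_k$. Similarly, $\{U_i \times Y\}_i$ is a finite open cover of $X \times Y$ with pairwise intersections $(U_i \cap U_j) \times Y$, giving a second exact sequence
\[
0 \longrightarrow \cO(X \times Y) \longrightarrow \prod_i \cO(U_i \times Y) \longrightarrow \prod_{i,j} \cO((U_i \cap U_j) \times Y).
\]
The natural map of the lemma produces a morphism from the first sequence (tensored with $\cO(Y)$) to the second; the five-lemma style comparison reduces the problem to showing that $\cO(U) \otimes_k \cO(Y) \to \cO(U \times Y)$ is an isomorphism whenever $U$ is affine. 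Repeating the argument symmetrically with a finite affine cover of $Y$ reduces this mixed case to the biaffine case already handled.

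The only genuine subtlety is the bookkeeping in the gluing step: one must use that finite products coincide with finite coproducts of $k$-vector spaces so that $(-)\otimes_k\cO(Y)$ commutes with the products appearing, and one must know that separatedness makes all the relevant intersections affine so that the affine case is applicable to the middle and right columns. Once that is set up, the comparison of exact sequences is formal. Finally, the ``in particular'' statement follows by specializing to $Y = \Spec(R)$, which is affine and thus quasi-compact, so that $\cO(X_R) = \cO(X \times \Spec(R)) \cong \cO(X) \otimes_k \cO(\Spec(R)) = \cO(X) \otimes_k R$.
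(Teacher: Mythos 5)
Your proof is correct and follows essentially the same route as the paper: reduce to the affine case via a finite affine open cover, using the sheaf exact sequence, exactness of $-\otimes_k \cO(Y)$ over a field, and the fact that separatedness makes the pairwise intersections affine. The paper merely performs the two covering steps in the opposite order (first covering $Y$ with $X$ affine, then covering $X$), which makes no substantive difference.
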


\begin{proof}
The assertion is well-known when $X$ and $Y$ are affine.

When $X$ is affine and $Y$ is arbitrary, we may choose a finite
open covering $(V_i)_{1 \leq i \leq n}$ of $Y$; then the intersections
$V_i \cap V_j$ are affine as well. Also, we have an exact sequence
\[ 0 \longrightarrow \cO(Y) \longrightarrow \prod_i \cO(V_i)
\stackrel{d_Y}{\longrightarrow} \prod_{i,j} \cO(V_i \cap V_j), \]
where 
$d_Y((f_i)_i) := (f_i \vert_{V_i \cap V_j} - f_j \vert_{V_i \cap V_j})_{i,j}$.
Tensoring with $\cO(X)$ yields an exact sequence
\[ 0 \longrightarrow \cO(X) \otimes_k \cO(Y) \longrightarrow 
\prod_i \cO(X \times V_i)
\stackrel{d_{X, Y}}{\longrightarrow} 
\prod_{i,j} \cO(X \times (V_i \cap V_j)), \]
where $d_{X, Y}$ is defined similarly. Since the $X \times V_i$ 
form an open covering of $X \times Y$, the kernel of 
$d_{X, Y}$ is $\cO(X \times Y)$; this proves the assertion 
in this case.

In the general case, we choose a finite open affine covering 
$(U_i)_{1 \leq i \leq m}$ of $X$ and obtain an exact sequence
\[ 0 \longrightarrow \cO(X) \otimes_k \cO(Y) \longrightarrow 
\prod_i \cO(U_i \times Y) \longrightarrow
\prod_{i,j} \cO((U_i \cap U_j) \times Y), \]
by using the above step. The assertion follows similarly.
\end{proof}

The quasi-compactness assumption in the above lemma is 
a mild finiteness condition, which is satisfied e.g. for
affine or noetherian schemes.

\begin{proposition}\label{prop:rep}
Let $G$ be an algebraic group and $X$ a $G$-scheme
of finite type. Then the $G$-module $\cO(X)$ is the union 
of its finite-dimensional submodules. 
\end{proposition}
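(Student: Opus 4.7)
The plan is to interpret the $G$-action on $X$ as a comodule structure on $\cO(X)$ and then apply the standard Hopf-algebraic ``finite generation of orbits'' argument. First I would invoke Lemma \ref{lem:aff}: since $G$ and $X$ are both of finite type over $k$, and hence quasi-compact, the comorphism of $a:G \times X \to X$ yields a $k$-linear map
\[
\alpha := a^* : \cO(X) \longrightarrow \cO(G \times X) \cong \cO(G) \otimes_k \cO(X).
\]
The neutral-element diagram translates into $(e^* \otimes \id) \circ \alpha = \id_{\cO(X)}$, and the associativity diagram translates into
\[
(m^* \otimes \id) \circ \alpha = (\id \otimes \alpha) \circ \alpha
\]
as maps $\cO(X) \to \cO(G) \otimes \cO(G) \otimes \cO(X)$. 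Thus $\alpha$ endows $\cO(X)$ with the structure of an $\cO(G)$-comodule, which is equivalent to a linear representation of $G$ in $\cO(X)$ in the sense of Definition \ref{def:rep}.

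Now fix $f \in \cO(X)$. I would pick a presentation
\[
\alpha(f) = \sum_{i=1}^n g_i \otimes f_i
\]
with $g_i \in \cO(G)$ and $f_1, \ldots, f_n \in \cO(X)$ chosen to be $k$-linearly independent (by discarding redundancies). Let $V := \sum_{i=1}^n k f_i \subseteq \cO(X)$. Applying $(e^* \otimes \id)$ to $\alpha(f)$ gives $f = \sum g_i(e) f_i \in V$, so $f$ lies in $V$. It remains to verify that $V$ is a $G$-submodule, i.e., that $\alpha(V) \subseteq \cO(G) \otimes V$.

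For this, I would plug $\alpha(f)$ into the associativity identity. The left-hand side becomes $\sum_i m^*(g_i) \otimes f_i$, while the right-hand side becomes $\sum_i g_i \otimes \alpha(f_i)$, so
\[
\sum_{i=1}^n m^*(g_i) \otimes f_i = \sum_{i=1}^n g_i \otimes \alpha(f_i)
\]
in $\cO(G) \otimes \cO(G) \otimes \cO(X)$. The linear independence of $f_1,\ldots,f_n$ over $k$ then forces each $\alpha(f_i)$ to lie in $\cO(G) \otimes V$: formally, expand $\alpha(f_i) = \sum_j h_{ij} \otimes \varphi_{ij}$ with $\varphi_{ij}$ chosen from a basis of $\cO(X)$ extending $f_1,\ldots,f_n$, and compare coefficients of basis vectors outside $V$ on both sides. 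This shows $V$ is $G$-stable, hence a finite-dimensional $G$-submodule containing $f$. Since $f$ was arbitrary, $\cO(X)$ is the union of its finite-dimensional submodules.

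The main point to get right is the passage from the identity in $\cO(G) \otimes \cO(G) \otimes \cO(X)$ to the stability $\alpha(f_i) \in \cO(G) \otimes V$; this is where the linear independence of the $f_i$ is essential, and is the only step that is not a direct translation of the axioms. Everything else is formal once Lemma \ref{lem:aff} is in hand.
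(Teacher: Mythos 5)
Your route is the paper's: Lemma \ref{lem:aff} turns the action into an $\cO(G)$-comodule structure $\alpha$ on $\cO(X)$, and coassociativity plus a coefficient comparison shows that the coefficients of $\alpha(f)$ span a finite-dimensional $G$-submodule containing $f$. The counit and coassociativity identities you write down are correct, as is the observation that $f=\sum_i g_i(e)f_i$ lies in $V$.

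The one step you flag as essential is, however, justified by the wrong linear independence. Writing $\alpha(f_i)=\sum_{\lambda}h_{i\lambda}\otimes e_{\lambda}$ along a basis $(e_\lambda)$ of $\cO(X)$ extending $f_1,\ldots,f_n$, your identity
\[
\sum_{i=1}^n m^*(g_i)\otimes f_i=\sum_{i=1}^n g_i\otimes\alpha(f_i)
\]
gives, for each basis vector $e_\lambda\notin V$, the relation $\sum_{i=1}^n g_i\otimes h_{i\lambda}=0$ in $\cO(G)\otimes\cO(G)$. To deduce $h_{i\lambda}=0$ for every $i$ you need the $g_i$ to be linearly independent in $\cO(G)$; independence of the $f_i$ plays no role at this point. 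Your recipe does not guarantee this: if, say, $\alpha(f)=g\otimes(f_1+f_2)$ is presented as $g\otimes f_1+g\otimes f_2$ with $f_1,f_2$ independent, the relation above only yields $g\otimes(h_{1\lambda}+h_{2\lambda})=0$, i.e.\ $h_{1\lambda}=-h_{2\lambda}$, and the argument does not show that $V=\langle f_1,f_2\rangle$ is stable (note that this $V$ is strictly larger than the right support of $\alpha(f)$, which is the subspace the argument is really about). The repair is immediate: take a presentation of minimal length, so that the $g_i$ and the $f_i$ are simultaneously linearly independent, or --- as the paper does --- expand $\alpha(f)$ along a fixed basis $(\varphi_i)$ of $\cO(G)$, which builds the required independence into the $\cO(G)$ side. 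With that change your proof is complete and coincides with the paper's.
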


\begin{proof}
The action map $a : G \times X \to X$ yields a homomorphism
of algebras $a^\# : \cO(X) \to \cO(G \times X)$. 
In view of Lemma \ref{lem:aff}, we may view $a^\#$ 
as a homomorphism $\cO(X) \to \cO(G) \otimes_k \cO(X)$. 
Choose a basis $(\varphi_i)$ of the vector space $\cO(G)$. 
Then for any $f \in \cO(X)$, there exists a family 
$(f_i)$ of elements of $\cO(X)$ such that $f_i \neq 0$ 
for only finitely many $i$'s, and
\[ a^\#(f) = \sum_i \varphi_i \otimes f_i. \]
Thus, we have identically
\[ \rho(g)(f) = \sum_i \varphi_i(g^{-1}) \, f_i. \]
Applying this to the action of $G$ on itself by left 
multiplication, we obtain the existence of families 
$(\gamma_{ij})_j$, $(\psi_{ij})_j$ of elements of $\cO(G)$ such 
that $\gamma_{ij} \neq 0$ for only finitely many $j$'s, and
\[ \varphi_i(h^{-1} g^{-1}) = 
\sum_j \gamma_{ij}(g^{-1}) \, \psi_{ij}(h^{-1}) \]
identically on $G \times G$. It follows that
\[ \rho(g) \rho(h) (f) = \sum_{i,j} \gamma_{ij}(g^{-1}) \,
\psi_{ij}(h^{-1}) \, f_i. \]
As a consequence, the span of the $f_i$'s in $\cO(G)$
is a finite-dimensional $G$-submodule, which contains 
$f = \sum_i \varphi_i(e) \, f_i$.
\end{proof}

\begin{proposition}\label{prop:emb}
Let $G$ be an algebraic group and $X$ an affine $G$-scheme 
of finite type. Then there exists a finite-dimensional 
$G$-module $V$ and a closed $G$-equivariant immersion 
$\iota : X \to V$.
\end{proposition}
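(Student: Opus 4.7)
The plan is to combine the standard reduction for affine schemes of finite type (pick generators of the coordinate ring) with the finite-dimensionality statement from Proposition \ref{prop:rep}, and then dualize to get the linear action on an ambient affine space.

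First, since $X$ is affine of finite type, the $k$-algebra $\cO(X)$ is finitely generated, say by $f_1, \ldots, f_n$. By Proposition \ref{prop:rep}, each $f_i$ lies in a finite-dimensional $G$-submodule $W_i \subseteq \cO(X)$. Then $W := W_1 + \cdots + W_n$ is again a finite-dimensional $G$-submodule of $\cO(X)$, and by construction it contains the chosen generators, so it generates $\cO(X)$ as a $k$-algebra.

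Next, I would set $V := W^*$, viewed as a finite-dimensional $G$-module via the contragredient representation, so that $W$ is canonically identified with $V^*$. The inclusion $W \hookrightarrow \cO(X)$ is a $G$-equivariant linear map and, by the universal property of the symmetric algebra, extends uniquely to a $k$-algebra homomorphism
\[
\Sym(W) \longrightarrow \cO(X),
\]
which is $G$-equivariant because $W \to \cO(X)$ is, and which is surjective because $W$ generates $\cO(X)$. By Example \ref{ex:rep}(ii), $\cO(V) \cong \Sym(V^*) = \Sym(W)$ as $G$-modules (and as algebras), so this is a surjective homomorphism of $G$-algebras $\cO(V) \twoheadrightarrow \cO(X)$. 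Taking $\Spec$, one obtains the desired closed immersion $\iota : X \hookrightarrow V$, which is $G$-equivariant by the equivalence of categories between affine schemes and $k$-algebras used in Example \ref{ex:rep}(ii).

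The only real point to verify is the compatibility between the $G$-action on $V = W^*$, the induced $G$-action on $\cO(V) = \Sym(W)$ via Example \ref{ex:rep}(ii), and the original $G$-action on $W \subseteq \cO(X)$; these match precisely because the recipe $\rho(g)(f) = f \circ a(g^{-1})$ applied to the linear action of $G$ on $V = W^*$ recovers the given representation on $W = V^*$. Beyond this bookkeeping, no further obstacle arises.
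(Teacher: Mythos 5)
Your proof is correct and follows essentially the same route as the paper: pick generators of $\cO(X)$, enlarge them to a finite-dimensional $G$-submodule $W$ via Proposition \ref{prop:rep}, and turn the resulting $G$-equivariant surjection $\Sym(W) \to \cO(X)$ into a closed equivariant immersion $X \to V$ with $V = W^*$. The paper leaves the dualization and the compatibility of actions implicit, while you spell them out; the mathematical content is identical.
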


\begin{proof}
We may choose finitely many generators $f_1, \ldots,f_n$
of the algebra $\cO(X)$. By Proposition \ref{prop:rep},
each $f_i$ is contained in some finite-dimensional 
$G$-submodule $W_i \subseteq \cO(X)$. Thus, 
$W := W_1 + \cdots + W_n$ is a finite-dimensional 
$G$-submodule of $\cO(X)$, which generates that algebra. 
This defines a surjective homomorphism
of algebras $\Sym(W) \to \cO(X)$, equivariant for the natural
action of $G$ on $\Sym(W)$. In turn, this yields the desired
closed equivariant immersion.
\end{proof}

Examples of linear representations arise from the action
of the stabilizer of a $k$-rational point on its infinitesimal
neighborhoods, which we now introduce.

\begin{example}\label{ex:inf}
Let $G$ be an algebraic group acting on a scheme $X$ via $a$
and let $Y \subseteq X$ be a closed subscheme.
For any non-negative integer $n$, consider the 
\textit{$n$th infinitesimal neighborhood $Y_{(n)}$ of $Y$ in $X$}; 
this is the closed subscheme of $X$ with ideal sheaf $\cI_Y^{n+1}$, 
where $\cI_Y \subseteq \cO_X$ denotes the ideal sheaf of $Y$. 
The subschemes $Y_{(n)}$ form an increasing sequence, starting with 
$Y_{(0)} = Y$. 

Next, assume that $G$ stabilizes $Y$. Then 
$a^{-1}(Y) = p_2^{-1}(Y)$, and hence
\[ a^{-1}(\cI_Y) \cO_{G \times X} = p_2^{-1}(\cI_Y) \cO_{G \times X}. \]
It follows that
\[ a^{-1}(\cI_Y^{n+1}) \cO_{G \times X} = 
p_2^{-1}(\cI_Y^{n+1}) \cO_{G \times X}. \]
Thus, $a^{-1}(Y_{(n)}) = p_2^{-1}(Y_{(n)})$, i.e., $G$ stabilizes 
$Y_{(n)}$ as well.

As a consequence, given a (say) locally noetherian $G$-scheme $X$ 
equipped with a $k$-rational point $x = \Spec(\cO_{X,x}/\fm_x)$, 
the algebraic group $\C_G(x)$ acts on each 
infinitesimal neighborhood $x_{(n)} = \Spec(\cO_{X,x}/\fm_x^{n+1})$, 
which is a finite scheme supported at $x$. This yields a linear 
representation $\rho_n$ of $G$ on $\cO_{X,x}/\fm_x^{n+1}$ 
by algebra automorphisms. In particular, $\C_G(x)$ acts linearly 
on $\fm_x/\fm_x^2$ and hence on the Zariski tangent space, 
$T_x(X) = (\fm_x/\fm_x^2)^*$.
\end{example}

Applying the above construction to the action of $G$ on itself 
by conjugation, which fixes the point $e$, we obtain a linear 
representation of $G$ in $\fg := T_e(G)$, called the
\textit{adjoint representation} and denoted by
\[ \Ad : G \longrightarrow \GL(\fg). \]
This yields in turn a linear map
\[ \ad := d\Ad_e : \fg \longrightarrow \End(\fg) \]
(where the right-hand side denotes the space of endomorphisms 
of the vector space $\fg$), and hence a bilinear map 
\[ [\, , \, ] : \fg \times \fg \longrightarrow \fg, \quad
(x,y) \longmapsto \ad(x)(y). \]
One readily checks that $[x,x] = 0$ identically; also, $[\, , \, ]$
satisfies the Jacobi identity (see e.g. \cite[II.4.4.5]{DG}). 
Thus, $(\fg,[\, , \, ])$ is a Lie algebra, called the 
\textit{Lie algebra of $G$}; we denote it by $\Lie(G)$.

Denote by $T_G$ the tangent sheaf of $G$, i.e., the sheaf of
derivations of $\cO_G$. By \cite[II.4.4.6]{DG}, we may also 
view $\Lie(G)$ as the Lie algebra $H^0(G,T_G)^G = \Der^G(\cO_G)$ 
consisting of those global derivations of $\cO_G$ 
that are invariant under the $G$-action via right 
multiplication; this induces an isomorphism
\[ T_G \cong \cO_G \otimes_k \Lie(G). \] 
We have $\dim(G) \leq \dim \Lie(G)$ with equality if and only
if $G$ is smooth, as follows from Proposition \ref{prop:smooth}.
Also, every homomorphism of algebraic groups $f : G \to H$ 
differentiates to a homomorphism of Lie algebras 
\[ \Lie(f) := df_{e_G} : \Lie(G) \longrightarrow \Lie(H). \]  
More generally, every action $a$ of $G$ on a scheme $X$ yields
a homomorphism of Lie algebras 
\[
\Lie(a) : \Lie(G) \longrightarrow H^0(X,T_X) = \Der(\cO_X) \] 
(see \cite[II.4.4]{DG}).

When $\car(k) = p > 0$, the $p$th power of any derivation is 
a derivation; this equips $\Lie(G) = \Der^G(\cO_G)$ with an
additional structure of $p$-\textit{Lie algebra}, also called
\textit{restricted Lie algebra} (see \cite[II.7.3]{DG}).
This structure is preserved by the above homomorphisms.

\subsection{The neutral component}
\label{subsec:tnc}

Recall that a scheme $X$ is \'etale (over $\Spec(k)$) 
if and only if its underlying topological space is discrete and 
the local rings of $X$ are finite separable extensions of $k$
(see e.g. \cite[I.4.6.1]{DG}).
In particular, every \'etale scheme is locally of finite type. 
Also, $X$ is \'etale if and only if the $k_s$-scheme
$X_{k_s}$ is constant; moreover, the assignment $X \mapsto X(k_s)$
yields an equivalence from the category of \'etale schemes
(and morphisms of schemes) to that of discrete topological
spaces equipped with a continuous action of the Galois group 
$\Gamma$ (and $\Gamma$-equivariant maps); see 
\cite[I.4.6.2, I.4.6.4]{DG}.

Next, let $X$ be a scheme, locally of finite type. 
By \cite[I.4.6.5]{DG}, there exists an \'etale scheme 
$\pi_0(X)$ and a morphism 
\[ \gamma = \gamma_X : X \longrightarrow \pi_0(X) \] 
such that every morphism of schemes $f: X \to Y$, 
where $Y$ is \'etale, factors uniquely through $\gamma$. 
Moreover, $\gamma$ is faithfully flat and its fibers are exactly 
the connected components of $X$. The formation of the
\textit{scheme of connected components} $\pi_0(X)$ commutes 
with field extensions in view of \cite[I.4.6.7]{DG}.

As a consequence, given a morphism of schemes
$f : X \to Y$, where $X$ and $Y$ are locally of finite type,
we obtain a commutative diagram 
\[
\xymatrix{
X \ar[r]^-f \ar[d]_{\gamma_X} 
& Y \ar[d]^{\gamma_Y} \\
\pi_0(X) \ar[r]^-{\pi_0(f)} & \pi_0(Y), \\}
\]
where $\pi_0(f)$ is uniquely determined. Applying this 
construction to the two projections $p_1 : X \times Y \to X$, 
$p_2: X \times Y \to Y$, we obtain a canonical morphism 
$\pi_0(X \times Y) \to \pi_0(X) \times \pi_0(Y)$,
which is in fact an isomorphism (see \cite[I.4.6.10]{DG}).
In particular, the formation of the scheme of connected components
commutes with finite products.

It follows easily that for any locally algebraic group scheme $G$, 
there is a unique group scheme structure on $\pi_0(G)$ such 
that $\gamma$ is a homomorphism. Moreover, given an action $a$ of $G$ 
on a scheme $X$, locally of finite type, we have a compatible 
action $\pi_0(a)$ of $\pi_0(G)$ on $\pi_0(X)$.

\begin{theorem}\label{thm:neut}
Let $G$ be a locally algebraic group and denote by $G^0$ 
the connected component of $e$ in $G$.

\begin{enumerate}

\item $G^0$ is the kernel of $\gamma : G \to \pi_0(G)$.

\item The formation of $G^0$ commutes with field extensions.

\item $G^0$ is a geometrically irreducible algebraic group.

\item The connected components of $G$ are irreducible, of finite
type and of the same dimension. 

\end{enumerate}

\end{theorem}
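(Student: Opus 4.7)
Parts (1) and (2) are formal consequences of the construction of $\gamma$ recalled just before the theorem. For (1), $\gamma$ is a group homomorphism whose set-theoretic fibers are the connected components of $G$, so its kernel $\gamma^{-1}(e_{\pi_0(G)})$ is precisely $G^0$. For (2), the formation of $\pi_0$ commutes with field extensions (recalled in the excerpt), and the kernel of a homomorphism, being the fiber over a $k$-rational point, commutes with any base change; hence $(G^0)_K = (G_K)^0$ for every field extension $K/k$.

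For (3), by (2) I may pass to $k = \bar{k}$, since both irreducibility and finite type descend under field extensions. The key step is the irreducibility of $G^0$: if $X$ and $Y$ are two irreducible components of $G^0$ passing through $e$, then the image of the multiplication morphism $X \times Y \to G^0$ is irreducible, and its closure is an irreducible closed subset containing $X$ (taking $y=e$) and $Y$ (taking $x=e$); maximality of $X$ and $Y$ as irreducible closed subsets forces $X = Y$. So a unique irreducible component passes through $e$, and by left-translation homogeneity the same holds at every point of $G^0$. The irreducible components of $G^0$, locally finite by noetherianity of local rings, are therefore pairwise disjoint closed subsets, each automatically open as complement of the union of the others. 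Connectedness of $G^0$ then leaves only one.

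Still working over $\bar{k}$, I would deduce that $G^0$ is of finite type as follows. Choose an affine open neighborhood $V$ of $e$ in $G^0$. The morphism $m : G \times G \to G$ is faithfully flat by Remark \ref{rem:action}, hence open, so $V \cdot V$ is an open subscheme of $G^0$, quasi-compact as the image of the quasi-compact scheme $V \times V$. By the irreducibility just proved, $V$ is dense in $G^0$, so for every closed point $g$ the open set $gV^{-1}$ meets $V$, giving $g \in V \cdot V$. Thus the complement of $V \cdot V$ in $G^0$ is a closed subset containing no closed point; as $G^0$ is Jacobson (being locally of finite type over a field), this complement must be empty, and $G^0 = V \cdot V$ is of finite type. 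Being of finite type then descends to $k$.

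For (4), any connected component $C$ of $G$ corresponds to a closed point $c$ of the étale scheme $\pi_0(G)$, whose residue field is a finite separable extension of $k$. Hence $C_{\bar{k}}$ is a finite disjoint union $\bigsqcup_i g_i G^0_{\bar{k}}$ of translates of $G^0_{\bar{k}}$, indexed by the Galois orbit of $c$. By (3), each translate is irreducible, of finite type, and of dimension $\dim G^0$; these properties of $C_{\bar{k}}$ descend to $C$. Since $C$ is connected, the Galois group permutes the irreducible components of $C_{\bar{k}}$ transitively, so the closed surjection $C_{\bar{k}} \to C$ (closed because the base change $\mathrm{Spec}(\bar{k}) \to \mathrm{Spec}(k)$ is integral) sends them all onto the same irreducible closed subset, which must be $C$. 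The principal obstacle in the whole proof is the irreducibility step in (3): extracting the global clopen decomposition of $G^0$ from the local statement "unique irreducible component through $e$" relies crucially on homogeneity under translations combined with the local noetherian finiteness of components through each point.
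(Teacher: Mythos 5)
Your proof is correct, and most of it runs parallel to the paper's: parts (1) and (2) are identical in substance, and for quasi-compactness of $G^0$ the paper shows that the multiplication $n : U \times U \to G^0$ is faithfully flat by testing surjectivity on $K$-points via irreducibility of $G^0_K$, where you instead work with closed points over $\bar{k}$ and invoke the Jacobson property — the same idea in different clothing. Likewise in (4) the paper base-changes to a finite normal extension of the residue field $\kappa(x)$ and uses that $X_K \to X$ is finite, flat, open and closed, whereas you pass to $\bar{k}$ and use closedness of the integral base change together with transitivity of the Galois action on the components of $C_{\bar{k}}$; both are sound. The genuine divergence is the key irreducibility step in (3): the paper deduces it from Proposition \ref{prop:smooth}, namely that over $\bar{k}$ the reduced group $(G^0)_\red$ is smooth (homogeneity plus generic reducedness), hence locally irreducible, hence irreducible since connected. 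You instead run the classical argument that two irreducible components $X$, $Y$ through $e$ are both contained in the irreducible closed set $\overline{m(X\times Y)}$, forcing $X=Y$, and then propagate by translation to all closed points. Your route is more elementary in that it avoids the smoothness of the reduction (a Cartier-flavoured input), at the price of the bookkeeping you correctly flag: disjointness of components must be checked at non-closed points too (which reduces to closed points since a non-empty intersection of two components is closed and hence contains a closed point), and one needs local finiteness of the components to conclude that each is open. Both approaches yield complete proofs.
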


\begin{proof}
(1) This holds as the fibers of $\gamma$ are the connected components
of $G$.

(2) This follows from the fact that the formation of $\gamma$ 
commutes with field extensions.

(3) Consider first the case of an algebraically closed field $k$. Then 
the reduced neutral component $G^0_\red$ is smooth by Proposition 
\ref{prop:smooth}, and hence locally irreducible. Since $G^0_\red$ 
is connected, it is irreducible. 

Returning to an arbitrary ground field, $G^0$ is geometrically
irreducible by (2) and the above step. We now show that $G^0$
is of finite type. Choose a non-empty open subscheme of finite 
type $U \subseteq G^0$; then $U$ is dense in $G^0$. Consider 
the multiplication map of $G^0$, and its pull-back
\[ n : U \times U \longrightarrow G^0. \]
We claim that $n$ is faithfully flat.

Indeed, $n$ is flat by Remark \ref{rem:action}. To show that
$n$ is surjective, let $g \in G^0(K)$ for some field extension
$K$ of $k$. Then $U_K \cap g \, i( U_K)$ is non-empty, since
$G^0_K$ is irreducible. Thus, there exists a field extension
$L$ of $k$ and $x,y \in U(L)$ such that $g = x y^{-1}$.
This yields the claim. 

By that claim and the quasi-compactness of $U \times U$,
we see that $G^0$ is quasi-compact as well. But $G^0$  is also 
locally of finite type; hence it is of finite type.

(4) Let $X \subseteq G$ be a connected component. Since $G$
is locally of finite type, we may choose a closed point $x \in X$;
then the residue field $\kappa(x)$ is a finite extension of $k$.
Thus, we may choose a field extension $K$ of $\kappa(x)$,
which is finite and stable under $\Aut_k(\overline{\kappa(x)})$.
The structure map $\pi : X_K \to X$ is finite and faithfully flat, 
hence open and closed; moreover, every point $x'$ of $\pi^{-1}(x)$ 
is $K$-rational (as $\kappa(x')$ is a quotient field of 
$K \otimes_k \kappa(x)$). Thus, the fiber of $\gamma_K$
at $x'$ is the translate $x' G^0_K$ (since 
$x'^{-1} \gamma_K^{-1} \gamma_K(x')$ is a connected component
of $G_K$ and contains $e$). As a consequence, 
$\pi(x' G^0_K)$ is irreducible, open and closed in $G$, and
contains $\pi(x') = x$; so $\pi(x' G^0_K) = X$. This shows
that $X$ is irreducible of dimension $\dim(G^0)$. To check that
$X$ is of finite type, observe that 
$X_K = \bigcup_{x' \in \pi^{-1}(x)} \, x' G^0_K$ is of finite
type, and apply descent theory (see \cite[IV.2.7.1]{EGA}).

\end{proof}

With the notation and assumptions of the above theorem,
$G^0$ is called the \textit{neutral component} of $G$.
Note that $G$ is equidimensional of dimension $\dim(G^0)$.

\begin{remarks}\label{rem:neut}
(i) Let $G$ be a locally algebraic group acting on a scheme
$X$, locally of finite type. If $k$ is separably closed, then
every connected component of $X$ is stable by $G^0$.

(ii) A locally algebraic group $G$ is algebraic if and only if 
$\pi_0(G)$ is finite.

(iii) By \cite[II.5.1.8]{DG}, the category of \'etale group
schemes is equivalent to that of discrete topological groups 
equipped with a continuous action of $\Gamma$ by group automorphisms, 
via the assignment $G \mapsto G(k_s)$. Under this equivalence,
the finite \'etale group schemes correspond to the (abstract) 
finite groups equipped with a $\Gamma$-action by group automorphisms.
\end{remarks}

These results reduce the structure of locally algebraic groups 
to that of algebraic groups; we will concentrate on the latter 
in the sequel.

\subsection{Reduced subschemes}
\label{subsec:rs}

Recall that every scheme $X$ has a largest reduced subscheme $X_\red$; 
moreover, $X_\red$ is closed in $X$ and has the same underlying
topological space. Every morphism of schemes 
$f : X \to Y$ sends $X_\red$ to $Y_\red$. 

\begin{proposition}\label{prop:norm}
Let $G$ be a smooth algebraic group acting on a scheme of finite 
type $X$.

\begin{enumerate}

\item $X_\red$ is stable by $G$.

\item Let $\eta : \tilde{X} \to X_\red$ denote the normalization.
Then there is a unique action of $G$ on $\tilde{X}$
such that $\eta$ is equivariant.

\item When $k$ is separably closed, every irreducible component
of $X_\red$ is stable by $G^0$.

\end{enumerate}

\end{proposition}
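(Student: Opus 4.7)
For (1), the plan is to establish that $G \times X_\red$ is reduced, after which the claim is immediate. Smoothness of $G \to \Spec(k)$ is stable under base change, so the projection $G \times X_\red \to X_\red$ is smooth, hence flat with geometrically reduced fibers. A flat morphism with geometrically reduced fibers over a reduced base has reduced total space, so $G \times X_\red$ is reduced. The composition $G \times X_\red \hookrightarrow G \times X \to X$ is then a morphism from a reduced scheme to $X$, and any such morphism factors uniquely through the closed subscheme $X_\red$, yielding the desired restricted action.

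For (2), the same principle applies with reducedness replaced by normality: since smooth morphisms preserve normality, the smoothness of the base change $G \times \tilde{X} \to \tilde{X}$ shows that $G \times \tilde{X}$ is normal (and locally Noetherian, so it decomposes as a disjoint union of its connected components, each of which is integral). Consider the composition
\[
b \,:\, G \times \tilde{X} \stackrel{\id \times \eta}{\longrightarrow} G \times X_\red \stackrel{a}{\longrightarrow} X_\red,
\]
where $a$ is the restricted action from (1). Since $\tilde{X}$ is the disjoint union of the normalizations $\tilde{X}_i$ of the irreducible components $X_i$ of $X_\red$, and since every connected component of $G \times \tilde{X}$ has irreducible image in $X_\red$ (necessarily contained in a single $X_i$), the universal property of normalization produces a unique lift $\tilde{a} : G \times \tilde{X} \to \tilde{X}$ satisfying $\eta \circ \tilde{a} = b$. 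To check that $\tilde{a}$ is an action, apply the same uniqueness principle to the associativity and identity diagrams: these commute after postcomposition with $\eta$, and since $G \times G \times \tilde{X}$ is likewise normal, the diagrams lift uniquely, forcing commutativity at the level of $\tilde{X}$.

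For (3), combine (2) with Remark \ref{rem:neut}(i). Since $\tilde{X}$ is normal of finite type, its connected components coincide with its irreducible components, corresponding bijectively under $\eta$ to the irreducible components of $X_\red$. Applying the remark to the $G$-scheme $\tilde{X}$, when $k$ is separably closed each connected component of $\tilde{X}$ is stable under $G^0$; pushing forward by $\eta$ transfers this stability statement to each irreducible component of $X_\red$.

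The step I expect to require the most care is the action-axiom verification in (2): producing the lift $\tilde{a}$ is a direct application of the universal property of normalization, but confirming that $\tilde{a}$ is genuinely an action (rather than just a morphism factoring $b$) relies on the uniqueness of such lifts together with the fact that intermediate products like $G \times G \times \tilde{X}$ remain normal. Keeping track of these normality and finiteness hypotheses throughout is the main technical point.
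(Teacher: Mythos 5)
Your proof is correct and follows essentially the same route as the paper's: reducedness (resp. normality) of $G \times X_\red$ (resp. $G \times \tilde{X}$) via the smoothness, i.e. geometric reducedness and normality, of $G$; the universal property of normalization to produce and uniquely characterize $\tilde{a}$ and to verify the action axioms; and Remark \ref{rem:neut}(i) applied to $\tilde{X}$ for part (3). Note that, exactly as in the paper's own argument, what you actually obtain in (3) is stability under $G^0$ rather than under all of $G$ --- this is what the cited remark delivers and is presumably what is intended, since $\pi_0(G)$ can genuinely permute the irreducible components.
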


\begin{proof}
(1) As $G$ is geometrically reduced, $G \times X_\red$ is reduced
by \cite[IV.6.8.5]{EGA}. Thus, $G \times X_\red = (G \times X)_\red$
is sent to $X_\red$ by $a$.

(2) Likewise, as $G$ is geometrically normal, $G \times \tilde{X}$
is normal by \cite[IV.6.8.5]{EGA} again. So the map
$\id \times \eta : G \times \tilde{X} \to G \times X$ is the 
normalization. This yields a morphism 
$\tilde{a} : G \times \tilde{X} \to \tilde{X}$ such that 
the square
\begin{equation}\label{eqn:norm}
\xymatrix{
G \times \tilde{X} \ar[r]^-{\tilde{a}} \ar[d]_{\id \times \eta} 
& \tilde{X} \ar[d]^{\eta} \\
G \times X_\red \ar[r]^-{a} & X_\red, \\}
\end{equation}
commutes, where $a$ denotes the $G$-action.
Since $\eta$ induces an isomorphism on a dense open subscheme 
of $\tilde{X}$, we have $\tilde{a}(e,\tilde{x})= \tilde{x}$
identically on $\tilde{X}$. Likewise,
$\tilde{a}(g,\tilde{a}(h,\tilde{x})) = \tilde{a}(gh,\tilde{x})$
identically on $G \times G \times \tilde{X}$, i.e., 
$\tilde{a}$ is an action.

(3) Let $Y$ be an irreducible component of $X_\red$. Then the
normalization $\tilde{Y}$ is a connected component of $\tilde{X}$,
and hence is stable by $G^0$ (Remark \ref{rem:neut} (i)). Using 
the surjectivity of the normalization map $\tilde{Y} \to Y$ 
and the commutative square (\ref{eqn:norm}), it follows that
$Y$ is stable by $G^0$.
\end{proof}

When the field $k$ is perfect, the product of any two 
reduced schemes is reduced (see \cite[I.2.4.13]{DG}). 
It follows that the natural map 
$(X \times Y)_\red \to X_\red \times Y_\red$ is an isomorphism;
in particular, the formation of $X_\red$ commutes with field extensions.
This implies easily the following:

\begin{proposition}\label{prop:red}
Let $G$ be a group scheme over a perfect field $k$. 

\begin{enumerate}

\item Any action of $G$ on a scheme $X$ restricts to
an action of the closed subgroup scheme $G_\red$ on $X_\red$.

\item If $G$ is locally algebraic, then $G_\red$ is the 
largest smooth subgroup scheme of $G$.

\end{enumerate}

\end{proposition}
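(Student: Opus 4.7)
The plan is to derive everything from the single fact recalled just before the statement: over a perfect field $k$, the product of two reduced schemes is reduced, so $(X \times Y)_\red = X_\red \times Y_\red$, and the formation of $X_\red$ commutes with field extensions.

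For (1), I would first check that $G_\red$ is a closed subgroup scheme of $G$. The multiplication $m : G \times G \to G$ sends the reduced closed subscheme $(G \times G)_\red = G_\red \times G_\red$ into $G_\red$ (any morphism sends the maximal reduced subscheme into the maximal reduced subscheme of the target). The same applies to the inverse $i$, and the neutral element $e$ is automatically a $k$-rational point of $G_\red$. Thus $m$ and $i$ restrict to morphisms $G_\red \times G_\red \to G_\red$ and $G_\red \to G_\red$, satisfying the axioms of Definition \ref{def:gs} because they do so on $G$. Given an action $a : G \times X \to X$, the same reasoning applies: $a$ sends $(G \times X)_\red = G_\red \times X_\red$ into $X_\red$, yielding a morphism $a' : G_\red \times X_\red \to X_\red$. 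The associativity and neutrality diagrams for $a'$ are obtained by restriction from those for $a$, using again that products of reduced subschemes are reduced, so $a'$ is an action.

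For (2), smoothness of $G_\red$ follows from Proposition \ref{prop:smooth}. Indeed $G_\red$ is locally of finite type (as a closed subscheme of $G$), hence a locally algebraic group by part (1). It is reduced by construction, and since $k$ is perfect, $(G_\red)_{\bar{k}} = (G_{\bar{k}})_\red$ is reduced, so $G_\red$ is geometrically reduced; thus it is smooth. For maximality, let $H \subseteq G$ be any smooth subgroup scheme. Then $H$ is reduced, and the inclusion $H \hookrightarrow G$ factors through the largest reduced closed subscheme $G_\red$, giving $H \subseteq G_\red$ as subgroup schemes.

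The arguments are formal once the perfectness hypothesis is used to identify $(X \times Y)_\red$ with $X_\red \times Y_\red$; the only place where a nontrivial input is invoked is the application of Proposition \ref{prop:smooth}, and even there the hard direction (reducedness implies smoothness for locally algebraic groups) has already been proved. There is no real obstacle; the main point to be careful about is to verify that the action diagrams actually restrict, which is where the equality $(G \times X)_\red = G_\red \times X_\red$ (and not merely an inclusion) is essential.
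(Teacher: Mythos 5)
Your proof is correct and follows exactly the route the paper intends: the paper gives no explicit proof, stating only that the proposition follows ``easily'' from the preceding remark that $(X\times Y)_\red = X_\red\times Y_\red$ over a perfect field, and your write-up fills in precisely that argument (restriction of the structure morphisms, then Proposition \ref{prop:smooth} plus the universal property of $G_\red$ for part (2)).
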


Note that $G_\red$ is not necessarily normal in $G$, as shown
by the following:

\begin{example}\label{ex:nonnorm}
Consider the $\bG_m$-action on $\bG_a$ by multiplication. 
If $\car(k) = p$, then every subgroup scheme
$\alpha_{p^n} = V(x^{p^n}) \subset \bG_a$ is normalized
by this action (since $x^{p^n}$ is homogeneous), but not centralized
(since $\bG_m$ acts non-trivially on 
$\cO(\alpha_{p^n}) = k[x]/(x^{p^n})$). Thus, we may form the 
corresponding semi-direct product 
$G := \bG_m \ltimes \alpha_{p^n}$. Then $G$ is an algebraic group;
moreover, $G_\red = \bG_m$ is not normal in $G$ by Remark 
\ref{rem:sd} (i).

To obtain a similar example with $G$ finite, just replace 
$\bG_m$ with its subgroup scheme $\mu_\ell$ of $\ell$-th
roots of unity, where $\ell$ is prime to $p$.
\end{example}
 
We now obtain a structure result for finite group schemes: 

\begin{proposition}\label{prop:finite}
Let $G$ be a finite group scheme over a perfect field $k$. 
Then the multiplication map induces an isomorphism
$G_\red \ltimes G^0 \stackrel{\cong}{\longrightarrow} G$.
\end{proposition}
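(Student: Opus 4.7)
The plan is to show that the inclusion $G_\red \hookrightarrow G$ composed with the projection $\gamma : G \to \pi_0(G)$ of Theorem \ref{thm:neut} is an isomorphism of group schemes. Once this is established, Remark \ref{rem:sd}(ii) applied to the normal closed subgroup scheme $N := G^0$ and the closed subgroup scheme $H := G_\red$ (using the retraction $r : G \xrightarrow{\gamma} \pi_0(G) \cong G_\red$, which is a homomorphism with kernel $G^0$) yields the claimed isomorphism $G_\red \ltimes G^0 \xrightarrow{\cong} G$. Note that $G_\red$ is indeed a subgroup scheme by Proposition \ref{prop:red}, and $G_\red$ normalizes $G^0$ because $G^0$ is normal in $G$.

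First I would check that $G_\red$ and $\pi_0(G)$ are both finite \'etale group schemes. For $G_\red$ this follows from Proposition \ref{prop:red}(2): $G_\red$ is smooth, and it is finite as a closed subscheme of $G$, hence \'etale. For $\pi_0(G)$ this is part of the general construction recalled in \S \ref{subsec:tnc}. The homomorphism $\varphi := \gamma \vert_{G_\red} : G_\red \to \pi_0(G)$ is thus a morphism between finite \'etale $k$-schemes, so to prove it is an isomorphism it suffices to check this after the base change to $\bar{k}$, and there it suffices to check bijectivity on $\bar{k}$-points.

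The kernel of $\varphi$ is $G_\red \cap G^0 = (G^0)_\red$, using that $k$ is perfect (so the formation of $(-)_\red$ commutes with the field extension to $\bar{k}$, and with closed immersions). Now $G^0$ is a connected finite $k$-scheme, so $G^0_{\bar{k}}$ is still connected (Theorem \ref{thm:neut}(2)--(3)), hence its underlying space is a single point, which must be $e$. Consequently $(G^0)_\red = \{e\}$ and $\varphi$ has trivial kernel. For surjectivity, every $\bar{k}$-point $\xi$ of $\pi_0(G)_{\bar{k}} = \pi_0(G_{\bar{k}})$ is the image of some $\bar{k}$-point of $G_{\bar{k}}$ (as $\gamma$ is surjective), and every $\bar{k}$-point of $G_{\bar{k}}$ factors through $(G_{\bar{k}})_\red = (G_\red)_{\bar{k}}$. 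Therefore $\varphi_{\bar{k}}$ is bijective on $\bar{k}$-points, hence $\varphi$ is an isomorphism.

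I expect the only subtle point to be the identification $G_\red \cap G^0 = (G^0)_\red$ and the fact that $G^0$ is \emph{geometrically} connected, which is precisely where perfectness of $k$ is used (via Theorem \ref{thm:neut} and the behavior of $(-)_\red$ under base change). Everything else is formal manipulation of the semidirect product structure once a section of $\gamma$ has been produced.
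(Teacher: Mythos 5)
Your proposal is correct and follows the same overall strategy as the paper: identify $G_\red$ with $\pi_0(G)$ via $\gamma$, obtain a homomorphic retraction $G \to G_\red$ with kernel $G^0$, and conclude by Remark \ref{rem:sd}~(ii). The only real difference lies in how the isomorphism $G_\red \stackrel{\cong}{\to} \pi_0(G)$ is established. The paper proves the stronger, purely scheme-theoretic claim that $\gamma_X : X \to \pi_0(X)$ restricts to an isomorphism $X_\red \cong \pi_0(X)$ for \emph{any} finite scheme $X$ over a perfect field, by reducing to $X = \Spec(R)$ with $R$ local artinian and lifting the residue field to a subfield of $R$ (this is where perfectness enters). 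You instead exploit the group structure: $G_\red$ is smooth (Proposition \ref{prop:red}) and finite, hence \'etale, so the homomorphism $\gamma\vert_{G_\red}$ of finite \'etale group schemes is an isomorphism once its kernel $G_\red \cap G^0$ is shown trivial and surjectivity on $\bar{k}$-points is checked. Both routes are sound; the paper's lemma is more general (no group structure needed) and reuses the residue-field-lifting fact, while yours is arguably quicker given that Proposition \ref{prop:red} and the \'etale formalism of \S\ref{subsec:tnc} are already available. One small point worth tightening: the identity $G_\red \cap G^0 = (G^0)_\red$ is best justified by observing that $G_\red \cap G^0$ is a closed subscheme of the \'etale scheme $G_\red$, hence reduced and supported at the single point underlying $G^0$, which forces it to equal $\Spec(\kappa(e)) = \Spec(k)$; perfectness alone is not quite the reason.
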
 

\begin{proof}
Consider, more generally, a finite scheme $X$. 
We claim that the morphism
$\gamma : X \to \pi_0(X)$ restricts to an isomorphism
$X_\red \cong \pi_0(X)$. To check this, we may assume that
$X$ is irreducible; then $X = \Spec(R)$ for some local 
artinian $k$-algebra $R$ with residue field $K$ being a 
finite extension of $k$. Since $k$ is perfect, $K$ lifts 
uniquely to a subfield of $R$, which is clearly 
the largest subfield of that algebra. Then $\gamma_X$ 
is the associated morphism $\Spec(R) \to \Spec(K)$; 
this yields our claim.

Returning to our finite group scheme $G$, we obtain 
an isomorphism of group schemes 
$G_\red \stackrel{\cong}{\to} \pi_0(G)$ via $\gamma$. 
This yields in turn a retraction of $G$ to $G_\red$ 
with kernel $G^0$. So the desired statement follows 
from Remark \ref{rem:sd} (ii). 
\end{proof}
 
With the notation and assumptions of the above proposition,
$G_\red$ is a finite \'etale group scheme, which corresponds
to the finite group $G(\bar{k})$ equipped with the action
of the Galois group $\Gamma$. Also, $G^0$ is finite and
its underlying topological space is just the point $e$; 
such a group scheme is called \textit{infinitesimal}.
Examples of infinitesimal group schemes include $\alpha_{p^n}$ 
and $\mu_{p^n}$ in characteristic $p > 0$. When $\car(k) = 0$,
every infinitesimal group scheme is trivial by Theorem 
\ref{thm:car}.

Proposition \ref{prop:finite} can be extended to the setting
of algebraic groups over perfect fields, see Corollary
\ref{cor:red}. But it fails over any imperfect field,
as shown by the following example of a finite group
scheme $G$ such that $G_\red$ is not a subgroup scheme:

\begin{example}\label{ex:red}
Let $k$ be an imperfect field, i.e., $\car(k) = p > 0$ 
and $k \neq k^p$. Choose $a \in k \setminus k^p$ and 
consider the finite subgroup scheme $G \subset \bG_a$ 
defined as the kernel of the additive polynomial 
$x^{p^2} - a x^p$. Then $G_\red = V(x(x^{p(p-1)} - a))$ 
is smooth at $0$ but not everywhere, since 
$x^{p(p-1)} - a = (x^{p-1} - a^{1/p})^p$ over $k_i$.
So $G_\red$ admits no group scheme structure in view of 
Proposition \ref{prop:smooth}.
\end{example}

\subsection{Torsors}
\label{subsec:t}

\begin{definition}\label{def:tors}
Let $X$ be a scheme equipped with an action $a$ of a group scheme 
$G$, and $f : X \to Y$ a $G$-invariant morphism of schemes. 

We say that $f$ is a \textit{$G$-torsor over $Y$}
(or a \textit{principal $G$-bundle over $Y$})
if it satisfies the following conditions:

\begin{enumerate}

\item $f$ is faithfully flat and quasi-compact.

\item The square
\begin{equation}\label{eqn:tors}
\xymatrix{
G \times X \ar[r]^-a \ar[d]_{p_2} 
& X \ar[d]^{f} \\
X \ar[r]^-f & Y \\}
\end{equation}
is cartesian.

\end{enumerate}

\end{definition}

\begin{remarks}\label{rem:tors}
(i) The condition (2) may be rephrased as follows: for any
scheme $S$ and any points $x,y \in X(S)$, we have $f(x) = f(y)$
if and only if there exists $g \in G(S)$ such that $y = g \cdot x$;
moreover, such a $g$ is unique. This is the scheme-theoretic version 
of the notion of principal bundle in topology.

(ii)  Consider a group scheme $G$ and a scheme $Y$. Let
$G$ act on $G \times Y$ via left multiplication
on itself. Then the projection $p_2: G \times Y \to Y$ is
a $G$-torsor, called the \textit{trivial $G$-torsor over $Y$}. 

(iii) One easily checks that a $G$-torsor $f: X \to Y$ is trivial 
if and only if $f$ has a section. In particular, a $G$-torsor 
$X$ over $\Spec(k)$ is trivial if and only if $X$ has 
a $k$-rational point. When $G$ is algebraic, this holds of course 
if $k$ is algebraically closed, but generally not over 
an arbitrary field $k$. Assume for instance that $k$ 
contains some element $t$ which is not a square, and consider 
the scheme $X := V(x^2 - t) \subset \bA^1$. 
Then $X$ is normalized by the action of $\mu_2$ on $\bA^1$ 
via multiplication; this yields a non-trivial $\mu_2$-torsor 
over $\Spec(k)$. 

(iv) For any $G$-torsor $f : X \to Y$, the topology of $Y$
is the quotient of the topology of $X$ by the equivalence
relation defined by $f$ (see \cite[IV.2.3.12]{EGA}). As a
consequence, the assignment $Z \mapsto f^{-1}(Z)$ yields
a bijection from the open (resp.~closed) subschemes of 
$Y$ to the open (resp.~closed) $G$-stable subschemes of~$X$. 
\end{remarks}

\begin{definition}\label{def:cat}
Let $G$ be a group scheme acting on a scheme $X$. 
A morphism of schemes $f : X \to Y$ is a 
\textit{categorical quotient} of $X$ by $G$, if $f$ is 
$G$-invariant and every $G$-invariant morphism of schemes 
$\varphi : X \to Z$ factors uniquely through $f$.
\end{definition}

In view of its universal property, a categorical 
quotient is unique up to unique isomorphism.

\begin{proposition}\label{prop:cat}
Let $G$ be an algebraic group, and $f : X \to Y$ be a $G$-torsor. 
Then $f$ is a categorical quotient by $G$.
\end{proposition}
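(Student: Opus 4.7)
The plan is to reduce the categorical quotient property to faithfully flat descent for morphisms, using the cartesian square in the definition of a torsor to identify the descent datum with $G$-invariance.

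First I would handle uniqueness. Suppose $\psi_1, \psi_2 : Y \to Z$ both satisfy $\psi_i \circ f = \varphi$. Since $f$ is faithfully flat and quasi-compact, it is an epimorphism in the category of schemes (for any $y \in Y$, one may pick a point of $X$ above $y$ and compare the induced maps on local rings using faithful flatness, and on the topological side the topology of $Y$ is the quotient topology by Remark \ref{rem:tors} (iv)); hence $\psi_1 = \psi_2$. This step is essentially formal and should be the easy part.

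For existence, the key input is fpqc descent for morphisms: given a faithfully flat quasi-compact morphism $f : X \to Y$ and a scheme $Z$, a morphism $\varphi : X \to Z$ factors (necessarily uniquely) through $f$ if and only if the two pullbacks $\varphi \circ \pi_1, \varphi \circ \pi_2 : X \times_Y X \to Z$ coincide, where $\pi_1, \pi_2$ are the two projections. Now the defining cartesian square (\ref{eqn:tors}) identifies $X \times_Y X$ with $G \times X$, the first projection $\pi_1$ corresponding to $p_2 : G \times X \to X$ and the second projection $\pi_2$ corresponding to the action $a : G \times X \to X$. Consequently the descent criterion $\varphi \circ \pi_1 = \varphi \circ \pi_2$ translates precisely into $\varphi \circ p_2 = \varphi \circ a$, i.e., the $G$-invariance of $\varphi$. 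Thus every $G$-invariant $\varphi$ factors uniquely through $f$, which is the required universal property.

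The main obstacle is invoking fpqc descent in a framework where the authors have declared they wish to avoid explicit use of the fpqc topology. In practice one would cite a concrete form of this descent (for instance \cite[IV.2.7.1]{EGA}, which is already quoted in the paper) and verify the gluing condition by hand: the local criterion is that if $x, y \in X(S)$ have $f(x) = f(y)$, then $\varphi(x) = \varphi(y)$, which is immediate from the torsor property (such $x, y$ differ by a unique element of $G(S)$, and $\varphi$ is $G$-invariant). The passage from this pointwise statement to the existence of an actual scheme morphism $\bar\varphi : Y \to Z$ is where faithfully flat descent is genuinely used, first to glue locally on $Y$ (via a quasi-compact open cover trivializing $f$ after an fpqc base change) and then to check that the resulting local morphisms agree on overlaps.
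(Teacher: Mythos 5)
Your proof is correct, but it follows a genuinely different route from the one in the text. You invoke fpqc descent of morphisms (the fact that $\Hom(-,Z)$ is a sheaf for the fpqc topology), observe that the cartesian square (\ref{eqn:tors}) identifies $X \times_Y X$ with $G \times X$ in such a way that the two projections become $a$ and $p_2$, and conclude that the descent condition is literally $G$-invariance. That is the standard modern argument, it works uniformly for an arbitrary target $Z$, and it delivers uniqueness for free from the epimorphism property of faithfully flat quasi-compact morphisms. The cost is exactly the one you identify: the paper has announced that it will not make explicit use of fpqc sheaves, and descent of morphisms along an fpqc cover is a nontrivial theorem (the right reference is SGA~1, VIII.5.2, rather than \cite[IV.2.7.1]{EGA}, which concerns descent of \emph{properties} of morphisms). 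The text instead first reduces to affine $Z$ -- using Remark \ref{rem:tors}~(iv) to match open $G$-stable subschemes of $X$ with open subschemes of $Y$ -- and then only needs the much more elementary inputs of faithfully flat ring maps and flat base change: it shows directly that $f^\# : \cO_Y \to f_*(\cO_X)^G$ is an isomorphism by computing $f^*(f_*(\cO_X)) \cong \cO(G) \otimes_k \cO_X$ from the torsor square and Lemma \ref{lem:aff}, and taking $G$-invariants. That identification $\cO(G/H) \cong \cO(G)^H$ is moreover reused later (Remark \ref{rem:quot}~(ii)), so the paper's computation earns its keep. In short: your argument is a clean top-down application of a big theorem; the paper's is a bottom-up verification using only the tools it has set up. Both are valid, and indeed the proof of fpqc descent of morphisms ultimately passes through the same affine reduction you sketch in your last paragraph.
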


\begin{proof}
Consider a $G$-invariant morphism $\varphi : X \to Z$.
Then $\varphi^{-1}(U)$ is an open $G$-stable subscheme
for any open subscheme $U$ of $Z$. Thus,
$f$ restricts to a $G$-torsor $f_U : \varphi^{-1}(U) \to V$
for some open subscheme $V = V(U)$ of $Y$.
To show that $\varphi$ factors uniquely through $f$, 
it suffices to show that $\varphi_U : \varphi^{-1}(U) \to U$
factors uniquely through $f_U$ for any affine $U$. Thus,
we may assume that $Z$ is affine. Then $\varphi$ corresponds
to a $G$-invariant homomorphism $\cO(Z) \to \cO(X)$, i.e.,
to a homomorphism $\cO(Z) \to \cO(X)^G$ (the subalgebra of
$G$-invariants in $\cO(X)$). So it suffices to check that
the map 
\[ f^\# : \cO_Y \longrightarrow f_*(\cO_X)^G \] 
is an isomorphism.

Since $f$ is faithfully flat, it suffices in turn to show
that the natural map 
\[ \cO_X = f^*(\cO_Y) \to f^*(f_*(\cO_X)^G) \]
is an isomorphism. We have canonical isomorphisms 
\[ f^*(f_*(\cO_X)) \cong p_{2*}(a^*(\cO_X)) 
\cong p_{2*}(\cO_{G \times X}) \cong \cO(G) \otimes_k \cO_X,\]
where the first isomorphism follows from the cartesian
square (\ref{eqn:tors}) and the faithful flatness of $f$, 
and the third isomorphism follows from Lemma \ref{lem:aff}. 
Moreover, the composition of these isomorphisms identifies 
the $G$-action on $f^*(f_*(\cO_X))$ with that on 
$\cO(G) \otimes_k \cO_X$ via left multiplication on $\cO(G)$. 
Thus, taking $G$-invariants yields the desired isomorphism.
\end{proof}

\begin{proposition}\label{prop:des}
Let $f: X \to Y$ be a $G$-torsor. 

\begin{enumerate}

\item The morphism $f$ is finite (resp. affine, proper, 
of finite presentation) if and only if so is the scheme $G$. 

\item When $Y$ is of finite type, $f$ is smooth if and only 
if $G$ is smooth.

\end{enumerate}

\end{proposition}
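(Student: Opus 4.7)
My plan is to prove both parts uniformly via faithfully flat descent, exploiting the cartesian square~(\ref{eqn:tors}) in two different ways. The square says that $p_2 : G \times X \to X$ is simultaneously (i) the pullback of $f : X \to Y$ along $f$ itself, and (ii) the pullback of $G \to \Spec(k)$ along $X \to \Spec(k)$. So properties of $G$ and of $f$ are both encoded in the properties of the same morphism $p_2$, and one can shuttle between them.

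For the forward direction, suppose $G$ has one of the listed properties $P$ (finite, affine, proper, of finite presentation, or smooth). Since $P$ is stable under base change for morphisms, viewpoint (ii) shows that $p_2$ has $P$. But $f$ is faithfully flat and quasi-compact by Definition~\ref{def:tors}, and each of the properties in our list is fpqc-local on the target (this is standard, see e.g.\ \cite[IV.2.7.1]{EGA} for finite presentation and the analogous statements for the other properties). Viewpoint (i) then lets us descend $P$ from $p_2$ to $f$.

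For the reverse direction, suppose $f$ has property $P$. By viewpoint (i) and stability under base change, $p_2$ has $P$. We may assume $X$ is non-empty (otherwise $Y$ is empty by the surjectivity of $f$ and there is nothing to prove), so pick any point $x \in X$ and restrict $p_2$ to the fiber over $x$: this is the base change $G_{\kappa(x)} \to \Spec(\kappa(x))$, which therefore has $P$. Finally, $\Spec(\kappa(x)) \to \Spec(k)$ is faithfully flat (as $\kappa(x)$ is a field extension of $k$) and quasi-compact (affine), so another application of fpqc descent transfers $P$ from $G_{\kappa(x)}$ to $G$. For part~(2), the assumption that $Y$ is of finite type (combined with the already-proved equivalence for finite presentation from part~(1)) ensures that $X$ is of finite type, so smoothness of $f$ and of $G$ are both meaningful.

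The only real technical input is the catalog of properties that are stable under base change and fpqc-local on the target; once this is granted, everything is a bookkeeping consequence of the cartesian square. The main obstacle, if any, is simply citing the correct descent statement for each property, in particular for ``proper'' (which needs separation, universal closedness, and finite type descended separately) and for ``smooth'' (where fpqc descent of smoothness follows from fpqc descent of flatness and of being locally of finite presentation, together with the fibral criterion).
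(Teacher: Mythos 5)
Your argument is correct and is exactly the paper's: the paper proves both parts by combining the cartesian square~(\ref{eqn:tors}) with descent theory (citing \cite[IV.2.7.1]{EGA} for (1) and \cite[IV.6.8.3]{EGA} for (2)), and your two readings of $p_2 : G \times X \to X$ — as the base change of $f$ along the fpqc morphism $f$, and as the base change of $G \to \Spec(k)$ along $X \to \Spec(k)$ — are precisely the mechanism behind that citation. You have merely spelled out the details (including the useful device of descending from a fiber $G_{\kappa(x)}$ along $\Spec(\kappa(x)) \to \Spec(k)$ to recover the property of $G$ itself), and the bookkeeping is sound.
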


\begin{proof}
(1) This follows from the cartesian diagram (\ref{eqn:tors}) 
together with descent theory (see \cite[IV.2.7.1]{EGA}).

Likewise, (2) follows from \cite[IV.6.8.3]{EGA}. 
\end{proof}
  
\begin{remarks}\label{rem:des}
(i) As a consequence of the above proposition, 
every torsor $f : X \to Y$ under an algebraic group $G$ 
is of finite presentation. In particular,
$f$ is surjective on $\bar{k}$-rational points, i.e., 
the induced map $X(\bar{k}) \to Y(\bar{k})$ is surjective.
But $f$ is generally not surjective on $S$-points for an
arbitrary scheme $S$ (already for $S = \Spec(k)$). Still,
$f$ satisfies the following weaker version of surjectivity:

\textit{For any scheme $S$ and any point $y \in Y(S)$, 
there exists a faithfully flat morphism of finite presentation 
$\varphi : S' \to S$ and a point $x \in X(S')$ such that 
$f(x) = y$.}

Indeed, viewing $y$ as a morphism $S \to Y$, we may take 
$S' :=  X \times_Y S$, $\varphi := p_2$ and $x := p_1$. 

(ii) Consider a $G$-scheme $X$, a $G$-invariant morphism of 
schemes $f : X \to Y$ and a faithfully flat quasi-compact morphism
of schemes $v : Y' \to Y$. Form the cartesian square
\[ 
\xymatrix{
X' \ar[r]^-{f'} \ar[d]_u 
& Y' \ar[d]^v \\
X \ar[r]^-f & Y. \\}
\]
Then there is a unique action of $G$ on $X'$ such that $u$
is equivariant and $f'$ is invariant. Moreover, $f$ is a
$G$-torsor if and only if $f'$ is a $G$-torsor. Indeed,
this follows again from descent theory, more specifically
from \cite[IV.2.6.4]{EGA} for the condition (2), and 
\cite[IV.2.7.1]{EGA} for (3).

(iii) In the above setting, $f$ is a $G$-torsor if and only 
if the base change $f_K$ is a $G_K$-torsor for some field 
extension $K$ of $k$. 

(iv) Consider two $G$-torsors $f : X \to Y$, $f' : X' \to Y$
and a $G$-equivariant morphism $\varphi : X \to X'$ of schemes
over $Y$. Then $\varphi$ is an isomorphism: to check this, 
one may reduce by descent to the case where $f$ and $f'$ are 
trivial. Then $\varphi$ is identified with an endomorphism 
of the trivial torsor. But every such endomorphism is of the 
form $(g,y) \mapsto (g \psi(y),y)$ for a unique morphism 
$\psi: Y \to G$, and hence is an automorphism with inverse 
$(g,y) \mapsto (g \psi(y)^{-1},y)$.
\end{remarks}

\begin{example}\label{ex:neut}
Let $G$ be an algebraic group. Then 
$\gamma : G \to \pi_0(G)$ is a $G^0$-torsor.

Indeed, recall from \S \ref{subsec:tnc} that 
the formation of $\gamma$ commutes with field extensions. 
By Remark \ref{rem:des} (iii), we may thus assume 
$k$ algebraically closed. Then the finite \'etale 
scheme $\pi_0(G)$ just consists of finitely many 
$k$-rational points, say $x_1,\ldots,x_n$, and the fiber 
$F_i$ of $\gamma$ at $x_i$ contains a $k$-rational point, 
say $g_i$. Recall that $F_i$ is a connected component of $G$; 
thus, the translate $g_i^{-1} F_i$ is a connected component 
of $G$ through $e$, and hence equals $G^0$. It follows that 
$G$ is the disjoint union of the translates $g_i G^0$, which 
are the fibers of $\gamma$; this yields our assertion.
\end{example}

\subsection{Homogeneous spaces and quotients}
\label{subsec:hsq}

\begin{proposition}\label{prop:hom}
Let $f : G \to H$ be a homomorphism of algebraic groups.

\begin{enumerate}

\item The image $f(G)$ is closed in $H$.

\item $f$ is a closed immersion if and only if its kernel
is trivial.

\end{enumerate}

\end{proposition}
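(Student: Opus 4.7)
The plan is to prove (1) by recognising $f(G)$ as a $G$-orbit and invoking Proposition \ref{prop:orb}, and then to deduce (2) from (1) via a monomorphism/descent argument.

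For (1), I would view $f(G)$ as the $G$-orbit of $e_H$ under the left-translation action $a\colon G\times H\to H$, $a(g,h)=f(g)\,h$. By Proposition \ref{prop:orb}(1), this orbit is locally closed in $H$; upgrading ``locally closed'' to ``closed'' requires two reductions. First, closedness in $H$ descends from closedness in $H_{\bar k}$ along the faithfully flat quasi-compact base change $H_{\bar k}\to H$, so I may assume $k=\bar k$. Second, over $\bar k$ the set-theoretic image of $f$ coincides with that of $f|_{G_\red}$, and $G_\red$ is a smooth subgroup scheme of $G$ by Proposition \ref{prop:red}(2), so I may also assume $G$ smooth.

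With these reductions in place, set $Y:=\overline{f(G)}$ and $Z:=Y\setminus f(G)$, each with its reduced structure. Since left translation of $H$ on itself is free, the action $a$ has stabiliser $\Ker f$ at \emph{every} point of $H$, so every $G$-orbit in $Y$ has dimension $\dim G-\dim\Ker f=\dim f(G)=\dim Y$. If $Z$ were non-empty, Proposition \ref{prop:orb}(2) applied to the $G$-action on $Z$ would produce a closed $G$-orbit $O\subset Z$ of dimension $\dim Y$, contradicting $\dim Z<\dim Y$. Hence $Z=\emptyset$ and $f(G)=Y$ is closed.

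For (2), one direction is immediate: a closed immersion is a scheme-theoretic injection, so its fibre over $e_H$ is the reduced point $\{e_G\}$. Conversely, assuming $\Ker f$ is trivial, I would use (1) to replace $H$ by the scheme-theoretic image of $f$, making $f$ schematically surjective. The standard identification
\[
G\times_H G\;\cong\;G\times\Ker f\;\cong\;G
\]
shows that the diagonal of $f$ is an isomorphism, i.e.\ $f$ is a monomorphism of schemes. Combined with the general fact that a surjective homomorphism of algebraic groups over a field is faithfully flat (a homogeneity argument in the spirit of Remark \ref{rem:action}, which I would import from \cite{DG} or \cite{SGA3}), descent along $f$ then upgrades the monomorphism to an isomorphism $G\xrightarrow{\sim} f(G)$. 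Composing with the closed immersion $f(G)\hookrightarrow H$ supplied by (1) yields that $f$ itself is a closed immersion.

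The main obstacle will be that last imported input: flatness, and hence faithful flatness, of a surjective homomorphism of algebraic groups is not formal and must be cited rather than proved here. The orbit/dimension bookkeeping in (1) is, by contrast, routine once the reductions to $k=\bar k$ and $G$ smooth are in place.
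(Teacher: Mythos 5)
Your argument is correct, and it splits naturally into a part that matches the paper and a part that does not. For (1) you use exactly the paper's device --- the translation action $a(g,h)=f(g)h$ and Proposition \ref{prop:orb} --- but the paper finishes more directly: Proposition \ref{prop:orb}(2) applied to the action on $H$ itself yields some $h$ with $a_h(G)=f(G)h$ closed, and right translation by $h^{-1}$ then closes $f(G)$; your boundary-dimension argument on $Z=\overline{f(G)}\setminus f(G)$ is a valid (slightly longer) alternative, and in fact once you know $Z$ is $G$-stable and every orbit has dimension $\dim Y>\dim Z$ you already have your contradiction without invoking Proposition \ref{prop:orb}(2) a second time. Your explicit reductions to $k=\bar k$ and to $G_\red$ are a genuine improvement in care: the paper's one-line reduction leaves the smoothness hypothesis of Proposition \ref{prop:orb}(2) implicit. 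For (2) you take a genuinely different route. The paper argues that trivial kernel forces $f$ to be quasi-finite, invokes Zariski's Main Theorem to factor $f$ as an immersion followed by a finite morphism, uses translation over $\bar k$ to conclude $f$ is finite, and then kills the cokernel of $f^\#$ with Nakayama; the only external input is ZMT. You instead observe that $G\times_H G\cong G\times\Ker f\cong G$ makes $f$ a monomorphism, import the faithful flatness of a schematically dominant homomorphism of algebraic groups onto its image (which follows from generic flatness plus homogeneity and is independent of the present proposition, so there is no circularity), and conclude by the standard fact that a faithfully flat quasi-compact monomorphism is an isomorphism. Your version is the ``descent-theoretic'' proof and generalizes better (e.g.\ to flat group schemes over a base); the paper's is more self-contained within the elementary toolkit it has set up. One small point worth making explicit in your write-up: you should justify that the scheme-theoretic image of $f$ is a closed subgroup scheme before replacing $H$ by it (this follows since scheme-theoretic images of quasi-compact morphisms commute with the flat projections used to transport the group law, or simply by combining part (1) with reducedness considerations over $\bar k$).
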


\begin{proof}
As in the proof of Proposition \ref{prop:orb},
we may assume that $k$ is algebraically closed. 

(1) Consider the action $a$ of $G$ on $H$ given by
$g \cdot h := f(g) h$. By Proposition \ref{prop:orb} again, 
there exists $h \in H(k)$ such that the image of the orbit 
map $a_h$ is closed. But $a_h(G) = a_e(G) h$ and hence 
$a_e(G) = f(G)$ is closed.

(2) Clearly, $\Ker(f)$ is trivial if $f$ is a closed immersion.
Conversely, if $\Ker(f)$ is trivial then the fiber of $f$ at
any point $x \in X$ consists of that point; in particular,
$f$ is quasi-finite. By Zariski's Main Theorem (see 
\cite[IV.8.12.6]{EGA}), $f$ factors as an immersion followed 
by a finite morphism. As a consequence, there exists 
a dense open subscheme $U$ of $f(G)$ such that the restriction 
$f^{-1}(U) \to U$ is finite. Since the translates of $U_{\bar{k}}$ 
by $G(\bar{k})$ cover $f(G_{\bar{k}})$, it follows that $f_{\bar{k}}$ 
is finite; hence $f$ is finite as well. 
Choose an open affine subscheme $V$ of $f(G)$; then so is 
$f^{-1}(V)$, and $\cO(f^{-1}(V))$ is a finite module over 
$\cO(V)$ via $f^\#$. Moreover, the natural map
\[ \cO(V)/\fm \longrightarrow 
\cO(f^{-1}(V)/\fm \cO(f^{-1}(V)) = \cO(f^{-1}(\Spec \cO(V)/\fm)) \] 
is an isomorphism for any maximal ideal $\fm$ of $\cO(V)$.
By Nakayama's lemma, it follows that $f^\#$ is surjective;
this yields the assertion. 
\end{proof}

As a consequence of the above proposition, 
\textit{every subgroup scheme of an algebraic group is closed}.

We now come to an important existence result:

\begin{theorem}\label{thm:hom}
Let $G$ be an algebraic group and $H \subseteq G$ a subgroup scheme. 

\begin{enumerate}

\item There exists a $G$-scheme $G/H$ equipped with a 
$G$-equivariant morphism 
\[ q : G \longrightarrow G/H, \] 
which is an $H$-torsor for the action of $H$ on $G$ by
right multiplication. 

\item The scheme $G/H$ is of finite type. It is smooth 
if $G$ is smooth.

\item If $H$ is normal in $G$, then $G/H$ has a unique structure 
of algebraic group such that $q$ is a homomorphism.

\end{enumerate}

\end{theorem}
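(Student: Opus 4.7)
The plan is to realize $G/H$ as a $G$-orbit inside a projective space, via a scheme-theoretic version of Chevalley's embedding theorem.

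\emph{Step 1 (linearization).} I would first produce a finite-dimensional $G$-module $V$ together with a linear subspace $W \subseteq V$ whose scheme-theoretic stabilizer in $G$ coincides with $H$. To this end, let $I \subseteq \cO(G)$ be the ideal of $H$ and let $G$ act on $\cO(G)$ by right translation. By Proposition \ref{prop:rep}, finitely many generators of $I$ lie in a finite-dimensional $G$-submodule $V \subseteq \cO(G)$; I set $W := V \cap I$. The subgroup scheme $H$ preserves $I$, hence preserves $W$; conversely, since $W$ generates $I$ as an $\cO(G)$-ideal, any $S$-point of $G$ preserving $W \otimes \cO(S)$ must normalize $I \otimes \cO(S)$, and therefore factor through $H$. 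This gives $H = \Stab_G(W)$ scheme-theoretically.

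\emph{Step 2 (reduction to a line and the orbit).} Passing to the top exterior power, I replace $(V, W)$ by $(\bigwedge^d V, \bigwedge^d W)$ with $d = \dim W$, so that $W$ becomes a line $L \subseteq V$ and $H$ is the scheme-theoretic stabilizer of $x := [L] \in \bP(V)(k)$. The orbit map $a_x : G \to \bP(V)$ has locally closed image by Proposition \ref{prop:orb}(1); I define $G/H$ to be the scheme-theoretic image and $q : G \to G/H$ the corresponding surjection. Then $G/H$ inherits a $G$-action from $\bP(V)$ making $q$ equivariant, and is of finite type as a locally closed subscheme of $\bP(V)$.

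\emph{Step 3 (torsor property).} That $q$ is an $H$-torsor for the right $H$-action amounts to the cartesian-square condition of Definition \ref{def:tors}, i.e., that $G \times H \to G \times_{G/H} G$, $(g,h) \mapsto (g, gh)$, is an isomorphism; this isomorphism encodes precisely the identification $H = \Stab_G(x)$. Faithful flatness and quasi-compactness of $q$ then follow from homogeneity: left translation by $G(\bar{k})$-points is transitive on the fibers of $q_{\bar{k}}$, so all fibers are translates of $H_{\bar{k}}$, and flatness descends by the fiberwise criterion once checked at one point.

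\emph{Step 4 (smoothness and group structure).} If $G$ is smooth, then by Proposition \ref{prop:des}(2) the torsor $q$ is smooth, and smoothness of $G/H$ follows by descent along the faithfully flat morphism $q$. When $H$ is normal, the composition $q \circ m : G \times G \to G/H$ is invariant under the right action of $H \times H$ on $G \times G$; since $q \times q$ is an $(H \times H)$-torsor, Proposition \ref{prop:cat} yields a unique multiplication $\bar{m} : G/H \times G/H \to G/H$ factoring $q \circ m$. The inverse and the neutral element on $G/H$ are obtained from $q \circ i$ and $q(e)$ in the same way, and the group-scheme axioms are inherited from those of $G$ by the uniqueness clause in the universal property. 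The main obstacle is Step 1: one must verify the identification $H = \Stab_G(W)$ at the level of $S$-points for arbitrary test schemes $S$, which requires handling the ideal $I$ and the $G$-module $V$ functorially rather than only on geometric points.
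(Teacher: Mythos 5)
The paper does not prove this theorem; it cites \cite[VIA.3.2]{SGA3} and explicitly remarks in its notes that ``no direct construction of these spaces is known in this generality,'' adding that Chevalley's linearization method works only ``in the setting of affine algebraic groups.'' Your proposal is precisely Chevalley's method, and it has a genuine gap at Step 1 that cannot be repaired within this framework: for a general algebraic group $G$, the ideal $I$ of $H$ inside the global sections $\cO(G)$ does not cut out $H$. The extreme case is $G$ anti-affine (e.g.\ an abelian variety), where $\cO(G) = k$ and hence $I = 0$ for every proper subgroup scheme $H$; then $W = V \cap I = 0$, its stabilizer is all of $G$, and the identification $H = \Stab_G(W)$ fails outright. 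Your argument that ``$W$ generates $I$ as an $\cO(G)$-ideal, hence any point preserving $W$ normalizes $I$ and factors through $H$'' silently uses that $H$ is the zero scheme of $I$ in $G$, which holds exactly when $G$ is affine (or at least when $H$ is cut out by global functions). By Theorem \ref{thm:affi} itself, the non-affine part of $G$ is unavoidable in general, so the affine case does not bootstrap to the general one by this route.

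The remaining steps are essentially sound \emph{granted} Step 1 (the exterior-power trick, Proposition \ref{prop:orb} for the locally closed orbit, and the descent arguments for smoothness and the group structure on $G/H$ via Proposition \ref{prop:cat} are all standard), though Step 3 also waves at faithful flatness of the orbit map, which requires the generic-flatness-plus-homogeneity argument rather than a pointwise fiber criterion. If you restrict the statement to affine $G$, your outline is a correct sketch of the construction developed in \cite[III.3.5]{DG}. For the general case one needs the quite different d\'evissage and fppf-sheaf-quotient machinery of \cite[VIA]{SGA3}, which is why the paper treats this as a black box.
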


\begin{proof}
See \cite[VIA.3.2]{SGA3}.
\end{proof}

\begin{remarks}\label{rem:quot}
(i) With the notation and assumptions of the above theorem, 
$q$ is the categorical quotient of $G$ by $H$, in view of
Proposition \ref{prop:cat}. In particular, $q$ is unique
up to unique isomorphism; it is called the 
\textit{quotient morphism}. The \textit{homogeneous space}
$G/H$ is equipped with a $k$-rational point $x := q(e)$, the
\textit{base point}. The stabilizer $\C_G(x)$ equals $H$,
since it is the fiber of $q$ at $x$.

(ii) By the universal property of categorical quotients,
the homomorphism of algebras $q^\# : \cO(G/H)  \to \cO(G)^H$ 
is an isomorphism.

(iii) The morphism $q$ is faithfully flat and lies
in a cartesian diagram
\[
\xymatrix{
G \times H \ar[r]^-n \ar[d]_{p_1} 
& G \ar[d]^-q \\
G \ar[r]^-q & G/H, \\}
\]
where $n$ denotes the restriction of the multiplication
$m : G \times G \to G$. Also, $q$ is of finite presentation 
in view of Proposition \ref{prop:des}. 

(iv) Since $q$ is flat and $G$, $H$ are equidimensional,
we see that $G/H$ is equidimensional of dimension 
$\dim(G) - \dim(H)$.

(v) We have $(G/H)(\bar{k}) = G(\bar{k})/H(\bar{k})$
as follows e.g. from Remark \ref{rem:des} (i).
In particular, if $k$ is perfect (so that $G_\red$ is 
a subgroup scheme of $G$), then the scheme $G/G_\red$ 
has a unique $\bar{k}$-rational point. Since that scheme
is of finite type, it is finite and local; its base point
is its unique $k$-rational point.
\end{remarks}

Next, we obtain two further factorization properties 
of quotient morphisms:

\begin{proposition}\label{prop:fact}
Let $f : G \to H$ be a homomorphism of algebraic groups, 
$N := \Ker(f)$ and $q : G \to G/N$ the quotient homomorphism.
Then there is a unique homomorphism $\iota : G/N \to H$ such
that the triangle 
\[
\xymatrix{
G \ar[r]^-{f} \ar[d]_{q} & H \\ 
G/N \ar[ur]_{\iota}  \\}
\]
commutes. Moreover, $\iota$ is an isomorphism onto a subgroup 
scheme of $H$.
\end{proposition}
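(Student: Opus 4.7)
The plan is to construct $\iota$ using the universal property of $q$, then verify it is a homomorphism, and finally identify its kernel as trivial so that Proposition \ref{prop:hom} applies.

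First, I would check that $f : G \to H$ is invariant under the right multiplication action of $N$ on $G$: for any scheme $S$, any $g \in G(S)$ and $n \in N(S)$, one has $f(gn) = f(g) f(n) = f(g)$ since $n \in \Ker(f)(S)$. By Theorem \ref{thm:hom}, $q : G \to G/N$ is an $N$-torsor, hence a categorical quotient by Proposition \ref{prop:cat}. The universal property therefore produces a unique morphism of schemes $\iota : G/N \to H$ with $\iota \circ q = f$. Note also that $\iota$ sends the base point $q(e)$ of $G/N$ to $e_H$.

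Next, to see that $\iota$ is a group homomorphism, I would compare the two morphisms $\iota \circ m_{G/N}$ and $m_H \circ (\iota \times \iota)$ from $(G/N) \times (G/N)$ to $H$. Precomposing each with $q \times q : G \times G \to (G/N) \times (G/N)$, which is faithfully flat (being a composition of base changes of $q$), gives respectively $\iota \circ q \circ m_G = f \circ m_G$ and $m_H \circ (f \times f)$; these agree because $f$ is a homomorphism. By faithful flatness of $q \times q$, it follows that the two morphisms coincide, so $\iota$ respects multiplication.

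Finally, I would compute $\Ker(\iota)$ by using the torsor structure. Since $\iota \circ q = f$, we have the scheme-theoretic identity
\[
q^{-1}(\Ker(\iota)) \;=\; (\iota \circ q)^{-1}(e_H) \;=\; f^{-1}(e_H) \;=\; N.
\]
On the other hand, the trivial subgroup scheme $\{e_{G/N}\} \subseteq G/N$ satisfies $q^{-1}(\{e_{G/N}\}) = N$ as well, since the fiber of $q$ at the base point is precisely $N$. By Remark \ref{rem:tors}(iv) applied to the $N$-torsor $q$, the assignment $Z \mapsto q^{-1}(Z)$ is injective on closed subschemes of $G/N$, so $\Ker(\iota) = \{e_{G/N}\}$ is trivial. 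Proposition \ref{prop:hom}(2) then forces $\iota$ to be a closed immersion, and Proposition \ref{prop:hom}(1) identifies its image $\iota(G/N) = f(G)$ as a closed subgroup scheme of $H$; thus $\iota$ is an isomorphism onto a subgroup scheme of $H$.

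The main delicacy is the last paragraph: rather than trying to verify tripletwise that $\iota$ is ``injective on points'', I would lean on the fact that $q$ being a torsor gives an order-preserving bijection between subschemes of $G/N$ and $N$-stable subschemes of $G$, which converts the kernel computation into the tautology $f^{-1}(e_H) = N$.
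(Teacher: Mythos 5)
Your proof is correct, and its skeleton is the same as the paper's: factor $f$ through the categorical quotient (Proposition \ref{prop:cat}) to get $\iota$, verify that $\iota$ is a homomorphism with trivial kernel, and conclude via Proposition \ref{prop:hom}. The differences are in how the two verification steps are carried out. For the homomorphism property, the paper lifts $S$-points of $G/N$ to points of $G$ over fppf covers (Remark \ref{rem:des}) and uses $f(x'y')=f(x')f(y')$, whereas you compare the two composites $(G/N)\times(G/N)\to H$ after precomposing with $q\times q$ and invoke that a faithfully flat, quasi-compact morphism is an epimorphism of schemes; these are two phrasings of the same descent argument (do note that quasi-compactness, which holds here since $q$ is of finite presentation, is needed for the epimorphism claim, not flatness alone). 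For the triviality of $\Ker(\iota)$, the paper only says ``one may check likewise,'' i.e.\ by point-lifting again, while you give a genuinely different and complete argument: both $\Ker(\iota)$ and $\{e\}$ pull back under $q$ to the closed subscheme $N=f^{-1}(e_H)$, and the bijection of Remark \ref{rem:tors}(iv) between closed subschemes of $G/N$ and closed $N$-stable subschemes of $G$ forces them to coincide. This nicely converts the kernel computation into the tautology $f^{-1}(e_H)=N$ and actually supplies a detail the paper leaves implicit.
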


\begin{proof}
Clearly, $f$ is $N$-invariant; thus, it factors through
a unique morphism $\iota : G/N \to H$ by Theorem \ref{thm:hom}.
We check that $\iota$ is a homomorphism: let $S$ be a scheme
and $x,y \in (G/N)(S)$. By Remark \ref{rem:des}, 
there exist morphisms of schemes $\varphi : T \to S$, 
$\psi : U \to S$ and points $x_T \in G(T)$, $y_U \in G(U)$ 
such that $q(x_T) = x$, $q(y_U) = y$. Using the fibered product
$S' := T \times_S U$, we thus obtain a morphism 
$f : S' \to S$ and points $x',y' \in G(S')$
such that $q(x') = x$, $q(y') = y$; then $q(x' y') = x y$. 
Since $f(x' y') = f(x') f(y')$, we have 
$\iota(x y) = \iota(x) \iota(y)$. 
One may check likewise that $\Ker(\iota)$ is trivial. Thus,
$\iota$ is a closed immersion; hence its image is a subgroup scheme
in view of Proposition \ref{prop:hom}.
\end{proof}

\begin{proposition}\label{prop:orbit}
Let $G$ be an algebraic group, $X$ a $G$-scheme of finite
type and $x \in X(k)$. Then the orbit map $a_x : G \to X$,
$g \mapsto g \cdot x$ factors through a unique immersion 
$j_x : G/\C_G(x) \to X$.
\end{proposition}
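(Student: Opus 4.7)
The plan proceeds in three stages: (i) construct $j_x$ from the universal property of $G/H$, (ii) show $j_x$ is a monomorphism by faithfully flat descent along $q$, and (iii) upgrade this to an immersion using that the image is locally closed. The main obstacle will be the last step.

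First I would observe that the orbit map $a_x : G \to X$ is invariant under the right-translation action of $H := \C_G(x)$ on $G$: for any scheme $S$ and points $g \in G(S)$, $h \in H(S)$,
\[
a_x(gh) = (gh) \cdot x = g \cdot (h \cdot x) = g \cdot x = a_x(g).
\]
By Theorem \ref{thm:hom}, the quotient morphism $q : G \to G/H$ is an $H$-torsor for this action, and by Proposition \ref{prop:cat} it is a categorical quotient. Hence $a_x$ factors uniquely as $a_x = j_x \circ q$ for some morphism $j_x : G/H \to X$; since $q$ is an epimorphism, $j_x$ is unique already as a morphism, a fortiori as an immersion.

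Next, to verify that $j_x$ is a monomorphism, suppose $y_1, y_2 \in (G/H)(S)$ satisfy $j_x(y_1) = j_x(y_2)$. By Remark \ref{rem:des}(i) applied to the torsor $q$, there exist a faithfully flat morphism of finite presentation $\varphi : S' \to S$ and lifts $g_1, g_2 \in G(S')$ with $q(g_i) = y_i \circ \varphi$. Then $g_1 \cdot x = g_2 \cdot x$ in $X(S')$, so $g_1^{-1} g_2 \in H(S')$; hence $q(g_1) = q(g_2)$, i.e.\ $y_1 \circ \varphi = y_2 \circ \varphi$, and faithfully flat descent yields $y_1 = y_2$.

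For the final step, Proposition \ref{prop:orb}(1) tells us that the set-theoretic image of $a_x$, and hence of $j_x$, is locally closed in $X$; so after replacing $X$ by a suitable open subscheme we may assume this image is closed. Being a monomorphism of finite type, $j_x$ is quasi-finite and separated, so Zariski's Main Theorem provides a factorization $j_x = v \circ u$ with $u : G/H \hookrightarrow Z$ an open immersion and $v : Z \to X$ finite. Replacing $Z$ by the scheme-theoretic image of $u$, we may arrange that $u$ is scheme-theoretically dense; combined with $v \circ u$ being a monomorphism, one then concludes that $v$ itself is a monomorphism. Since a finite monomorphism of schemes is a closed immersion, $v$ is a closed immersion and $j_x = v \circ u$ is the composition of an open immersion with a closed immersion, hence an immersion as desired.
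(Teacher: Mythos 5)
Your first two steps are sound: the factorization $a_x = j_x \circ q$ does follow from the $H$-invariance of $a_x$ together with Theorem \ref{thm:hom} and Proposition \ref{prop:cat}, and your fppf-descent argument correctly shows that $j_x$ is a monomorphism. The gap is in the final step. The inference ``$u$ is a schematically dense open immersion and $v \circ u$ is a monomorphism, hence $v$ is a monomorphism'' is false. Take $C$ a nodal cubic with normalization $\nu : \bA^1 \to C$, let $p_1,p_2$ be the two preimages of the node, and set $W := \bA^1 \setminus \{p_2\}$. Then $u : W \hookrightarrow \bA^1$ is a schematically dense open immersion, $\nu$ is finite, and $\nu\vert_W$ is a monomorphism (it is universally injective and unramified, hence its diagonal is a surjective open immersion), yet $\nu$ is not a monomorphism because it identifies $p_1$ and $p_2$. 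Worse, the same example shows that the properties you have established cannot suffice on their own: $\nu\vert_W : W \to C$ is a quasi-finite, separated monomorphism of finite type whose set-theoretic image is all of $C$, hence closed, and it is \emph{not} an immersion (an immersion with closed image would be a closed immersion, which $\nu\vert_W$ is not, since $\cO_{C,0} \to \cO_{W,p_1}$ is not surjective). So no purely formal argument from ``monomorphism with locally closed image'' can finish the proof; your formal setup does not distinguish $j_x$ from $\nu\vert_W$.

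What is missing is a second use of the group action, beyond Proposition \ref{prop:orb}(1). The proofs the paper cites (\cite[III.3.5.2]{DG}, \cite[V.10.1.2]{SGA3}) proceed by equipping the locally closed image with a subscheme structure, invoking generic flatness to get flatness of $j_x$ over a dense open subset of the image, and then using equivariance under the transitive $G$-action (after base change to $\bar{k}$, translating by points of $G(\bar k)$) to propagate flatness everywhere; one concludes because a flat monomorphism locally of finite presentation is an open immersion. Some such appeal to homogeneity is unavoidable, and it is exactly the ingredient your last paragraph replaces with a false lemma.
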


\begin{proof}
See \cite[III.3.5.2]{DG} or \cite[V.10.1.2]{SGA3}.
\end{proof}

With the above notation and assumptions, we may define
the \textit{orbit} of $x$ as the locally closed subscheme 
of $X$ corresponding to the immersion $j_x$.

\subsection{Exact sequences, isomorphism theorems}
\label{subsec:esit}

\begin{definition}\label{def:ex}
Let $j : N \to G$ and $q: G \to Q$ be homomorphisms
of group schemes. We have an \textit{exact sequence}
\begin{equation}\label{eqn:ex} 
1 \longrightarrow N \stackrel{j}{\longrightarrow} G
\stackrel{q}{\longrightarrow} Q \longrightarrow 1 
\end{equation}
if the following conditions hold:

\begin{enumerate}

\item $j$ induces an isomorphism of $N$ with $\Ker(q)$.

\item For any scheme $S$ and any $y \in Q(S)$, there exists
a faithfully flat morphism $f: S' \to S$ of finite presentation
and $x \in G(S')$ such that $q(x) = y$.

\end{enumerate}

Then $G$ is called an \textit{extension} of $Q$ by $N$.

We say that $q$ is an \textit{isogeny} if $N$ is finite.
\end{definition}

\begin{remarks}\label{rem:ex}
(i) The condition (1) holds if and only if the sequence of groups
\[ 1 \longrightarrow N(S) \stackrel{j(S)}{\longrightarrow} G(S)
\stackrel{q(S)}{\longrightarrow} Q(S) \]
is exact for any scheme $S$.

(ii) The condition (2) holds whenever $q$ is faithfully flat of 
finite presentation, as already noted in Remark \ref{rem:des}(i).

(iii) As for exact sequences of abstract groups, one may define
the push-forward of the exact sequence (\ref{eqn:ex}) under 
any homomorphism $N \to N'$, and the pull-back under 
any homomorphism $Q' \to Q$. Also, exactness is preserved under 
field extensions.
\end{remarks} 

Next, consider an algebraic group $G$ and a normal subgroup
scheme $N$; then we have an exact sequence 
\begin{equation}\label{eqn:quot}
1 \longrightarrow N \longrightarrow G 
\stackrel{q}\longrightarrow G/N \longrightarrow 1 
\end{equation}
by Theorem \ref{thm:hom} and the above remarks. Conversely,
given an exact sequence (\ref{eqn:ex}) of algebraic groups,
$j$ is a closed immersion and $q$ factors through a 
closed immersion $\iota : G/N \to Q$ by Proposition 
\ref{prop:fact}. Since $q$ is surjective, $\iota$ is 
an isomorphism; this identifies the exact sequences 
(\ref{eqn:ex}) and (\ref{eqn:quot}). 

As another consequence of Proposition \ref{prop:fact},
\textit{the category of commutative algebraic groups is abelian}.
Moreover, the above notion of exact sequence coincides 
with the categorical notion. In this setting, the set 
of isomorphism classes of extensions of $Q$ by $N$
has a natural structure of commutative group, that we 
denote by $\Ext^1(Q,N)$.

We now extend some classical isomorphism theorems for
abstract groups to the setting of group schemes, in a series
of propositions: 

\begin{proposition}\label{prop:sub}
Let $G$ be an algebraic group and $N \trianglelefteq G$ 
a normal subgroup scheme with quotient $q : G \to G/N$. 
Then the assignment $H \mapsto H/N$ yields a bijective 
correspondence between the subgroup schemes of $G$ 
containing $N$ and the subgroup schemes of $G/N$,
with inverse the pull-back. Under this correspondence, 
the normal subgroup schemes of $G$ containing $N$ 
correspond to the normal subgroup schemes of $G/N$.
\end{proposition}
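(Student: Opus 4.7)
The plan is to construct the two assignments separately, show they are mutually inverse using the fact that $q$ is an $N$-torsor, and then check that normality is preserved in both directions.

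First, given a subgroup scheme $H$ of $G$ containing $N$, the normality of $N$ in $G$ forces its normality in $H$, so Theorem \ref{thm:hom} furnishes a quotient $q_H : H \to H/N$. The composition $H \hookrightarrow G \stackrel{q}{\to} G/N$ has kernel $H \cap N = N$, so by Proposition \ref{prop:fact} it factors through an isomorphism of $H/N$ onto a subgroup scheme of $G/N$, which I shall identify with $H/N$. Conversely, for any subgroup scheme $K \subseteq G/N$, the pull-back $q^{-1}(K) := G \times_{G/N} K$ is a closed subscheme of $G$ whose $S$-points are $\{g \in G(S) : q(g) \in K(S)\}$, which is visibly a subgroup of $G(S)$ containing $N(S)$; thus $q^{-1}(K)$ is a subgroup scheme of $G$ containing $N$.

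That the two constructions are mutually inverse is the main content. Starting from $K \subseteq G/N$, the base change $q^{-1}(K) \to K$ of $q$ along the immersion $K \hookrightarrow G/N$ is itself an $N$-torsor directly from Definition \ref{def:tors}, so by uniqueness of the categorical quotient (Proposition \ref{prop:cat}) it identifies $q^{-1}(K)/N$ with $K$ as subgroup schemes of $G/N$. In the other direction, the inclusion $H \subseteq q^{-1}(H/N)$ is clear; for the reverse inclusion let $g \in q^{-1}(H/N)(S)$, so that $q(g) \in (H/N)(S)$. Since $q_H : H \to H/N$ is an $N$-torsor, Remark \ref{rem:des} (i) provides a faithfully flat morphism of finite presentation $S' \to S$ and a point $h \in H(S')$ with $q_H(h) = q(g)|_{S'}$. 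Then $g|_{S'} \cdot h^{-1}$ lies in $N(S') \subseteq H(S')$, so $g|_{S'} \in H(S')$; fpqc descent for the closed immersion $H \hookrightarrow G$ then yields $g \in H(S)$, as desired.

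For the normality assertion, suppose first $H \trianglelefteq G$. Given a scheme $S$, $\bar{g} \in (G/N)(S)$ and $\bar{h} \in (H/N)(S)$, a faithfully flat base change $S' \to S$ lifts these to $g \in G(S')$ and $h \in H(S')$; then $ghg^{-1} \in H(S')$, so its image $\bar{g}\bar{h}\bar{g}^{-1}$ lies in $(H/N)(S')$, and descent yields $\bar{g}\bar{h}\bar{g}^{-1} \in (H/N)(S)$. Conversely, if $H/N \trianglelefteq G/N$, then for any $g \in G(S)$ and $h \in H(S)$ the image $q(ghg^{-1}) = q(g) q(h) q(g)^{-1}$ lies in $(H/N)(S)$, hence $ghg^{-1} \in q^{-1}(H/N)(S) = H(S)$ by the bijection already established. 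The principal obstacle is the descent step in the second paragraph, which requires using the torsor property of $q$ in an essential way rather than merely surjectivity on $S$-points.
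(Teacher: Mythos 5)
Your proof is correct. Note that the paper does not actually prove this proposition: it simply cites \cite[VIA.5.3.1]{SGA3}, so you have supplied a genuine self-contained argument where the text defers to a reference. Your argument is built entirely from the machinery the paper does develop — Theorem \ref{thm:hom} ($q$ and $q_H$ are $N$-torsors), Proposition \ref{prop:fact} (to embed $H/N$ into $G/N$), Proposition \ref{prop:cat} (uniqueness of categorical quotients, to identify $q^{-1}(K)/N$ with $K$), and Remark \ref{rem:des} (i) together with fppf descent along closed immersions (to get $q^{-1}(H/N) \subseteq H$ and to transfer normality). The two small points worth being explicit about are (a) that the restriction $q^{-1}(K) \to K$ is an $N$-torsor because both defining conditions of Definition \ref{def:tors} — faithful flatness with quasi-compactness, and the cartesian square — are stable under \emph{arbitrary} base change (Remark \ref{rem:des} (ii) only discusses base change along faithfully flat morphisms, where the point is the converse direction), and (b) that $H$ is closed in $G$ (the paper records this right after Proposition \ref{prop:hom}), which is what makes the descent step legitimate: a morphism $S \to G$ factors through a closed subscheme if and only if it does so after a faithfully flat quasi-compact base change. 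You correctly identify the descent step as the crux; merely knowing that $q$ is surjective on $\bar k$-points would not suffice.
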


\begin{proof}
See \cite[VIA.5.3.1]{SGA3}.
\end{proof}

\begin{proposition}\label{prop:quot}
Let $G$ be an algebraic group and $N \subseteq H \subseteq G$ 
subgroup schemes with quotient maps $q_N : G \to G/N$, 
$q_H : G \to G/H$.

\begin{enumerate}

\item There exists a unique morphism $f : G/N \to G/H$ 
such that the triangle
\[
\xymatrix{
G \ar[r]^-{q_H} \ar[d]_{q_N} & G/H \\ 
G/N \ar[ur]_f  \\}
\]
commutes. Moreover, $f$ is $G$-equivariant and faithfully flat
of finite presentation. The fiber of $f$ at the base point of $G/H$ 
is the homogeneous space $H/N$.

\item If $N$ is normal in $H$, then the action of $H$ on $G$ 
by right multiplication factors through an action of $H/N$ 
on $G/N$ that centralizes the action of $G$. Moreover,
$f$ is an $H/N$-torsor.

\item If $H$ and $N$ are normal in $G$, then we have 
an exact sequence
\[ 1 \longrightarrow H/N \longrightarrow G/N 
\stackrel{f}{\longrightarrow} G/H \longrightarrow 1.\]
\end{enumerate}

\end{proposition}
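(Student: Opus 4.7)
For part (1), the plan is to invoke the universal property of the categorical quotient $q_N : G \to G/N$ established in Proposition \ref{prop:cat}. Since $N \subseteq H$ acts on $G$ by right multiplication as a restriction of the $H$-action and $q_H : G \to G/H$ is $H$-invariant, $q_H$ is in particular $N$-invariant and therefore factors uniquely through $q_N$, producing the required morphism $f : G/N \to G/H$ with $f \circ q_N = q_H$. Equivariance under the left $G$-action follows by uniqueness, since both $q_N$ and $q_H$ are $G$-equivariant by Theorem \ref{thm:hom}(1). Because $q_N$ and $q_H$ are faithfully flat of finite presentation (Theorem \ref{thm:hom} and Proposition \ref{prop:des}), and $q_N$ is faithfully flat, faithfully flat descent gives the same properties for $f$. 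For the fiber at the base point $x_H$ of $G/H$: the preimage $q_H^{-1}(x_H)$ equals $H$ by Remark \ref{rem:quot}(i), so $q_N^{-1}(f^{-1}(x_H)) = H$; since $H$ is stable under the right $N$-action, the restriction $q_N|_H$ coincides with the quotient morphism $H \to H/N$, which identifies $f^{-1}(x_H)$ with $H/N$.

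For part (2), assume $N$ is normal in $H$. To descend the right multiplication action $G \times H \to G$ to an action $G/N \times H/N \to G/N$, I would check invariance of the composite $G \times H \to G \to G/N$ under the right $N$-action on the first factor and the left $N$-action on the second. The first is immediate from the $N$-invariance of $q_N$; the second uses normality, via the identity $g(nh) = (gh)(h^{-1}nh)$ with $h^{-1}nh \in N$. This descended action commutes with the left $G$-action, because left and right multiplications in $G$ commute. To show $f$ is an $H/N$-torsor, only the cartesian square of Definition \ref{def:tors}(2) remains to be established, since faithful flatness and finite presentation are already in hand. The plan is to deduce it by descent from the known cartesian square for the $H$-torsor $q_H : G \to G/H$: pulling back the candidate cartesian square $G/N \times H/N \to (G/N) \times_{G/H} (G/N)$ along the faithfully flat morphism $q_N \times q_N : G \times G \to G/N \times G/N$ converts it into the $H$-torsor square for $q_H$, after identifying $G \times_{G/H} G \cong G \times H$ via the $H$-torsor structure. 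I expect this descent-theoretic identification to be the main obstacle, since one must carefully match the $N$-actions on both sides; but once set up correctly, it reduces to a direct application of \cite[IV.2.7.1]{EGA} in the spirit of Remark \ref{rem:des}(ii).

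For part (3), suppose in addition that $H$ is normal in $G$, so that $G/N$ and $G/H$ are algebraic groups by Theorem \ref{thm:hom}(3). Because $q_N$ and $q_H$ are group-scheme homomorphisms, the relation $f \circ q_N = q_H$ forces $f$ to be a homomorphism: the two morphisms $G/N \times G/N \to G/H$ obtained by composing $f$ with multiplication versus multiplying after applying $f$ to each factor agree after pull-back along the faithfully flat $q_N \times q_N$, hence agree. The kernel of $f$ is by definition the fiber at the base point of $G/H$, which is $H/N$ by part (1). Finally, condition (2) in Definition \ref{def:ex} is satisfied because $f$ is faithfully flat of finite presentation (Remark \ref{rem:ex}(ii)), so we obtain the exact sequence $1 \to H/N \to G/N \to G/H \to 1$.
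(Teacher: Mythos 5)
Parts (1) and (3) of your proposal are correct and essentially follow the paper's route: existence and uniqueness of $f$ come from the categorical-quotient property of $q_N$ (Proposition \ref{prop:cat}), equivariance from uniqueness of factorizations through $\id \times q_N$, and the fiber identification from the correspondence between subschemes of $G/N$ and $N$-stable subschemes of $G$. Your derivation of ``faithfully flat of finite presentation'' for $f$ by descending these properties along $q_N$ from $q_H = f \circ q_N$ is a valid, slightly more direct alternative to the paper's, which instead reads them off a cartesian square established afterwards. The descent of the right $H$-action to an $H/N$-action on $G/N$ in part (2) is also fine (modulo a harmless relabeling of which of the two $N$-invariances is immediate and which uses normality).

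The genuine soft spot is the torsor claim in part (2), and you have flagged it yourself as ``the main obstacle.'' Pulling your candidate square back along $q_N \times q_N$ turns the target into $G \times_{G/H} G \cong G \times H$ as you say, but the pullback of the source $G/N \times H/N$ is the scheme of triples $(g_1, \bar{h}, g_2)$ with $q_N(g_1)\cdot \bar{h} = q_N(g_2)$; identifying this with $G \times H$ amounts to showing that for each pair $(g_1,g_2)$ with $q_H(g_1) = q_H(g_2)$ there is a \emph{unique} $\bar{h} \in H/N$ with $q_N(g_1)\cdot\bar{h} = q_N(g_2)$ --- which is exactly the simple transitivity of the $H/N$-action on the fibers of $f$ that the torsor property asserts. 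The descent does not convert one square into the other for free; it merely relocates the substantive verification. The paper closes this by a direct computation on $S$-points of the mixed square $G \times H/N \to G \times_{G/H} G/N$, $(g,\bar{h}) \mapsto (g, q_N(g)\cdot\bar{h})$: using $G$-equivariance of $f$ and the identification of $f^{-1}(f(x))$ with $H/N$ from part (1), the condition $q_H(g) = f(y)$ is equivalent to $g^{-1}\cdot y \in (H/N)(S)$, which exhibits the inverse $(g,y) \mapsto (g, g^{-1}\cdot y)$ outright; one then concludes that $f$ is an $H/N$-torsor because its pullback along the single fppf morphism $q_H$ is the trivial torsor (Remark \ref{rem:des}). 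Your argument can be repaired either by adopting that computation, or by supplementing your descent with the missing uniqueness: existence of $\bar{h}$ comes from the definition of the descended action, and uniqueness from the fact that $\bar{h} \mapsto q_N(g_1)\cdot\bar{h}$ is a monomorphism (translate by $g_1^{-1}$ and use that $H/N \to G/N$ is a closed immersion). As written, the key step is asserted rather than proved.
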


\begin{proof}
(1) The existence of $f$ follows from the fact that $q_N$ is
a categorical quotient. To show that $f$ is equivariant,
let $S$ be a scheme, $g \in G(S)$ and $y \in (G/N)(S)$.
Then there exists a morphism $S' \to S$ and $y' \in G(S')$
such that $q_N(y') = y$. So 
\[ f(g \cdot y) = f(g \cdot q_N (y') = (f \circ q_N)(gy') = q_H(gy')
= g \cdot q_H(y') = g \cdot y. \]
One checks similarly that the fiber of $f$ at the base point
$x$ equals $H/N$.

Next, note that the multiplication map $n : G \times H \to H$
yields a morphism $r : G \times H/N \to G/N$. We claim that
the square
\begin{equation}\label{eqn:hom} 
\xymatrix{
G \times H/N \ar[r]^-{r} \ar[d]_{p_1} 
& G/N \ar[d]^f \\
G \ar[r]^-{q_H} & G/H \\}
\end{equation}
is cartesian. The commutativity of this square follows readily 
from the equivariance of the involved morphisms. Let $S$
be a scheme and $g \in G(S)$, $y \in (G/N)(S)$. Then 
$q_H(g) = f(y)$ if and only if 
$f(g^{-1} \cdot y) = q_H(e) = f(x)$, i.e., 
$g^{-1} y \in (H/N)(S)$. It follows that the map
$G \times H/N \to G \times_{G/H}G/N$ is bijective
on $S$-points; this yields the claim.

Since $q_H$ and $p_1$ are faithfully flat of finite
presentation, the same holds for $f$ in view of the 
cartesian square (\ref{eqn:hom}).

(2) The existence of the action $G/N \times H/N \to G/N$
follows similarly from the universal property of the
quotient $G \times H \to G/N \times H/N$. One may check
by lifting points as in the proof of (1) that this action 
centralizes the $G$-action. Finally, $f$ is a $G$-torsor
in view of the cartesian square (\ref{eqn:hom}) again.

(3) This follows readily from (1) together with Proposition
\ref{prop:fact} (or argue by lifting points to check 
that $f$ is a homomorphism).
\end{proof}

\begin{proposition}\label{prop:prod}
Let $G$ be an algebraic group, $H \subseteq G$ a subgroup
scheme and $N \trianglelefteq G$ a normal subgroup scheme. 
Consider the semi-direct product $H \ltimes N$, where 
$H$ acts on $N$ by conjugation. 

\begin{enumerate}

\item The map
\[ f : H \ltimes N \longrightarrow G, \quad (x,y) \longmapsto xy \]
is a homomorphism with kernel $H \cap N$ identified with a subgroup
scheme of $H \ltimes N$ via $x \mapsto (x^{-1},x)$. 

\item The image $H \cdot N$ of $f$ is the smallest subgroup scheme
of $G$ containing $H$ and $N$.

\item The natural maps $H/H \cap N \to H \cdot N/N$ 
and $N/H \cap N \to H \cdot N/H$ are isomorphisms.

\item If $H$ is normal in $G$, then $H \cdot N$ is normal in $G$ as well.

\end{enumerate}

\end{proposition}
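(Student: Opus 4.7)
The plan is to treat (1)--(4) in sequence, using Propositions~\ref{prop:hom},~\ref{prop:fact},~\ref{prop:sub} and the universal property of quotients (Theorem~\ref{thm:hom}) as the main tools.

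For (1), a direct functorial computation using the semidirect multiplication shows
\[
f((x,y)(x',y')) \;=\; f(xx',(x'^{-1}\!\cdot y)y') \;=\; xx' \cdot x'^{-1}yx' \cdot y' \;=\; (xy)(x'y') \;=\; f(x,y)f(x',y');
\]
the kernel condition $xy = e$ forces $y = x^{-1}$, and since $y \in N$ one has $x \in H \cap N$, giving the stated embedding of $H \cap N$ as $\Ker(f)$. For (2), $H \cdot N := f(H \ltimes N)$ is a closed subgroup scheme by Proposition~\ref{prop:hom}(1); restricting $f$ to $H \times \{e\}$ and to $\{e\} \times N$ shows that it contains both $H$ and $N$, and conversely any subgroup scheme of $G$ containing $H$ and $N$ is closed under multiplication and hence contains the image of $H \times N \to G$.

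For (3), since $N \trianglelefteq G$ it is normal in $H \cdot N$, so $q_N : H \cdot N \to (H \cdot N)/N$ is a homomorphism of algebraic groups. The composition $H \hookrightarrow H \cdot N \xrightarrow{q_N} (H \cdot N)/N$ has kernel $H \cap N$, so Proposition~\ref{prop:fact} produces a closed immersion $H/(H \cap N) \hookrightarrow (H \cdot N)/N$ onto some subgroup scheme $K$. The preimage $q_N^{-1}(K)$ is a subgroup scheme of $H \cdot N$ containing both $H$ and $N$, hence equals $H \cdot N$ by (2); by Proposition~\ref{prop:sub}, $K = (H \cdot N)/N$, giving the first isomorphism. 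For the second, view the composition $N \hookrightarrow H \cdot N \xrightarrow{q_H} (H \cdot N)/H$ as a morphism of schemes; it is invariant under the right action of $H \cap N$ on $N$, and two $S$-points $y,y' \in N(S)$ have the same image iff $y^{-1}y' \in H \cap N$. The key use of normality is the identity $xy = (xyx^{-1})\,x$ with $xyx^{-1} \in N$, which shows that every $H$-coset in $H \cdot N$ is represented by an element of $N$; combined with the faithful flatness of $q_H$ and Remark~\ref{rem:des}, this yields the fpqc-local surjectivity needed, and the universal property of $N \to N/(H \cap N)$ produces the desired isomorphism of schemes $N/(H \cap N) \cong (H \cdot N)/H$.

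For (4), if $H$ is also normal in $G$, then for any scheme $S$, any $g \in G(S)$ and any $x \in H(S)$, $y \in N(S)$, one has $g(xy)g^{-1} = (gxg^{-1})(gyg^{-1}) \in H(S) \cdot N(S)$, so the image of $H \times N$ under multiplication is stable under conjugation, hence normal in $G$. The main technical step is the surjectivity argument for the second isomorphism in (3), where one must exploit the normality of $N$ to rewrite arbitrary products $xy$ as $n \cdot x$ with $n \in N$; everything else reduces to direct functorial verifications and applications of the universal properties of quotients.
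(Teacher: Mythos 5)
Your proof is correct, and for part (3) it takes a genuinely different route from the paper. The paper disposes of (3) with a single commutative square in which the vertical arrows $H \to H/H\cap N$ and $H \ltimes N/N \to H\cdot N/N$ are $H\cap N$-torsors and the top arrow is an isomorphism, then invokes the categorical quotient property (Proposition \ref{prop:cat}) to identify the two quotients; the second isomorphism is declared ``similar''. Your argument for the first isomorphism instead runs through Proposition \ref{prop:fact} to get a closed immersion $H/(H\cap N) \hookrightarrow (H\cdot N)/N$, and then uses the minimality statement from (2) together with the subgroup correspondence of Proposition \ref{prop:sub} to force surjectivity; this is purely group-theoretic and avoids verifying any torsor conditions, at the cost of only working because $(H\cdot N)/N$ is a group. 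For the second isomorphism, where $(H\cdot N)/H$ is merely a homogeneous space, you correctly abandon that mechanism and make explicit the identity $xy = (xyx^{-1})x$ that underlies the paper's ``similarly'': it is exactly what makes $N \to (H\cdot N)/H$ surjective (fppf-locally, via Remark \ref{rem:des}) and lets the fibers be computed as $H\cap N$-orbits, after which the universal property of $N \to N/(H\cap N)$ finishes the job. The residual details you leave implicit (flatness of $N \to (H\cdot N)/H$, or equivalently that it is an $H\cap N$-torsor rather than just a surjection with the right fibers) are of the same order as those the paper itself suppresses. Parts (1), (2) and (4) match the paper's intent, which simply asserts they are ``easily checked'' or refers back to the lifting-of-points argument of Proposition \ref{prop:fact}.
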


\begin{proof}
The assertions (1) and (2) are easily checked.

(3) We have a commutative diagram
\[
\xymatrix{
H \ar[r] \ar[d] & H \ltimes N/N \ar[d] \\
H/H\cap N \ar[r] & H \cdot N/N, \\}
\] 
where the top horizontal arrow is an isomorphism and the vertical 
arrows are $H \cap N$-torsors. This yields the first isomorphism
by using Proposition \ref{prop:cat}. The second isomorphism is
obtained similarly.

(4) This may be checked as in the proof of Proposition \ref{prop:fact}.
\end{proof}

We also record a useful observation:

\begin{lemma}\label{lem:prod}
Keep the notation and assumptions of the above proposition.
If $G = H \cdot N$, then $G(\bar{k}) = H(\bar{k}) \, N(\bar{k})$.
The converse holds when $G/N$ is smooth.
\end{lemma}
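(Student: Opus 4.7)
The plan is to handle the two implications separately, each via a short reduction.

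For the forward implication, I would use the homomorphism $f \colon H \ltimes N \to G$, $(x,y) \mapsto xy$ provided by Proposition \ref{prop:prod}(1), which has kernel $H \cap N$ and image $H \cdot N$. The hypothesis $G = H \cdot N$ says exactly that $f$ is scheme-theoretically surjective, so by Proposition \ref{prop:fact} it factors as the quotient by $H \cap N$ followed by an isomorphism onto $G$; in particular $f$ is faithfully flat of finite presentation. Remark \ref{rem:des}(i) then guarantees that $f$ is surjective on $\bar{k}$-points. Since the underlying scheme of $H \ltimes N$ is $H \times N$, one has $(H \ltimes N)(\bar{k}) = H(\bar{k}) \times N(\bar{k})$, and the formula $f(x,y) = xy$ immediately yields $G(\bar{k}) = H(\bar{k}) \, N(\bar{k})$.

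For the converse, assume $G/N$ is smooth and $G(\bar{k}) = H(\bar{k}) \, N(\bar{k})$. Set $L := H \cdot N$; this is a subgroup scheme of $G$ containing $N$ by Proposition \ref{prop:prod}(2), so by Proposition \ref{prop:sub} the quotient $L/N$ is a closed subgroup scheme of $G/N$, and it suffices to prove $L/N = G/N$. Let $q \colon G \to G/N$ be the quotient map; as an $N$-torsor, $q$ is surjective on $\bar{k}$-points by Remark \ref{rem:des}(i). Combining this with the inclusion $H(\bar{k}) \, N(\bar{k}) \subseteq L(\bar{k})$ and the hypothesis, one obtains
\[ (L/N)(\bar{k}) = q(L(\bar{k})) \supseteq q(G(\bar{k})) = (G/N)(\bar{k}), \]
hence equality.

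The crux is then the geometric claim that a closed subscheme $Y$ of a smooth $k$-scheme $X$ of finite type satisfying $Y(\bar{k}) = X(\bar{k})$ must equal $X$. By Proposition \ref{prop:smooth}, $X_{\bar{k}}$ is reduced; since $X_{\bar{k}}$ is of finite type over an algebraically closed field, its closed points are dense and all $\bar{k}$-rational, so $X(\bar{k})$ is dense in $|X_{\bar{k}}|$. Thus the closed subset underlying $Y_{\bar{k}}$ contains the dense subset $X(\bar{k})$, and reducedness of $X_{\bar{k}}$ forces $Y_{\bar{k}} = X_{\bar{k}}$ as closed subschemes; faithfully flat descent then gives $Y = X$. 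Applying this with $X = G/N$ and $Y = L/N$ yields $L/N = G/N$, and hence $L = G$ by Proposition \ref{prop:sub}. The only genuine subtlety is this last scheme-theoretic step: without the smoothness hypothesis on $G/N$ one cannot lift an equality of $\bar{k}$-points past nilpotents, which is exactly why the converse requires the extra assumption.
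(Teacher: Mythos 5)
Your proof is correct and follows essentially the same route as the paper: the forward implication is the paper's one-line appeal to Remark \ref{rem:des}(i) via the quotient $H \ltimes N \to (H\ltimes N)/(H\cap N) \cong H\cdot N$, and the converse is the paper's argument with the closed immersion $H/(H\cap N) \cong (H\cdot N)/N \hookrightarrow G/N$, using that a closed immersion into a smooth (hence geometrically reduced) scheme which is surjective on $\bar{k}$-points is an isomorphism, followed by Proposition \ref{prop:sub}. The only cosmetic difference is that you phrase the target subscheme as $L/N$ with $L = H\cdot N$ rather than as $H/(H\cap N)$; these agree by Proposition \ref{prop:prod}(3).
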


\begin{proof}
The first assertion follows e.g. from Remark \ref{rem:des} (i).

For the converse, consider the quotient homomorphism 
$q : G \to G/N$: it restricts to a homomorphism $H \to G/N$ 
with kernel $H \cap N$, and hence factors through a closed 
immersion $i : H/H \cap N \to G/N$ by Proposition 
\ref{prop:fact}. Since $G(\bar{k}) = H(\bar{k}) N(\bar{k})$, 
we see that $i$ is surjective on $\bar{k}$-rational points. As
$G/N$ is smooth, $i$ must be an isomorphism.
Thus, $H \cdot N/N = G/N$. By Proposition \ref{prop:sub}, 
we conclude that $H \cdot N = G$.
\end{proof}

We may now obtain the promised generalization of the structure
of finite group schemes over a perfect field (Proposition
\ref{prop:finite}): 

\begin{corollary}\label{cor:red}
Let $G$ be an algebraic group over a perfect field $k$.

\begin{enumerate}

\item $G = G_\red \cdot G^0$.

\item $G_\red \cap G^0 = G^0_\red$ is the smallest subgroup
scheme $H$ of $G$ such that $G/H$ is finite.

\end{enumerate}

\end{corollary}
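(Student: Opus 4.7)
The plan is to split the corollary into its two parts and handle them with different tools: part (1) is a direct application of Lemma \ref{lem:prod}, while part (2) reduces to a dimension/irreducibility argument because $G^0_\red$ need not be normal in $G^0$ over an imperfect\dots wait, over a perfect field it still need not be normal, as Example \ref{ex:nonnorm} shows with $G = \bG_m \ltimes \alpha_{p^n}$.

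For part (1), since $G^0 \trianglelefteq G$, the subgroup scheme $G_\red \cdot G^0$ is defined (Proposition \ref{prop:prod}). I would apply the converse half of Lemma \ref{lem:prod} with $H = G_\red$ and $N = G^0$. The smoothness hypothesis on $G/G^0 = \pi_0(G)$ is automatic, since $\pi_0(G)$ is \'etale. It then suffices to verify $G(\bar k) = G_\red(\bar k) \cdot G^0(\bar k)$. Because $k$ is perfect, the product of reduced $k$-schemes is reduced, so $(G_\red)_{\bar k} = (G_{\bar k})_\red$; hence $G_\red(\bar k) = G(\bar k)$ (every $\bar k$-point factors through the reduced subscheme), and the equality is trivial.

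For part (2), I would first identify the schemes $G_\red \cap G^0$ and $G^0_\red$. Since $G^0$ is open in $G$, the intersection $G_\red \cap G^0$ is the open subscheme of $G_\red$ with underlying space $|G^0|$; it is reduced with the same underlying space as $G^0$, so it equals $G^0_\red$. Applying Proposition \ref{prop:red}(2) to the algebraic group $G^0$ over the perfect field $k$ shows that $(G^0)_\red = G^0_\red$ is indeed a (smooth) subgroup scheme of $G^0$, hence of $G$. Finiteness of $G/G^0_\red$ is then immediate: by Remark \ref{rem:quot}(iv) this quotient is equidimensional of dimension $\dim G - \dim G^0_\red = \dim G^0 - \dim G^0 = 0$, and it is of finite type by Theorem \ref{thm:hom}, hence finite.

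For minimality, let $H \subseteq G$ be a subgroup scheme with $G/H$ finite. Then $\dim H = \dim G = \dim G^0$, so the connected component $H^0$ of $e$ in $H$ has $\dim H^0 = \dim G^0$; moreover $H^0 \subseteq G^0$ as $H^0$ is connected and contains $e$. The key input is Theorem \ref{thm:neut}(3): $G^0$ is (geometrically) irreducible, so the closed subset $|H^0| \subseteq |G^0|$ of full dimension must equal $|G^0|$. Thus $H^0_\red$ and $G^0_\red$ are reduced closed subschemes of $G^0$ sharing the same underlying space, so they coincide; this gives $G^0_\red = H^0_\red \subseteq H$. The one place I had to be careful is exactly the step where one would like to conclude ``$G/G^0_\red$ is finite'' from a short exact sequence $1 \to G^0/G^0_\red \to G/G^0_\red \to G/G^0 \to 1$: this is unavailable because $G^0_\red$ need not be normal in $G^0$ (cf.\ Example \ref{ex:nonnorm}), which is why I route the finiteness claim through Remark \ref{rem:quot}(iv) and the minimality claim through topological irreducibility.
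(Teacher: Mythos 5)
Your proof is correct. Part (1) is exactly the paper's argument: apply Lemma \ref{lem:prod} with $H = G_\red$, $N = G^0$, using that $\pi_0(G)$ is smooth and that $G_\red(\bar k) = G(\bar k)$. For part (2) you take a genuinely different, more geometric route. The paper works throughout with $\bar k$-points: it shows $G/H$ is finite iff $G/H^0_\red$ is finite (via $(G/H)(\bar k) = G(\bar k)/H_\red(\bar k)$), deduces that the homogeneous space $G^0_\red/H^0_\red$ is finite, and then invokes the fact that a finite, smooth, connected homogeneous space must be trivial to get $G^0_\red = H^0_\red$. You instead run a dimension count through Remark \ref{rem:quot}(iv) ($\dim(G/H) = \dim G - \dim H$, so $G/H$ finite forces $\dim H^0 = \dim G^0$) and then use the geometric irreducibility of $G^0$ from Theorem \ref{thm:neut}(3) together with uniqueness of the reduced structure on a closed subset to force $H^0_\red = G^0_\red$; likewise you get finiteness of $G/G^0_\red$ from equidimensionality in dimension $0$ rather than from counting $\bar k$-points. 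Both arguments are sound; yours trades the ``smooth connected finite $\Rightarrow$ trivial'' step for topological irreducibility, and your explicit remark that one cannot use an exact sequence $1 \to G^0/G^0_\red \to G/G^0_\red \to G/G^0 \to 1$ (since $G^0_\red$ need not be normal, cf.\ Example \ref{ex:nonnorm}) correctly identifies the pitfall that both proofs must route around.
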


\begin{proof}
(1) This follows from Lemma \ref{lem:prod}, since $G/G^0 \cong \pi_0(G)$ 
is smooth and $G(\bar{k}) = G_\red(\bar{k})$.

(2) Let $H \subseteq G$ be a subgroup scheme. Since $G/H$ 
is of finite type, the finiteness of $G/H$ is equivalent to
the finiteness of $(G/H)(\bar{k}) = G(\bar{k})/H(\bar{k}) 
= G(\bar{k})/H_\red(\bar{k})$. Thus, $G/H_\red$ is finite
if and only if so is $G/H$. Likewise, using the finiteness 
of $H/H^0$, one may check that $G/H$ is finite if and only
if so is $G/H^0_\red$. Under these conditions, the homogeneous
space $G^0_\red/H^0_\red$ is finite as well; since it is also
smooth and connected, it follows that $G^0_\red = H^0_\red$,
i.e., $G^0_\red \subseteq H$. 

To complete the proof, it suffices to check that $G/G^0_\red$ is 
finite, or equivalently that $G(\bar{k})/G^0(\bar{k})$ is finite.
But this follows from the finiteness of $G/G^0$.
\end{proof}

\begin{definition}\label{def:split}
An exact sequence of group schemes (\ref{eqn:ex}) is called
\textit{split} if $q : G \to Q$ has a section which is
a homomorphism.
\end{definition}

Any such section $s$ yields an endomorphism $r := s \circ q$
of the group scheme $G$ with kernel $N$; moreover, $r$
may be viewed as a retraction of $G$ to the image of $s$, 
isomorphic to $H$. By Remark \ref{rem:sd} (ii), this identifies 
(\ref{eqn:ex}) with the exact sequence
\[ 1 \longrightarrow N \stackrel{i}{\longrightarrow} 
H \ltimes N \stackrel{r}{\longrightarrow} H \longrightarrow 1. \]

\subsection{The relative Frobenius morphism}
\label{subsec:trfm}

Throughout this subsection, we assume that the ground field $k$ 
has characteristic $p > 0$.

Let $X$ be a $k$-scheme and $n$ a positive integer. The
\textit{$n$th absolute Frobenius morphism of $X$} is the
endomorphism 
\[ F^n_X : X \longrightarrow X \] 
which is the identity on the underlying topological space 
and such that the homomorphism of sheaves of algebras
$(F^n_X )^\# : \cO_X \to (F^n_X )_*(\cO_X) = \cO_X$
is the $p^n$th power map, $f \mapsto f^{p^n}$.

Clearly, every morphism of $k$-schemes $f : X \to Y$ lies
in a commutative square
\[
\xymatrix{
X \ar[r]^-{f} \ar[d]_{F^n_X} & Y \ar[d]^{F^n_Y} \\ 
X \ar[r]^-{f} & Y.  \\}
\]
Note that $F^n_X$ is generally not a morphism of $k$-schemes,
since the $p^n$th power map is generally not $k$-linear.
To address this, define a $k$-scheme $X^{(n)}$ by the cartesian square
\[
\xymatrix{
X^{(n)} \ar[r] \ar[d] & X \ar[d]^{\pi} \\ 
\Spec(k) \ar[r]^-{F^n_k} & \Spec(k),  \\}
\]
where $\pi$ denotes the structure map and $F^n_k := F^n_{\Spec(k)}$ 
corresponds to the $p^n$th power map of $k$. Then 
$F^n_X$ factors through a unique morphism of $k$-schemes
\[ F^n_{X/k} : X \longrightarrow X^{(n)}, \]
the \textit{$n$th relative Frobenius morphism}. Equivalently, 
the above cartesian square extends to a commutative diagram
\[
\xymatrix{X \ar[d]_{F^n_{X/k}} \ar[dr]^{F^n_X}\\
X^{(n)} \ar[r] \ar[d] & X \ar[d]^{\pi} \\ 
\Spec(k) \ar[r]^-{F^n_k} & \Spec(k).  \\}
\]

The underlying topological space of $X^{(n)}$ is $X$
again, and the structure sheaf is given by
\[ \cO_{X^{(n)}}(U) = \cO_X(U) \otimes_{F^n} k \]
for any open subset $U \subseteq X$, where the right-hand side 
denotes the tensor product of $\cO_X(U)$ and $k$ over $k$
acting on $\cO_X(U)$ via scalar multiplication, and on $k$ 
via the $p^n$th power map. Thus, we have in 
$\cO_X(U) \otimes_{F^n} k$
\[ t f \otimes u = f \otimes t^{p^n} u \]
for any $f \in \cO_X(U)$ and $t,u \in k$. The $k$-algebra
structure on $\cO_X(U) \otimes_{F^n} k$ is defined by 
\[ t (f \otimes u) = f \otimes tu \]
for any such $f$, $t$ and $u$. The morphism
$F^n_{X/k}$ is again the identity on the underlying topological
spaces; the associated homomorphism of sheaves of algebras
is the map 
\begin{equation}\label{eqn:trfm} 
(F^n_{X/k})^\# : \cO_X(U) \otimes_{F^n} k \longrightarrow \cO_X(U), 
\quad f \otimes t \longmapsto t f^{p^n}. 
\end{equation} 

Using this description, one readily checks that 
\textit{the formation of the $n$th relative Frobenius morphism 
commutes with field extensions}. Moreover, for any positive integers 
$m$, $n$, we have an isomorphism of schemes 
\[ (X^{(m)})^{(n)} \cong X^{(m+n)} \]
that identifies the composition $F^n_{X^{(m)}/k} \circ F^m_{X/k}$
with $F^{m+n}_{X/k}$. In particular, $F^n_{X/k}$ may be seen as
the $n$th iterate of the relative Frobenius morphism $F_{X/k}$.

Also, note that \textit{the formation of $F^n_{X/k}$ is compatible
with closed subschemes and commutes with finite products}. 
Specifically, any morphism of $k$-schemes $f: X \to Y$ induces 
a morphism of $k$-schemes $f^{(n)} : X^{(n)} \to Y^{(n)}$ such that 
the square 
\[
\xymatrix{
X \ar[r]^-f \ar[d]_{F^n_{X/k}} & Y \ar[d]^{F^n_{Y/k}} \\ 
X^{(n)} \ar[r]^-{f^{(n)}} & Y^{(n)}  \\}
\]
commutes. If $f$ is a closed immersion, then so is $f^{(n)}$.
Moreover, for any two schemes $X$, $Y$, the map
\[ p_1^{(n)} \times p_2^{(n)} : 
(X \times Y)^{(n)} \longrightarrow
X^{(n)} \times Y^{(n)} \]
is an isomorphism (where $p_1: X \times Y \to X$,
$p_2: X \times Y \to Y$ denote the projections), and the triangle
\[
\xymatrix{
X \times Y \ar[r]^-{F^n_{X \times Y/k}} \ar[dr]_{F^n_{X/k} \times F^n_{Y/k}} 
& (X \times Y)^{(n)} \ar[d]^{\cong} \\
& X^{(n)} \times Y^{(n)} \\
}
\]
commutes. 

We now record some geometric properties of the relative Frobenius 
morphism:

\begin{lemma}\label{lem:frob}
Let $X$ be a scheme of finite type and $n$ a positive integer.

\begin{enumerate}

\item The morphism $F^n_{X/k}$ is finite and purely inseparable. 

\item The scheme-theoretic image of $F^n_{X/k}$ is geometrically 
reduced for $n \gg 0$.

\end{enumerate}

\end{lemma}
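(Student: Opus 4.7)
The plan is to reduce both parts to the affine case $X = \Spec(A)$ and analyze the ring map $\phi \colon A \otimes_{F^n} k \to A$, $a \otimes t \mapsto t a^{p^n}$, which describes $F^n_{X/k}$ locally by formula (\ref{eqn:trfm}).

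For (1), I pick $k$-algebra generators $a_1,\dots,a_m$ of $A$. The image of $\phi$ contains $a_i^{p^n}$ for each $i$ and all of $k$, and a standard division argument shows that $A$ is spanned as a module over $\phi(A \otimes_{F^n} k)$ by the monomials $a_1^{e_1}\cdots a_m^{e_m}$ with $0 \leq e_i < p^n$; hence $A$ is finite over $A \otimes_{F^n} k$. For pure inseparability, I note that $F^n_{X/k}$ is the identity on underlying topological spaces and so is injective on points; at each $x \in X$ the induced residue field map has image containing $\kappa(x)^{p^n}$, and the extension $\kappa(x) \supseteq \kappa(x)^{p^n}$ is purely inseparable.

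For (2), locally the scheme-theoretic image $Y \subseteq X^{(n)}$ is $\Spec(k[A^{p^n}])$, the spectrum of the $k$-subalgebra of $A$ generated by $p^n$-th powers. Since $F^n_{X/k}$ is finite by (1), it is quasi-compact, so the formation of $Y$ commutes with the flat base change $X_{\bar k} \to X$; combined with the fact that the relative Frobenius itself commutes with field extensions, this reduces the problem to the case $k = \bar{k}$.

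The key observation over a perfect base field is that $k[A^{p^n}] = A^{p^n}$: since every $\lambda \in k$ admits a $p^n$-th root and the ``freshman's dream'' $(x+y)^{p^n} = x^{p^n}+y^{p^n}$ holds in characteristic $p$, one has
\[
\sum_i \lambda_i \, a_i^{p^n} = \Big( \sum_i \lambda_i^{1/p^n} a_i \Big)^{p^n}.
\]
Now $A_{\bar k}$ is finitely generated over $\bar k$, so its nilradical $\mathrm{Nil}(A_{\bar k})$ is nilpotent, say of index $N$; for $n$ with $p^n \geq N$, any $b \in A_{\bar k}$ whose $p^n$-th power is nilpotent must itself be nilpotent, and then $b^{p^n} \in \mathrm{Nil}(A_{\bar k})^{p^n} \subseteq \mathrm{Nil}(A_{\bar k})^N = 0$. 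Hence $A_{\bar k}^{p^n}$ is reduced, so $Y_{\bar k}$ is reduced, proving geometric reducedness. The main point to justify carefully is the compatibility of the scheme-theoretic image with the base change to $\bar k$; once that and the identity $k[A^{p^n}] = A^{p^n}$ over a perfect field are in place, the reducedness statement becomes almost immediate.
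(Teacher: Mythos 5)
Your proof is correct and follows essentially the same route as the paper's: finiteness and integrality via the $k$-subalgebra generated by $p^n$-th powers, and reducedness of the image by passing to $\bar{k}$ and using that the $p^n$-th power of a nilpotent element vanishes for $n \gg 0$. If anything, you are more careful than the paper's terse argument in performing the base change to $\bar{k}$ \emph{before} the nilpotency step, which is indeed the right order, since over an imperfect field the image $k[A^{p^n}]$ need not be reduced merely because $(F^n_{X/k})^{\#}$ kills the nilradical.
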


\begin{proof}
(1) Since $F^n_{X/k}$ is the identity on the underlying topological
spaces, we may assume that $X$ is affine. Let $R := \cO(X)$, then 
the image of the homomorphism 
$(F^n_{X/k})^\# : R \otimes_{F^n} k \to R$ 
is the $k$-subalgebra $k R^{p^n}$ generated by the $p^n$th powers.
Thus, $F^n_{X/k}$ is integral, and hence finite since $R$ is of finite 
type. Also, $F^n_{X/k}$ is clearly purely inseparable.

(2) Let $I \subset R$ denote the ideal consisting of nilpotent elements. 
Since the algebra $R$ is of finite type, there exists a positive
integer $n_0$ such that $f^n = 0$ for all $f \in I$ and all
$n \geq n_0$. Choose $n_1$ such that $p^{n_1} \geq n_0$, then 
$(F^n_{X/k})^\#$ sends $I$ to $0$ for any $n \geq n_1$. Thus,
the image of $F^n_{X/k}$ is reduced for $n \gg 0$. Since 
the formation of $F^n_{X/k}$ commutes with field extensions, 
this completes the proof.
\end{proof}

\begin{proposition}\label{prop:frob}
Let $G$ be a $k$-group scheme.

\begin{enumerate}

\item There is a unique structure of $k$-group scheme on $G^{(n)}$
such that $F^n_{G/k}$ is a homomorphism.

\item If $G$ is algebraic, then $\Ker(F^n_{G/k})$ is infinitesimal.
Moreover, $G/\Ker(F^n_{G/k})$ is smooth for $n \gg 0$.

\end{enumerate}

\end{proposition}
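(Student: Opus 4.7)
The plan is to combine two ingredients: the functorial properties of the twist $(-)^{(n)}$ recorded just above the statement, and Proposition \ref{prop:fact} applied to the relative Frobenius, together with Lemma \ref{lem:frob}.

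For part (1), I will define the group structure on $G^{(n)}$ by pushing that of $G$ through the functor $(-)^{(n)}$. Concretely, I set the multiplication on $G^{(n)}$ to be the composite $G^{(n)} \times G^{(n)} \cong (G \times G)^{(n)} \xrightarrow{m^{(n)}} G^{(n)}$, and similarly I take $i^{(n)}$ and $e^{(n)}$ as inverse and neutral element. The associativity, inverse, and neutral axioms transport from $G$ by applying $(-)^{(n)}$ to the defining commutative diagrams. The naturality square of the relative Frobenius with respect to $m$ reads $m_{G^{(n)}} \circ (F^n_{G/k} \times F^n_{G/k}) = F^n_{G/k} \circ m$ (after identifying $(G \times G)^{(n)}$ with $G^{(n)} \times G^{(n)}$), which is exactly the statement that $F^n_{G/k}$ is a homomorphism; the same goes for $i$ and $e$. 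For uniqueness I will reduce to the affine case by covering $G$ with affine open subgroup neighborhoods and checking, with the explicit formula for $(F^n_{G/k})^\#$ given in the excerpt, that there is only one Hopf algebra structure on $\cO(G) \otimes_{F^n} k$ for which $(F^n_{G/k})^\#$ is a Hopf algebra morphism.

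For the first half of (2), I will invoke Lemma \ref{lem:frob}(1): since $G$ is algebraic, $F^n_{G/k}$ is finite, so $\Ker(F^n_{G/k})$, as a fiber of a finite morphism over a closed point, is a finite scheme. Because $F^n_{G/k}$ is moreover the identity on the underlying topological space, this kernel is set-theoretically concentrated at the single point $\{e\}$. A finite group scheme whose underlying space is $\{e\}$ is, by definition, infinitesimal.

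For the smoothness of $G/\Ker(F^n_{G/k})$ when $n$ is large, the plan is to apply Proposition \ref{prop:fact} to the homomorphism $F^n_{G/k}$: it realizes $G/\Ker(F^n_{G/k})$ as a closed subgroup scheme of $G^{(n)}$, namely the scheme-theoretic image of $F^n_{G/k}$. Lemma \ref{lem:frob}(2) then ensures that this image is geometrically reduced for $n$ sufficiently large, and Proposition \ref{prop:smooth} upgrades geometric reducedness to smoothness. The step I expect to require the most care is the uniqueness clause in (1), where the bare functorial setup does not immediately force a single answer; the cleanest remedy is the affine-local Hopf algebra computation just described, though one can also argue that $F^n_{G/k}$ is a universal homeomorphism and hence enough of an epimorphism in the category of schemes to pin the structure down. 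Once (1) is in hand, the remaining assertions fall out mechanically from Lemma \ref{lem:frob}, Proposition \ref{prop:fact}, and Proposition \ref{prop:smooth}.
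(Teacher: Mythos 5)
Your treatment of existence in (1) and of both claims in (2) is correct and is essentially the paper's own argument: the paper proves (1) by appealing to the compatibility of $F^n_{-/k}$ with finite products, and proves (2) by combining Lemma \ref{lem:frob} with Proposition \ref{prop:smooth}. Your use of Proposition \ref{prop:fact} to identify $G/\Ker(F^n_{G/k})$ with the scheme-theoretic image of $F^n_{G/k}$, which Lemma \ref{lem:frob}(2) makes geometrically reduced and Proposition \ref{prop:smooth} makes smooth, is exactly the intended route, and your argument that the kernel is finite and supported at $e$ (hence infinitesimal) is fine.

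The weak point is the uniqueness clause of (1): both remedies you propose break down when $G$ is non-reduced. A universal homeomorphism is not an epimorphism of schemes (already $X_\red \to X$ is not), and by the explicit formula $(F^n_{G/k})^{\#}(f \otimes t) = t f^{p^n}$ the relative Frobenius kills every local section $f$ with $f^{p^n} = 0$; so for non-reduced $G$ it does not separate morphisms out of $G^{(n)}$, and for the same reason the requirement that $(F^n_{G/k})^{\#}$ be a Hopf algebra morphism does not pin down the comultiplication on $\cO(G^{(n)})$. Concretely, for $G = \alpha_{p^n}$ the morphism $F^n_{G/k}$ factors through $\Spec(k)$, i.e., it is the constant map to the neutral element, and is therefore a homomorphism for \emph{every} group scheme structure on $G^{(n)} \cong \Spec(k[y]/(y^{p^n}))$ with neutral element at $y=0$ --- of which there are at least two, the $\alpha_{p^n}$- and $\mu_{p^n}$-structures. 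So neither of your arguments can close this step; note that the paper's own one-line proof also only produces the canonical base-change structure, and the ``uniqueness'' should be understood as the canonicity of that structure (it holds as a rigidity statement when $G$ is reduced, where $(F^n_{G/k})^{\#}$ is injective and $F^n_{G/k}$ is indeed an epimorphism).
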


\begin{proof}
(1) This follows from the fact that the formation of the relative Frobenius 
morphism commutes with finite products.

(2) This is a consequence of the above lemma together with
Proposition \ref{prop:smooth}.
\end{proof}

\medskip

\noindent
\textit{Notes and references}.

Most of the notions and results presented in this section 
can be found in \cite{DG} and \cite{SGA3} in a much greater
generality. We provide some specific references:

Proposition \ref{prop:smooth} is taken from 
\cite[VIA.1.3.1]{SGA3}; Proposition \ref{prop:orb} follows 
from results in \cite[II.5.3]{DG}; Lemma \ref{lem:aff} 
is a special case of \cite[I.2.2.6]{DG}; Theorem \ref{thm:neut}
follows from \cite[II.5.1.1, II.5.1.8]{DG}; Proposition 
\ref{prop:finite} holds more generally for locally algebraic 
groups, see \cite[II.2.2.4]{DG}; Example \ref{ex:red} 
is in \cite[VIA.1.3.2]{SGA3}.

Our definition of torsors is somewhat ad hoc: what we call 
$G$-torsors over $Y$ should be called $G_Y$-torsors, 
where $G_Y$ denotes the group scheme $p_2 : G \times Y \to Y$
(see \cite[III.4.1]{DG} for general notions and results on torsors).

Proposition \ref{prop:cat} is a special case of a result of
Mumford, see \cite[Prop.~0.1]{MFK}; Proposition \ref{prop:hom} 
is a consequence of \cite[II.5.5.1]{DG}; Proposition 
\ref{prop:fact} is a special case of \cite[VIA.5.4.1]{SGA3}.

Theorem \ref{thm:hom} (on the existence of homogeneous 
spaces) is a deep result, since no direct construction of these
spaces is known in this generality. In the setting of affine 
algebraic groups, homogeneous spaces may be constructed by a method
of Chevalley; this is developed in \cite[III.3.5]{DG}.

Propositions \ref{prop:quot} and \ref{prop:prod} are closely 
related to results in \cite[VIA.5.3]{SGA3}. We have provided 
additional details to be used later.

Proposition \ref{prop:frob} (2) holds more generally 
for locally algebraic groups, see \cite[VII.8.3]{SGA3}.

Many interesting extensions of algebraic groups are not split, 
but quite a few of them turn out to be \textit{quasi-split}, 
i.e., split after pull-back by some isogeny. For example, the
extension 
\[ 1 \longrightarrow G^0 \longrightarrow G \longrightarrow 
\pi_0(G) \longrightarrow 1 \] 
is quasi-split for any algebraic group $G$ (see \cite[Lem.~5.11]{BS} 
when $G$ is smooth and $k$ is algebraically closed of characteristic
$0$; the general case follows from \cite[Thm.~1.1]{Br4}). 
Further instances of quasi-split extensions will be obtained in
Theorems \ref{thm:qc}, \ref{thm:cag} and \ref{thm:cagc} below. 
On the other hand, the group $G$ of upper triangular unipotent 
$3 \times 3$ matrices lies in an extension
\[ 1 \longrightarrow \bG_a \longrightarrow G \longrightarrow 
\bG_a^2 \longrightarrow 1, \]
which is not quasi-split. It would be interesting to determine
those classes of algebraic groups that yield quasi-split extensions.

\section{Proof of Theorem \ref{thm:affi}}
\label{sec:ptaffi}

\subsection{Affine algebraic groups}
\label{subsec:aag}

In this subsection, we obtain several criteria for an algebraic 
group to be affine, which will be used throughout the sequel. 
We begin with a classical result:

\begin{proposition}\label{prop:lin}
Every affine algebraic group is linear.
\end{proposition}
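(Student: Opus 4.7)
The plan is to combine Proposition \ref{prop:rep} with Proposition \ref{prop:hom}(2). Let $G = \Spec(A)$ be an affine algebraic group, and consider its action on itself by right multiplication $\rho : G \times G \to G$, $(x,y) \mapsto y x^{-1}$; by Example \ref{ex:actions}, this action is faithful. As in Example \ref{ex:rep}(ii), it induces a linear representation of $G$ on $A = \cO(G)$ by algebra automorphisms, and this representation determines the action in view of the equivalence of categories between affine schemes and algebras.

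Since $G$ is algebraic, $A$ is a finitely generated $k$-algebra, say by $f_1,\dots,f_n$. By Proposition \ref{prop:rep}, each $f_i$ lies in some finite-dimensional $G$-submodule $V_i \subseteq A$, so $V := V_1 + \cdots + V_n$ is a finite-dimensional $G$-submodule of $A$ that contains a set of algebra generators. The associated homomorphism of algebraic groups $\varphi : G \to \GL(V)$ is then the candidate closed immersion.

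It remains to verify that $\Ker(\varphi)$ is trivial, and this is the only step requiring care. For any scheme $S$ and $g \in \Ker(\varphi)(S)$, the automorphism induced by $g$ on $\cO(S) \otimes_k V$ is the identity. Because that automorphism extends to an $\cO(S)$-algebra automorphism of $\cO(S) \otimes_k A$ and $V$ generates $A$ as a $k$-algebra, $g$ acts trivially on the whole of $\cO(S) \otimes_k A$. By the equivalence of categories mentioned above, $g$ lies in the kernel of the right multiplication action of $G$ on itself; by its faithfulness, $g$ is the neutral element of $G(S)$. Hence $\Ker(\varphi)$ is trivial, and Proposition \ref{prop:hom}(2) concludes that $\varphi$ is a closed immersion, exhibiting $G$ as a closed subgroup scheme of $\GL(V) \cong \GL_{\dim V}$.

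The one mild obstacle is the kernel computation, since the statement of Proposition \ref{prop:rep} gives us only \emph{some} finite-dimensional $G$-submodule containing each generator; the point is that by taking the sum we obtain a single finite-dimensional $G$-submodule generating $A$ as an algebra, which is exactly what is needed to transfer triviality of $\varphi$ on $V$ to triviality of the full regular representation, and thus of the action.
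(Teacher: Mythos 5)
Your proof is correct and follows essentially the same route as the paper: the paper cites Proposition \ref{prop:emb} (whose proof is exactly your construction of a finite-dimensional submodule of $\cO(G)$ containing algebra generators, via Proposition \ref{prop:rep}) and then deduces faithfulness from the faithfulness of the regular action before invoking Proposition \ref{prop:hom}(2). The only cosmetic differences are your use of right rather than left multiplication and your phrasing of the kernel computation algebraically (an algebra automorphism fixing a generating set is the identity) rather than geometrically via the closed equivariant immersion $G \hookrightarrow V$.
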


\begin{proof}
Let $G$ be an affine algebraic group. By Proposition
\ref{prop:emb}, there exist a finite-dimensional $G$-module
$V$ and a closed $G$-equivariant immersion $\iota : G \to V$,
where $G$ acts on itself by left multiplication. Since the latter
action is faithful, the $G$-action on $V$ is faithful as well. 
In other words, the corresponding homomorphism $\rho: G \to \GL(V)$ 
has a trivial kernel. By Proposition \ref{prop:hom}, it follows 
that $\rho$ is a closed immersion.
\end{proof}

Next, we relate the affineness of algebraic groups with
that of subgroup schemes and quotients:

\begin{proposition}\label{prop:tower}
Let $H$ be a subgroup scheme of an algebraic group $G$.

\begin{enumerate}

\item If $H$ and $G/H$ are both affine, then $G$ is affine as well.

\item If $G$ is affine, then $H$ is affine. If in addition
$H \trianglelefteq G$, then $G/H$ is affine.

\end{enumerate}

\end{proposition}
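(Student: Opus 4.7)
Part (1) is a descent argument. By Theorem \ref{thm:hom}(1), the quotient morphism $q : G \to G/H$ is an $H$-torsor, hence affine by Proposition \ref{prop:des}(1) (since $H$ is affine). Combined with the hypothesis that $G/H$ is affine, this forces $G$ itself to be affine (preimage of the affine base under an affine morphism). For part (2a), Proposition \ref{prop:hom}(1) applied to the inclusion $H \hookrightarrow G$ shows every subgroup scheme is closed, so $H$ is a closed subscheme of the affine scheme $G$, and hence affine.

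The substantive assertion is the second half of (2), that $G/H$ is affine when $G$ is affine and $H \trianglelefteq G$. Since $G/H$ is an algebraic group by Theorem \ref{thm:hom}(3), Proposition \ref{prop:lin} reduces the problem to exhibiting a closed immersion $G/H \hookrightarrow \GL(V)$ for some finite-dimensional $V$; equivalently, via Propositions \ref{prop:fact} and \ref{prop:hom}(2), to producing a homomorphism $\rho : G \to \GL(V)$ with $\Ker(\rho) = H$. My plan is a Chevalley-style construction. Let $I \subseteq \cO(G)$ be the ideal of $H$; by Proposition \ref{prop:rep} applied to the right translation action of $G$ on $\cO(G)$, there is a finite-dimensional $G$-submodule $V_0 \subseteq \cO(G)$ containing a finite generating set of $I$. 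Setting $W := V_0 \cap I$, the subspace $W$ generates $I$, and one checks that the scheme-theoretic stabilizer of $W$ in $G$ (acting on $V_0$) equals $H$: an element $g \in G$ preserves $W$ if and only if it preserves the ideal it generates, which happens if and only if $g$ preserves the vanishing scheme $V(I) = H$.

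Passing to top exterior powers, $H$ is also the stabilizer of the line $L := \Lambda^{\dim W} W$ inside the $G$-module $\Lambda^{\dim W} V_0$, and acts on $L$ through a character $\chi : H \to \bG_m$. The main obstacle is to upgrade this ``stabilizer of a line'' description into ``kernel of a linear representation.'' This is where the normality of $H$ enters essentially: since $gHg^{-1} = H$ for every $g \in G$, each $G$-translate of $L$ is $H$-stable with $H$ acting by a character conjugate to $\chi$, so one may assemble a finite-dimensional $G$-module $V$ on which $H$ acts trivially --- for instance by tensoring $\Lambda^{\dim W} V_0$ with a suitable auxiliary $G$-equivariant determinantal twist to cancel $\chi$, or by forming a $G$-invariant direct sum of the $G$-translates of $L$ together with their duals --- while the total kernel in $G$ remains exactly $H$ thanks to Chevalley's identification. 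The resulting homomorphism $\rho$ then satisfies $\Ker(\rho) = H$, and the induced closed immersion $G/H \hookrightarrow \GL(V)$ combined with Proposition \ref{prop:lin} shows that $G/H$ is affine. The rigorous execution of the character-cancellation step is the technical heart of the argument.
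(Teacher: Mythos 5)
Parts (1) and the first half of (2) are correct and coincide with the paper's own argument: the quotient map $G \to G/H$ is an $H$-torsor, hence an affine morphism when $H$ is affine (Proposition \ref{prop:des} with Theorem \ref{thm:hom}), so $G$ is affine when $G/H$ is; and $H$ is closed in $G$ (Proposition \ref{prop:hom}), hence affine when $G$ is. For the second half of (2) the paper gives no proof at all, citing \cite[III.3.7.3]{DG} and \cite[VIB.11.7]{SGA3}, so you are attempting something the text deliberately outsources. Your strategy is the classical one, and its first stages are sound: $W = V_0 \cap I$ generates the ideal $I$ of $H$, and since $V_0$ is $G$-stable the transporter argument ($g$ stabilizes $W$ iff $g$ stabilizes $I$ iff $g$ stabilizes $H$) does identify $H$ with the scheme-theoretic stabilizer of $W$, and then of the line $L = \Lambda^{\dim W}W$.

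The gap is exactly where you place it, and neither device you sketch closes it. Tensoring with a ``determinantal twist'' can only cancel the character $\chi$ of $H$ on $L$ if $\chi$ extends to (a power of) a character of $G$, which there is no reason to expect; and ``the direct sum of the $G$-translates of $L$ together with their duals'' is not a well-defined $G$-module (the translates are not independent, and $H$ acts on different translates by different conjugate characters, so no single dual twist kills them all). The standard completion is a different construction: let $U \subseteq \Lambda^{\dim W}V_0$ be the sum of the $H$-weight spaces $U_\psi$. Normality of $H$ makes $U$ a $G$-submodule with $G$ permuting the $U_\psi$, so $G$ acts by conjugation on $M := \bigoplus_\psi \End(U_\psi) \subseteq \End(U)$. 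Then $H$ acts trivially on $M$ (it acts by a scalar on each $U_\psi$), while any $g$ acting trivially on $M$ preserves each $U_\psi$ and centralizes the full matrix algebra $\End(U_\psi)$, hence acts by a scalar there, hence stabilizes $L$, hence lies in $H$; so $\Ker(G \to \GL(M)) = H$, and Propositions \ref{prop:fact}, \ref{prop:hom} and \ref{prop:lin} finish. Even this step needs care in the present generality (one must make sense of the weight decomposition and its $G(R)$-equivariance for a possibly non-smooth $H$ over an arbitrary algebra $R$), which is presumably why the paper simply refers to \cite[III.3.7.3]{DG}.
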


\begin{proof}
(1) Since $H$ is affine, the quotient morphism $q : G \to G/H$ 
is affine as well, in view of Proposition \ref{prop:des}
and Theorem \ref{thm:hom} (3). This yields the statement.

(2) The first assertion follows from the closedness of $H$ in $G$ 
(Proposition \ref{prop:hom}). The second assertion is proved 
in \cite[III.3.7.3]{DG}, see also \cite[VIB.11.7]{SGA3}.
\end{proof}

\begin{remark}\label{rem:tower}
With the notation and assumptions of the above proposition,
$G$ is smooth (resp.~proper, finite) if $H$ and $G/H$ are both
smooth (resp.~proper, finite), as follows from the same argument. 
Also, $G$ is connected if $H$ and $G/H$ are both connected;
since all these schemes have $k$-rational points, this is
equivalent to geometric connectedness.
\end{remark}

The above proposition yields that every algebraic
group has an ``affine radical'':

\begin{lemma}\label{lem:ar}
Let $G$ be an algebraic group.

\begin{enumerate}

\item $G$ has a largest smooth connected normal affine 
subgroup scheme, $L(G)$.

\item $L(G/L(G))$ is trivial.

\item The formation of $L(G)$ commutes with separable 
algebraic field extensions.

\end{enumerate}

\end{lemma}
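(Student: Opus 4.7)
Proposal. For \textit{(1)}, the plan is to verify that the class $\mathcal{C}$ of smooth connected normal affine subgroup schemes of $G$ is closed under the operation $(H_1,H_2) \mapsto H_1 \cdot H_2$, and then invoke a dimension argument. If $H_1,H_2 \in \mathcal{C}$, both being normal means Proposition \ref{prop:prod} (1), (4) realizes $H_1 \cdot H_2$ as a normal subgroup scheme of $G$ and as the image of the homomorphism $H_1 \ltimes H_2 \to G$ with kernel $H_1 \cap H_2$. As a scheme, the source is $H_1 \times H_2$, hence smooth, geometrically connected (here I use that each $H_i = H_i^0$ is geometrically irreducible by Theorem \ref{thm:neut} (3)), and affine. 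Applying Proposition \ref{prop:tower} (2) and Remark \ref{rem:tower} to the exact sequence $1 \to H_1 \cap H_2 \to H_1 \ltimes H_2 \to H_1 \cdot H_2 \to 1$ transfers these three properties to $H_1 \cdot H_2$. Since members of $\mathcal{C}$ have dimension bounded by $\dim G$, I would then choose $L \in \mathcal{C}$ of maximal dimension; for any $H \in \mathcal{C}$, the product $L \cdot H$ lies in $\mathcal{C}$ and contains $L$, so the closed immersion $L \hookrightarrow L \cdot H$ between smooth irreducible schemes of equal dimension must be an isomorphism, forcing $H \subseteq L$.

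For \textit{(2)}, let $M$ be a smooth connected normal affine subgroup scheme of $G/L(G)$, and set $H := q^{-1}(M)$, where $q : G \to G/L(G)$ is the quotient homomorphism. Proposition \ref{prop:sub} shows $H$ is a normal subgroup scheme of $G$, and the exact sequence $1 \to L(G) \to H \to M \to 1$ combined with Proposition \ref{prop:tower} (1) and Remark \ref{rem:tower} makes $H$ smooth, connected and affine. By the maximality established in \textit{(1)}, $H \subseteq L(G)$, hence $H = L(G)$, and $M$ is trivial.

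For \textit{(3)}, the inclusion $L(G)_K \subseteq L(G_K)$ holds for any extension $K/k$, since smoothness, connectedness (via the geometric irreducibility of $L(G) = L(G)^0$ from Theorem \ref{thm:neut} (3)), normality and affineness all pass to $L(G)_K$. The substantive content is the reverse inclusion. I would first handle $K = k_s$ by Galois descent: the uniqueness statement in \textit{(1)} forces $L(G_{k_s})$ to be stable under the natural $\Gamma$-action on $G_{k_s}$, and hence to descend to a closed subgroup scheme $L' \subseteq G$ with $L'_{k_s} = L(G_{k_s})$. Faithful flatness of $\Spec k_s \to \Spec k$ then transfers smoothness, affineness and normality from $L'_{k_s}$ to $L'$, and the identity $\pi_0(L')_{k_s} = \pi_0(L'_{k_s})$ together with connectedness of $L'_{k_s}$ forces $L'$ to be connected. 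Thus $L' \in \mathcal{C}$, so $L' \subseteq L(G)$, and base change yields $L(G_{k_s}) \subseteq L(G)_{k_s}$. For a general separable algebraic extension $K/k$, fix an embedding $K \hookrightarrow k_s$; applying the established identity to both $G$ and $G_K$ gives $L(G_K)_{k_s} = L(G_{k_s}) = L(G)_{k_s} = (L(G)_K)_{k_s}$, and faithful flatness of $\Spec k_s \to \Spec K$ yields $L(G_K) = L(G)_K$.

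The main obstacle is the Galois descent step in \textit{(3)}: one needs both that a $\Gamma$-stable closed subscheme of $G_{k_s}$ descends to $G$ and that each of the four relevant properties descends under the faithfully flat morphism $\Spec k_s \to \Spec k$. These are standard but not spelled out in the excerpt, so some care is required to assemble them cleanly.
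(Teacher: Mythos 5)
Your proposal is correct and follows essentially the same route as the paper: closure of the class under products via Proposition \ref{prop:prod} and the semi-direct product, a maximal-dimension argument for (1), pull-back along the quotient for (2), and Galois descent for (3). One small citation slip: Remark \ref{rem:tower} transfers properties from $H$ and $G/H$ to $G$, not from $G$ to its quotient, so for smoothness and connectedness of $H_1 \cdot H_2$ you should instead invoke Theorem \ref{thm:hom}(2) together with the surjectivity of the quotient map $H_1 \ltimes H_2 \to H_1 \cdot H_2$ (which is how the paper phrases it: the product is smooth and connected because it is a quotient of $H_1 \ltimes H_2$).
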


\begin{proof}
(1) Let $L_1$, $L_2$ be two smooth connected normal affine 
subgroup schemes of $G$. Then the product 
$L_1 \cdot L_2 \subseteq G$ is a normal subgroup scheme
by Proposition \ref{prop:prod}. Since $L_1 \cdot L_2$
is a quotient of $L_1 \ltimes L_2$, it is smooth and
connected. Also, by using the isomorphism  
$L_1 \cdot L_2/L_1 \cong L_2/L_1 \cap L_2$ together
with Proposition \ref{prop:tower}, we see that 
$L_1 \cdot L_2$ is affine. 

Next, take $L_1$ as above and of maximal dimension.
Then $\dim(L_1 \cdot L_2/L_1) = 0$ by Proposition
\ref{prop:fact}. Since $L_1 \cdot L_2/L_1$ is smooth
and connected, it must be trivial. It follows
that $L_2 \subseteq L_1$; this proves the assertion.

(2) Denote by $M \subseteq G$ the pull-back of 
$L(G/L(G))$ under the quotient map $G \to G/L(G)$.
By Proposition \ref{prop:sub}, $M$ is a normal subgroup
scheme of $G$ containing $L(G)$. Moreover, $M$ is affine,
smooth and connected, since so are $L(G)$ and $M/L(G)$.
Thus, $M = L(G)$; this yields the assertion by Proposition 
\ref{prop:sub} again.

(3) This follows from a classical argument of Galois descent,
see \cite[V.22]{Se1}. More specifically, it suffices to check
that the formation of $L(G)$ commutes with Galois extensions. 
Let $K$ be such an extension of $k$, and $\cG$ the Galois group.
Then $\cG$ acts on $G_K = G \times \Spec(K)$ via its action
on $K$. Let $L' : = L(G_K)$; then for any $\gamma \in \cG$,
the image $\gamma(L')$ is also a smooth connected affine normal
$K$-subgroup scheme of $G_K$. Thus, $\gamma(L') \subseteq L'$. 
Since this also holds for $\gamma^{-1}$, we obtain $\gamma(L') = L'$. 
As $G_K$ is covered by $\cG$-stable affine open subschemes,
it follows (by arguing as in \cite[V.20]{Se1}) 
that there exists a unique subscheme $M \subseteq G$
such that $L' = M_K$. Then $M$ is again a smooth connected
affine normal subgroup scheme of $G$, and hence $M \subseteq L(G)$.
On the other hand, we clearly have $L(G)_K \subseteq L'$;
we conclude that $M = L(G)$.
\end{proof}

\begin{remark}\label{rem:aag}
In fact, the formation of $L$ commutes with separable field
extensions that are not necessarily algebraic. This can be shown 
by adapting the proof of \cite[1.1.9]{CGP}, which asserts 
that the formation of the unipotent radical commutes with all
separable field extensions. That proof involves methods of group 
schemes over rings, which go beyond the scope of this text. 
\end{remark}

Our final criterion for affineness is of geometric origin:

\begin{proposition}\label{prop:act}
Let $a : G \times X \to X$ be an action of an algebraic group 
on an irreducible locally noetherian scheme and let $x \in X(k)$. 
Then the quotient group scheme $\C_G(x)/\Ker(a)$ is affine.
\end{proposition}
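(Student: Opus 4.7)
The plan is to embed a quotient of $\C_G(x)$ into a general linear group by exploiting the action on infinitesimal neighborhoods of $x$, and then to show that the kernel of the resulting embedding is already $\Ker(a)$. First, using Proposition~\ref{prop:quot}(3), I replace $G$ by $G/\Ker(a)$; the stabilizer of $x$ in the quotient is $\C_G(x)/\Ker(a)$, and the new action of $G/\Ker(a)$ on $X$ is faithful. So I reduce to proving: if the action $a$ is faithful, then $\C_G(x)$ is affine.

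By Example~\ref{ex:inf}, the stabilizer $\C_G(x)$ acts by algebra automorphisms on each infinitesimal neighborhood $x_{(n)}=\Spec(\cO_{X,x}/\fm_x^{n+1})$, yielding a tower of linear representations
\[
\rho_n\colon \C_G(x)\longrightarrow \GL\bigl(\cO_{X,x}/\fm_x^{n+1}\bigr).
\]
The kernels $K_n:=\Ker(\rho_n)$ form a descending chain of closed subgroup schemes of the noetherian algebraic group $\C_G(x)$, which therefore stabilizes at some $K:=K_N$. The induced map $\C_G(x)/K\to\GL(\cO_{X,x}/\fm_x^{N+1})$ has trivial kernel, hence is a closed immersion by Proposition~\ref{prop:hom}(2); consequently $\C_G(x)/K$ is affine. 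It remains to show that $K$ is trivial.

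For this, the fixed-point scheme $X^K\subseteq X$, closed by Theorem~\ref{thm:norcent}(2), contains every $x_{(n)}$, so its defining ideal $\cI\subseteq\cO_X$ satisfies $\cI_x\subseteq\bigcap_n\fm_x^{n+1}=0$ by Krull's intersection theorem in the noetherian local ring $\cO_{X,x}$. Quasi-coherence of finite type, coming from local noetherianness of $X$, extends this stalkwise vanishing to $\cI|_U=0$ on some open neighborhood $U\ni x$, so $K$ acts trivially on $U$. Irreducibility of $X$ makes $U$ dense, and a schematic-density argument forces the two morphisms $a|_{K\times X}$ and $p_2\colon K\times X\to X$, which agree on the open $K\times U$ and target the separated scheme $X$, to coincide. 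Thus $K\subseteq\Ker(a)=\{e\}$, and $\C_G(x)=\C_G(x)/K$ is affine.

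The main obstacle is the final schematic-density step: since $X$ is merely irreducible and locally noetherian (not reduced), a dense open need not be schematically dense in $X$, and two morphisms agreeing on it need not coincide globally. The cleanest resolution I foresee passes through the coaction perspective: on an affine open $V=\Spec R$ around $x$, the $K$-action becomes a coaction $R\to R\otimes\cO(K)$, and the hypothesis forces this coaction to become trivial after localization at $\fm_x$; lifting that triviality from the stalk back to $R$ via noetherianness and irreducibility, then propagating across affine opens by equivariance, is where the bulk of the technical work lies.
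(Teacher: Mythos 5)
Your proposal is, in substance, the paper's own proof: the tower of representations $\rho_n$ on the infinitesimal neighborhoods $x_{(n)}$, the stabilization of the kernels $K_n$ by noetherianity of $G$, the closed immersion of the quotient into $\GL(\cO_{X,x}/\fm_x^{N+1})$ via Proposition \ref{prop:hom}, and the identification of the stabilized kernel with $\Ker(a)$ via the fixed-point scheme. (Your preliminary passage to $G/\Ker(a)$ is a cosmetic difference; the paper keeps $\Ker(a)$ and shows $N=\Ker(a)$ directly.) The step you leave open is exactly the step the paper dispatches in one sentence --- ``the union of the $x_{(n)}$ is dense in $X$; it follows that $N$ acts trivially on $X$, by the representability of the fixed point functor'' --- and your analysis of it is the correct one: the ideal sheaf $\cI$ of $X^K$ vanishes at the stalk at $x$ by Krull, hence on an open $U\ni x$ by coherence, hence $\cI\subseteq\ker(\cO_X\to j_*\cO_U)$; this kernel is zero precisely when $U$ is \emph{schematically} dense, i.e., when no associated point of $X$ lies outside $U$, which for irreducible $X$ amounts to $X$ having no embedded components. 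Your worry is genuine at the stated level of generality: for $X=\Spec\, k[u,v]/(v^2,uv)$ and $x$ the point $(u-1,v)$, the closed subscheme $V(v)$ contains every $x_{(n)}$ yet is not all of $X$, so neither your write-up nor the paper's one-line density claim settles such a case as written, and the coaction route you sketch runs into the same obstruction (the triviality of the coaction on $R_{\fm_x}$ only kills the part of the discrepancy not supported on embedded primes away from $x$). In every application in the text ($X$ a group scheme, a homogeneous space, or a variety, as in Corollary \ref{cor:cent}) the scheme $X$ has no embedded points and the argument closes; so I would regard your proof as complete once you either add that hypothesis explicitly or observe that it holds in the intended applications --- which is precisely the precision the paper's own proof elides.
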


\begin{proof}
We may replace $G$ with $\C_G(x)$, and hence assume that 
$G$ fixes $x$. Consider the $n$th infinitesimal 
neighborhoods, $x_{(n)} := \Spec(\cO_{X,x}/\fm^{n+1}_x)$, 
where $n$ runs over the positive integers; these form 
an increasing sequence of finite subschemes of $X$ supported 
at $x$. As seen in Example \ref{ex:inf}, 
each $x_{(n)}$ is stabilized by $G$; 
this yields a linear representation $\rho_n$ of $G$ in 
$\cO_{X,x}/\fm^{n+1}_x =:V_n$, a finite-dimensional vector space.
Denote by $N_n$ the kernel of $\rho_n$; then $N_n$ contains 
$\Ker(a)$. As $\rho_n$ is a quotient of $\rho_{n+1}$, 
we have $N_{n+1} \subseteq N_n$. Since $G$ is of finite type,
it follows that there exists $n_0$ such that 
$N_n = N_{n_0} =: N$ for all $n \geq n_0$. 
Then $N$ acts trivially on each subscheme $x_{(n)}$. 
As $X$ is locally noetherian and irreducible, 
the union of these subschemes is dense in $X$; it follows 
that $N$ acts trivially on $X$, by using the representability 
of the fixed point functor $X^G$ (Theorem 
\ref{thm:norcent}). Thus, $N = \Ker(a)$. So 
$\rho_{n_0} : G \to \GL(V_{n_0})$ factors through a closed 
immersion $j: G/\Ker(a) \to \GL(V_{n_0})$ by Proposition 
\ref{prop:hom}.
\end{proof}

\begin{corollary}\label{cor:cent}
Let $G$ be a connected algebraic group and $Z$ its center.
Then $G/Z$ is affine.
\end{corollary}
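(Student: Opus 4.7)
The plan is to apply Proposition \ref{prop:act} directly to the conjugation action of $G$ on itself, taking the $k$-rational point $x = e$.

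First I would observe that the hypotheses of Proposition \ref{prop:act} are satisfied in this setting. Since $G$ is an algebraic group, it is locally noetherian. Moreover, since $G$ is connected and contains the $k$-rational point $e$, we have $G = G^0$, so by Theorem \ref{thm:neut}(3) the scheme $G$ is geometrically irreducible, hence irreducible. Thus $X := G$ is an irreducible locally noetherian scheme on which the algebraic group $G$ acts via $\Int : G \times G \to G$, and $e \in G(k)$.

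Next I would identify the two relevant subgroup schemes. The stabilizer $\C_G(e)$ for the conjugation action is all of $G$: for any scheme $S$ and any $g \in G(S)$, one has $g \, e \, g^{-1} = e$, so $\C_G(e) = G$. On the other hand, the kernel of the conjugation action is by definition the center $Z$ of $G$ (see Example \ref{ex:actions}). Proposition \ref{prop:act} therefore yields that
\[
\C_G(e)/\Ker(\Int) \;=\; G/Z
\]
is affine, which is the desired conclusion.

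There is essentially no obstacle here, since the corollary is a direct specialization of the preceding proposition; the only minor point to verify is the passage from connectedness of $G$ to irreducibility of the underlying scheme, which is handled by Theorem \ref{thm:neut}(3) applied to $G = G^0$.
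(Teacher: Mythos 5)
Your proof is correct and is essentially identical to the paper's: both apply Proposition \ref{prop:act} to the conjugation action of $G$ on itself at the fixed point $e$, noting that the kernel of this action is $Z$ and that connectedness of $G$ guarantees irreducibility of the underlying scheme. The only difference is that you spell out the verification of the hypotheses (irreducibility via Theorem \ref{thm:neut}(3), the identification $\C_G(e)=G$) which the paper leaves implicit.
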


\begin{proof}
Consider the action of $G$ on itself by inner automorphisms.
Then the kernel of this action is $Z$ and the neutral 
element is fixed. So the assertion follows from Proposition 
\ref{prop:act}.  
\end{proof}

The connectedness assumption in the above corollary cannot
be removed in view of Example \ref{ex:noncon} below.

\subsection{The affinization theorem}
\label{subsec:tat}

Every scheme $X$ is equipped with a morphism to an affine scheme,
namely, the canonical morphism
\[ \varphi = \varphi_X : X \to \Spec \cO(X). \]
The restriction of $\varphi_X$ to any affine open subscheme
$U \subseteq X$ is the morphism $U \to \Spec \cO(X)$ associated
with the restriction homomorphism $\cO(X) \to \cO(U)$.
Moreover, $\varphi$ satisfies the following universal
property: every morphism $f : X \to Y$, where $Y$ is an affine
scheme, factors uniquely through $\varphi$. We say that
$\varphi$ is the \textit{affinization morphism} of $X$, 
and denote $\Spec \cO(X)$ by $\Aff(X)$. When $X$ is of 
finite type, $\Aff(X)$ is not necessarily of finite type;
equivalently, the algebra $\cO(X)$ is not necessarily finitely
generated (even when $X$ is a quasi-projective variety, 
see Example \ref{ex:ell} below).

Also, every morphism of schemes $f: X \to Y$ lies in 
a commutative diagram
\[
\xymatrix{
X \ar[r]^-f \ar[d]_{\varphi_X} 
& Y \ar[d]^{\varphi_Y} \\
\Aff(X) \ar[r]^-{\Aff(f)} & \Aff(Y), \\}
\]
where $\Aff(f)$ is the morphism of affine schemes associated
with the ring homomorphism $f^\# : \cO(Y) \to \cO(X)$. 

For quasi-compact schemes, the formation of the affinization 
morphism commutes with field extensions and finite products, 
as a consequence of Lemma \ref{lem:aff}.
It follows that for any algebraic group $G$, there is a
canonical group scheme structure on $\Aff(G)$ such that
$\varphi_G$ is a homomorphism. Moreover, given an action 
$a$ of $G$ on a quasi-compact scheme $X$, the map
$\Aff(a)$ is an action of $\Aff(G)$ on $\Aff(X)$, compatibly with $a$.

With these observations at hand, we may make an important step 
in the proof of Theorem \ref{thm:affi}:

\begin{theorem}\label{thm:aff}
Let $G$ be an algebraic group, $\varphi : G \to \Aff(G)$
its affinization morphism and $H := \Ker(\varphi)$. 
Then $H$ is the smallest normal subgroup scheme of $G$ 
such that $G/H$ is affine. Moreover, $\cO(H) = k$ and 
$\Aff(G) = G/H$. In particular, $\cO(G) = \cO(G/H)$; 
thus, the algebra $\cO(G)$ is finitely generated.
\end{theorem}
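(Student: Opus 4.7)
The plan is to establish four claims about $H := \Ker(\varphi)$: that $H$ is the minimal normal subgroup scheme with affine quotient, that $\varphi$ factors as $\iota \circ q$ where $q : G \to G/H$ is the quotient map and $\iota : G/H \to \Aff(G)$ induces an isomorphism on global sections, that $G/H$ is affine, and finally that $\cO(H) = k$. The third claim carries the real content; the others will follow formally.

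I would begin with the easy pieces. For minimality, any $N \trianglelefteq G$ with $G/N$ affine produces a morphism $G \to G/N$ to an affine scheme, which must factor through $\varphi$ by the universal property of the affinization, forcing $H \subseteq N$. For the factorization, $\varphi$ is a group homomorphism with $\varphi \vert_H = e$, hence invariant under the right action of $H$ on $G$ by multiplication; Proposition \ref{prop:cat} then produces the unique $\iota : G/H \to \Aff(G)$ with $\varphi = \iota \circ q$. On global sections, $\varphi^\#$ is the identity on $\cO(G)$ by construction of the affinization, while $q^\# : \cO(G/H) \to \cO(G)$ is the inclusion identifying $\cO(G/H)$ with $\cO(G)^H$ (Proposition \ref{prop:cat} again); comparing $\varphi^\# = q^\# \circ \iota^\#$ forces $\cO(G) = \cO(G)^H$ and makes $\iota^\#$ the inverse of $q^\#$, hence an isomorphism.

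The main obstacle will be showing $G/H$ is affine. The plan is to apply Proposition \ref{prop:rep} to the right-multiplication action of $G$ on itself, which expresses $\cO(G)$ as a directed union of finite-dimensional $G$-submodules $V$, each providing a homomorphism $\rho_V : G \to \GL(V)$ with closed kernel $K_V$. A direct computation on $S$-points, using the homomorphism property of $\varphi$, shows that $g \in \bigcap_V K_V(S)$ if and only if right translation by $g$ acts trivially on $\cO(G_S)$, equivalently $\varphi_S(g) = e$; hence $\bigcap_V K_V = H$. Since $G$ is Noetherian, the family of finite sub-intersections $K_{V_1} \cap \cdots \cap K_{V_n}$ satisfies the descending chain condition and so has a minimal element, which must equal $H$; setting $V := V_1 \oplus \cdots \oplus V_n$ yields a single finite-dimensional $V$ with $K_V = H$. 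Proposition \ref{prop:fact} then factors $\rho_V$ as a closed immersion $G/H \hookrightarrow \GL(V)$, realizing $G/H$ as a closed subscheme of an affine group, hence affine.

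The remaining conclusions are formal. Since $G/H$ and $\Aff(G)$ are both affine and $\iota^\#$ is already an isomorphism of coordinate rings, $\iota$ is an isomorphism, yielding $\Aff(G) = G/H$ and $\cO(G) = \cO(G/H)$; the latter is finitely generated because $G/H$ is of finite type. For $\cO(H) = k$, the identification $\cO(G/H) = \cO(G)$ via $q^\#$ shows $q_* \cO_G = \cO_{G/H}$ as quasi-coherent sheaves, and the flatness of $q$ makes it tor-independent of any base change, so the base-change isomorphism applied to the cartesian square $H = G \times_{G/H} \Spec(k)$ yields $\cO(H) = \cO(G) \otimes_{\cO(G/H)} k = k$.
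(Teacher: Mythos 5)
Your argument for everything up to and including $\Aff(G) = G/H$ is sound and is essentially the paper's own proof: minimality via the universal property of $\varphi$, the identification of $H$ with the kernel of the translation representation of $G$ on $\cO(G)$ (the nontrivial direction of which is the computation $f(xy)=\sum_i u_i(x)v_i(y)$ coming from $m^\#$, which your ``direct computation'' must carry out), the Noetherian chain argument producing one finite-dimensional $V$ with $\Ker(\rho_V)=H$, and the resulting closed immersion $G/H \hookrightarrow \GL(V)$.

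The final step, $\cO(H)=k$, has a genuine gap. The formula $\cO(q^{-1}(e)) = \cO(G)\otimes_{\cO(G/H)}k$ would be automatic if $q$ were an affine morphism, but by Proposition \ref{prop:des} that holds exactly when $H$ is affine --- and $H$ is the anti-affine part of $G$, so in all interesting cases it is not. For a flat quasi-compact morphism $q : X \to Y$ with $q_*(\cO_X)=\cO_Y$ that is neither affine nor proper, formation of $q_*(\cO_X)$ need not commute with passage to a fibre: for instance $X = (\bA^1\times\bP^1)\setminus\{(0,\infty)\} \to \bA^1$ satisfies $q_*(\cO_X)=\cO_{\bA^1}$, yet the fibre over $0$ is $\bA^1$, whose ring of functions is $k[s]$. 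Tor-independence of the square (which does hold, $q$ being flat) only gives an isomorphism of complexes in the derived category; extracting the degree-zero statement requires controlling the higher direct images $R^iq_*(\cO_G)$ and the attendant Tor terms, which you do not do. One can repair this by exploiting the torsor structure of $q$, but the paper's route is purely group-theoretic and simpler: let $N := \Ker(\varphi_H)$ be the kernel of the affinization of $H$ itself. By the part of the theorem already proved (applied to $H$), $H/N$ is affine, so $G/N \to G/H$ is an affine morphism (it is an $H/N$-torsor) and hence $G/N$ is affine; the universal property of $\varphi_G$ then forces $H \subseteq N$, so $H = \Ker(\varphi_H)$, i.e.\ $H$ acts trivially on $\cO(H)$ by translation, and therefore $\cO(H) = \cO(H)^H = \cO(H/H) = k$.
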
 

\begin{proof}
Consider a normal subgroup scheme $N$ of $G$ such that 
$G/N$ is affine. Then we have a commutative diagram
of homomorphisms
\[
\xymatrix{
G \ar[r]^-q \ar[d]_{\varphi_G} 
& G/N \ar[d]^{\varphi_{G/N}} \\
\Aff(G) \ar[r]^-{\Aff(q)} & \Aff(G/N), \\}
\]
where $q$ is the quotient morphism and $\varphi_{G/N}$ 
is an isomorphism. Since $H$ is the fiber of $\varphi_G$ 
at the neutral element $e_G$, it follows that $H \subseteq N$.  

We now claim that $H$ is the kernel of the action of $G$
on $\cO(G)$ via left multiplication. Denote by $K$ the 
latter kernel; we check that $H(R) = K(R)$ for any algebra 
$R$. Note that $H(R)$ consists of those $x \in G(R)$ such 
that $f(x) = f(e)$ for all $f \in \cO(G)$ (since 
$\cO(G \times \Spec(R)) = \cO(G) \otimes_k R$). Also, $K(R)$
consists of those $x \in G(R)$ such that $f(xy) = f(y)$
for all $f \in \cO(G \times \Spec(R'))$ and $y \in G(R')$, 
where $R'$ runs over all $R$-algebras. In particular, 
$f(x) = f(e)$ for all $f \in \cO(G)$, and hence 
$K(R) \subseteq H(R)$. 

To show the opposite inclusion, choose a basis 
$(\varphi_i)_{i \in I}$ of the $k$-vector space $\cO(G)$; 
then the $R'$-module 
$\cO(G \times \Spec(R')) = \cO(G) \otimes_k R'$
is free with basis $(\varphi_i)_{i \in I}$. Thus, for any 
$f \in \cO(G) \otimes_k R'$, there exists a unique family 
$(\psi_i = \psi_i(f))_{i \in I}$ in $\cO(G) \otimes_k R'$ 
such that $f(xy) = \sum_i \psi_i(x) \, \varphi_i(y)$ identically.
So the equalities $f(xy) = f(y)$ for all $y \in G(R')$
are equivalent to the equalities
\[ \sum_i (\psi_i(x) - \psi_i(e)) \, \varphi_i(y) = 0 \]
for all such $y$. 
Since the latter equalities are satisfied for any $x \in H(R)$,
this yields the inclusion $H(R) \subseteq K(R)$,
and completes the proof of the claim.

By Proposition \ref{prop:rep},
there exists an increasing family of finite-dimensional
$G$-submodules $(V_i)_{i \in I}$ of $\cO(G)$ such that 
$\cO(G) = \bigcup_i V_i$. Denoting by $K_i$ the kernel
of the corresponding homomorphism $G \to \GL(V_i)$, 
we see that $H = K$ is the decreasing intersection of the $K_i$.
Since the topological space underlying $G$ is noetherian and 
each $K_i$ is closed in $G$, there exists $i \in I$ such that 
$H = K_i$. It follows that $G/H$ is affine. We have proved 
that $H$ is the smallest normal subgroup scheme of $G$ having 
an affine quotient.

The affinization morphism $\varphi_G$ factors through
a unique morphism of affine schemes $\iota : G/H \to \Aff(G)$. 
The associated homomorphism
\[ \iota^{\#}: \cO(\Aff(G)) = \cO(G) \to \cO(G/H) = \cO(G)^H \]
is an isomorphism; thus, so is $\iota$. This shows that
$\Aff(G) = G/H$.

Next, consider the kernel $N$ of the affinization
morphism $\varphi_H$. Then $N \trianglelefteq H$ and 
the quotient group $H/N$ is affine. Since $G/H$ is
affine as well, it follows by Proposition \ref{prop:tower}
that the homogeneous space $G/N$ is affine. 
Thus, the quotient morphism $G \to G/N$
factors through a unique morphism $\Aff(G) \to G/N$. 
Taking fibers at $e$ yields that $H \subseteq N$; thus, $H = N$. 
Hence the action on $H$ on itself via left multiplication yields 
a trivial action on $\cO(H)$. As $\cO(H)^H = k$, we conclude that
$\cO(H) = k$.
\end{proof}

\begin{corollary}\label{cor:aff}
Let $G$ be an algebraic group acting faithfully on an 
affine scheme $X$. Then $G$ is affine.
\end{corollary}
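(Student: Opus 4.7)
The plan is to apply Theorem \ref{thm:aff} to extract the smallest normal subgroup scheme $H \trianglelefteq G$ for which $G/H$ is affine, and then to argue that faithfulness of the action on $X$ forces $H$ to be trivial; combined with the conclusion $G/H = \Aff(G)$ of that theorem, this gives $G = \Aff(G)$, hence affine.

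The crucial input is the equality $\cO(H) = k$ supplied by Theorem \ref{thm:aff}. First, I restrict the action to $H$, obtaining $a_H : H \times X \to X$. By Lemma \ref{lem:aff} we have
\[ \cO(H \times X) = \cO(H) \otimes_k \cO(X) = \cO(X), \]
so the affinization $\Aff(H \times X)$ is canonically identified with $X$, with affinization morphism equal to the second projection $p_2 : H \times X \to X$. Since $X$ is affine, the universal property of affinization factors $a_H$ as $\psi \circ p_2$ for a unique endomorphism $\psi$ of $X$. Restricting along $\{e\} \times X \hookrightarrow H \times X$, on which $a_H$ is the identity by the axiom $e \cdot x = x$, forces $\psi = \id_X$. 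Hence $a_H = p_2$; that is, $H$ acts trivially on $X$.

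Because the action is faithful, its kernel is the trivial subgroup scheme of $G$, and since $H$ lies in this kernel we conclude $H = \{e\}$. Theorem \ref{thm:aff} then gives that $G = G/H$ is affine. I foresee no serious obstacle: the entire content of the corollary sits in the deep parts of Theorem \ref{thm:aff} (existence of a normal $H$ with $\cO(H) = k$ and $G/H$ affine), plus the essentially formal observation that any morphism from $H \times X$ to an affine target must factor through the second projection once $\cO(H) = k$ and $X$ is affine.
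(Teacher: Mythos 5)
Your proof is correct and follows essentially the same route as the paper: both arguments reduce to the fact that $\cO(H) = k$ together with Lemma \ref{lem:aff} forces the $H$-action on the affine scheme $X$ to factor through $p_2$, whence $H$ lies in the (trivial) kernel of the action and $G = G/H = \Aff(G)$. The only cosmetic difference is that you unwind this for the restricted $H$-action by hand, whereas the paper invokes the general compatibility of affinization with actions to say the $G$-action factors through $\Aff(G)$ acting on $\Aff(X) = X$.
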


\begin{proof}
The action of $G$ on $X$ factors through an action of 
$\Aff(G)$ on $\Aff(X) = X$. Thus, the subgroup scheme $H$ 
of Theorem \ref{thm:aff} acts trivially on $X$. Hence $H$ 
is trivial; this yields the assertion.
\end{proof}

\begin{example}\label{ex:ell}
Let $E$ be an elliptic curve equipped with an invertible 
sheaf $\cL$ such that $\deg(\cL) = 0$ and $\cL$ has 
infinite order in $\Pic(E)$. (Such a pair $(E,\cL)$ exists
unless $k$ is algebraic over a finite field,
as follows from \cite{ST}; see also \cite{Ul}). 
Choose an invertible sheaf $\cM$ on $E$ such that 
$\deg(\cM) > 0$. Denote by $L,M$ the line bundles on $E$ 
associated with $\cL,\cM$ and consider their direct sum,
\[ \pi : X := L \oplus M \longrightarrow E. \]
Then $X$ is a quasi-projective variety and
\[ \pi_*(\cO_X) \cong  \bigoplus_{\ell,m} 
\cL^{\otimes \ell} \otimes_{\cO_E}  \cM^{\otimes m}, \]
where the sum runs over all pairs of non-negative integers.
Thus,
\[ \cO(X) \cong \bigoplus_{\ell,m} 
H^0(E,\cL^{\otimes \ell} \otimes_{\cO_E}  \cM^{\otimes m}). \]
In particular, the algebra $\cO(X)$ is equipped with a
bi-grading. If this algebra is finitely generated, then
the pairs $(\ell,m)$ such that $\cO(X)_{\ell,m} \neq 0$
form a finitely generated monoid under componentwise addition;
as a consequence, the convex cone $C \subset \bR^2$ 
generated by these pairs is closed. But we have 
$\cO(X)_{\ell,0} = 0$ for any $\ell \geq 1$,
since $\cL^{\otimes \ell}$ is non-trivial and has degree $0$.
Also, given any positive rational number $t$, we have
$\cO(X)_{n,tn} \neq 0$ for any positive integer $n$
such that $tn$ is integer, since 
$\deg(\cL^{\otimes n} \otimes_{\cO_E} \cM^{\otimes tn}) > 0$. 
Thus, $C$ is not closed, a contradiction. We conclude that
\textit{the algebra $\cO(X)$ is not finitely generated}.
\end{example}

\subsection{Anti-affine algebraic groups}
\label{subsec:aaag}

\begin{definition}\label{def:anti}
An algebraic group $G$ over $k$ is \textit{anti-affine} 
if $\cO(G) = k$.
\end{definition}

By Lemma \ref{lem:aff}, $G$ is anti-affine if and only if 
$G_K$ is anti-affine for some field extension $K$ of $k$.

\begin{lemma}\label{lem:anti}
Every anti-affine algebraic group is smooth and connected.
\end{lemma}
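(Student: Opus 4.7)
The plan is to establish both properties by reducing, in each case, to a finite quotient of $G$ and exploiting the following principle: if $q : G \to Q$ is a faithfully flat quasi-compact morphism of $k$-schemes, then $q^\# : \cO(Q) \hookrightarrow \cO(G)$ is injective. Combined with the hypothesis $\cO(G) = k$, this will force the target $Q$ to satisfy $\cO(Q) = k$, and whenever $Q$ is affine this immediately gives $Q = \Spec(k)$.

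First I would dispose of connectedness. Since $G$ is algebraic, Remark \ref{rem:neut}(ii) ensures $\pi_0(G)$ is finite étale, in particular affine. By Example \ref{ex:neut} the morphism $\gamma : G \to \pi_0(G)$ is a $G^0$-torsor, hence faithfully flat (and certainly quasi-compact), so $\gamma^\# : \cO(\pi_0(G)) \hookrightarrow \cO(G) = k$. As $\cO(\pi_0(G))$ also contains $k$, we conclude $\cO(\pi_0(G)) = k$ and therefore $\pi_0(G) = \Spec(k)$, i.e., $G = G^0$.

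Next I would show smoothness via Proposition \ref{prop:smooth}, for which it suffices to prove that $G$ is geometrically reduced. Base change to $\bar{k}$: by Lemma \ref{lem:aff}, $\cO(G_{\bar{k}}) = \cO(G) \otimes_k \bar{k} = \bar{k}$, so $G_{\bar{k}}$ is again anti-affine. Since $\bar{k}$ is perfect, Proposition \ref{prop:red} gives that $(G_{\bar{k}})_\red$ is a (closed) subgroup scheme of $G_{\bar{k}}$. Form the quotient $Q := G_{\bar{k}}/(G_{\bar{k}})_\red$, which exists and is of finite type by Theorem \ref{thm:hom}; by Remark \ref{rem:quot}(v), $Q$ has a unique $\bar{k}$-rational point and is finite (hence affine). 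Applying the principle above to the faithfully flat quotient map $G_{\bar{k}} \to Q$ yields $\cO(Q) \hookrightarrow \bar{k}$, so $Q = \Spec(\bar{k})$, whence $(G_{\bar{k}})_\red = G_{\bar{k}}$. Thus $G_{\bar{k}}$ is reduced, $G$ is geometrically reduced, and Proposition \ref{prop:smooth} gives smoothness.

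There is essentially no obstacle here: the only points needing care are that $\pi_0(G)$ and $Q$ are genuinely affine (finite étale, respectively finite), and that the quotient $G_{\bar{k}}/(G_{\bar{k}})_\red$ makes sense as a subgroup-scheme quotient, which is why the passage to the perfect field $\bar{k}$ is necessary before invoking Proposition \ref{prop:red}.
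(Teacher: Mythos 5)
Your proof is correct and follows essentially the same route as the paper: connectedness via $\cO(\pi_0(G))=k$ forcing the finite \'etale scheme $\pi_0(G)$ to be trivial, and smoothness by passing to $\bar{k}$ and showing the finite quotient $G_{\bar{k}}/(G_{\bar{k}})_\red$ is trivial. The only cosmetic difference is that you deduce $\cO(Q)\hookrightarrow\cO(G)$ from faithful flatness of the quotient map, where the paper uses the identification $\cO(Q)\cong\cO(G)^{H}$ from Remark \ref{rem:quot}(ii); these are interchangeable here.
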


\begin{proof}
Let $G$ be an algebraic group. Recall that the group
of connected components $\pi_0(G) \cong G/G^0$ is finite and
\'etale. Also, $\cO(\pi_0(G)) \cong \cO(G)^{G^0}$ by Remark 
\ref{rem:quot} (ii). If $G$ is anti-affine, then it follows
that $\cO(\pi_0(G)) = k$. Thus, $\pi_0(G)$ is trivial, 
i.e., $G$ is connected.

To show that $G$ is smooth, we may assume that $k$ is
algebraically closed. Then $G_\red$ is a smooth subgroup scheme
of $G$; moreover, the homogeneous space $G/G_\red$ is finite
by Remark \ref{rem:quot}(v). As above, it follows that $G = G_\red$.   
\end{proof}

We now obtain a generalization of a classical rigidity lemma 
(see \cite[p.~43]{Mum}): 

\begin{lemma}\label{lem:rig}
Let $X$, $Y$, $Z$ be schemes such that $X$ is quasi-compact, 
$\cO(X) = k$ and $Y$ is locally noetherian and irreducible.
Let $f : X \times Y \to Z$ be a morphism. 
Assume that there exist $k$-rational points $x_0 \in X$, 
$y_0 \in Y$ such that $f(x,y_0) = f(x_0,y_0)$ identically. 
Then $f(x,y) = f(x_0,y)$ identically.
\end{lemma}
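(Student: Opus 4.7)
The plan is to combine three ingredients: the quasi-compactness of $X$ via a tube lemma, the hypothesis $\cO(X)=k$ via Lemma \ref{lem:aff} and the universal property of the affinization, and density to extend a local equality to a global one. I first choose an affine open neighborhood $U\subseteq Z$ of the point $z_0 := f(x_0,y_0)$. The hypothesis $f(x,y_0)=f(x_0,y_0)$ identically means $f(X\times\{y_0\})=\{z_0\}\subseteq U$, so $X\times\{y_0\}\subseteq f^{-1}(U)$. By the standard tube lemma argument — for each $x\in X$ pick an open rectangle $A_x\times B_x\subseteq f^{-1}(U)$ with $(x,y_0)\in A_x\times B_x$, extract a finite subcover $A_{x_1},\dots,A_{x_n}$ of $X$ by quasi-compactness, and take $V:=\bigcap_i B_{x_i}$ — I obtain an open neighborhood $V$ of $y_0$, which I may shrink to be affine, with $f(X\times V)\subseteq U$.

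Now Lemma \ref{lem:aff} gives $\cO(X\times V)\cong\cO(X)\otimes_k\cO(V)=\cO(V)$. The morphism $f|_{X\times V}:X\times V\to U$ to the affine scheme $U$ therefore factors through its affinization $\Spec\cO(X\times V)=\Spec\cO(V)=V$, which is just $p_2|_{X\times V}$. Hence $f|_{X\times V}=g\circ p_2|_{X\times V}$ for some $g:V\to U\subseteq Z$; setting $x=x_0$ forces $g(y)=f(x_0,y)$, and therefore $f(x,y)=f(x_0,y)$ as morphisms $X\times V\to Z$.

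To globalize, let $\sigma:X\times Y\to X\times Y$ be $(x,y)\mapsto(x_0,y)$, and consider the equalizer $E\subseteq X\times Y$ of the two morphisms $f$ and $f\circ\sigma$. Since $Z$ is separated by our standing convention, $E$ is a closed subscheme, and the previous step exhibits $X\times V$ as an open subscheme of $E$. Since $Y$ is irreducible, $V$ is dense in $Y$; since $X$ is flat over $k$, the projection $p_2$ is flat, which propagates scheme-theoretic density from $V\subseteq Y$ to $X\times V\subseteq X\times Y$. A closed subscheme containing a scheme-theoretically dense open subscheme equals the ambient scheme, so $E=X\times Y$ and $f=f\circ\sigma$ identically, as required. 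The main obstacle lies precisely in this last step: while topological density of the non-empty open $V$ in the irreducible $Y$ is immediate, scheme-theoretic density further requires $V$ to contain every associated point of $Y$, which holds in the intended applications (where $Y$ is a group scheme and so has no embedded primes).
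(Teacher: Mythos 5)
Your overall strategy --- restrict to a neighbourhood of $X\times\{y_0\}$ mapping into an affine open, use $\cO(X)=k$ to factor $f$ through $p_2$ there, then spread out by density --- is the right one, and your second and third steps are essentially sound. The genuine gap is the first step, the tube lemma, and it is exactly the point the paper's proof is designed to avoid. The Zariski topology on the fibre product $X\times Y$ is strictly finer than the product topology, and the ``rectangles'' $A\times B=p_1^{-1}(A)\cap p_2^{-1}(B)$ do \emph{not} form a basis of it, so the opening move ``pick an open rectangle $A_x\times B_x\subseteq f^{-1}(U)$ containing $(x,y_0)$'' is not available. Concretely, in $\bA^1\times\bA^1=\Spec k[s,t]$ over an infinite field, the complement of the line $V(s+t-1)$ is an open neighbourhood of the origin containing no rectangle $A\times B$ around it: any two nonempty opens $A,B\subseteq\bA^1$ omit only finitely many rational points, so one can choose $a\in A$ with $1-a\in B$, and then $(a,1-a)\in A\times B$ lies on $V(s+t-1)$. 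Worse, the \emph{conclusion} of the tube lemma fails in this generality: the mechanism behind it is the closedness of the projection $p_2\colon X\times Y\to Y$, which holds when $X$ is universally closed (i.e.\ proper) over $k$ but not when it is merely quasi-compact; and $\cO(X)=k$ does not force properness --- the intended $X$ in Proposition \ref{prop:rig} is an anti-affine algebraic group, and these are generally not proper (e.g.\ the anti-affine extensions of an elliptic curve by $\bG_m$). Note finally that once the lemma is proved one has $f=g\circ p_2$ near $y_0$, so that $f^{-1}(U)=p_2^{-1}(g^{-1}(U))$ \emph{is} a tube; asserting the tube a priori is therefore dangerously close to assuming the conclusion.

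The paper circumvents this by replacing the (generally nonexistent) open tube $X\times V$ with the closed infinitesimal tubes $X\times y_{0,(n)}$, where $y_{0,(n)}=\Spec(\cO_{Y,y_0}/\fm_{y_0}^{n+1})$: one checks directly, as in Example \ref{ex:inf}, that $f$ carries $X\times y_{0,(n)}$ into the $n$th infinitesimal neighbourhood of $z_0$, which is affine; your computation $\cO(X\times y_{0,(n)})=\cO(X)\otimes_k\cO(y_{0,(n)})=\cO(y_{0,(n)})$ then yields the factorization through $p_2$ on each thickening exactly as in your second step. Krull's intersection theorem makes the union of the $y_{0,(n)}$ schematically dense in $Y$ (this is where the locally noetherian hypothesis enters), and the globalization via the closed equalizer subscheme proceeds as in your final paragraph. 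If you substitute this infinitesimal argument for your first step, the rest of your proof goes through.
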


\begin{proof}
Let $z_0:=f(x_0,y_0)$; this is a $k$-rational point of $Z$. 
As in Example \ref{ex:inf}, consider 
the $n$th infinitesimal neighborhoods of this point,
\[ z_{0,(n)} := \Spec(\cO_{Z,z_0}/\fm^{n+1}_{z_0}), \] 
where $n$ runs over the positive integers. These form 
an increasing sequence of finite subschemes of $Z$ 
supported at $z_0$, and one checks as in the above example that 
$X \times y_{0,(n)}$ is contained in the fiber of $f$ at 
$z_{0,(n)}$, where $y_{0,(n)} := \Spec(\cO_{Y,y_0}/\fm^{n+1}_{y_0})$.
In other words, $f$ restricts to a morphism
$f_n : X \times y_{0,(n)} \to z_{0,(n)}$. Consider the associated 
homomorphism of algebras 
$f_n^\# : \cO(z_{0,(n)}) \to \cO(X \times y_{0,(n)})$.
By Lemma \ref{lem:aff} and the assumptions on $X$, we have 
$\cO(X \times y_{0,(n)}) = \cO(X) \otimes_k \cO(y_{0,(n)})
= \cO(y_{0,(n)})$. Since $z_{0,(n)}$ is affine, it follows that 
$f_n$ factors through a morphism $g_n : y_{0,(n)} \to Z$,
i.e., $f_n(x,y) = g_n(y)$ identically. In particular,
$f(x,y) = f(x_0,y)$ on $X \times y_{0,(n)}$.

Next, consider the largest closed subscheme 
$W \subseteq X \times Y$ on which $f(x,y) = f(x_0,y)$,
i.e., $W$ is the pull-back of the diagonal in $Z \times Z$
under the morphism $(x,y) \mapsto (f(x,y),f(x_0,y))$. 
Then $W$ contains $X \times y_{0,(n)}$ for all $n$. 
Since $Y$ is locally noetherian and irreducible, the union 
of the $y_{0,(n)}$ is dense in $Y$. It follows that the union 
of the $X \times y_{0,(n)}$ is dense in $X \times Y$; we conclude 
that $W = X \times Y$.
\end{proof}

\begin{proposition}\label{prop:rig}
Let $H$ be an anti-affine algebraic group, $G$ 
an algebraic group and $f: H \to G$ 
a morphism of schemes such that $f(e_H) = e_G$.
Then $f$ is a homomorphism and factors through
the center of $G^0$.
\end{proposition}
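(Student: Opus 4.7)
The plan is to deduce both statements from the rigidity lemma (Lemma \ref{lem:rig}). To apply it, I first note that $H$ is quasi-compact (being algebraic) with $\cO(H) = k$ by hypothesis, and that Lemma \ref{lem:anti} together with Theorem \ref{thm:neut}(3) ensures $H = H^0$ is (geometrically) irreducible and locally noetherian. So $H$ is an admissible choice for either of the two factors in Lemma \ref{lem:rig}.

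To show $f$ is a homomorphism, I would introduce the ``defect'' morphism
\[ \varphi : H \times H \longrightarrow G, \quad (x,y) \longmapsto f(xy) \, f(x)^{-1} \, f(y)^{-1}, \]
and observe that $f(e_H) = e_G$ forces $\varphi(x, e_H) = e_G$ and $\varphi(e_H, y) = e_G$ identically. Applying Lemma \ref{lem:rig} with $X = Y = H$, $Z = G$, $x_0 = y_0 = e_H$, the vanishing on $H \times \{e_H\}$ is the hypothesis, and rigidity then forces $\varphi(x,y) = \varphi(e_H, y) = e_G$ identically, i.e., $f(xy) = f(x)\,f(y)$.

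Since $H$ is connected and $f(e_H) = e_G$, the image $f(H)$ is contained in $G^0$. To show it lies in the center of $G^0$, I would exploit Corollary \ref{cor:cent}: the quotient $G^0/Z(G^0)$ is affine. The composite homomorphism $g : H \to G^0 \to G^0/Z(G^0)$ has image a closed subgroup scheme $K$ of $G^0/Z(G^0)$ by Proposition \ref{prop:fact} and Proposition \ref{prop:hom}, isomorphic to $H/\Ker(g)$. By Proposition \ref{prop:tower}(2), $K$ is affine. On the other hand, Remark \ref{rem:quot}(ii) gives $\cO(K) = \cO(H/\Ker(g)) = \cO(H)^{\Ker(g)} \subseteq \cO(H) = k$, so $K$ is anti-affine. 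An affine group scheme $K = \Spec \cO(K)$ with $\cO(K) = k$ is trivial, hence $g$ is trivial and $f(H) \subseteq Z(G^0)$.

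The only mild obstacle is the bookkeeping to apply Lemma \ref{lem:rig} in the right direction (fixing the correct variable to use the vanishing on $H \times \{e_H\}$); everything else is formal given the tools already developed. As an alternative to Corollary \ref{cor:cent} for centrality, one could apply the rigidity lemma a second time to the commutator morphism
\[ \psi : H \times G^0 \longrightarrow G^0, \quad (x,g) \longmapsto f(x) \, g \, f(x)^{-1} \, g^{-1}, \]
which vanishes on $\{e_H\} \times G^0$ and on $H \times \{e_{G^0}\}$; here one uses geometric irreducibility of $G^0$ from Theorem \ref{thm:neut}(3) to take $Y = G^0$ in Lemma \ref{lem:rig}.
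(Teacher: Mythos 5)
Your proof is correct and follows essentially the paper's argument: the rigidity lemma applied to the associativity defect gives the homomorphism property, and your alternative route for centrality (rigidity on the commutator $\psi$) is exactly the paper's, while your primary route via Corollary \ref{cor:cent} together with ``affine and anti-affine implies trivial'' is the variant the paper itself invokes later in Proposition \ref{prop:antmax}. One cosmetic slip: with your ordering $\varphi(x,y)=f(xy)f(x)^{-1}f(y)^{-1}$, the conclusion $\varphi\equiv e_G$ reads $f(xy)=f(y)f(x)$ rather than $f(x)f(y)$; either take $\varphi(x,y)=f(xy)f(y)^{-1}f(x)^{-1}$ as in the paper, or observe that once the image is shown to be central the two orderings agree.
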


\begin{proof}
Since $H$ is connected by Lemma \ref{lem:anti},
we see that $f$ factors through $G^0$. Thus, we
may assume that $G$ is connected.

Consider the morphism
\[ \varphi : H \times H \longrightarrow G, \quad
(x,y) \longmapsto f(xy) f(y)^{-1} f(x)^{-1}. \]
Then $\varphi(x,e_H) = e_G = \varphi(e_H,e_H)$ 
identically; also, $H$ is irreducible
in view of Lemma \ref{lem:anti}. Thus, the rigidity 
lemma applies, and yields 
$\varphi(x,y) = \varphi(e_H,y) = e_G$
identically. This shows that $f$ is a homomorphism.

The assertion that $f$ factors through the center of 
$G$ is proved similarly by considering the morphism
\[ \psi : H \times G \longrightarrow G, \quad
(x,y) \longmapsto f(x) y f(x)^{-1} y^{-1}. \]
\end{proof}

In particular, every anti-affine group $G$ is commutative.
Also, note that $G/H$ is anti-affine for any subgroup scheme
$H \subseteq G$ (since $\cO(G/H) = \cO(G)^H$).

We may now complete the proof of Theorem \ref{thm:affi}
with the following:

\begin{proposition}\label{prop:antmax}
Let $G$ be an algebraic group and $H$ the kernel of 
the affinization morphism of $G$.

\begin{enumerate}

\item $H$ is contained in the center of $G^0$.

\item $H$ is the largest anti-affine subgroup of $G$.

\end{enumerate}

\end{proposition}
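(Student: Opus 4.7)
The plan is to leverage two facts already established: from Theorem \ref{thm:aff} we know $\cO(H)=k$, so $H$ is itself anti-affine, and from Theorem \ref{thm:aff} again we have $G/H=\Aff(G)$, which is affine. Part (1) will follow from the rigidity-type result of Proposition \ref{prop:rig}, and part (2) will follow from the universal property of the affinization morphism (together with the fact that any morphism from an anti-affine scheme to an affine scheme is constant).

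For (1), first observe that $H$ is anti-affine by Theorem \ref{thm:aff}, hence smooth and connected by Lemma \ref{lem:anti}; in particular $H\subseteq G^0$ since $H$ is connected and contains $e$. Now apply Proposition \ref{prop:rig} to the closed immersion $j\colon H\hookrightarrow G$, which sends $e_H$ to $e_G$: this yields that $j$ factors through the center of $G^0$, which is exactly the statement that $H$ is central in $G^0$.

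For (2), let $H'\subseteq G$ be any anti-affine subgroup scheme. Consider the composition
\[
f\colon H'\hookrightarrow G \xrightarrow{\varphi_G} \Aff(G)=G/H.
\]
The associated algebra map $\cO(\Aff(G))\to\cO(H')=k$ factors through $k$, so by the universal property of $\Spec$, $f$ factors through a single $k$-rational point of $\Aff(G)$; since $f(e_{H'})$ is the base point of $G/H$, this point must be the base point, i.e., $f$ is the constant map to the base point. Equivalently, $H'$ is contained in the fiber of $\varphi_G$ over the base point, which is exactly $H$. This proves that $H$ contains every anti-affine subgroup scheme, and since $H$ itself is anti-affine, it is the largest such.

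The argument is essentially bookkeeping: the substantive work has already been done in Theorem \ref{thm:aff} (extracting $H$ and identifying it as $\Aff(G)=G/H$) and in Proposition \ref{prop:rig} (the rigidity lemma for anti-affine sources). The only step requiring any care is noting that a morphism from an anti-affine $k$-scheme to an affine $k$-scheme is automatically constant on $k$-rational points via the $\cO$-functor, which makes the "$H'$ maps into the fiber over $e$" conclusion immediate; I do not anticipate a genuine obstacle here.
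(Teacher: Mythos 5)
Your proof is correct and follows essentially the same route as the paper: part (1) is exactly the paper's argument ($H$ is anti-affine by Theorem \ref{thm:aff}, hence smooth and connected by Lemma \ref{lem:anti}, and Proposition \ref{prop:rig} applied to the inclusion places it in the center of $G^0$). For part (2) the paper packages the same idea group-theoretically --- the quotient $N/(N\cap H)$ is simultaneously anti-affine and affine, hence trivial --- whereas you apply the $\Spec$--$\cO(\cdot)$ adjunction directly to the composite $H' \hookrightarrow G \to G/H$; both versions rest on the single fact that an anti-affine group admits no nonconstant morphism to an affine scheme, so the difference is only cosmetic.
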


\begin{proof}
(1) By Theorem \ref{thm:aff}, $H$ is anti-affine.
So the assertion follows from Lemma \ref{lem:anti} and 
Proposition \ref{prop:rig}, or alternatively, from Corollary
\ref{cor:cent}. 

(2) Consider another anti-affine subgroup $N \subseteq G$.
Then the quotient group $N/N \cap H$ is anti-affine,
and also affine (since $N/N\cap H$ is isomorphic to 
a subgroup of $G/H$, and the latter is affine). As
a consequence, $N/N \cap H$ is trivial, that is,
$N$ is contained in $H$.
\end{proof}

We will denote the largest anti-affine subgroup of an 
algebraic group $G$ by $G_\ant$. 

For later use, we record the following observations:

\begin{lemma}\label{lem:quotient}
Let $G$ be an algebraic group and $N \trianglelefteq G$ 
a normal subgroup scheme. Then the quotient map 
$G \to G/N$ yields an isomorphism
\[ G_{\ant}/G_{\ant} \cap N 
\stackrel{\cong}{\longrightarrow} (G/N)_{\ant} .\]
\end{lemma}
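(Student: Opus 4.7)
The plan is to show that $q: G \to G/N$ restricts to a surjective homomorphism $G_\ant \to (G/N)_\ant$ with kernel $G_\ant \cap N$; Proposition \ref{prop:fact} then immediately produces the asserted isomorphism.

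The easy inclusion $q(G_\ant) \subseteq (G/N)_\ant$ goes as follows. By Proposition \ref{prop:prod}(3), $q(G_\ant) \cong G_\ant/(G_\ant \cap N)$ is a quotient of the anti-affine group $G_\ant$, so it is anti-affine by the remark following Proposition \ref{prop:rig}. Maximality of $(G/N)_\ant$ (Proposition \ref{prop:antmax}) then places $q(G_\ant)$ inside $(G/N)_\ant$.

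For the reverse inclusion, I would set $A := (G/N)_\ant$ and $H := q^{-1}(A)$, a closed (hence algebraic) subgroup scheme of $G$ containing $N$ with $H/N \cong A$. Let $H_\ant$ denote the largest anti-affine subgroup of $H$, supplied by Proposition \ref{prop:antmax} applied to the algebraic group $H$. Since anti-affineness of an algebraic group does not depend on the ambient scheme, $H_\ant$ is anti-affine viewed inside $G$, whence $H_\ant \subseteq G_\ant$ by maximality. It thus suffices to show $q(H_\ant) = A$, for then $A \subseteq q(G_\ant)$.

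The heart of the argument, and the main obstacle, is establishing $q(H_\ant) = A$. My plan is to set $A' := q(H_\ant)$, a closed subgroup scheme of the commutative group $A$ (Propositions \ref{prop:hom} and \ref{prop:fact}), and prove the quotient $B := A/A'$ is trivial by showing it is simultaneously affine and anti-affine. It is anti-affine as a quotient of the anti-affine group $A$ (remark after Proposition \ref{prop:rig}). For the affineness, the homomorphism $H \to A \to B$ is trivial on $H_\ant$ by construction, so by Proposition \ref{prop:quot}(1) it factors through $H/H_\ant$; but $H/H_\ant = \Aff(H)$ is affine by Theorem \ref{thm:aff} combined with Proposition \ref{prop:antmax}, so $B$ is a quotient of an affine algebraic group by a normal subgroup scheme, hence affine by Proposition \ref{prop:tower}(2). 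An algebraic group that is both affine and anti-affine satisfies $\cO(B) = k$ and equals $\Spec \cO(B) = \Spec(k)$, so $B$ is trivial, giving $A' = A$ and completing the argument.
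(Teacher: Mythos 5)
Your proof is correct, and its overall strategy coincides with the paper's: both establish the easy inclusion $q(G_\ant)\subseteq (G/N)_\ant$ via Propositions \ref{prop:fact} and \ref{prop:antmax}, and both obtain surjectivity by exhibiting the obstruction as a group that is simultaneously affine and anti-affine, hence trivial. The difference is in how that obstruction is realized. You pull $(G/N)_\ant$ back to $H := q^{-1}((G/N)_\ant)$, invoke the largest anti-affine subgroup $H_\ant\subseteq G_\ant$, and show $A/q(H_\ant)$ is affine because it is a quotient of $H/H_\ant = \Aff(H)$. The paper argues more directly: it forms the cokernel $C$ of $j : G_\ant/(G_\ant\cap N)\to (G/N)_\ant$ (legitimate since anti-affine groups are commutative) and notes that $C$ is a subgroup of $(G/N)/(G_\ant/(G_\ant\cap N))$, which is a quotient of the affine group $G/G_\ant$, hence affine by Proposition \ref{prop:tower}. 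Your detour through $H$ and $H_\ant$ is sound but buys nothing beyond the paper's argument; the paper's version is shorter because it extracts affineness from $G/G_\ant$ rather than from an auxiliary affinization.
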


\begin{proof}
By Proposition \ref{prop:fact}, 
we have a closed immersion of algebraic groups
$G_{\ant}/G_{\ant} \cap N \to G/N$; moreover, $G_{\ant}/G_{\ant} \cap N$
is anti-affine. So we obtain a closed immersion of commutative
algebraic groups $j : G_{\ant}/G_{\ant} \cap N \to (G/N)_{\ant}$. 
Denote by $C$ the cokernel of $j$; then $C$ is anti-affine
as a quotient of $(G/N)_{\ant}$. Also, $C$ is a subgroup of 
$(G/N)/(G_{\ant}/G_{\ant} \cap N)$, which is a quotient group
of $G/G_{\ant}$. Since the latter group is affine, it follows 
that $C$ is affine as well, by using Proposition \ref{prop:tower}. 
Thus, $C$ is trivial, i.e., $j$ is an isomorphism.
\end{proof}

\begin{lemma}\label{lem:prop}
The following conditions are equivalent for an algebraic 
group~$G$:

\begin{enumerate}

\item $G$ is proper.

\item $G_\ant$ is an abelian variety and $G/G_\ant$ is finite.

\end{enumerate}

Under these conditions, we have $G_\ant = G^0_\red$; in particular,
$G^0_\red$ is a smooth connected algebraic group and its 
formation commutes with field extensions.
\end{lemma}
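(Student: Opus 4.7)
The plan is to prove the two implications separately and then address the supplementary assertion. The direction $(2) \Rightarrow (1)$ is immediate from Remark \ref{rem:tower}: if $G_\ant$ is an abelian variety and $G/G_\ant$ is finite, then both are proper, so $G$ is proper as an extension of proper algebraic groups.

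For $(1) \Rightarrow (2)$, I would first apply Theorem \ref{thm:aff} with $H = G_\ant$: the quotient $G/G_\ant$ is affine (in fact isomorphic to $\Aff(G)$). Since $G$ is proper, so is the quotient $G/G_\ant$; and a scheme that is both proper and affine over a field is finite. Hence $G/G_\ant$ is finite, which in particular gives $\dim G_\ant = \dim G$. On the other hand, $G_\ant$ is closed in $G$ and therefore proper; by Lemma \ref{lem:anti} it is smooth and connected, so by the definition recalled in the introduction it is an abelian variety.

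For the last statement, let $d = \dim G$. By Theorem \ref{thm:neut}(3), $G^0$ is geometrically irreducible of dimension $d$, hence so is $G^0_\red$. The inclusion $G_\ant \subseteq G^0$ from Proposition \ref{prop:antmax}(1) exhibits $G_\ant$ as a closed, reduced, irreducible subscheme of $G^0$ of dimension $d$; comparing with the reduced irreducible scheme $G^0_\red$ of the same dimension forces $G_\ant = G^0_\red$ as subschemes of $G^0$. Since $G_\ant$ is an abelian variety, $G^0_\red$ is automatically smooth and connected, and its formation commutes with field extensions because that of $G_\ant$ does, by Theorem \ref{thm:affi}. The only point requiring mild care is the final identification $G_\ant = G^0_\red$ (over a possibly imperfect field, where $G^0_\red$ need not a priori carry a group structure); however, the dimension and irreducibility argument above circumvents this issue entirely, so I do not anticipate a genuine obstacle.
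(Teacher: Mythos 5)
Your proof is correct. For the equivalence (1)$\Leftrightarrow$(2) you argue exactly as the paper does: (2)$\Rightarrow$(1) via Remark \ref{rem:tower}, and (1)$\Rightarrow$(2) by noting that $G/G_\ant$ is both proper and affine, hence finite, while $G_\ant$ is smooth, connected and proper, hence an abelian variety. The only place you diverge is the final identification $G_\ant = G^0_\red$. The paper observes that $G^0/G_\ant$ is finite and connected, hence infinitesimal, so its reduced subscheme is the point $e$ and therefore $G^0_\red \subseteq G_\ant$; combined with the (reduced) inclusion $G_\ant \subseteq G^0$ this gives equality. You instead count dimensions: $G_\ant$ is a closed reduced irreducible subscheme of $G^0$ of full dimension, so it must coincide with $G^0_\red$. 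Both arguments are valid and equally short; the paper's route has the mild advantage of not invoking the dimension-theoretic fact that a proper closed irreducible subset of an irreducible finite-type scheme has strictly smaller dimension, while yours avoids any discussion of the quotient $G^0/G_\ant$. Your closing remark that the argument sidesteps the question of whether $G^0_\red$ is a subgroup scheme over an imperfect field is well taken, and applies equally to the paper's version.
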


\begin{proof}
(1)$\Rightarrow$(2) As $G_\ant$ is smooth, connected and proper,
it is an abelian variety. Also, $G/G_\ant$ is proper and affine,
hence finite.

(2)$\Rightarrow$(1) This follows from Remark \ref{rem:tower}.

For the final assumption, note that the quotient group scheme
$G^0/G_\ant$ is finite and connected, hence infinitesimal. 
So the algebra $\cO(G^0/G_\ant)$ is local with residue field $k$
(via evaluation at $e$). It follows that $(G^0/G_\ant)_\red = e$
and hence that $G^0_\red \subseteq G_\ant$; this yields the assertion.
\end{proof}

\medskip

\noindent
\textit{Notes and references}.

Some of the main results of this section originate in Rosenlicht's 
article \cite{Ro1}. More specifically, Corollary \ref{cor:cent} 
is a scheme-theoretic version of \cite[Thm.~13]{Ro1}, and Theorem
\ref{thm:aff}, of \cite[Cor.~3, p.~431]{Ro1}. 

Also, Theorem \ref{thm:aff}, Lemma \ref{lem:anti} and
Proposition \ref{prop:rig} are variants of results from
\cite[III.3.8]{DG}.

The rigidity lemma \ref{lem:rig} is a version of 
\cite[Thm.~1.7]{SS}.

\section{Proof of Theorem \ref{thm:che}}
\label{sec:ptche}

\subsection{The Albanese morphism}
\label{subsec:av}

Throughout this subsection, $A$ denotes an abelian variety, i.e.,
a smooth connected proper algebraic group. Then $A$ is commutative 
by Corollary \ref{cor:cent}. Thus, we will denote the group law 
additively; in particular, the neutral element will be denoted by $0$.
Also, the variety $A$ is projective (see \cite[p.~62]{Mum}).

\begin{lemma}\label{lem:rat}
Every morphism $f: \bP^1 \to A$ is constant.
\end{lemma}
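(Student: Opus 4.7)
The plan is to apply the rigidity lemma (Lemma~\ref{lem:rig}) to a two-variable morphism built from $f$ together with the natural $\bG_a$-action on $\bP^1$. Consider the action $\sigma \colon \bG_a \times \bP^1 \to \bP^1$ given in homogeneous coordinates by $(t,[x:y]) \mapsto [x+ty:y]$; it restricts to translation on the affine chart $\bA^1 \subset \bP^1$ and fixes the $k$-rational point $\infty$. Combining with the group law on $A$, I form
\[
\Psi \colon \bP^1 \times \bG_a \longrightarrow A,
\qquad (x,t) \longmapsto f(\sigma(t,x)) - f(x).
\]
Here $\bP^1$ is quasi-compact with $\cO(\bP^1) = k$, while $\bG_a$ is locally noetherian and irreducible, so the two factors satisfy the hypotheses required by Lemma~\ref{lem:rig}.

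Next I would invoke the rigidity lemma at the $k$-rational points $x_0 := \infty$ and $y_0 := 0$. Trivially, $\Psi(x,0) = f(x) - f(x) = 0 = \Psi(\infty,0)$ identically in $x$, so the hypothesis of the lemma is satisfied. Its conclusion then gives $\Psi(x,t) = \Psi(\infty,t)$ identically on $\bP^1 \times \bG_a$; since $\infty$ is fixed by $\sigma$, we have $\Psi(\infty,t) = f(\infty) - f(\infty) = 0$, and hence $\Psi \equiv 0$. Equivalently, $f(\sigma(t,x)) = f(x)$ for all $x \in \bP^1$ and $t \in \bG_a$.

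Taking $x = 0 \in \bA^1$ now yields $f(t) = f(0)$ for every $t \in \bA^1$, so $f$ is constant on the dense open subscheme $\bA^1 \subset \bP^1$, and therefore on all of $\bP^1$ since $A$ is separated. The only conceptual idea is to observe that a $\bG_a$-action on $\bP^1$ with a fixed point produces a one-parameter family of shifts of $f$ whose $t=0$ slice is trivially constant, which allows the rigidity lemma to automatically propagate constancy to every shift; there is no genuine obstacle once the correct two-variable morphism is written down.
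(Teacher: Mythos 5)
Your proof is correct, and it takes a genuinely different route from the paper's. The paper first reduces to $k$ algebraically closed and, via L\"uroth's theorem and normalization, to the case where $f$ is birational onto its image; it then derives a contradiction from the differential, using that $T_A$ is trivial while $T_{\bP^1} \cong \cO_{\bP^1}(2)$, so that the generically injective map $df$ would give a nonzero section of $\cO_{\bP^1}(-2)^{\oplus n}$. You instead apply the rigidity lemma (Lemma~\ref{lem:rig}, proved in \S~\ref{subsec:aaag}, well before this point, so there is no circularity) to the difference between $f$ and its translate under the $\bG_a$-action on $\bP^1$ fixing $\infty$; your verification of the hypotheses --- $\bP^1$ quasi-compact with $\cO(\bP^1)=k$ as the first factor, $\bG_a$ noetherian and irreducible as the second, and the slice at $t=0$ identically zero --- matches the statement of the lemma exactly, and the endgame (constancy on the dense orbit $\bA^1$, then on all of $\bP^1$ by separatedness of $A$ and reducedness of $\bP^1$) is fine. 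Your argument buys several things: no base change to $\bar{k}$, no L\"uroth or normalization, and no separability or characteristic-$p$ care about the differential; moreover it uses only the group law of $A$, not its properness or smoothness, so it actually proves the stronger statement that every morphism from $\bP^1$ to an arbitrary group scheme is constant. What the paper's tangent-sheaf computation buys is independence from the rigidity machinery and a self-contained, entirely classical argument; but since Lemma~\ref{lem:rig} is already available at this stage of the text, your route is arguably the more economical one.
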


\begin{proof}
We may assume that $k$ is algebraically closed. 
Suppose that $f$ is non-constant and denote by $C \subseteq A$ 
its image, with normalization $\eta : \tilde{C} \to C$.
Then $f$ factors through a surjective morphism
$\bP^1 \to \tilde{C}$. By L\"uroth's theorem, it follows that 
$\tilde{C} \cong \bP^1$. Thus, $f$ factors through the normalization 
$\eta : \bP^1 \to C$ and hence it suffices to show 
that $\eta$ is constant. In other words, we may assume 
that $f$ is birational to its image. Then the differential 
\[ df : T_{\bP^1} \longrightarrow f^*(T_A) \]
is non-zero at the generic point of $\bP^1$ and hence is
injective. Since the tangent sheaf $T_A$ is trivial and 
$T_ {\bP^1} \cong \cO_{\bP^1}(2)$, we obtain an injective map 
$\cO_{\bP^1} \to  \cO_{\bP^1}(-2)^{\oplus n}$, where $n := \dim(A)$. 
This yields a contradiction, since $H^0(\bP^1,\cO_{\bP^1}) = k$
while  $H^0(\bP^1,\cO_{\bP^1}(-2)) = 0$.
\end{proof}

\begin{theorem}\label{thm:weil}
Let $X$ be a smooth variety and $f : X  \dasharrow A$ 
a rational map. Then $f$ is a morphism.
\end{theorem}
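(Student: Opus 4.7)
The plan is to proceed in three stages. \textbf{First,} I show the indeterminacy locus of $f$ has codimension at least $2$ in $X$. Let $U \subseteq X$ be the domain of definition of $f$ and set $Z := X \setminus U$. For any codimension-one point $\eta \in X$, the local ring $\cO_{X,\eta}$ is a discrete valuation ring because $X$ is smooth, so the valuative criterion of properness applied to $A$ extends the morphism $\Spec k(X) \to A$ induced by $f$ to $\Spec \cO_{X,\eta} \to A$. Spreading this out yields a morphism defined on a neighborhood of $\eta$, so $\eta \in U$ and $\operatorname{codim}_X Z \geq 2$.

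\textbf{Second,} assume for contradiction that $Z \neq \emptyset$, pick a closed point $x_0 \in Z$, and cut $X$ by $\dim X - 2$ general smooth hypersurfaces through $x_0$ to obtain a smooth surface $S \subseteq X$ through $x_0$ with $S \cap Z = \{x_0\}$ set-theoretically. The restriction $f|_S : S \dasharrow A$ is defined on $S \setminus \{x_0\}$, and the task reduces to extending $f|_S$ across $x_0$. The value $a_0 \in A$ of any such extension is independent of the choice of $S$: any two smooth curves through $x_0$ transverse to $Z$ jointly embed into some smooth surface through $x_0$, and restrictions of $f$ to smooth curves extend uniquely to morphisms (a rational map from a smooth curve to a proper scheme is a morphism), so the limit values on all such curves agree. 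Consistent extensions on various choices of $S$ thus yield a global value $f(x_0) := a_0$, contradicting $x_0 \in Z$.

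\textbf{Third,} on the smooth surface $S$ I invoke the classical elimination of indeterminacy of Zariski: there is a finite sequence of blow-ups of closed points above $x_0$, $\sigma : \tilde{S} \to S$, such that $\tilde{f} := f|_S \circ \sigma : \tilde{S} \to A$ is a morphism. The exceptional set $E := \sigma^{-1}(x_0)$ is a connected union of smooth projective curves, each becoming isomorphic to $\bP^1$ after base change to $\bar{k}$. Lemma \ref{lem:rat} (applied over $\bar{k}$) forces $\tilde{f}_{\bar{k}}$ to be constant on each component of $E_{\bar{k}}$, and connectedness of $E$ collapses $\tilde{f}(E)$ to a single point $a_0 \in A$. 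Since $\sigma$ is an isomorphism away from $E$, $\tilde{f}$ descends to an extension of $f|_S$ over $x_0$ with value $a_0$.

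\textbf{The main obstacle} will be the invocation of Zariski's elimination of surface indeterminacies, which is classical but non-elementary, together with the verification in the second step that the value at $x_0$ is independent of the chosen surface. An attractive alternative via the closure $\Gamma \subseteq X \times A$ of the graph of $f|_U$ and its Stein factorization would bypass the reduction to surfaces, but still requires showing that the positive-dimensional fibers of $\Gamma \to X$ over $Z$ must contain a rational curve — a statement of comparable depth in arbitrary dimension.
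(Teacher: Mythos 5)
Your Step 1 is correct, and Step 3 (resolution of indeterminacy on a smooth surface by point blow-ups, followed by Lemma \ref{lem:rat} on the exceptional curves and descent along the contraction) is sound in outline. The gap is in Step 2, and it sits exactly where the difficulty of the theorem is concentrated. Producing a well-defined candidate value $a_0$ at $x_0$ does \emph{not} contradict $x_0 \in Z$: a rational map can have a unique set-theoretic limit at a point without being regular there, so you still have to convert ``the limit is well defined'' into ``$f$ is a morphism at $x_0$.'' The standard way to do this is through the closure $Y \subseteq X \times A$ of the graph: one shows that the fibre of the proper birational map $p_1 : Y \to X$ over $x_0$ is a single point, hence that $p_1$ is quasi-finite near $x_0$, hence finite, hence (being birational onto the normal variety $X$) an isomorphism there by Zariski's Main Theorem. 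You explicitly set the graph aside as an ``alternative,'' so this step is absent from your argument. Moreover, to see that the fibre over $x_0$ is a single point you must control the limits of $f$ along \emph{all} curves through $x_0$ meeting $U$ (equivalently, along images of curves in $Y$ passing through the fibre), and such curves need not be smooth nor lie on any of your surfaces $S$; your supporting claim that any two smooth curves through $x_0$ jointly embed in a smooth surface through $x_0$ is itself unproved and not obvious, and ``transverse to $Z$'' is not meaningful when $Z$ is a point.

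For comparison, the paper's proof goes directly through the graph closure: $p_1 : Y \to X$ is proper and birational, and if it is not an isomorphism it must contract a rational curve because $X$ is smooth (this is \cite[Prop.~1.43]{De}), which contradicts Lemma \ref{lem:rat}. Your surface-section strategy is essentially an attempt to reprove that contraction statement by hand via hyperplane sections and the factorization of birational morphisms of smooth surfaces; that can be made to work, but it is precisely the nontrivial content that your write-up leaves open, together with the final ZMT step identifying ``single-point fibre'' with ``regularity at $x_0$.''
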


\begin{proof}
Again, we may assume $k$ algebraically closed.
View $f$ as a morphism $U \to A$, where $U \subseteq X$
is a non-empty open subvariety. Denote by $Y \subseteq X \times A$ 
the closure of the graph of $f$, with projections
$p_1 : Y \to X$, $p_2 : Y \to A$. Then $p_1$ is proper (since so 
is $A$) and birational (since it restricts to an isomorphism 
over $U$). Assume that $p_1$ is not an isomorphism. Then 
$p_1$ contracts some rational curve in $Y$, i.e., there
exists a non-constant morphism $g : \bP^1 \to Y$ such that
$p_1 \circ g$ is constant (see \cite[Prop.~1.43]{De}). 
It follows that $p_2 \circ g : \bP^1 \to A$ is non-constant; 
but this contradicts Lemma \ref{lem:rat}.
\end{proof}

\begin{lemma}\label{lem:alb}
Let $X$, $Y$ be varieties equipped with $k$-rational
points $x_0$, $y_0$ and let $f : X \times Y \to A$
be a morphism. Then we have identically
\[ f(x,y) - f(x_0,y) - f(x,y_0) + f(x_0,y_0) = 0. \]
\end{lemma}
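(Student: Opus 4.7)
The plan is to define the auxiliary morphism $g \colon X \times Y \to A$ by $g(x,y) := f(x,y) - f(x_0,y) - f(x,y_0) + f(x_0,y_0)$, which vanishes identically on $\{x_0\} \times Y$ and on $X \times \{y_0\}$, and then to show $g = 0$ by reducing to the case of smooth projective curves and invoking the rigidity lemma \ref{lem:rig}. Since the assertion is invariant under base change and $\{0_A\} \subset A$ is closed, I may assume $k$ is algebraically closed; then $X(k) \times Y(k)$ is dense in the reduced scheme $X \times Y$, so it suffices to verify $g(x,y) = 0$ at every pair of $k$-rational points.

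Fix $(x,y) \in X(k) \times Y(k)$. Using the classical fact that any finite set of closed points on a variety over an algebraically closed field lies on an irreducible curve, I choose irreducible curves $C_1 \subseteq X$ containing $\{x_0, x\}$ and $C_2 \subseteq Y$ containing $\{y_0, y\}$. Passing to normalizations and then to smooth projective completions produces smooth projective curves $\bar{C}_1, \bar{C}_2$ with dense open subsets $\tilde{C}_i \subseteq \bar{C}_i$ and morphisms $\tilde{C}_1 \to X$, $\tilde{C}_2 \to Y$ whose images contain the prescribed points. The composition $\tilde{C}_1 \times \tilde{C}_2 \to X \times Y \to A$ is then a rational map from the smooth variety $\bar{C}_1 \times \bar{C}_2$ to $A$, which extends uniquely to a morphism $F \colon \bar{C}_1 \times \bar{C}_2 \to A$ by Theorem \ref{thm:weil}.

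Now lift the original points to $k$-rational points $\tilde{x}_0, \tilde{x} \in \bar{C}_1$ and $\tilde{y}_0, \tilde{y} \in \bar{C}_2$, and form the analogous expression
\[ G(c_1,c_2) := F(c_1,c_2) - F(\tilde{x}_0,c_2) - F(c_1,\tilde{y}_0) + F(\tilde{x}_0,\tilde{y}_0). \]
On the dense open $\tilde{C}_1 \times \tilde{C}_2$ the morphism $G$ coincides with the pullback of $g$, and hence vanishes on $\{\tilde{x}_0\} \times \tilde{C}_2$ and on $\tilde{C}_1 \times \{\tilde{y}_0\}$; by separatedness of $A$ this vanishing extends to the closures $\{\tilde{x}_0\} \times \bar{C}_2$ and $\bar{C}_1 \times \{\tilde{y}_0\}$. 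Since $\bar{C}_1$ is a proper geometrically integral curve over an algebraically closed field, $\cO(\bar{C}_1) = k$, and the rigidity lemma \ref{lem:rig} (applied with $\bar{C}_1, \bar{C}_2, A$ playing the roles of $X, Y, Z$, and base points $\tilde{x}_0, \tilde{y}_0$) forces $G(c_1,c_2) = G(\tilde{x}_0, c_2) = 0$ identically. Evaluating at $(\tilde{x}, \tilde{y})$ yields $g(x,y) = 0$, which completes the argument.

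The main obstacle is producing the connecting curves $C_1, C_2$ through the prescribed points, a standard but non-trivial classical fact; granted this, the remainder is a direct combination of Theorem \ref{thm:weil} (to extend to the smooth projective setting) and Lemma \ref{lem:rig} (to conclude).
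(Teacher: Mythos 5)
Your argument is correct, but it takes a genuinely different route from the one in the text. The proof given here compactifies $X$ itself: by Nagata's theorem one chooses a proper compactification $\bar{X}$, normalizes it, and observes that the complement of the smooth locus $U \subseteq \bar{X}$ has codimension at least $2$, whence $\cO(U) = \cO(\bar{X}) = k$; Weil's extension theorem then produces a morphism on $U \times V$ ($V$ the smooth locus of $Y$) to which the rigidity lemma applies in a single global stroke. You instead argue pointwise: you connect $x_0$ to $x$ and $y_0$ to $y$ by irreducible curves, pass to their smooth projective models (where $\cO(\bar{C}_1) = k$ is immediate), extend by Theorem \ref{thm:weil} on the smooth projective surface $\bar{C}_1 \times \bar{C}_2$, and apply Lemma \ref{lem:rig} there, concluding by density of rational points over $\bar{k}$. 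The trade-off is clear: you avoid Nagata's compactification theorem entirely, replacing it by the elementary existence of smooth projective completions of curves, but you pay for it with the classical lemma that any two closed points of a variety over an algebraically closed field lie on an irreducible curve. You rightly flag this as the one nontrivial input; do note that since $X$ and $Y$ here are arbitrary (not assumed quasi-projective) varieties, that lemma itself requires a word --- e.g.\ a reduction to the quasi-projective case via Chow's lemma, choosing the curve upstairs so that it is not contracted --- and the degenerate cases ($\dim X = 0$, or $x = x_0$) should be dispatched separately, though they are trivial. With those small caveats, the reductions (to $k = \bar{k}$ by faithfully flat descent, and to $k$-points by reducedness of $X \times Y$ and separatedness of $A$) and the application of rigidity are all sound.
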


\begin{proof}
By a result of Nagata (see \cite{Na1,Na2}, and \cite{Lu}
for a modern proof), we may choose a compactification of $X$, 
i.e., an open immersion $X \to \bar{X}$, where $\bar{X}$ 
is a proper variety. Replacing $\bar{X}$ with its normalization, 
we may further assume that $\bar{X}$ is normal. Also, we may 
assume $k$ algebraically closed from the start.

Denote by $U$ the smooth locus of $\bar{X}$ and by $V$ the 
smooth locus of $Y$. By Theorem \ref{thm:weil}, the rational 
map $f: \bar{X} \times Y \dasharrow A$ yields a morphism 
$g : U \times V \to A$. Also, note that the complement 
$\bar{X} \setminus U$ has codimension at least $2$, 
since $\bar{X}$ is normal and $k = \bar{k}$. It follows that 
$\cO(U) = \cO(\bar{X}) = k$. Using again the assumption that 
$k = \bar{k}$, we may choose points $x_1 \in U(k)$, $y_1 \in V(k)$. 
Consider the morphism
\[ \varphi : U \times V \longrightarrow A, \quad 
(x,y) \longmapsto g(x,y) - g(x_1,y) - g(x,y_1) + g(x_1,y_1). \]
Then $\varphi(x,y_1) = 0$ identically, and hence $\varphi = 0$ 
by rigidity (Lemma \ref{lem:rig}). It follows that
$f(x,y) - f(x_1,y) - f(x,y_1) + f(x_1,y_1) = 0$ identically
on $X \times Y$. This readily yields the desired equation. 
\end{proof}

\begin{proposition}\label{prop:alba}
Let $G$ be a smooth connected algebraic group.

\begin{enumerate}

\item Let $f: G \to A$ be a morphism to an abelian variety
sending $e$ to $0$. Then $f$ is a homomorphism.

\item There exists a smallest normal subgroup scheme 
$N$ of $G$ such that the quotient $G/N$ is an abelian variety.
Moreover, $N$ is connected.

\item For any abelian variety $A$, every morphism $f : G \to A$ 
sending $e$ to $0$ factors uniquely through the quotient
homomorphism $\alpha : G \to G/N$.

\item The formation of $N$ commutes with separable
algebraic field extensions.

\end{enumerate}

\end{proposition}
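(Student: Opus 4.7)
The plan is to deduce the four assertions in order, using Lemma \ref{lem:alb} as the principal ingredient for (1) and building the subsequent parts upon it.

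For (1), I would first note that $G$ is geometrically integral: smoothness yields geometric reducedness via Proposition \ref{prop:smooth}, and Theorem \ref{thm:neut}(3) gives that $G = G^0$ is geometrically irreducible. Hence $G$ is a variety and Lemma \ref{lem:alb} applies to the morphism $G \times G \to A$, $(x,y) \mapsto f(xy)$, with $x_0 = y_0 = e$. Since $f(e) = 0$, the identity produced simplifies to $f(xy) - f(x) - f(y) = 0$, so $f$ is a homomorphism.

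For (2), the idea is to show that the collection
\[
\mathcal{N} := \{ M \trianglelefteq G \mid G/M \text{ is an abelian variety}\}
\]
is closed under finite intersections, and then to take the intersection of all its members. Given $M_1, M_2 \in \mathcal{N}$, the diagonal homomorphism $G \to G/M_1 \times G/M_2$ has kernel $M_1 \cap M_2$, so by Proposition \ref{prop:fact} it factors through a closed immersion $G/(M_1 \cap M_2) \hookrightarrow G/M_1 \times G/M_2$. The target is an abelian variety, so $G/(M_1 \cap M_2)$ is proper; as a quotient of the smooth connected $G$, it is also smooth (Theorem \ref{thm:hom}(2)) and connected. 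Therefore $M_1 \cap M_2 \in \mathcal{N}$. Because the topological space of $G$ is noetherian and $\mathcal{N}$ is downward directed, the intersection $N := \bigcap_{M \in \mathcal{N}} M$ equals a finite intersection and hence lies in $\mathcal{N}$; by construction it is the smallest element. For the connectedness of $N$, note that $N/N^0$ is finite, so $G/N^0 \to G/N$ is finite by Proposition \ref{prop:des}; hence $G/N^0$ is proper and, by the same reasoning, smooth and connected, so $N^0 \in \mathcal{N}$, and minimality forces $N = N^0$.

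For (3), I would apply (1) to see that $f$ is a homomorphism. Its kernel $K$ then yields a quotient $G/K$ which, by Proposition \ref{prop:fact}, embeds as a closed subgroup scheme of $A$ and is therefore proper; being also smooth and connected, $G/K$ is an abelian variety. Thus $K \in \mathcal{N}$ and $N \subseteq K$, so $f$ vanishes on $N$ and factors uniquely through $\alpha$, uniqueness coming from Proposition \ref{prop:cat} (since $\alpha$ is a categorical quotient). For (4), I would follow the Galois descent of Lemma \ref{lem:ar}(3): for a Galois extension $K/k$ with group $\mathcal{G}$, any $\gamma \in \mathcal{G}$ carries $N(G_K)$ to another normal subgroup scheme of $G_K$ with abelian variety quotient, and uniqueness forces $\gamma(N(G_K)) = N(G_K)$; this $\mathcal{G}$-stable subscheme then descends to a subgroup scheme of $G$ which one checks equals $N(G)$.

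The main obstacle is the verification in (2) that $\mathcal{N}$ is closed under intersections; once the closed immersion $G/(M_1 \cap M_2) \hookrightarrow G/M_1 \times G/M_2$ is set up via Proposition \ref{prop:fact}, the needed properties (properness, smoothness, connectedness) transfer cleanly. The remaining subtlety is ensuring that "abelian variety" is preserved under taking $N^0$ in place of $N$, which reduces to Proposition \ref{prop:des} and Theorem \ref{thm:hom}(2). All other steps are then formal consequences of (1) and the universal property of $\alpha$.
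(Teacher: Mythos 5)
Your argument follows the paper's proof almost step for step: part (1) is exactly the paper's application of Lemma \ref{lem:alb} to $(x,y)\mapsto f(xy)$; part (2) uses the same closed immersion $G/(M_1\cap M_2)\hookrightarrow G/M_1\times G/M_2$ to get closure under intersection, and the same finiteness of $G/N^0\to G/N$ to force $N=N^0$; parts (3) and (4) are likewise the paper's arguments.

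There is one step you pass over in (2) that the paper treats explicitly and that your definition of $\mathcal{N}$ makes unavoidable: to conclude $N^0\in\mathcal{N}$ you need $N^0$ to be \emph{normal} in $G$, since membership in $\mathcal{N}$ (and indeed the very statement that $G/N^0$ is an algebraic group via Theorem \ref{thm:hom}(3)) requires normality, not just that $N^0$ is a subgroup scheme of the normal subgroup $N$. This is true but not automatic; the paper's argument is to reduce to $k=\bar{k}$, where $G(k)$ is dense in $G$ (by smoothness), observe that conjugation by each $g\in G(k)$ preserves $N$ and hence permutes its connected components, fixing the one through $e$, so that $G(k)$ normalizes $N^0$; representability of the normalizer (Theorem \ref{thm:norcent}) then upgrades this to normality under all of $G$. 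With that sentence inserted, your proof is complete and is essentially the paper's.
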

  
\begin{proof}
(1) This follows from Lemma \ref{lem:alb} applied
to the map $G \times G \to A$, $(x,y) \mapsto f(xy)$
and to $x_0 = y_0 = e$.

(2) Consider two normal subgroup schemes $N_1$, $N_2$ of $G$ 
such that $G/N_1$, $G/N_2$ are abelian varieties. Then 
$N_1 \cap N_2$ is normal in $G$ and $G/N_1 \cap N_2$ is proper,
since the natural map 
\[ G/N_1 \cap N_2 \longrightarrow G/N_1 \times G/N_2 \]
is a closed immersion of algebraic groups
(Proposition \ref{prop:hom}). Since $G/N_1 \cap N_2$
is smooth and connected, it is an abelian variety as well. It follows 
that there exists a smallest such subgroup scheme, say $N$.

We claim that the neutral component $N^0$ is normal in $G$.
To check this, we may assume $k$ algebraically closed.
Then $G(k)$ is dense in $G$ and normalizes $N^0$; this yields
the claim. 
 
The natural homomorphism $G/N^0 \to G/N$ is finite, 
since it is a torsor under the finite group $N/N^0$ (Propositions 
\ref{prop:des} and \ref{prop:quot}). 
As a consequence, $G/N^0$ is proper; hence 
$N  = N^0$ by the minimality assumption. Thus, $N$ is connected. 

(3) By (1), $f$ is a homomorphism; denote its kernel by $H$.
Then $G/H$ is an abelian variety in view of Proposition 
\ref{prop:fact}. Thus, $H$ contains $N$, and the assertion 
follows from Proposition \ref{prop:quot}.

(4) This is checked by a standard argument of Galois descent,
as in the proof of Lemma \ref{lem:ar}.
\end{proof}

\begin{remark}\label{rem:av}
In fact, the formation of $N$ commutes with all separable
field extensions. This may be proved as sketched in Remark 
\ref{rem:aag}.
\end{remark}

With the notation and assumptions of Proposition \ref{prop:alba},
we say that the quotient morphism 
\[ \alpha : G \longrightarrow G/N \] 
is the \textit{Albanese homomorphism of $G$}, and 
$G/N =: \Alb(G)$ the \textit{Albanese variety}. 

Actually, Proposition \ref{prop:alba} (3) extends to any 
\textit{pointed variety}, i.e., a variety equipped with 
a $k$-rational point:

\begin{theorem}\label{thm:alba}
Let $(X,x)$ be a pointed variety. 
Then there exists an abelian variety $A$ and a morphism 
$\alpha = \alpha_X : X \to A$ sending $x$ to $0$, such that 
for any abelian variety $B$, every morphism $X \to B$ sending 
$x$ to $0$ factors uniquely through~$\alpha$. 
\end{theorem}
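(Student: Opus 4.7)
My plan is to construct $A$ as the target of a ``maximal'' morphism from $(X,x)$ to abelian varieties, and to verify the universal property using Proposition~\ref{prop:alba}.

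Let $\mathcal{F}$ denote the collection of pairs $(B,f)$ where $B$ is an abelian variety and $f:X\to B$ is a morphism with $f(x)=0$ whose image $f(X)$ generates $B$ (meaning $B$ is the smallest abelian subvariety of itself containing $f(X)$). Given two such pairs $(B_1,f_1),(B_2,f_2)$, the combined morphism $(f_1,f_2):X\to B_1\times B_2$ has image generating an abelian subvariety $B_{12}\subseteq B_1\times B_2$, so the pair $(B_{12},(f_1,f_2))$ lies in $\mathcal{F}$; the two projections $B_{12}\to B_i$ are surjective (since each $f_i(X)$ generates $B_i$), giving $\dim B_{12}\geq\max(\dim B_1,\dim B_2)$. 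Thus $\mathcal{F}$ carries a directed-system structure in which dimensions can only increase.

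The main obstacle is a uniform upper bound $\dim B\leq N(X)$ over all $(B,f)\in\mathcal{F}$. I would establish this by embedding $X$ into a proper variety $\bar{X}$ via Nagata's theorem and then applying Theorem~\ref{thm:weil} to extend $f$ to a morphism on the smooth locus $\bar{X}^{\mathrm{sm}}$, reducing the bound to one on smooth proper varieties where it can be read off from cohomological invariants (the dimension of $H^1$ of the structure sheaf on a smooth projective model, via the theory of the Picard scheme). A more self-contained route uses iterated sums: for $(B,f)\in\mathcal{F}$, the images of $X^n\times X^n\to B$, $(y_\bullet,z_\bullet)\mapsto\sum f(y_i)-\sum f(z_i)$, form an ascending chain of symmetric closed subvarieties that stabilizes at an abelian subvariety forced to equal $B$, and combining dimension estimates here with geometric invariants of a compactification yields the bound. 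This is the technical heart of the proof.

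Granting the bound, pick $(A,\alpha)\in\mathcal{F}$ with $\dim A$ maximal, and choose $(A,\alpha)$ further so that $A$ dominates every other maximal-dimensional element via isogeny (such a ``most divisible'' choice exists because, once dimension is fixed, the poset of isogenies of abelian varieties receiving $X$ admits minimal elements by Noetherianity of their kernels). For any $(B,f)\in\mathcal{F}$, the combined pair $(B_0,(\alpha,f))$ has $\dim B_0\geq\dim A$ and $(B_0,\cdot)\in\mathcal{F}$, so by maximality $\dim B_0=\dim A$ and the projection $p:B_0\to A$ is a surjective isogeny. The divisibility choice of $A$ forces $\ker p$ to be trivial, so $p$ is an isomorphism. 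Then $g:=q\circ p^{-1}:A\to B$, where $q:B_0\to B$ is the other projection, satisfies $f=g\circ\alpha$; since $g(0)=0$, Proposition~\ref{prop:alba}(1) applied to $G=A$ shows that $g$ is a homomorphism. For an arbitrary $f:X\to B$ sending $x$ to $0$, replace $B$ by the abelian subvariety it generates to reduce to the case $(B,f)\in\mathcal{F}$. Uniqueness of $g$ follows because $\alpha(X)$ generates $A$, so any two homomorphisms $A\to B$ agreeing on $\alpha(X)$ are equal.
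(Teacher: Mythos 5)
The paper does not actually prove Theorem \ref{thm:alba}: it cites Serre \cite{Se2} for the algebraically closed case and Wittenberg \cite{Wit} in general (and later, in Proposition \ref{prop:picalb}, it rederives the result for proper pointed varieties from the Picard scheme). Your overall architecture is the classically correct one --- Serre's construction via morphisms $f:X\to B$ whose image generates $B$, a maximal such pair, and the ``generation'' argument for uniqueness --- and the final reductions (passing to the generated abelian subvariety, using Proposition \ref{prop:alba}(1) to see that $g$ is a homomorphism, uniqueness of $g$) are fine. But both of the load-bearing steps are only gestured at, and the justifications you give for them do not work as stated.

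For the uniform bound on $\dim B$: the iterated-sums argument is circular. The chain $W_1\subseteq W_2\subseteq\cdots$ of images of the sum maps stabilizes after at most $\dim B$ strict inclusions, and each step raises the dimension by at most $\dim X$; this yields $\dim B\leq \dim B\cdot\dim X$, i.e.\ nothing, because the length of the chain is bounded only in terms of the quantity you are trying to bound. The cohomological route is the right one, but ``a smooth projective model'' is unavailable in positive characteristic; what works is a \emph{normal} projective compactification $\bar{X}$ of the smooth locus (Nagata plus normalization), extension of $f$ to $\bar{X}^{\mathrm{sm}}$ by Theorem \ref{thm:weil}, and the fact that when $f(X)$ generates $B$ the induced homomorphism $\widehat{B}\to\Pic^0_{\bar{X}/k}$ has \emph{finite kernel}, whence $\dim B\leq\dim\Pic^0_{\bar{X}/k}$. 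That finiteness is precisely where ``generates'' enters and itself requires proof; at that point you are essentially redoing Proposition \ref{prop:picalb}. For the ``most divisible'' choice: ``Noetherianity of their kernels'' is not an argument --- the kernels of the successive isogenies $A_{n+1}\to A_n$ live in different abelian varieties, and nothing formal prevents an infinite tower of compatible isogeny covers of constant dimension (compare $A\stackrel{2}{\leftarrow}A\stackrel{2}{\leftarrow}\cdots$, which does carry compatible lifts of $\alpha$ whenever $\alpha$ is divisible in $A(k(X))$). Termination again needs a genuine finiteness input: with the dictionary above, each $\widehat{A_n}\to\Pic^0_{\bar X/k}$ is an isogeny onto a fixed abelian subvariety $Q$, and $\deg(\widehat{A_1}\to Q)=\deg(\widehat{A_n}\to Q)\cdot\prod_i\deg(p_i)$ bounds $\prod_i\deg(p_i)$, forcing all but finitely many $p_i$ to be isomorphisms. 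So the missing idea in both places is the same: the Picard scheme of a proper model. Supplying it would turn your sketch into a proof, but as written the two central steps are gaps rather than routine verifications.
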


\begin{proof}
See \cite[Thm.~5]{Se2} when $k$ is algebraically closed;
the general case is obtained in \cite[Thm.~A1]{Wit}. 
\end{proof}

The morphism $\alpha : X \to A$ in the above theorem is uniquely 
determined by $(X,x)$; it is called the \textit{Albanese morphism},
and $A$ is again the \textit{Albanese variety}, denoted by $\Alb(X)$.
Combining that theorem with Theorem \ref{thm:weil}
and Lemma \ref{lem:alb}, we obtain readily:

\begin{corollary}\label{cor:alba}

\begin{enumerate}

\item For any smooth pointed variety $(X,x)$ and 
any open subvariety $U \subseteq X$ containing $x$,
we have a commutative square
\[
\xymatrix{
U \ar[r]^-{\alpha_U} \ar[d]_{j} & \Alb(U) \ar[d]^{\Alb(j)} \\ 
X \ar[r]^-{\alpha_X} & \Alb(X),  \\}
\]
where $j: U \to X$ denotes the inclusion and
$\Alb(j)$ is an isomorphism.

\item For any pointed varieties $(X,x)$, $(Y,y)$, 
we have $\alpha_{X \times Y} = \alpha_X \times \alpha_Y$. 
In particular, the natural map 
\[ \Alb(X \times Y) \longrightarrow \Alb(X) \times \Alb(Y) \] 
is an isomorphism.

\item Any action $a$ of a smooth connected algebraic 
group $G$ on a pointed variety $X$ yields a unique homomorphism 
$f = f_a : \Alb(G) \to \Alb(X)$
such that the square
\[
\xymatrix{
G \times X \ar[r]^-{a} \ar[d]_{\alpha_G \times \alpha_X} & X \ar[d]^{\alpha_X} \\ 
\Alb(G) \times \Alb(X) \ar[r]^-{f + \id} & \Alb(X)  \\}
\]
commutes.

\end{enumerate}

\end{corollary}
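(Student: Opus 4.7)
The overall strategy is to derive each of the three assertions by direct application of the universal property of the Albanese morphism (Theorem \ref{thm:alba}), combined with the extension theorem for rational maps (Theorem \ref{thm:weil}) and the biadditivity identity (Lemma \ref{lem:alb}). No fundamentally new ideas are required; the content lies in the uniqueness clauses of the universal properties.

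For (1), I would first obtain the commutative square by applying the universal property of $\alpha_U$ to the composition $\alpha_X \circ j : U \to \Alb(X)$, which sends $x$ to $0$; this yields the induced homomorphism $\Alb(j)$. To produce an inverse, I would view $\alpha_U$ as a rational map $X \dasharrow \Alb(U)$ and invoke Theorem \ref{thm:weil} (which applies since $X$ is smooth and $\Alb(U)$ is an abelian variety) to extend it to a morphism $\tilde{\alpha} : X \to \Alb(U)$ sending $x$ to $0$. The universal property of $\alpha_X$ then gives a morphism $\Alb(X) \to \Alb(U)$, and the two composites with $\Alb(j)$ agree with the identity on the dense open $U$; by separatedness, they are the identity everywhere.

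For (2), the natural map $\Alb(X \times Y) \to \Alb(X) \times \Alb(Y)$ is built from the projections by functoriality applied to the basepoints $(x,y_0)$ and $(x_0,y)$. To construct its inverse, I would apply Lemma \ref{lem:alb} to $\alpha_{X \times Y} : X \times Y \to \Alb(X \times Y)$ with basepoints $(x,y)$, obtaining identically
\[ \alpha_{X \times Y}(p,q) = \alpha_{X \times Y}(p,y) + \alpha_{X \times Y}(x,q). \]
Each summand sends its basepoint to $0$ and hence factors uniquely through $\alpha_X$ or $\alpha_Y$ respectively; summing the resulting homomorphisms $\Alb(X) \to \Alb(X \times Y)$ and $\Alb(Y) \to \Alb(X \times Y)$ produces the inverse map.

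For (3), I would apply the universal property of $\alpha_{G \times X}$ to $\alpha_X \circ a : G \times X \to \Alb(X)$, which sends $(e, x)$ to $0$, producing a morphism $h : \Alb(G \times X) \to \Alb(X)$. Via the isomorphism from (2), $h$ becomes a morphism $\Alb(G) \times \Alb(X) \to \Alb(X)$ sending $(0,0)$ to $0$. Applying Lemma \ref{lem:alb} once more (or Proposition \ref{prop:alba}(1), since $\Alb(G) \times \Alb(X)$ is a smooth connected algebraic group) decomposes $h$ as a sum $f + g$ of homomorphisms $f : \Alb(G) \to \Alb(X)$ and $g : \Alb(X) \to \Alb(X)$; the identity $a(e, -) = \id_X$ forces $g = \id_{\Alb(X)}$, yielding the claimed homomorphism $f$. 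Uniqueness of $f$ follows from the uniqueness in the universal property of $\alpha_{G \times X}$.

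The main delicate point is the gluing argument in (1): one must invoke the extension theorem to obtain $\tilde{\alpha}$ on all of $X$ from its restriction on $U$, then use density of $U$ and separatedness of the target to conclude that the two candidate inverses compose to the identity globally, not merely on the dense open. The other two parts are essentially bookkeeping around Lemma \ref{lem:alb}.
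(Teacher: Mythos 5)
Your proposal is correct and follows exactly the route the paper intends: the paper gives no detailed argument, stating only that the corollary follows ``readily'' by combining Theorem \ref{thm:alba} with Theorem \ref{thm:weil} and Lemma \ref{lem:alb}, and these are precisely the three ingredients you deploy (the universal property for functoriality, Weil's extension theorem for the isomorphism in (1), and biadditivity for (2) and (3)). The details you supply, including the identification of $g$ with $\id_{\Alb(X)}$ via the uniqueness clause of the universal property, are sound.
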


The formation of the Albanese morphism commutes with separable
algebraic field extensions, by Galois descent again. But it does 
not commute with arbitrary field extensions, as shown by Example
\ref{ex:ray} below.

\subsection{Abelian torsors}
\label{subsec:at}

\begin{lemma}\label{lem:mult}
Let $A$ an abelian variety and $n$ a non-zero integer. 
Then the multiplication map
\[ n_A : A \longrightarrow A, \quad x \longmapsto n x \]
is an isogeny.
\end{lemma}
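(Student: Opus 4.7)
The plan is to show that $n_A$ is a surjective homomorphism with finite kernel, i.e., that it fits into an exact sequence $1 \to \Ker(n_A) \to A \xrightarrow{n_A} A \to 1$ with finite kernel, in the sense of Definition \ref{def:ex}. First, $n_A$ is a homomorphism because $A$ is commutative (by Corollary \ref{cor:cent}, already noted at the start of \S\ref{subsec:av}). The heart of the argument is to establish that $n_A$ is a finite morphism; surjectivity and the finiteness of the kernel will then follow by elementary arguments.

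To prove finiteness I would invoke the \emph{theorem of the cube} from \cite{Mum}, which yields that for any symmetric line bundle $\cL$ on $A$ (meaning $[-1]^*\cL \cong \cL$) one has $n_A^*\cL \cong \cL^{\otimes n^2}$. Since $A$ is projective, choose an ample invertible sheaf $\cM$ on $A$ and set $\cL := \cM \otimes [-1]^*\cM$; then $\cL$ is symmetric and ample (the sum of two ample divisors is ample). Because $n \neq 0$, the pullback $n_A^*\cL \cong \cL^{\otimes n^2}$ is ample as well. A morphism whose pullback of an ample sheaf is ample must be quasi-finite: otherwise there would exist a positive-dimensional irreducible component $Z$ of some fiber, on which $n_A^*\cL\vert_Z$ is trivial (being the pullback of a sheaf from a point), contradicting ampleness. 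Being a quasi-finite morphism between proper $k$-schemes, $n_A$ is proper and quasi-finite, hence finite.

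Next, the image $n_A(A) \subseteq A$ is a closed subgroup scheme of $A$ by Proposition \ref{prop:hom}. As $n_A$ is finite, $\dim n_A(A) = \dim A$, and since $A$ is connected, $n_A(A)$ is a connected closed subgroup scheme of full dimension; thus $n_A(A) = A$, i.e., $n_A$ is surjective. Let $K := \Ker(n_A)$; this is a closed subgroup scheme of $A$, and it is finite since $n_A$ is finite. By Proposition \ref{prop:fact}, $n_A$ factors as $A \xrightarrow{q} A/K \xrightarrow{\iota} A$ where $\iota$ is a closed immersion. Both $A/K$ and $A$ are connected and of the same dimension (as $K$ is finite and $q$ is faithfully flat of relative dimension $0$), so $\iota$ is an isomorphism. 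Since $q$ is faithfully flat of finite presentation, condition (2) of Definition \ref{def:ex} holds, and we conclude that $n_A$ is an isogeny.

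The main obstacle is the identity $n_A^*\cL \cong \cL^{\otimes n^2}$ for symmetric $\cL$; this is not elementary and requires the theorem of the cube together with a direct computation. Since the paper treats \cite{Mum} as the standard reference for abelian varieties, I would cite this result rather than reproduce its proof. Note that in characteristic zero (or whenever $\car(k) \nmid n$) one can instead argue via the differential: $d(n_A)_e$ equals multiplication by $n$ on $\Lie(A)$, hence is invertible, so $n_A$ is étale at $e$; by translating via $n_A \circ t_y = t_{ny} \circ n_A$ and invoking Proposition \ref{prop:smooth}, $n_A$ is étale everywhere, and then finite by properness. But this shortcut fails when $p = \car(k)$ divides $n$, so the line bundle argument is needed for the general statement.
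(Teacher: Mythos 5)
Your proof is correct: the paper itself gives no argument but simply cites \cite[p.~62]{Mum}, and the argument you supply (symmetric ample $\cL$ with $n_A^*\cL \cong \cL^{\otimes n^2}$ via the theorem of the cube, hence $n_A$ quasi-finite, hence finite and surjective with finite kernel) is precisely the standard proof found at that reference. Your remark that the differential shortcut only works when $\car(k) \nmid n$ is also accurate.
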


\begin{proof}
See \cite[p.~62]{Mum}.
\end{proof}

\begin{example}\label{ex:noncon}
With the above notation, consider the semi-direct product 
$G := \bZ/2 \ltimes A$, where $\bZ/2$ (viewed as a constant
group scheme) acts on $A$ via $x \mapsto \pm x$. One may check
that the center $Z$ of $G$ is the kernel of the multiplication
map $2_A$ and hence is finite; thus, $G/Z$ is not affine. So 
Corollary \ref{cor:cent} does not extend to disconnected 
algebraic groups. 

Also, note that $G$ is not contained in any connected algebraic 
group, as follows from Proposition \ref{prop:rig}.
\end{example}

\begin{lemma}\label{lem:finite}
Let $G$ be a smooth connected commutative algebraic group, 
with group law denoted additively, and let $f : X \to \Spec(k)$ 
be a $G$-torsor. Then there exists a positive integer $n$ 
and a morphism $\varphi : X \to G$ such that 
\[ \varphi(g \cdot x) = \varphi(x) + n g \] 
identically on $G \times X$. 
\end{lemma}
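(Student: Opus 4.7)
Plan: The idea is to pass to a finite Galois extension where the torsor trivializes, use the trivialization to get a morphism $X_K \to G_K$, and then average over the Galois group to descend back to $k$, picking up the integer $n = [K:k]$ as the price of descent.

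First I would arrange a $K$-point. Since $G$ is smooth, Proposition~\ref{prop:des}(2) applied to the $G$-torsor $f : X \to \Spec(k)$ shows that $X$ is smooth (and nonempty, since $f$ is faithfully flat); hence $X(k_s) \neq \emptyset$. Choose a finite Galois extension $K/k$ with group $\Gamma$ such that $X(K) \neq \emptyset$, fix $x_0 \in X(K)$, and set $n := [K:k] = |\Gamma|$. The torsor property (Remark~\ref{rem:tors}(i)) converts $x_0$ into an isomorphism $\psi : X_K \to G_K$ of $K$-schemes characterized by $\psi(x_0) = 0$ and $\psi(g\cdot x) = g + \psi(x)$ identically.

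Next I would produce Galois conjugates and average. For $\sigma \in \Gamma$, let $\sigma_X$ and $\sigma_G$ denote the natural actions on $X_K = X \times_k \Spec(K)$ and $G_K = G \times_k \Spec(K)$ via the second factor, and set
\[ \psi^\sigma := \sigma_G \circ \psi \circ \sigma_X^{-1} : X_K \longrightarrow G_K. \]
Concretely, $\psi^\sigma$ is the trivialization attached to the $K$-point $x_0^\sigma := \sigma(x_0) = g_\sigma \cdot x_0$ (for a unique $g_\sigma \in G(K)$), and one checks $\psi^\sigma = \psi - g_\sigma$; in particular $\psi^\sigma(g \cdot x) = g + \psi^\sigma(x)$ as well. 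Using the commutative group law on $G$ (which makes $\Hom(X_K,G_K)$ an abelian group), define
\[ \Psi := \sum_{\sigma \in \Gamma} \psi^\sigma : X_K \longrightarrow G_K. \]
The relation $\tau \cdot \psi^\sigma = \psi^{\tau\sigma}$ shows that $\Psi$ is $\Gamma$-invariant, so by Galois descent for morphisms (the equality $\Hom_k(X,G) = \Hom_K(X_K, G_K)^\Gamma$) it descends uniquely to a morphism $\varphi : X \to G$ of $k$-schemes.

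Finally the equivariance is immediate: summing $\psi^\sigma(g\cdot x) = g + \psi^\sigma(x)$ over $\sigma \in \Gamma$ gives $\Psi(g\cdot x) = ng + \Psi(x)$ on $G_K \times X_K$, and this identity descends to $\varphi(g\cdot x) = \varphi(x) + ng$ on $G \times X$. The one point that requires genuine care, and is the main obstacle in writing this out, is the functorial definition of the conjugate trivializations $\psi^\sigma$ as morphisms of $K$-schemes (rather than as maps on $K$-points) and the verification that $\Psi$ is truly $\Gamma$-equivariant with respect to the Galois action on $\Hom_K(X_K,G_K)$; once this is set up cleanly, the rest is a formal computation.
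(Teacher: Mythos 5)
Your proof is correct and follows essentially the same route as the paper: trivialize the torsor over a finite Galois extension via a point $x_0\in X(K)$, sum the Galois conjugates of the trivialization to get a $\Gamma$-invariant morphism $X_K\to G_K$ with the covariance $\Psi(g\cdot x)=\Psi(x)+ng$, and descend. The only (immaterial) difference is that you index the sum by all of $\Gamma$, so $n=[K:k]$, whereas the paper sums over the distinct conjugates of the chosen point; since the lemma only requires some positive $n$, both work.
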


\begin{proof}
If $X$ has a $k$-rational point $x$, then the orbit map 
$a_x : G  \to X$,  $g \mapsto g \cdot x$
is a $G$-equivariant isomorphism, where $G$ acts on itself by
translation. So we may just take $\varphi = a_x^{-1}$ and $n = 1$
(i.e., $\varphi$ is $G$-equivariant).

In the general case, since $X$ is a smooth variety, it has a $K$-rational 
point $x_1$ for some finite Galois extension of fields $K/k$. Denote by 
$\cG$ the corresponding Galois group and by $x_1,\ldots, x_n$ 
the distinct conjugates of $x_1$ under $\cG$. Then the first step
yields $G_K$-equivariant isomorphisms
$g_i : X_K \to G_K$ for $i = 1,\ldots,n$, such that 
$x = g_i(x) \cdot x_i$ identically. Consider the morphism
\[ \phi : X_K \longrightarrow G_K, \quad 
x \longmapsto g_1(x) + \cdots + g_n(x). \]
Then $\phi$ is equivariant under $\cG$, since that group permutes
the $x_i$'s and hence the $g_i$'s. Also, we have 
$\phi(g \cdot x) = \phi(x) + n g$ identically. So $\phi$ descends 
to the desired morphism $X \to G$.
\end{proof}

\begin{proposition}\label{prop:cov}
Let $A$ be an abelian variety and $f : X \to Y$ an $A$-torsor, 
where $X,Y$ are smooth varieties. Then there exists a positive integer 
$n$ and a morphism $\varphi : X \to A$ such that 
$\varphi(a \cdot x) = \varphi(x) + n a$ identically on $A \times X$. 
\end{proposition}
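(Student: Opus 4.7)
The plan is to reduce Proposition \ref{prop:cov} to Lemma \ref{lem:finite} by passing to the generic fiber of $f$, and then to extend the resulting morphism to all of $X$ via Theorem \ref{thm:weil}. Set $K := k(Y)$ and $X_K := X \times_Y \Spec(K)$. By Remark \ref{rem:tors} (iv), $X_K$ is an $A_K$-torsor over $\Spec(K)$; in particular it is smooth over $K$ by Proposition \ref{prop:des}, and it is geometrically integral, since over some finite separable extension $L/K$ it acquires an $L$-point and hence becomes isomorphic to $A_L$, which is itself geometrically integral. Applying Lemma \ref{lem:finite} with base field $K$ in place of $k$ then furnishes a positive integer $n$ and a $K$-morphism $\varphi_K : X_K \to A_K$ satisfying
\[ \varphi_K(a \cdot x) = \varphi_K(x) + n a \]
on $A_K \times_K X_K$.

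Composing $\varphi_K$ with the projection $A_K \to A$ gives a $k$-morphism $X_K \to A$. Because $X$ is an irreducible smooth $k$-variety sharing its generic point with $X_K$, this defines a rational map $\varphi : X \dasharrow A$, which Theorem \ref{thm:weil} upgrades to a genuine morphism $\varphi : X \to A$ (using that $X$ is smooth and $A$ is an abelian variety).

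To conclude, one verifies the identity $\varphi(a \cdot x) = \varphi(x) + n a$ on the whole of $A \times X$. The two sides define morphisms $A \times X \rightrightarrows A$ whose equaliser is closed, since $A$ is separated. By construction these two morphisms coincide on the generic fiber of the composite $A \times X \to X \to Y$, namely $A \times_k X_K$; and this fiber contains the generic point of $A \times X$, which is irreducible as the product of two geometrically integral $k$-varieties. Hence the equaliser contains the generic point of $A \times X$ and therefore exhausts the whole space.

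The main obstacle, in my view, is more psychological than technical: the essential computation has already been performed in Lemma \ref{lem:finite}, and the key leverage for this global extension is Theorem \ref{thm:weil}. One should nonetheless take some care that $X_K$ is geometrically integral (so that Lemma \ref{lem:finite} applies in the form stated) and that the equivariance propagates from the generic fiber to all of $A \times X$; both points are handled by the irreducibility and separatedness remarks above.
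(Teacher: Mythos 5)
Your proof is correct and follows essentially the same route as the paper: base change to the generic point of $Y$, apply Lemma \ref{lem:finite} over $k(Y)$, compose with the projection $A_{k(Y)} \to A$ to obtain a rational map $X \dasharrow A$, extend it to a morphism by Theorem \ref{thm:weil}, and propagate the covariance identity by density. The only difference is that you spell out two points the paper leaves implicit (geometric integrality of the generic fiber, and the closed-equaliser argument for extending the identity), which is fine.
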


\begin{proof}
Let $\eta_Y : \Spec k(Y) \to Y$ be the generic point.
Then the base change $X \times_Y \Spec k(Y) \to \Spec k(Y)$
is an $A_{k(Y)}$-torsor. Using Lemma \ref{lem:finite}, we obtain a map 
$\psi : X \times_Y \Spec k(Y) \to A_{k(Y)}$ satisfying the required 
covariance property. Composing $\psi$ with the natural maps 
$\eta_X \times f : \Spec k(X) \to X \times_Y \Spec k(Y)$ 
and $\pi : A_{k(Y)} \to A$ yields a map $\Spec k(X) \to A$, 
which may be viewed as a rational map $X  \dasharrow A$ and hence 
(by Theorem \ref{thm:weil}) as a morphism $\varphi : X \to A$. 
Clearly, $\varphi$ satisfies the same covariance property as $\psi$.
\end{proof}

\begin{theorem}\label{thm:qc}
Let $G$ be a smooth connected algebraic group and $A \subseteq G$ 
an abelian subvariety. Then $A \subseteq Z(G)$ and there exists 
a connected normal subgroup scheme $H \subseteq G$ such that
$G = A \cdot H$ and $A \cap H$ is finite. If $k$ is perfect, 
then we may take $H$ smooth.
\end{theorem}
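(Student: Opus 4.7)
The first assertion ($A \subseteq Z(G)$) should fall out immediately from the rigidity results already in hand. Since $A$ is an abelian variety, it is proper and connected, so $\mathcal{O}(A) = k$, i.e., $A$ is anti-affine. The inclusion $\iota : A \hookrightarrow G$ sends $e_A$ to $e_G$, so Proposition \ref{prop:rig} immediately forces $\iota$ to factor through the center of $G^0$. Because $G$ is connected, $G^0 = G$, which gives $A \subseteq Z(G)$.

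For the existence of $H$, the plan is to produce a surjective homomorphism $G \twoheadrightarrow A$ whose restriction to $A$ is an isogeny. Since $A$ is central and hence normal, the quotient map $q : G \to G/A$ exists and is an $A$-torsor between smooth varieties. I would apply Proposition \ref{prop:cov} to this torsor to obtain a positive integer $n$ and a morphism $\varphi : G \to A$ satisfying $\varphi(a \cdot g) = \varphi(g) + na$ identically. After subtracting the constant $\varphi(e_G)$, one may arrange $\varphi(e_G) = 0$; then Proposition \ref{prop:alba}(1) upgrades $\varphi$ to a group homomorphism. The covariance identity specializes at $g = e_G$ to $\varphi|_A = n_A$, which is an isogeny by Lemma \ref{lem:mult}; in particular $\varphi$ is surjective with finite restriction to $A$.

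I would then set $H := \ker(\varphi)^0$. Since $\ker(\varphi)$ is normal in $G$ and its formation of the neutral component is preserved by conjugation, $H$ is a connected normal subgroup scheme. Clearly $A \cap H \subseteq A \cap \ker(\varphi) = \ker(n_A) = A[n]$ is finite. To see $A \cdot H = G$, consider the quotient $G/H$: it has dimension $\dim A$ (since $H$ has the same dimension as $\ker(\varphi)$), and the composition $A \hookrightarrow G \twoheadrightarrow G/H$ has finite kernel and therefore its image is a closed subgroup of full dimension in the connected group $G/H$, hence all of $G/H$. Translating back gives $G = A \cdot H$.

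For perfect $k$ I would refine the construction by taking $H := (\ker \varphi)^0_\red$. Proposition \ref{prop:red}(2) makes this a smooth subgroup scheme of $G$. Normality can be checked after base change to $\bar{k}$, where conjugation by any $\bar{k}$-point preserves $\ker(\varphi)^0_\red$ and the $\bar{k}$-points are Zariski-dense in the smooth group $G_{\bar k}$; hence the closed normalizer of $H_{\bar k}$ equals $G_{\bar k}$, and normality descends. The intersection $A \cap H$ remains contained in $A[n]$ and is still finite. For the equality $A \cdot H = G$, note that over $\bar{k}$ the reduction does not change the points, so $G(\bar{k}) = A(\bar{k}) \cdot (\ker\varphi)^0(\bar{k}) = A(\bar{k}) \cdot H(\bar{k})$; since $G/H$ is smooth (as $G$ is), Lemma \ref{lem:prod} yields $G = A \cdot H$. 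The only subtle point in the whole argument, and the one deserving care, is the passage from the homomorphism $\varphi$ and its possibly disconnected kernel to an actually connected (and, over a perfect field, smooth) normal complement while retaining the equality $A \cdot H = G$; once the surjectivity $\varphi|_A = n_A$ is in hand, the dimension/Lemma \ref{lem:prod} argument above resolves it.
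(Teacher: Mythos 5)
Your proposal is correct and follows essentially the same route as the paper: rigidity (Proposition \ref{prop:rig}) for centrality, Proposition \ref{prop:cov} applied to the $A$-torsor $G \to G/A$ to produce $\varphi$ with $\varphi|_A = n_A$, and $H := \Ker(\varphi)^0$ (or its reduction when $k$ is perfect). The only cosmetic divergence is at the step $G = A \cdot H$, where you use a dimension count on the closed subgroup $A \cdot H/H \subseteq G/H$ while the paper observes that $G/A\cdot H$ is both smooth connected and a quotient of the finite group $\pi_0(\Ker\varphi)$; both arguments are valid.
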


\begin{proof}
By Proposition \ref{prop:rig}, $A$ is contained in the center of $G$.
The quotient map $q : G \to G/A$ is an $A$-torsor; also, $G/A$ is smooth,
since so is $G$. Thus, Proposition \ref{prop:cov} yields a map
$\varphi : G \to A$ such that $\varphi(a g) = \varphi(g) + n a$ identically,
for some integer $n > 0$. Composing $\varphi$ with a translation of $A$,
we may assume that $\varphi(e_G) = 0$. Then $\varphi$ is a homomorphism
in view of Proposition \ref{prop:alba}; its restriction
to $A$ is the multiplication $n_A$.

We claim that $G = A \cdot \Ker(\varphi)$. Since $G$ is smooth,
it suffices by Lemma \ref{lem:prod} to show the equality 
$G(\bar{k}) = A(\bar{k}) \Ker(\varphi)(\bar{k})$.
Let $g \in G(\bar{k})$; by Lemma \ref{lem:mult}, 
there exists $a \in A(\bar{k})$ such that $\varphi(g) = n a$.  
Thus, $\varphi(a^{-1}g)=0$; this yields the claim.

Let $H := \Ker(\varphi)^0$. Then $A \cap H$ is finite, 
since it is contained in $A \cap \Ker(\varphi) = \Ker(n_A)$. 
We now show that $G = A \cdot H$. The homogeneous space 
$G/A \cdot H$ is smooth and connected, since so is $G$.
On the other hand, $G/A \cdot H = A \cdot \Ker(\varphi)/A \cdot H$ 
is homogeneous under $\Ker(\varphi)/H = \pi_0(\Ker(\varphi))$, 
and hence is finite. Thus, $G/A \cdot H$ is trivial; this yields 
the assertion. Since $H$ centralizes $A$, it follows that 
$H$ is normal in $G$.

Finally, if $k$ is perfect, then we may replace $H$ with $H_\red$
in the above argument.
\end{proof}

\begin{corollary}\label{cor:poin}
Let $A$ be an abelian variety and $B \subseteq A$ an abelian 
subvariety. Then there exists an abelian subvariety 
$C \subseteq A$ such that $A = B + C$ and $B \cap C$ is finite.
\end{corollary}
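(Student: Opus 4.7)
The plan is to apply Theorem \ref{thm:qc} directly, with $G := A$ (smooth, connected and proper) and with $B$ playing the role of the abelian subvariety of $G$. The commutativity of $A$ makes the centralization condition vacuous, so the theorem furnishes a connected normal subgroup scheme $H \subseteq A$ with $A = B \cdot H$ and $B \cap H$ finite; in the additive notation customary for abelian varieties, this reads $A = B + H$. I set $C := H$.

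To confirm that $C$ is an abelian subvariety I must verify smoothness, connectedness, and properness. Connectedness is built into the conclusion of Theorem \ref{thm:qc}; properness is automatic since $H$ is closed in $A$ by Proposition \ref{prop:hom} and $A$ is itself proper. Smoothness is given by Theorem \ref{thm:qc} whenever $k$ is perfect, which already covers all fields of characteristic zero by Theorem \ref{thm:car}. Combined with the finiteness of $B \cap C$, this settles the corollary in the perfect-field case.

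The only remaining obstacle is smoothness of $C$ over an imperfect field $k$, and this is the delicate part of the argument. I would resolve it by descent from the perfect closure $k_i$: first produce, via the perfect-field case, a smooth connected abelian subvariety $C_0 \subseteq A_{k_i}$ with $A_{k_i} = B_{k_i} + C_0$ and $B_{k_i} \cap C_0$ finite, and then descend $C_0$ along the purely inseparable extension $k_i / k$ to an abelian subvariety $C \subseteq A$. This descent is nontrivial because the formation of the reduced subscheme does not commute with purely inseparable extensions in general; to make it work, one must exploit the specific origin of $C_0$ as the (reduced) identity component of the kernel of a homomorphism $\varphi : A \to B$ constructed in the proof of Theorem \ref{thm:qc}, a homomorphism already defined over $k$ via Proposition \ref{prop:cov}.
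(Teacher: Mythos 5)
Your reduction to Theorem \ref{thm:qc} and your treatment of the perfect-field case are fine, but the imperfect-field case is where the proof actually has to be closed, and your sketch does not close it. You correctly observe that the $H$ produced by Theorem \ref{thm:qc} need not be smooth in positive characteristic, and that reduced subschemes do not descend along purely inseparable extensions (indeed Example \ref{ex:red} shows that $G_\red$ can fail to be a subgroup scheme over an imperfect field). But the proposed remedy --- descending $C_0 \subseteq A_{k_i}$ to $k$ by ``exploiting the specific origin of $C_0$'' as the reduced neutral component of $\Ker(\varphi)$ --- is only an intention, not an argument: knowing that $\Ker(\varphi)^0$ is defined over $k$ does not by itself tell you that its reduced subscheme is a $k$-subgroup scheme, which is precisely the point at issue.

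The missing idea, and the one the paper uses, is Lemma \ref{lem:prop}. The subgroup scheme $H$ is connected and is closed in $A$ (Proposition \ref{prop:hom}), hence proper; for a connected proper algebraic group one has $H_\red = H^0_\red = H_\ant$, which is a smooth connected proper (hence abelian) subvariety \emph{over any field}, with formation commuting with field extensions --- no descent from $k_i$ is needed. One then sets $C := H_\red$ and checks that $A = B + C$ still holds: since $H(\bar k) = H_\red(\bar k)$ and $A/H_\red$ is smooth (Theorem \ref{thm:hom}), Lemma \ref{lem:prod} gives $A = B \cdot H_\red$, while $B \cap H_\red \subseteq B \cap H$ remains finite. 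With this substitution your argument becomes the paper's proof; without it, the characteristic-$p$, imperfect-field case is genuinely open in your write-up.
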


\begin{proof}
By the above theorem, there exists a connected subgroup
scheme $H$ of $A$ such that $A = B + H$ and $B \cap H$ 
is finite. Now replace $H$ with $H_\red$, which is an abelian
subvariety by Lemma \ref{lem:prop}.
\end{proof}

The following example displays several specific features of 
algebraic groups over imperfect fields. It is based on Weil 
restriction; we refer to Appendix A of \cite{CGP} for 
the definition and main properties of this notion.

\begin{example}\label{ex:ray}
Let $k$ be an imperfect field. Choose a non-trivial finite
purely inseparable extension $K$ of $k$. Let $A'$ be 
a non-trivial abelian variety over $K$; then the Weil 
restriction $\R_{K/k}(A') =: G$ is a smooth connected commutative
algebraic group over $k$, of dimension $[K:k] \, \dim(A')$.
Moreover, there is an exact sequence of algebraic groups over $K$
\begin{equation}\label{eqn:ray} 
1 \longrightarrow U' \longrightarrow G_K 
\stackrel{q}{\longrightarrow} A' \longrightarrow 1, 
\end{equation}
where $U'$ is non-trivial, smooth, connected and unipotent
(see \cite[A.5.11]{CGP}). It follows readily that 
$q$ is the Albanese homomorphism of $G_K$. 

Let $H$ be a smooth connected affine algebraic group over $k$.
We claim that every homomorphism $f : H \to G$ is constant.
Indeed, the morphisms of $k$-schemes $f: H \to G$ correspond
bijectively to the morphisms of $K$-schemes 
$f' : H_K \to A'$, via the assignment 
$f \mapsto f' := q \circ f_K$ (see \cite[A.5.7]{CGP}). 
Since $q$ is a homomorphism, this bijection sends homomorphisms 
to homomorphisms. As every homomorphism $H_K \to A'$
is constant, this proves the claim.
 
Next, consider the Albanese homomorphism 
$\alpha : G \to \Alb(G)$. If $\alpha_K = q$, then 
$\Ker(\alpha)_K = \Ker(\alpha_K) = U'$; 
as a consequence, $\Ker(\alpha)$ is smooth, connected
and affine. By the claim, it follows that $\Ker(\alpha)$ is 
trivial, i.e., $G$ is an abelian variety. Then so is $G_K$,
but this contradicts the non-triviality of $U'$.
So \textit{the formation of the Albanese morphism does not 
commute with arbitrary field extensions}.

Note that $U' = L(G_K)$ (the largest smooth connected
normal affine subgroup scheme of $G_K$, introduced in
Lemma \ref{lem:ar}). Also, $L(G)$ is trivial
by the claim. Thus, \textit{the formation of $L(G)$
does not commute with arbitrary field extensions}.

Likewise, $G$ \textit{is not an extension of an abelian 
variety by a smooth connected affine algebraic group}.
We will see in Theorem \ref{thm:chev} that every smooth
connected algebraic group over a \textit{perfect} field
lies in a unique such extension.

For later use, we analyze the structure of $G$ in more 
detail. We claim that 
\textit{there is a unique exact sequence
\[ 1 \longrightarrow A \longrightarrow G 
\longrightarrow U \longrightarrow 1, \]
where $A$ is an abelian variety and $U$ is unipotent.}
This is equivalent to the assertion that $G_\ant$ 
is an abelian variety, and $G/G_\ant$ is unipotent. 
So it suffices to show the corresponding assertion 
for $G_K$. We may choose a positive integer $n$
such that $p^n_{U'} = 0$. Thus, the extension (\ref{eqn:ray})
is trivialized by push-out via $p^n_{U'}$. It follows
that (\ref{eqn:ray}) is also trivialized by pull-back 
via $p^n_{A'}$, and hence $G_K \cong (U' \times A')/F$
for some finite group scheme $F$ (isomorphic to the kernel
of $p^n_{A'}$). So the image of $A'$ in $G_K$ 
is an abelian variety with unipotent quotient; this
proves the claim.  

Next, we claim that \textit{there exists a connected
unipotent subgroup scheme $V \subseteq G$ such that
$G = A \cdot V$ and $A \cap V$ is finite.} This 
follows from Theorem \ref{thm:qc}, or directly by
taking for $V$ the neutral component of $\Ker(p^m_G)$, 
where $m$ is chosen so that $p^m_U = 0$. Yet 
\textit{there exists no smooth connected subgroup scheme 
$H \subseteq G$ such that $G = A \cdot H$
and $A \cap H$ is finite}. 
Otherwise, the quotient homomorphism $G \to U$ restricts 
to a homomorphism $H \to U$ with finite kernel. Thus, $H$ is
affine; also, $G$ is an extension of the abelian variety
$A/A \cap H$ by $H$. This yields a contradiction.
\end{example}

\subsection{Completion of the proof of Theorem \ref{thm:che}} 
\label{subsec:compl}

The final ingredient in Rosenlicht's proof of the Chevalley structure
theorem is the following:

\begin{lemma}\label{lem:exi}
Every non-proper algebraic group over an algebraically closed
field contains an affine subgroup scheme of positive dimension.
\end{lemma}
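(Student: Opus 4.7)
The plan is to reduce to the smooth connected case and then, by extending the left-translation action of $G$ on itself to an action on a proper birational model, to apply Proposition~\ref{prop:act} to a boundary point. I would first replace $G$ by $(G^0)_\red$: since $k$ is perfect this is a subgroup scheme (Proposition~\ref{prop:red}), it is smooth and connected (Proposition~\ref{prop:smooth}), and the quotients $G/G^0$ and $G^0/(G^0)_\red$ are both finite hence proper, so $G$ is proper if and only if $(G^0)_\red$ is. Hence it suffices to treat a smooth connected non-proper $G$. If $G$ is itself affine, then $G$ is the required subgroup; its positive dimension is forced by the non-properness, since a zero-dimensional smooth connected $\bar k$-scheme is a single point, which is proper. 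So I assume in addition that $G$ is not affine.

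The key construction is to produce a proper irreducible $\bar k$-variety $X$ containing $G$ as a dense open subscheme, together with a regular faithful action $a : G \times X \to X$ that extends the action of $G$ on itself by left translation. Concretely, one starts with any projective compactification of $G$ obtained from Nagata's theorem followed by normalization. Left translation then defines a rational action of $G$ on this compactification, regular on an open subset containing $G \times G$, and one modifies the compactification to promote this to a regular action: for instance by replacing the compactification by the normalization of the closure of the graph of multiplication in a suitable product, or by resolving the indeterminacy of the action in a $G$-equivariant fashion. In any realization the open $G$-orbit through $e$ is $G$ itself, so the boundary $\partial G := X \setminus G$ is $G$-stable.

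Since $G$ is not proper, $\partial G$ is non-empty, so I may pick a $\bar k$-rational point $x \in \partial G$. The orbit $G \cdot x$ lies in the closed proper subset $\partial G$ of the irreducible variety $X$, and using that $G/\C_G(x)$ is equidimensional of dimension $\dim G - \dim \C_G(x)$ (Remark~\ref{rem:quot}(iv)) together with Proposition~\ref{prop:orbit}, one gets $\dim G - \dim \C_G(x) = \dim G \cdot x \le \dim \partial G < \dim X = \dim G$, so $\dim \C_G(x) > 0$. Moreover the action $a$ is faithful, because its restriction to $G \subset X$ is the left multiplication, so $\Ker(a)$ is trivial. Proposition~\ref{prop:act} then gives that $\C_G(x) = \C_G(x)/\Ker(a)$ is affine, producing the required positive-dimensional affine subgroup scheme of~$G$.

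The main obstacle is the second step: converting the rational left-translation action on a projective compactification of $G$ into a genuine regular action on some proper birational model while keeping $G$ as the open orbit. This is classical but genuinely delicate over an arbitrary algebraically closed field, and is where the bulk of the technical work lies; once such an equivariant compactification is in hand the rest of the argument is essentially formal.
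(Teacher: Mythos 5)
Your endgame is sound: given a proper irreducible variety $X$ carrying a \emph{regular} faithful $G$-action with $G$ as a dense open orbit, a $k$-point $x$ of the non-empty boundary has $\dim \C_G(x) > 0$ because its orbit lies in the boundary, and $\C_G(x)$ is affine by Proposition \ref{prop:act}; the reduction to $G$ smooth and connected is also fine. The gap is the step you yourself flag as the main obstacle: the existence of such an equivariant compactification. That existence is exactly Theorem \ref{thm:equiv} applied to $G = G/\{e\}$, and the paper proves that theorem using Theorem \ref{thm:che}, which rests on Lemma \ref{lem:exi} — so invoking it here is circular. None of the substitutes you sketch closes the gap: Weil's regularization theorem turns a rational action into a regular one only on some (generally non-proper) birational model, and passing from there to a \emph{proper} equivariant model is precisely where the structure theory is needed (Sumihiro-type completion theorems assume $G$ linear); equivariant resolution of indeterminacy is unavailable in positive characteristic; and the normalization of the closure of the graph of multiplication does not carry a $G$-action on a proper scheme in any evident way.

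This is why the argument outlined in the paper (following Rosenlicht and \cite[Sec.~2.3]{BSU}) never regularizes the action. It keeps the action rational throughout, and its main step is the weaker but achievable claim that some proper birational model $X'$ admits an irreducible divisor $D$ normalized by the rational action; one then defines orbit maps and stabilizers for a general point of $D$ in the rational setting and adapts the proof of Proposition \ref{prop:act} there to get affineness, with positive dimension coming from $\dim(G/\C_G(x)) \leq \dim(D) = \dim(G)-1$. That divisor-stabilization argument, together with the rational-action version of the affineness criterion, is the actual content of the lemma; your proposal replaces it with an assertion that cannot be established at this point in the development.
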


We now give a brief outline of the proof of this lemma, 
which is presented in detail in 
\cite[Sec.~2.3]{BSU}; see also \cite[Sec.~4]{Mi}. 
Let $G$ be a non-proper algebraic group over $k = \bar{k}$.
Then $G^0_\red$ is non-proper as well, and hence we may 
assume that $G$ is smooth and connected. By \cite{Na1, Na2}, 
there exists a compactification $X$ of $G$, i.e., $X$ is a proper 
variety containing $G$ as an open subvariety; then the boundary 
$X \setminus G$ is non-empty. The action of $G$ by left 
multiplication on itself induces a faithful rational action 
\[ a : G \times X \dasharrow X. \]
One shows (this is the main step of the proof) that there exists 
a proper variety $X'$ and a birational morphism 
$\varphi : X' \to X$ such that the induced birational action 
$a' : G \times X' \dasharrow X'$ normalizes some irreducible 
divisor $D \subset X'$, i.e., $a'$ induces a rational action 
$G \times D \dasharrow D$. Then one considers the ``orbit map''
$a'_x$ for a general point $x \in D$, and the corresponding
``stabilizer'' $C_G(x)$ (these have to be defined 
appropriately). By adapting the argument of Proposition
\ref{prop:act}, one shows that $\C_G(x)$ is affine; it has
positive dimension, since 
$\dim(G/C_G(x)) \leq \dim(D) = \dim(G) -1$. So $\C_G(x)$ is
the desired subgroup scheme.

\medskip

We now show how to derive Theorem \ref{thm:che} from Lemma
\ref{lem:exi}, under the assumptions that $k$ is perfect
and $G$ is smooth and connected. We then have the following 
more precise result, which is a version of Chevalley's structure 
theorem:

\begin{theorem}\label{thm:chev}
Let $G$ be a smooth connected algebraic group over a perfect field
$k$, and $L = L(G)$ the largest smooth connected affine
normal subgroup scheme. 

\begin{enumerate}

\item $L$ is the kernel of the Albanese homomorphism of $G$.

\item The formation of $L$ commutes with field extensions.

\end{enumerate}

\end{theorem}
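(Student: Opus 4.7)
The plan is to identify $L$ with the kernel $N$ of the Albanese homomorphism $\alpha \colon G \to \Alb(G)$ (Proposition \ref{prop:alba}), from which part (2) will follow by a short descent argument.

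The inclusion $L \subseteq N$ is immediate: $\alpha|_L \colon L \to \Alb(G)$ is a homomorphism from an affine algebraic group to an abelian variety, so its image is closed (Proposition \ref{prop:hom}), hence simultaneously affine and proper, hence finite; being also smooth and connected as a continuous surjective image of $L$, it reduces to $\{0\}$.

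The main obstacle will be the reverse inclusion $N \subseteq L$, or equivalently the properness of $G/L$. Lemma \ref{lem:ar}(2) yields $L(G/L)=0$, and since $k$ is perfect (so $\bar k = k_s$) Lemma \ref{lem:ar}(3) gives $L((G/L)_{\bar k}) = 0$. The crux is then the following claim over the algebraically closed field $\bar k$: every smooth connected algebraic group $Q$ with $L(Q)=0$ is proper. I will establish this contrapositively using Lemma \ref{lem:exi}: a non-proper $Q$ contains an affine subgroup scheme of positive dimension, and passing to its neutral component reduced (a subgroup scheme by Proposition \ref{prop:red}(2), smooth by Proposition \ref{prop:smooth}) yields a smooth connected affine subgroup $H \subseteq Q$ with $\dim H > 0$. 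The hardest technical step will be to enlarge $H$ to a smooth connected affine \emph{normal} subgroup of $Q$: one forms the subgroup $H^{\ast}$ generated by a tuple of conjugates $g_1 H g_1^{-1}, \ldots, g_n H g_n^{-1}$ chosen to maximize dimension, checks that $H^{\ast}$ remains smooth, connected, and affine, and observes that maximality of $\dim H^{\ast}$ forces every further conjugate $gHg^{-1}$ to lie in $H^{\ast}$; then density of $Q(\bar k)$ and representability of the normalizer (Theorem \ref{thm:norcent}) make $H^{\ast}$ normal in $Q$, contradicting $L(Q)=0$. Granted the claim, $(G/L)_{\bar k}$ is proper, so $G/L$ is proper by faithfully flat descent, and being smooth and connected is therefore an abelian variety. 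Proposition \ref{prop:alba}(3) then factors the projection $G \to G/L$ through $\alpha$, yielding $N \subseteq L$ and hence $L=N$.

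For part (2), let $K/k$ be any field extension. The base change $L_K$ is smooth, connected, affine, and normal in $G_K$, so $L_K \subseteq L(G_K)$. The quotient $L(G_K)/L_K$ is smooth and connected (Theorem \ref{thm:hom}(2), Remark \ref{rem:tower}), affine (Proposition \ref{prop:tower}(2)), and embeds as a closed subgroup scheme of $G_K/L_K = (G/L)_K$, which is an abelian variety by (1). An affine closed subscheme of a proper scheme is finite, so $L(G_K)/L_K$ is finite, smooth, and connected, hence trivial; therefore $L(G_K) = L_K$.
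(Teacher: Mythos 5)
Your overall architecture is sound and most steps are fine: the inclusion $L\subseteq N$, the reduction via Lemma \ref{lem:ar} to showing that a smooth connected $Q$ over $\bar k$ with $L(Q)$ trivial is proper, the descent back to $k$, and your argument for part (2) all work. But there is a genuine gap at the crux. After extracting from Lemma \ref{lem:exi} a smooth connected affine subgroup $H\subseteq Q$ of positive dimension, you pass to the subgroup $H^{*}$ generated by a dimension-maximizing tuple of conjugates and assert that one ``checks that $H^{*}$ remains smooth, connected, and affine.'' Smoothness, connectedness, and (via your maximality argument) normality are indeed routine. Affineness is not: $H^{*}$ is the image of an iterated multiplication map on a product of non-normal affine subgroups, not a quotient of a semi-direct product, so Proposition \ref{prop:tower} does not apply. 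The statement ``a subgroup generated by smooth connected affine subgroups is affine'' is true, but at this stage of the development it is essentially equivalent to what you are trying to prove: the natural route is to observe that each conjugate of $H$ has trivial image in $\Alb(H^{*})$, so $\Alb(H^{*})$ is trivial, and then invoke ``trivial Albanese variety implies affine'' --- which is Theorem \ref{thm:cheva}, a consequence of Theorem \ref{thm:chev}. Without an independent proof of affineness, the contradiction with $L(Q)=0$ never materializes, since $L(Q)=0$ does not exclude non-affine normal subgroups.

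The paper sidesteps this entirely by applying Lemma \ref{lem:exi} not to $Q$ but to its center $Z$: a positive-dimensional affine subgroup of $Z$ has smooth connected reduced neutral component that is \emph{central}, hence automatically normal, and no generation-by-conjugates (and hence no affineness of a generated subgroup) is ever needed. The complementary case, where $Z$ is proper, is handled by setting $A:=Z^{0}_{\red}$ (an abelian variety), noting that $G/A$ is affine because $G/Z$ is affine (Corollary \ref{cor:cent}) and $Z/A$ is finite, and then using Theorem \ref{thm:qc} to write $G=A\cdot H$ with $H$ normal, smooth, connected and $A\cap H$ finite; then $H\cong G/A$ up to a finite kernel is affine, hence trivial, so $G=A$. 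You should restructure your key claim along these lines; the rest of your argument can then be kept as is.
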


\begin{proof}
(1) Recall that the existence of $L(G)$ has been obtained in 
Lemma \ref{lem:ar}, as well as the triviality of $L(G/L(G))$.
We may thus replace $G$ with $G/L$ and assume that $L$ is trivial.
Also, since the formations of $L(G)$ and of the Albanese homomorphism
commute with algebraic field extensions (see Lemma
\ref{lem:ar} again, and Proposition \ref{prop:alba}), we may assume
that $k$ is algebraically closed.

Consider the center $Z$ of $G$. If $Z$ is proper, then 
its reduced neutral component $Z^0_\red$ is an abelian variety, 
say $A$. Since $G/Z$ is affine (Corollary \ref{cor:cent}) 
and $Z/A$ is finite, $G/A$ is affine as well by Proposition 
\ref{prop:tower}. Also, by Theorem \ref{thm:qc}, there exists 
a smooth connected normal subgroup scheme $H \trianglelefteq G$ 
such that $G = A \cdot H$ and $A \cap H$ is finite. Then 
$H/A \cap H \cong G/A$ is affine, and hence $H$ is affine 
by Proposition \ref{prop:tower} again. It follows that 
$H$ is trivial, and $G = A$ is an abelian variety.

On the other hand, if $Z$ is not proper, then it contains an 
affine subgroup scheme $N$ of positive dimension by Lemma
\ref{lem:exi}. Thus, $N^0_\red$ is a non-trivial smooth connected 
central subgroup scheme of $G$. But this contradicts the
assumption that $L(G)$ is trivial.

(2) Let $K$ be a field extension of $k$. Then the exact sequence
\[ 1 \longrightarrow L \longrightarrow G 
\longrightarrow A \longrightarrow 1 \] 
yields an exact sequence of smooth connected algebraic groups over $K$
\[ 1 \longrightarrow L_K \longrightarrow G_K
\longrightarrow A_K \longrightarrow 1, \]
where $L_K$ is affine and $A_K$ is an abelian variety. 
It follows readily that $L_K$ equals $L(G_K)$ and is 
the kernel of the Albanese homomorphism of $G_K$.
\end{proof}

\begin{remark}
With the notation and assumptions of the above theorem,
every smooth connected affine subgroup scheme of $G$ 
(not necessarily normal) has a trivial image in the abelian
variety $A$. Thus, $L$ is the largest smooth connected affine 
subgroup scheme of $G$.
\end{remark}

We now return to an arbitrary field and obtain the following:

\begin{theorem}\label{thm:cheva}
Let $G$ be a smooth algebraic group and denote by $N$ the kernel 
of the Albanese homomorphism of $G^0$. Then $N$ is the smallest 
normal subgroup scheme of $G$ such that the quotient is proper.
Moreover, $N$ is affine and connected.
\end{theorem}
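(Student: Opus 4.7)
The plan is to verify the four assertions---connectedness, normality in the whole of $G$, properness of the quotient $G/N$, minimality, and affineness---by combining the properties of the Albanese homomorphism of the smooth connected group $G^0$ supplied by Proposition~\ref{prop:alba} with a descent to $\bar k$ that brings Chevalley's theorem (Theorem~\ref{thm:chev}) into play. Connectedness is immediate from Proposition~\ref{prop:alba}(2). For normality in the full $G$, the key observation is that $N$ is characteristic in $G^0$: it is uniquely determined by the universal property of being the smallest normal subgroup scheme of $G^0$ with abelian variety quotient, so it is preserved by every pointed automorphism of $G^0$. Conjugation by any $g \in G(S)$ restricts to a pointed $S$-automorphism of $G^0_S$; the corresponding family of automorphisms preserves $N$, whence $N \trianglelefteq G$.

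Properness and minimality then follow by elementary group-scheme considerations. Having $N \trianglelefteq G$ and $G^0 \trianglelefteq G$, Proposition~\ref{prop:quot}(3) gives the exact sequence
\[ 1 \longrightarrow G^0/N \longrightarrow G/N \longrightarrow \pi_0(G) \longrightarrow 1, \]
with $G^0/N = \Alb(G^0)$ an abelian variety and $\pi_0(G)$ finite étale; both are proper, so Remark~\ref{rem:tower} gives $G/N$ proper. For minimality, given $N' \trianglelefteq G$ with $G/N'$ proper, set $N'' := N' \cap G^0 \trianglelefteq G^0$. The induced morphism $G^0/N'' \to G/N'$ has trivial kernel, hence is a closed immersion by Proposition~\ref{prop:hom}(2); so $G^0/N''$ is a smooth connected closed subgroup scheme of a proper group, i.e., an abelian variety. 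Proposition~\ref{prop:alba}(2) then forces $N \subseteq N'' \subseteq N'$.

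The main obstacle is affineness of $N$. Since affineness is geometric, it suffices to show $N_{\bar k}$ is affine. Over the perfect field $\bar k$, Theorem~\ref{thm:chev} applied to the smooth connected $G^0_{\bar k}$ identifies its largest smooth connected normal affine subgroup $L := L(G^0_{\bar k})$ with the Albanese kernel of $G^0_{\bar k}$; in particular $L$ is affine and $G^0_{\bar k}/L = \Alb(G^0_{\bar k})$ is an abelian variety. By the universal property of $\Alb(G^0_{\bar k})$, the base-changed surjection $\alpha_{\bar k} : G^0_{\bar k} \twoheadrightarrow \Alb(G^0)_{\bar k}$ factors through $\Alb(G^0_{\bar k}) \twoheadrightarrow \Alb(G^0)_{\bar k}$, which yields $L \subseteq N_{\bar k}$ and identifies $N_{\bar k}/L$ with the kernel of the resulting surjection between abelian varieties. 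The crux is to show this kernel is finite, for then $N_{\bar k}$ is an extension of a finite group scheme by the affine $L$ and hence affine. I anticipate treating the separable part of $\bar k/k$ by Proposition~\ref{prop:alba}(4) (under which the formation of the Albanese kernel commutes with base change), and handling the residual purely inseparable step via the Frobenius twists of Subsection~\ref{subsec:trfm}: any discrepancy between $\Alb(G^0)_{\bar k}$ and $\Alb(G^0_{\bar k})$ should be killed by a sufficiently large power of the relative Frobenius, forcing the comparison map between the two Albanese varieties to be an isogeny.
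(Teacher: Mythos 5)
Your route is the paper's: connectedness, properness of $G/N$, and minimality (your variant using $N' \cap G^0$ in place of $N'^0$ works) all go through, and for affineness you correctly reduce to showing that $N_{\bar k}/L$ is finite, where $L = L(G^0_{\bar k})$ is the Albanese kernel over $\bar k$ supplied by Theorem \ref{thm:chev}. One soft spot first: your normality argument is too quick. The universal property of $N$ is a statement over $k$ (and, via Proposition \ref{prop:alba}(4), over separable algebraic extensions); it controls pointed $k$-automorphisms of $G^0$ but says nothing about an $S$-automorphism of $G^0_S$ for a general scheme $S$ — indeed the formation of $N$ does \emph{not} commute with arbitrary base change (Example \ref{ex:ray}). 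So "the corresponding family of automorphisms preserves $N$" needs justification. The standard fix, and the paper's, is that $\N_G(N)$ is a closed subgroup scheme (Theorem \ref{thm:norcent}) which, over $k_s$, contains the dense subset $G(k_s)$ because each $k_s$-point normalizes $N_{k_s}$ by uniqueness of the Albanese homomorphism; hence $\N_G(N) = G$. Note that your version never invokes the smoothness of $G$, which is precisely what makes $G(k_s)$ dense.

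The genuine gap is the affineness step, which you leave as a plan. "Killed by a sufficiently large power of the relative Frobenius" is not yet an argument: the kernel of the comparison map $f : \Alb(G^0_{\bar k}) \to \Alb(G^0)_{\bar k}$ is connected but could a priori contain a nontrivial abelian subvariety, and no Frobenius power kills that; the infinitesimality of $\Ker(f)$ (the paper's Remark \ref{rem:ext}) is deduced \emph{from} the affineness of $N$, so it cannot be used to prove it. What actually closes the gap is a descent construction, the paper's Lemma \ref{lem:ray}: write $L = L'_{\bar k}$ for a smooth connected affine normal $L' \subseteq G^0_K$ over a finite purely inseparable extension $K/k$ with $K^{p^n} \subseteq k$; the preimage of $L'^{(n)}$ under $F^n_{G^0_K/K}$ has ideal sheaf generated by $p^n$th powers of sections of the ideal of $L'$, and these lie in $\cO_{G^0}$, so this preimage descends to a normal $k$-subgroup scheme $M \subseteq G^0$ with $M_K/L'$ finite. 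Then $G^0/M$ is an abelian variety, minimality gives $N \subseteq M$, and $M$ is affine by Proposition \ref{prop:tower}, whence $N$ is affine; your isogeny statement then follows a posteriori rather than serving as the proof. This descent mechanism is the missing idea in your proposal.
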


\begin{proof}
By Proposition \ref{prop:alba}, $N$ is connected. Also, as
$G^0/N$ is proper and $G/G^0$ is finite, $G/N$ is proper.

We now show that $N$ is normal in $G$. For this, we may 
assume that $k$ is separably closed, since the formation 
of $N$ commutes with separable algebraic field extensions 
(Proposition \ref{prop:alba} again). Then $G(k)$ is dense 
in $G$ by smoothness, and normalizes $N$ by the uniqueness 
of the Albanese homomorphism. Thus, $G$ normalizes $N$.

Next, we show that $N$ is contained in every normal 
subgroup scheme $H \trianglelefteq G$ such that $G/H$ is
proper. Indeed, one sees as above that $H^0 \trianglelefteq G$
and $G/H^0$ is proper as well; hence $G^0/H^0$ is an abelian 
variety. Thus, $H^0 \supseteq N$ as desired.

Finally, we show that $N$ is affine. For this, we may
assume that $G$ is connected. By Theorem \ref{thm:chev}, 
there exists an exact sequence of 
algebraic groups over the perfect closure $k_i$,
\[ 1 \longrightarrow L_i \longrightarrow G_{k_i} 
\longrightarrow A_i \longrightarrow 1, \]
where $L_i$ is smooth, connected and affine, and $A_i$ is an
abelian variety. This exact sequence is defined over some
subfield $K \subseteq k_i$, finite over $k$. In other words,
there exists a finite purely inseparable field extension $K$  
of $k$ and a smooth connected affine normal subgroup scheme
$L' \trianglelefteq G_K$ such that $G_K/L'$ is an
abelian variety.

By Lemma \ref{lem:ray} below, we may choose a subgroup scheme
$L \subseteq G$ such that $L_K \supseteq L'$ and $L_K/L'$
is finite. As a consequence, $L_K \trianglelefteq G_K$
and hence $L \trianglelefteq G$. Also, $L_K$ is affine 
and hence $L$ is affine. Finally, 
$(G/L)_K \cong G_K/L_K \cong (G_K/L')/(L_K/L')$ 
is an abelian variety, and hence $G/L$ is an abelian variety.
Thus, $N \subseteq L$ is affine.
\end{proof}

\begin{lemma}\label{lem:ray}
Let $G$ be an algebraic group over $k$. Let $K$ be a finite 
purely inseparable field extension of $k$, and 
$H' \subseteq G_K$ a $K$-subgroup scheme. Then there exists 
a $k$-subgroup scheme $H \subseteq G$ such that 
$H' \subseteq H_K$ and $H_K/H'$ is finite. 
\end{lemma}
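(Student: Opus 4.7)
The case $\car(k)=0$ is trivial since then $K=k$ and we may take $H=H'$. Assume $\car(k)=p>0$ and fix an integer $n\geq 1$ with $K^{p^n}\subseteq k$. The key observation is that for $f\in \cO(G_K)=\cO(G)\otimes_k K$, writing $f=\sum a_i\otimes b_i$, one has in characteristic $p$
\[ f^{p^n}=\sum a_i^{p^n}\otimes b_i^{p^n}, \]
and since $b_i^{p^n}\in K^{p^n}\subseteq k$, this element lies in $\cO(G)\otimes 1\cong \cO(G)$. So the $p^n$-th power map on $\cO(G_K)$ takes values in $\cO(G)$.

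Let $\cJ'\subseteq \cO_{G_K}$ be the ideal sheaf of $H'$. Define $\cJ\subseteq \cO_G$ as the ideal sheaf generated (locally on affine opens of $G$) by $\{f^{p^n}:f\in\cJ'\}$, which by the observation genuinely lies in $\cO_G$. Set $H:=V(\cJ)\subseteq G$. I will check that (a) $\cJ$ satisfies the subgroup-scheme conditions $m_G^\#(\cJ)\subseteq \cJ\otimes_k\cO_G+\cO_G\otimes_k\cJ$, $i_G^\#(\cJ)\subseteq\cJ$, and $e_G^\#(\cJ)=0$; (b) $H'\subseteq H_K$; and (c) $H_K/H'$ is finite.

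For (a), on a generator $f^{p^n}$ the comultiplication $m_G^\#$ agrees with $m_{G_K}^\#$ when applied to elements of $\cO_G$, so $m_G^\#(f^{p^n})=m_{G_K}^\#(f)^{p^n}$. Since $H'$ is a subgroup scheme of $G_K$, $m_{G_K}^\#(f)\in \cJ'\otimes_K\cO_{G_K}+\cO_{G_K}\otimes_K\cJ'$. The $p^n$-th power of such an element, expanded using $(x+y)^{p^n}=x^{p^n}+y^{p^n}$ in characteristic $p$ together with the commutativity of the ambient ring, lies in $(\cJ\otimes_k\cO_G+\cO_G\otimes_k\cJ)\otimes_k K$ inside $\cO_{(G\times G)_K}$; faithful flatness of $K/k$ then pulls this back to $\cJ\otimes_k\cO_G+\cO_G\otimes_k\cJ\subseteq\cO_{G\times G}$. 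The antipode and counit conditions are analogous. For (b), each $f\in\cJ'$ satisfies $f^{p^n}\in\cJ'$ (as $\cJ'$ is an ideal), so $\cJ\subseteq \cJ'\cap\cO_G$, giving $\cJ\cdot\cO_{G_K}\subseteq\cJ'$, i.e., $H'\subseteq H_K$. For (c), as topological subsets of $|G|=|G_K|$ we have $V(f^{p^n})=V(f)$, so $|H|=|H'|$ and hence $\dim H_K=\dim H'$; the homogeneous space $H_K/H'$, which exists by Theorem \ref{thm:hom}, is then of finite type and of dimension zero, thus finite.

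The main obstacle is (a), where the bookkeeping of tensor products over $k$ and over $K$ requires care, but it all reduces to the Freshman's Dream $(x+y)^{p^n}=x^{p^n}+y^{p^n}$ in characteristic $p$ and the key observation that $p^n$-th powers in $\cO(G_K)$ land in $\cO(G)$.
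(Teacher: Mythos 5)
Your proof is correct and is essentially the paper's argument: both construct $H$ from the ideal generated by the $p^n$-th powers of local sections of the ideal of $H'$, which land in $\cO_G$ precisely because $K^{p^n}\subseteq k$. The paper packages this same ideal as the pullback of $H'^{(n)}$ under the relative Frobenius $F^n_{G_K/K}$, which makes your step (a) — the Hopf-ideal verification via the Freshman's Dream — automatic (the preimage of a subgroup scheme under a homomorphism is a subgroup scheme); otherwise the two arguments coincide, including the finiteness argument via equality of underlying topological spaces.
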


\begin{proof}
We may choose a positive integer $n$ such that 
$K^{p^n} \subseteq k$. Consider the $n$th relative Frobenius 
homomorphism
\[ F^n := F^n_{G_K/K} : G_K \longrightarrow G_K^{(n)} \]
as in \S \ref{subsec:trfm}. Denote by $H'_n$ the pull-back
under $F^n$ of the subgroup scheme $H'^{(n)}$ of $G_K^{(n)}$. 
Then $H' \subseteq H'_n$
and the quotient $H'_n/H'$ is finite, since $F^n$ is the
identity on the underlying topological spaces and remains
so over $\bar{k}$. Denote by $\cI' \subset \cO_{G_K}$ 
the sheaf of ideals of $H'$; then the sheaf of ideals 
$\cI'_n$ of $H'_n$ is generated by the $p^n$th powers of 
local sections of $\cI'$, as follows from (\ref{eqn:trfm}).
Since $K^{p^n} \subseteq k$, every such power lies in 
$\cO_G$. Thus, $\cI'_n = \cI_K$ for a unique sheaf of
ideals $\cI \subset \cO_G$. The corresponding closed 
$k$-subscheme $H \subseteq G$ satisfies $H_K = H'_n$,
and hence is the desired subgroup scheme.
\end{proof}

\begin{remark}\label{rem:ext}
Let $G$ be a smooth connected algebraic group over $k$
with Albanese homomorphism $\alpha : G \to \Alb(G)$ 
and consider a field extension $K$ of $k$. 
Then the homomorphism $\alpha_K : G_K \to \Alb(G)_K$ 
factors through a unique homomorphism
\[ f : \Alb(G_K) \longrightarrow \Alb(G)_K. \]
Since $\alpha$ is surjective, $f$ is surjective as well.
Moreover, $\Ker(f)$ is infinitesimal: indeed, denoting 
by $\alpha' : G_K \to \Alb(G_K)$ the Albanese homomorphism,
we have $\Ker(\alpha') \subseteq \Ker(\alpha_K)$
and $\Ker(f) \cong  \Ker(\alpha_K)/\Ker(\alpha')$.
In particular, $\Ker(f)$ is affine and connected. 
But $\Ker(f)$ is also a subgroup scheme of the abelian 
variety $\Alb(G_K)$; hence it must be finite and local.

In loose terms, the formation of the Albanese homomorphism 
commutes with field extensions up to purely inseparable 
isogenies.
\end{remark}

Since every algebraic group is an extension of a smooth
algebraic group by an infinitesimal one (Proposition
\ref{prop:frob}), Theorem \ref{thm:cheva} implies readily:

\begin{corollary}\label{cor:ray}
Every connected algebraic group $G$ is an extension of
an abelian variety by a connected affine algebraic group.
\end{corollary}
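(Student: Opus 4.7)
The plan is to reduce the general case to Theorem \ref{thm:cheva} by passing to a smooth connected quotient of $G$ and then pulling the resulting decomposition back.

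In characteristic zero there is essentially nothing to reduce: Cartier's theorem (Theorem \ref{thm:car}), applied over $\bar k$, shows that $G_{\bar k}$ is reduced, so Proposition \ref{prop:smooth} gives that $G$ is already smooth. Theorem \ref{thm:cheva} then produces a connected affine normal subgroup scheme $N \trianglelefteq G$ with $G/N$ proper; since $G/N$ is a quotient of the smooth connected group $G$, it is smooth and connected, hence an abelian variety.

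In positive characteristic the plan is to use Frobenius to kill the non-smoothness. By Proposition \ref{prop:frob}(2), I would pick $n$ large enough that $I := \Ker(F^n_{G/k})$ is infinitesimal and $\bar G := G/I$ is smooth. The quotient $\bar G$ is still connected, and $I$ itself is connected, finite and affine (being infinitesimal). Applying Theorem \ref{thm:cheva} to $\bar G$ yields a connected affine normal subgroup scheme $\bar N \trianglelefteq \bar G$ such that $\bar G/\bar N$ is an abelian variety. Now let $N \subseteq G$ be the pull-back of $\bar N$ along the quotient map $q : G \to \bar G$; by Proposition \ref{prop:sub}, $N$ is a normal subgroup scheme of $G$ and fits into a short exact sequence
\[ 1 \longrightarrow I \longrightarrow N \longrightarrow \bar N \longrightarrow 1. \]
Since both $I$ and $\bar N$ are connected and affine, Proposition \ref{prop:tower} and Remark \ref{rem:tower} force $N$ to be connected and affine as well. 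Finally, Proposition \ref{prop:quot}(3) identifies $G/N$ with $\bar G/\bar N$, which is an abelian variety.

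The only real obstacle is the positive-characteristic case, where one must first pass through the infinitesimal isogeny $G \to G/I$ to reach a smooth group before Theorem \ref{thm:cheva} is applicable; the rest amounts to checking that both connectedness and affineness descend from the two ends of the displayed extension to $N$, which is immediate from the stability properties in Proposition \ref{prop:tower} and Remark \ref{rem:tower}.
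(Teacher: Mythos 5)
Your proof is correct and follows essentially the same route as the paper: the paper's (very terse) argument is precisely to write $G$ as an extension of the smooth connected group $G/\Ker(F^n_{G/k})$ by the infinitesimal group $\Ker(F^n_{G/k})$ via Proposition \ref{prop:frob}, apply Theorem \ref{thm:cheva} to the smooth quotient, and pull the resulting affine connected normal subgroup back to $G$. You have simply supplied the details (including the characteristic-zero case via Cartier's theorem) that the paper leaves implicit.
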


Yet $G$ may contain no smallest connected affine subgroup
scheme with quotient an abelian variety, as shown by 
the following:

\begin{example}\label{ex:nonu}
Assume that $\car(k) = p > 0$ and let $A$ be a non-trivial abelian 
variety. Then $A$ contains two non-trivial infinitesimal
subgroup schemes $I$, $J$ such that $J \subsetneq I$: 
we may take $I = \Ker(F_{A/k})$ and $J = \Ker(F^2_{A/k})$,
where $F^n_{A/k}$ denotes the $n$th relative Frobenius morphism
as in \S \ref{subsec:trfm}. Thus, 
\[ G := (A \times I)/\diag(J) \] 
is a connected commutative algebraic group, where 
$\diag(x) := (x,x)$. Consider the infinitesimal subgroup schemes 
$N_1 := (J \times I)/\diag(J)$ and $N_2 := \diag(I)/\diag(J)$ 
of $G$. Then $G/N_1 \cong A/J$ and $G/N_2 \cong A$ are abelian 
varieties. Also, $N_1 \cap N_2$ is trivial, and $G$ is not 
an abelian variety. 
\end{example}

To complete the proof of Theorem \ref{thm:che}, it remains
to treat the general case, where $G$ is an arbitrary algebraic 
group over an arbitrary field. We then have to prove: 

\begin{lemma}\label{lem:chevalley}
Any algebraic group $G$ has a smallest normal subgroup 
scheme $N$ such that $G/N$ is proper. Moreover, $N$ is affine 
and connected.
\end{lemma}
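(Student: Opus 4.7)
The plan is to reduce to the smooth case treated in Theorem \ref{thm:cheva} by means of the relative Frobenius morphism, and then extract the smallest $N$ by a general intersection argument. First, using Proposition \ref{prop:frob}(2), I would choose $n$ large enough so that $K := \Ker(F^n_{G/k})$ is infinitesimal and $G' := G/K$ is smooth. By Theorem \ref{thm:cheva} applied to $G'$, there is a smallest normal subgroup scheme $N' \trianglelefteq G'$ with $G'/N'$ proper, and $N'$ is affine and connected. Let $q : G \to G'$ denote the quotient, and set $\tilde{N} := q^{-1}(N')$. Then $\tilde{N}$ is normal in $G$ by Proposition \ref{prop:sub}, $G/\tilde{N} \cong G'/N'$ is proper, and $\tilde{N}$ sits in an extension $1 \to K \to \tilde{N} \to N' \to 1$ of affine (resp. connected) groups, hence is affine and connected by Proposition \ref{prop:tower}(1) and Remark \ref{rem:tower}. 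This produces one candidate with the required properties.

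Second, to single out the smallest such subgroup, I would consider the family $\mathcal{F}$ of all normal subgroup schemes $M \trianglelefteq G$ with $G/M$ proper. This family is nonempty (it contains $\tilde{N}$) and is closed under finite intersections: indeed, the natural homomorphism $G/(M_1 \cap M_2) \to G/M_1 \times G/M_2$ has trivial kernel and so is a closed immersion by Proposition \ref{prop:hom}, embedding $G/(M_1 \cap M_2)$ as a closed subscheme of a proper scheme. Since $G$ is of finite type, every descending chain of closed subschemes stabilizes, and combining this with the finite intersection property yields a unique smallest element $N \in \mathcal{F}$.

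Finally, the remaining geometric properties of $N$ fall out formally. Since $N \subseteq \tilde{N}$ is a subgroup scheme, it is closed in $\tilde{N}$ by Proposition \ref{prop:hom}; as $\tilde{N}$ is affine, so is $N$. For connectedness, the neutral component $N^0$ is characteristic in $N$ and therefore normal in $G$; the extension $1 \to N/N^0 \to G/N^0 \to G/N \to 1$ exhibits $G/N^0$ as an extension of the proper group $G/N$ by the finite group $N/N^0$, hence $G/N^0$ is proper by Remark \ref{rem:tower}. The minimality of $N$ then forces $N = N^0$, so $N$ is connected.

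The main potential obstacle is that the Frobenius reduction alone does not identify the minimal $N$, because the infinitesimal kernel $K$ need not be contained in an arbitrary $M \in \mathcal{F}$; consequently $\tilde{N}$ may be strictly larger than $N$, and the intersection argument is essential for minimality. Once this point is recognized, however, the affineness of $N$ is inherited by containment in $\tilde{N}$, and connectedness follows from the usual trick of quotienting by $N^0$.
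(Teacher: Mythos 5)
Your proof is correct and follows essentially the same route as the paper: existence of a smallest $N$ via closure of the family under intersections (using the closed immersion $G/(M_1\cap M_2)\to G/M_1\times G/M_2$) together with noetherianity, affineness by exhibiting one affine normal subgroup with proper quotient obtained from Theorem \ref{thm:cheva} after killing an infinitesimal Frobenius kernel, and connectedness by passing to $N^0$ and invoking minimality. The only difference is organizational: you construct the affine candidate $\tilde{N}$ first and then minimize, whereas the paper minimizes first and then bounds $N$ inside the affine candidate.
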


\begin{proof}
The existence of $N$ is obtained as in the proof of Proposition 
\ref{prop:alba}. 

We now claim that there exists a connected affine normal subgroup 
$H \trianglelefteq G$ such that $G/H$ is proper. For this, we may 
reduce to the case where $G$ is smooth by using Proposition 
\ref{prop:frob} if $\car(k) > 0$. Then we may take for $H$ 
the kernel of the Albanese homomorphism of $G^0$ (Theorem
\ref{thm:cheva}); this proves the claim.

Given $H$ as in that claim, $N$ is a normal subgroup scheme 
of $H$, and hence is affine. Moreover, $H/N$ is affine, connected,
and proper since $G/N$ is proper. Thus, $H/N$ is infinitesimal;
it follows that $N$ is connected.
\end{proof}

\medskip

\noindent
\textit{Notes and references}.

Theorem \ref{thm:weil} is called 
\textit{Weil's extension theorem}. The proof presented here is 
taken from that of \cite[Cor.~1.44]{De}.

Many results of this section are due to Rosenlicht. 
More specifically, Lemma \ref{lem:finite} is a version of
\cite[Thm.~14]{Ro1}, and Theorem \ref{thm:qc}, of 
\cite[Cor.~, p.~434]{Ro1}; Lemma \ref{lem:exi} is 
\cite[Lem.~1, p.~437]{Ro1}.

Corollary \ref{cor:poin} is called \textit{Poincar\'e's complete
reducibility theorem}; it is proved e.g. in \cite[p.~173]{Mum}
over an algebraically closed field, and in 
\cite[Cor.~3.20]{Co2} over an arbitrary field.
It implies that every abelian variety is isogenous to a product
of simple abelian varieties, and these are uniquely determined
up to isogeny and reordering. 
 
Example \ref{ex:ray} develops a construction of Raynaud, 
see \cite[XVII.C.5]{SGA3}. The proof of Lemma \ref{lem:ray}
is taken from \cite[9.2 Thm.~1]{BLR}. Corollary \ref{cor:ray} 
is due to Raynaud, see \cite[IX.2.7]{Ra}.

\section{Some further developments}
\label{sec:sfd}

\subsection{The Rosenlicht decomposition}
\label{subsec:trd}

Throughout this section, $G$ denotes a smooth connected 
algebraic group. By Theorem \ref{thm:che}, $G$ has a smallest 
connected normal affine subgroup scheme $G_\aff$ with quotient 
being an abelian variety; also, recall that $G_\aff$ is the kernel 
of the Albanese homomorphism $\alpha : G \to \Alb(G)$. 
On the other hand, by Theorem \ref{thm:affi}, every algebraic group
$H$ has a largest anti-affine subgroup scheme that we will denote 
by $H_\ant$; moreover, $H_\ant$ is smooth, connected, and contained
in the center of $H^0$. Also, $H_\ant$ is the smallest normal subgroup 
scheme of $H$ having an affine quotient. 

We will analyze the structure of $G$ in terms of those of $G_\aff$ and 
$G_\ant$. Note that $(G_\aff)_\ant$ is trivial (since $G_\aff$
is affine), but $(G_\ant)_\aff$ may have positive dimension as
we will see in \S \ref{subsec:saag}.

\begin{theorem}\label{thm:ros}
Keep the above notation and assumptions.

\begin{enumerate}

\item $G = G_\aff \cdot G_\ant$.

\item $G_\aff \cap G_\ant$ contains $(G_\ant)_\aff$.

\item The quotient $(G_\aff \cap G_\ant)/(G_\ant)_\aff$ is finite.

\end{enumerate}

\end{theorem}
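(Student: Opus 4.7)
The plan is to deduce all three assertions from the Albanese homomorphism $\alpha \colon G \to \Alb(G) = G/G_\aff$, using Lemma \ref{lem:quotient} applied to the normal subgroup $G_\aff \trianglelefteq G$ for part (1), and the universal property of the Albanese variety (Proposition \ref{prop:alba}) for parts (2) and (3).

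For (1), I note that $\Alb(G) = G/G_\aff$ is an abelian variety, hence anti-affine: being smooth, connected, proper with a $k$-rational point, it is geometrically integral and satisfies $\cO(\Alb(G)) = k$. Thus $(G/G_\aff)_\ant = G/G_\aff$, and Lemma \ref{lem:quotient} applied with $N := G_\aff$ produces an isomorphism
\[
G_\ant/(G_\ant \cap G_\aff) \stackrel{\cong}{\longrightarrow} (G/G_\aff)_\ant = \Alb(G).
\]
In particular, $G_\ant$ surjects onto $G/G_\aff$ under the quotient map, which is equivalent to $G = G_\aff \cdot G_\ant$.

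For (2) and (3), consider the restriction $\alpha|_{G_\ant} \colon G_\ant \to \Alb(G)$. Since $G_\ant$ is smooth and connected by Lemma \ref{lem:anti}, and this morphism sends $e$ to $0$, Proposition \ref{prop:alba}(1) shows it is a homomorphism. By the universal property of $\Alb(G_\ant)$ (Proposition \ref{prop:alba}(3)), it factors as
\[
G_\ant \longrightarrow \Alb(G_\ant) = G_\ant/(G_\ant)_\aff \stackrel{\phi}{\longrightarrow} \Alb(G),
\]
so $(G_\ant)_\aff \subseteq \ker(\alpha|_{G_\ant}) = G_\ant \cap G_\aff$, which is exactly (2). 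For (3), the homomorphism $\phi$ is surjective by (1), and by Proposition \ref{prop:sub} its kernel is $(G_\aff \cap G_\ant)/(G_\ant)_\aff$. This quotient is affine: $G_\aff \cap G_\ant$ is a closed subgroup scheme of the affine group $G_\aff$, hence affine by Proposition \ref{prop:tower}(2), and $(G_\ant)_\aff$ is normal in it (in fact central, since $G_\ant$ is commutative), so its quotient is again affine by Proposition \ref{prop:tower}(2). It is also proper, being a closed subgroup scheme of the abelian variety $\Alb(G_\ant)$. An algebraic group over $k$ that is simultaneously affine and proper is finite, which yields (3).

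No genuine obstacle arises; the argument is entirely organisational. Lemma \ref{lem:quotient} handles (1), while for (2) and (3) the universal property of the Albanese morphism identifies the relevant subquotient as a subgroup of an abelian variety, after which the standard fact that an affine proper algebraic group is finite closes the proof.
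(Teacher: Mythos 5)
Your proof is correct and follows essentially the same route as the paper's: part (1) rests on the fact that a group which is simultaneously affine and anti-affine (here packaged via Lemma \ref{lem:quotient}) must be trivial, part (2) on the minimality/universal property of the kernel of the Albanese homomorphism, and part (3) on the fact that an affine proper algebraic group is finite. Your organisation of (2) and (3) around the induced homomorphism $\Alb(G_\ant) \to \Alb(G)$ is exactly the reformulation the paper itself records in Remark \ref{rem:ros}.
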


\begin{proof}
(1) By Proposition \ref{prop:prod}, $G_\aff \cdot G_\ant$ is a
normal subgroup scheme of $G$. Moreover, the quotient
$G \to G/G_\aff \cdot G_\ant$ factors through a homomorphism
$G/G_\aff \to G/G_\aff \cdot G_\ant$
and also through a homomorphism 
$G/G_\ant \to G/G_\aff \cdot G_\ant$, in view of Proposition
\ref{prop:quot}. In particular, $G/G_\aff \cdot G_\ant$
is a quotient of an abelian variety, and hence is
an abelian variety as well. Also, $G/G_\aff \cdot G_\ant$ 
is a quotient of an affine algebraic group, and hence 
is affine as well (Proposition \ref{prop:tower}). Thus, 
$G/G_\aff \cdot G_\ant$ is trivial; this proves the assertion.

(2) Proposition \ref{prop:prod} yields an isomorphism
$G_\ant/G_\ant \cap G_\aff \cong G/G_\aff$. In particular,
$G_\ant/G_\ant \cap G_\aff$ is an abelian variety.
Since $G_\ant \cap G_\aff$ is affine, the assertion follows
from Theorem \ref{thm:che}.

(3) Consider the quotient $\bar{G} := G/(G_\ant)_\aff$. 
Then $\bar{G} = \bar{G}_\aff \cdot \bar{G}_\ant$, where
$\bar{G}_\aff := G_\aff/(G_\ant)_\aff$ and 
$\bar{G}_\ant := G_\ant/(G_\ant)_\aff$. Moreover,
$\bar{G}_\aff$ is affine (as a quotient of $G_\aff$)
and $\bar{G}_\ant$ is an abelian variety.
Thus, $\bar{G}_\ant \cap \bar{G}_\ant$ is finite. 
This yields the assertion in view of Proposition 
\ref{prop:sub}.
\end{proof}

\begin{remark}\label{rem:ros}
The above theorem is equivalent to the assertion that 
\textit{the multiplication map of $G$ induces an isogeny}
\[ (G_\aff \times G_\ant)/(G_\ant)_\aff \longrightarrow G, \]
where $(G_\ant)_\aff$ is viewed as a subgroup scheme of
$G_\aff \times G_\ant$ via $x \mapsto (x,x^{-1})$. 

Theorem \ref{thm:ros} may also be reformulated 
in terms of the two Albanese varieties 
$\Alb(G) = G/G_\aff$ and $\Alb(G_\ant) = G_\ant/(G_\ant)_\aff$.  
Namely, the inclusion $\iota : G_\ant \to G$ yields a homomorphism
$\Alb(\iota) : \Alb(G_\ant) \to \Alb(G)$ with kernel isomorphic to
$(G_\aff \cap G_\ant)/(G_\ant)_\aff$. The assertion (1) is equivalent
to the surjectivity of $\Alb(\iota)$, and (3) amounts to the finiteness
of its kernel. So Theorem \ref{thm:ros} just means that
$\Alb(\iota)$ \textit{is an isogeny}.
\end{remark}

\begin{proposition}\label{prop:small}
With the above notation and assumptions, $G_\ant$ is 
the smallest subgroup scheme $H \subseteq G$ such that 
$G = H \cdot G_\aff$.
\end{proposition}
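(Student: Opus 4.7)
My plan is to show that any subgroup scheme $H \subseteq G$ satisfying $G = H \cdot G_\aff$ must contain $G_\ant$; the existence direction $G = G_\ant \cdot G_\aff$ is Theorem \ref{thm:ros}(1). The strategy is to locate an anti-affine subgroup inside $H$ and show it is forced to equal all of $G_\ant$.

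First, I would observe that $G = H \cdot G_\aff$ makes the restriction of the Albanese homomorphism $\alpha : G \to \Alb(G) = G/G_\aff$ surjective on $H$: Proposition \ref{prop:prod}(3) identifies $H/(H \cap G_\aff)$ with $G/G_\aff = \Alb(G)$. Since $G_\aff$ is normal in $G$, the subgroup scheme $H \cap G_\aff$ is normal in $H$, so Lemma \ref{lem:quotient} applied to this quotient gives
\[
H_\ant/(H_\ant \cap G_\aff) \;\cong\; \Alb(G)_\ant \;=\; \Alb(G),
\]
because an abelian variety is anti-affine. Hence $\alpha(H_\ant) = \Alb(G)$, and since $H_\ant$ is an anti-affine subgroup of $G$, Proposition \ref{prop:antmax} forces $H_\ant \subseteq G_\ant$.

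Next I would upgrade this inclusion to the equality $H_\ant = G_\ant$. Both $H_\ant$ and $G_\ant$ surject onto $\Alb(G)$ via $\alpha$, so at the level of $\bar{k}$-points a lift-and-correct argument gives $G_\ant(\bar{k}) = H_\ant(\bar{k}) \cdot (G_\ant \cap G_\aff)(\bar{k})$; since $G_\ant/(G_\ant \cap G_\aff) \cong \Alb(G)$ is smooth, Lemma \ref{lem:prod} promotes this to the scheme-theoretic identity $G_\ant = H_\ant \cdot (G_\ant \cap G_\aff)$. Applying Proposition \ref{prop:prod}(3) inside the commutative group $G_\ant$ then yields
\[
G_\ant/H_\ant \;\cong\; (G_\ant \cap G_\aff)/(H_\ant \cap G_\aff).
\]

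The concluding observation is that $G_\ant/H_\ant$ is simultaneously affine and anti-affine: it is a quotient of the affine commutative algebraic group $G_\ant \cap G_\aff \subseteq G_\aff$ by a (automatically normal) subgroup scheme, hence affine by Proposition \ref{prop:tower}(2); and it is a quotient of the anti-affine $G_\ant$, hence anti-affine. Any such group scheme is necessarily trivial, whence $G_\ant = H_\ant \subseteq H$. I anticipate no serious obstacle beyond careful bookkeeping of the various intersections; the mechanism is simply the opposition between affine and anti-affine groups encoded in Theorems \ref{thm:affi} and \ref{thm:che}.
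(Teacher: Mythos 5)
Your proof is correct and follows essentially the same strategy as the paper: both arguments reduce to showing $G = H_\ant \cdot G_\aff$ and then play the affine and anti-affine properties off against each other via the universal characterizations of $G_\ant$. The only real difference is at the end, where the paper observes directly that $G/H_\ant \cong G_\aff/(G_\aff \cap H_\ant)$ is affine and invokes the minimality of $G_\ant$ from Theorem \ref{thm:affi}, whereas you first get $H_\ant \subseteq G_\ant$ from the maximality in Proposition \ref{prop:antmax} and then kill the quotient $G_\ant/H_\ant$ as being simultaneously affine and anti-affine.
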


\begin{proof}
Let $H \subseteq G$ be a subgroup scheme. Then 
$H_\ant \cdot G_\aff \subseteq G$ is a normal subgroup scheme, 
since $H_\ant$ is central in $G$; moreover, 
$G/H_\ant \cdot G_\aff$ is a quotient of $G/G_\aff$, hence
an abelian variety. If $G = H \cdot G_\aff$, then 
\[ G/H_\ant \cdot G_\aff \cong H/H \cap (H_\ant \cdot G_\aff). \]
Moreover, $H \cap (H_\ant \cdot G_\aff)$ is a normal subgroup
scheme of $H$ containing $H_\ant$; thus, the quotient 
$H/H \cap (H_\ant \cdot G_\aff)$ is affine. So $G/H_\ant \cdot G_\aff$
is trivial, i.e., $G = H_\ant \cdot G_\aff$. 
By Proposition \ref{prop:quot}, it follows that 
$G/H_\ant \cong G_\aff/G_\aff \cap H_\ant$.
The right-hand side is the quotient of an affine algebraic 
group by a normal subgroup scheme, and hence is affine. 
By Theorem \ref{thm:affi}, it follows that $H_\ant \supseteq G_\ant$.
\end{proof}

\begin{remark}\label{rem:ns}
By the above proposition, the extension
\[ 1 \longrightarrow G_\aff \longrightarrow G \longrightarrow 
\Alb(G) \longrightarrow 1 \]
is split if and only if $G_\aff \cap G_\ant$ is trivial.
But this fails in general; in fact, $G_\aff \cap G_\ant$
is generally of positive dimension, and hence the above
extension does not split after pull-back by any isogeny
(see Remark \ref{rem:nonsplit} below for specific examples). 
\end{remark}

We now present two applications of Theorem \ref{thm:ros}; 
first, to the derived subgroup $\cD(G)$. Recall from 
\cite[II.5.4.8]{DG} (see also \cite[VIB.7.8]{SGA3}) that $\cD(G)$ 
is the subgroup functor of $G$ that assigns to any scheme $S$, 
the set of those $g \in G(S)$ such that $g$ lies in 
the commutator subgroup of $G(S')$ for some scheme $S'$, 
faithfully flat and of finite presentation over $S$. 
Moreover, the group functor $\cD(G)$ is represented by 
a smooth connected subgroup scheme of $G$, and 
$\cD(G)(\bar{k})$ is the commutator subgroup of $G(\bar{k})$.

\begin{corollary}\label{cor:ros}
With the above notation and assumptions, we have 
$\cD(G) = \cD(G_\aff)$. In particular, $\cD(G)$ is affine.
Also, $G$ is commutative if and only if $G_\aff$ is 
commutative.
\end{corollary}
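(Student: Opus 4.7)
The plan is to reduce the identity $\cD(G) = \cD(G_\aff)$ to an equality of $\bar{k}$-points via the Rosenlicht decomposition, and then promote that to a scheme-theoretic equality using smoothness.

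First I would record the two ingredients already at hand: by Theorem \ref{thm:ros}(1) we have $G = G_\aff \cdot G_\ant$, and by Theorem \ref{thm:affi} (applied to the connected group $G$) the subgroup $G_\ant$ is contained in the center $Z(G)$. The inclusion $\cD(G_\aff) \subseteq \cD(G)$ is immediate from $G_\aff \subseteq G$, so the content lies in the opposite inclusion.

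For that, I would pass to $\bar{k}$-points. By Lemma \ref{lem:prod}, $G(\bar{k}) = G_\aff(\bar{k}) \cdot G_\ant(\bar{k})$, so any two points $g_1, g_2 \in G(\bar{k})$ can be written as $g_i = a_i z_i$ with $a_i \in G_\aff(\bar{k})$ and $z_i \in G_\ant(\bar{k})$. Since the $z_i$ are central,
\[
[g_1,g_2] = [a_1 z_1, a_2 z_2] = [a_1, a_2] \in \cD(G_\aff)(\bar{k}).
\]
Taking products of commutators, we conclude that $\cD(G)(\bar{k})$, which by the quoted result from \cite[II.5.4.8]{DG} is the commutator subgroup of $G(\bar{k})$, coincides with $\cD(G_\aff)(\bar{k})$.

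To upgrade this to an identity of subgroup schemes, I would invoke the fact (also from \cite[II.5.4.8]{DG}) that both $\cD(G)$ and $\cD(G_\aff)$ are smooth connected subgroup schemes of $G$; in particular they are reduced. Two reduced closed subschemes of a $\bar{k}$-scheme are equal whenever their $\bar{k}$-points coincide, so after base change to $\bar{k}$ the two coincide, and by faithful flatness of $\Spec \bar{k} \to \Spec k$ they coincide already over $k$. Finally, the stated consequences are routine: $\cD(G) = \cD(G_\aff) \subseteq G_\aff$ is affine by Proposition \ref{prop:tower}, and $G$ is commutative if and only if $\cD(G)$ is trivial, equivalently $\cD(G_\aff)$ is trivial, equivalently $G_\aff$ is commutative. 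I do not anticipate a serious obstacle: the only subtle point is the descent from $\bar{k}$-points to the schematic equality, which is handled cleanly by smoothness.
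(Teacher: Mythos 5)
Your argument is correct and is essentially the proof the paper leaves implicit: the corollary is stated there as an immediate consequence of the Rosenlicht decomposition $G = G_\aff \cdot G_\ant$, the centrality of $G_\ant$ in $G$, and the recalled facts that $\cD(G)$ is smooth with $\cD(G)(\bar{k})$ equal to the commutator subgroup of $G(\bar{k})$. The one minor caveat is that over an imperfect field $G_\aff$ need not be smooth, so the smoothness of $\cD(G_\aff)$ is not directly covered by the quoted result from \cite[II.5.4.8]{DG}; this is harmless, however, since the inclusion $\cD(G_\aff) \subseteq \cD(G)$ holds scheme-theoretically for free, and the reverse inclusion needs only the reducedness of $\cD(G)_{\bar{k}}$ together with your computation of $\bar{k}$-points.
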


Our second application of Theorem \ref{thm:ros} characterizes
Lie algebras of algebraic groups in characteristic $0$:

\begin{corollary}\label{cor:lie}
Assume that $\car(k) = 0$ and consider a finite-dimensional
Lie algebra $\fg$ over $k$, with center $\fz$. Then the
following conditions are equivalent:

\begin{enumerate}

\item $\fg = \Lie(G)$ for some algebraic group $G$.

\item $\fg = \Lie(L)$ for some \textit{linear} algebraic group $L$.

\item $\fg/\fz$ (viewed as a Lie subalgebra of $\Lie(\GL(\fg))$ 
via the adjoint representation) is the Lie algebra of some algebraic
subgroup of $\GL(\fg)$. 

\end{enumerate}

\end{corollary}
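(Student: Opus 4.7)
My plan is to split the equivalence into four implications: (2) $\Rightarrow$ (1) and (2) $\Rightarrow$ (3) are essentially formal; (1) $\Rightarrow$ (2) is a direct application of Rosenlicht; and (3) $\Rightarrow$ (2) will be the main obstacle. For (2) $\Rightarrow$ (1), every linear algebraic group is algebraic by definition. For (2) $\Rightarrow$ (3), I would take the adjoint representation $\Ad : L \to \GL(\fg)$. Its scheme-theoretic kernel is the center $Z(L)$, which is reduced, hence smooth, in characteristic zero by Cartier's theorem (Theorem \ref{thm:car}); its Lie algebra is $\ker(\ad) = \fz$. Propositions \ref{prop:fact} and \ref{prop:hom} then factor $\Ad$ as a closed immersion $L/Z(L) \hookrightarrow \GL(\fg)$, producing an algebraic subgroup of $\GL(\fg)$ with Lie algebra $\ad(\fg) \cong \fg/\fz$.

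The heart of the argument is (1) $\Rightarrow$ (2). Given $G$ algebraic with $\Lie(G) = \fg$, I replace $G$ by its neutral component, so that Theorem \ref{thm:ros} yields $G = G_\aff \cdot G_\ant$. By Proposition \ref{prop:rig}, $G_\ant$ is contained in $Z(G)$; applying Cartier once more, $\Lie(Z(G)) = \fz$, so $\Lie(G_\ant) \subseteq \fz$. All groups in sight are smooth in characteristic zero, so the surjective group homomorphism $G_\aff \times G_\ant \to G$ is submersive on Lie algebras (its kernel being smooth by Cartier); setting $\fg_\aff := \Lie(G_\aff)$, this gives $\fg = \fg_\aff + \fz$. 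I then pick any vector-space complement $\mathfrak{b}$ of $\fg_\aff \cap \fz$ inside $\fz$. Since $\mathfrak{b} \subseteq \fz$ is central in $\fg$, the vector-space splitting $\fg = \fg_\aff \oplus \mathfrak{b}$ is automatically a Lie algebra direct sum, and therefore $L := G_\aff \times \bG_a^{\dim \mathfrak{b}}$ is a linear algebraic group with $\Lie(L) = \fg$.

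The remaining implication (3) $\Rightarrow$ (2) will be the main obstacle. One views $\fg$ as a central extension of $\mathfrak{h} := \Lie(H) = \fg/\fz$ by the abelian Lie algebra $\fz$, and seeks to realize this as the Lie algebra of a central algebraic extension $1 \to \bG_a^{\dim \fz} \to L \to H \to 1$; any such $L$ would be automatically linear (an extension of linear groups) with $\Lie(L) = \fg$. My plan is to lift the Chevalley--Eilenberg $2$-cocycle of $\mathfrak{h}$ with values in $\fz$ that defines $\fg$ to an algebraic (Hochschild) $2$-cocycle on $H$ with values in $\bG_a^{\dim \fz}$, appealing to the comparison theorem that, for a connected affine algebraic group in characteristic zero, relates Hochschild and Chevalley--Eilenberg cohomology in low degrees. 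Alternatively, one may invoke Ado's theorem to embed $\fg \hookrightarrow \mathfrak{gl}(V)$ and then apply Chevalley's classical criterion that a Lie subalgebra of $\mathfrak{gl}(V)$ is algebraic precisely when its adjoint image is algebraic in $\mathfrak{gl}(\fg)$, which is exactly hypothesis (3). Either route relies on input from the theory of algebraic Lie algebras that lies beyond the tools developed so far in this section.
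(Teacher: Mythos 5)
Your proposal is correct and takes essentially the same route as the paper: the one substantive implication, (1)$\Rightarrow$(2), is proved by exactly the same argument (Rosenlicht's decomposition $G = G_\aff \cdot G_\ant$ with $G_\ant$ central, giving $\fg = \fg_\aff + \fz$, then choosing a central complement and taking $L = G_\aff \times \bG_a^n$). For (2)$\Leftrightarrow$(3) the paper simply cites \cite[V.5.3]{Ch1} --- precisely the ``classical criterion'' you name as your second route for (3)$\Rightarrow$(2) --- so your deferral of that step to Chevalley's theory of algebraic Lie algebras is no more of a gap than the paper's own treatment.
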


\begin{proof}
Since (2)$\Leftrightarrow$(3) follows from \cite[V.5.3]{Ch1}
and (2)$\Rightarrow$(1) is obvious, it suffices to show
that (1)$\Rightarrow$(2). 

Let $G$ be an algebraic group such that $\fg = \Lie(G)$. 
We may assume that $G$ is connected. Then 
$G = G_\aff \cdot G_\ant = G_\aff \cdot Z$ and hence
$\fg = \fg_\aff + \fz$ with an obvious notation. Thus, we have
a decomposition of Lie algebras $\fg = \fg_\aff \oplus \fa$ 
for some linear subspace $\fa \subseteq \fz$, viewed as an
abelian Lie algebra. Let $n := \dim(\fa)$, then 
$L := G_\aff \times \bG_a^n$ is a connected linear algebraic
group with Lie algebra isomorphic to $\fg$.
\end{proof}

The Lie algebras satisfying the condition (2) above are called
\textit{algebraic}.

\subsection{Equivariant compactification of homogeneous spaces}
\label{subsec:echs}

\begin{definition}\label{def:equ}
Let $G$ be an algebraic group and $H \subseteq G$ a subgroup scheme.
An \textit{equivariant compactification} of the homogeneous space
$G/H$ is a proper $G$-scheme $X$ equipped with an open equivariant
immersion $G/H \to X$ with schematically dense image.
\end{definition}

Equivalently, $X$ is a $G$-scheme equipped with a base point
$x \in X(k)$ such that the $G$-orbit of $x$ is schematically 
dense in $X$ and the stabilizer $\C_G(x)$ equals $H$.

\begin{theorem}\label{thm:equiv}
Let $G$ be an algebraic group and $H \subseteq G$ a subgroup scheme.
Then $G/H$ has an equivariant compactification by a projective 
scheme.
\end{theorem}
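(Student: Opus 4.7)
The plan is to reduce to the case where $G$ is affine (linear) by invoking Theorem~\ref{thm:che}, handle the linear case by the classical Chevalley embedding into projective space, and patch via an associated fibre bundle over the proper base $G/(H\cdot N)$.

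\textbf{Linear case.} When $G$ is affine, Proposition~\ref{prop:lin} embeds $G$ as a closed subgroup scheme of some $\GL_n$. The classical theorem of Chevalley, whose proof rests on Proposition~\ref{prop:rep} applied to the sheaf of ideals of $H$ in $\cO(G)$ (see \cite[II.2.3.5]{DG}), furnishes a finite-dimensional $G$-module $W$ and a line $\ell \subseteq W$ whose scheme-theoretic $G$-stabiliser is precisely $H$. The orbit map $g \mapsto g\cdot[\ell]$ then factors through a locally closed $G$-equivariant immersion $j : G/H \hookrightarrow \bP(W)$ by Proposition~\ref{prop:orbit}, and its schematic closure is a $G$-stable (Proposition~\ref{prop:norm}(1)) projective equivariant compactification.

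\textbf{Reduction to the linear case.} In general, let $N \trianglelefteq G$ be the smallest normal subgroup with $G/N$ proper, furnished by Theorem~\ref{thm:che}; $N$ is affine and connected. Set $H' := H\cdot N$, a subgroup scheme of $G$ containing $N$ (Proposition~\ref{prop:prod}). Then $G/H' = (G/N)/(H'/N)$ is proper as a quotient of $G/N$, and the canonical $G$-equivariant projection $\pi : G/H \to G/H'$ has fibre $H'/H \cong N/(N\cap H)$ over the base point (Proposition~\ref{prop:prod}(3)). Apply the linear case to $N$, but arrange to build the line $\ell \subseteq W$ inside an $H'$-stable finite-dimensional submodule of $\cO(N)$, where $H'$ acts via left translation by $N$ together with conjugation by $H$ (permissible because $H$ normalises $N$). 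This upgrades the output of the linear case to an $H'$-equivariant projective compactification $F$ of $N/(N\cap H)$. Now form the associated fibre bundle
\[
X \; := \; G \times^{H'} F \; = \; (G \times F)/H',
\]
where $H'$ acts on $G \times F$ by $h\cdot(g,f) = (gh^{-1}, h\cdot f)$. This $X$ is an $F$-bundle over $G/H'$ via $[g,f] \mapsto gH'$, and the open inclusion $H'/H \hookrightarrow F$ induces a schematically dense $G$-equivariant open immersion $G/H \cong G\times^{H'}(H'/H) \hookrightarrow X$. The scheme $X$ is proper, since $G \to G/H'$ is an $H'$-torsor with proper base and proper fibre $F$; projectivity follows by combining an $H'$-linearised ample line bundle on $F$ with a polarisation of the base $G/H'$, descended along the torsor.

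\textbf{Main obstacle.} The principal difficulty is the construction of an $H'$-equivariant, rather than merely $N$-equivariant, compactification $F$: the group $H'$ need not be affine, so Chevalley's embedding is not directly available for $H'$. The resolution is to exploit the action of $H'$ on $N$ by conjugation to realise the compactification inside $\bP(W)$ for an $H'$-module $W$ cut out from $\cO(N)$. A secondary issue is upgrading properness of $G/H'$ and of $X$ to genuine projectivity; this is handled most cleanly by reducing to smooth $G$ via Proposition~\ref{prop:frob} (since infinitesimal quotients do not affect projectivity of the base) and then appealing to Lemma~\ref{lem:prop} together with the classical projectivity of abelian varieties to polarise $G/H'$, after which equivariant polarisations descend to give projectivity of $X$.
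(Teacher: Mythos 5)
Your overall architecture coincides with the paper's: treat the affine case by Chevalley's embedding of $G/H$ into $\bP(W)$, pass to $H' := H\cdot N$ with $N$ furnished by Theorem~\ref{thm:che}, and glue via the associated bundle $G\times^{H'}F$ over the projective base $G/H'$, using an $H'$-linearized ample sheaf on the fibre compactification $F$ to descend projectivity (the paper does exactly this via \cite[Prop.~7.1]{MFK}). The divergence, and the gap, lies in the step you yourself flag as the main obstacle: producing an $H'$-\emph{equivariant} $F$. You propose to find $W$ as ``an $H'$-stable finite-dimensional submodule of $\cO(N)$, where $H'$ acts via left translation by $N$ together with conjugation by $H$''. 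But that recipe defines a linear action of the semi-direct product $H\ltimes N$ on $\cO(N)$, not of $H' = H\cdot N \cong (H\ltimes N)/(H\cap N)$: the kernel of $H\ltimes N \to H\cdot N$ is $H\cap N$ embedded via $x \mapsto (x^{-1},x)$ (Proposition~\ref{prop:prod}), and such an element acts on $N$ by $n' \mapsto x(x^{-1}n'x) = n'x$, i.e.\ by right translation, which is nontrivial on $\cO(N)$. So there is no $H'$-module ``cut out from $\cO(N)$'' in the sense you use, and the orbit closure of $[\ell]$ in $\bP(W)$ is a priori only an $H\ltimes N$-scheme. The argument can be repaired --- the antidiagonal $H\cap N$ is normal in $H\ltimes N$ and lies in the stabilizer of $[\ell]$, hence fixes the whole orbit pointwise and therefore (by Theorem~\ref{thm:norcent} and schematic density) acts trivially on $F$, so the action descends to $H'$ --- but this is precisely the point that needs an argument and is absent from your proposal.

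The paper resolves the same obstacle more structurally: it shows that $H\cdot N$ acts on $H\cdot N/H$ through an \emph{affine} quotient group, so the affine case applies verbatim. Namely, $(H\ltimes N)_\ant = H_\ant$ since $N$ is affine and affinization commutes with finite products, whence $(H\cdot N)_\ant = H_\ant \subseteq H$ by Lemma~\ref{lem:quotient}; writing $H\cdot N/H \cong (H\cdot N/H_\ant)/(H/H_\ant)$ with $H\cdot N/H_\ant$ affine yields $F$ together with its linearized ample sheaf for free. Separately, you should not take the projectivity (as opposed to properness) of the base $G/H'$ for granted: this requires the ``proper group'' step of the paper's proof (reduction to $G$ connected, abelian varieties in characteristic $0$, and the finite morphism $G/H \to G/(H\cdot\Ker(F^n_{G/k}))$ onto an abelian variety in characteristic $p$), which your final remarks gesture at but compress considerably.
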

 
\begin{proof}
If $G$ is affine, then $H$ is the stabilizer of a line $\ell$ in 
some finite-dimensional $G$-module $V$ (see e.g. \cite[II.2.3.5]{DG}). 
Then the closure of the $G$-orbit of $\ell$ 
in the projective space of lines of $V$ yields the desired 
projective equivariant compactification $X$, in view of Proposition 
\ref{prop:orbit}. Note that $X$ is equipped with an ample
$G$-linearized invertible sheaf in the sense of 
\cite[I.3 Def.~1.6]{MFK}.

If $G$ is proper, then the homogeneous space $G/H$ is proper
as well. So it suffices to check that $G/H$ is projective.
For this, we may assume $k$ algebraically closed by using
\cite[IV.9.1.5]{EGA}. Note that 
$H \subseteq H  \cdot G^0 \subseteq G$ and the quotient scheme
$G/H \cdot G^0$ is finite and \'etale, hence constant. 
So the scheme $G/H$ is a finite disjoint union of copies of 
$H \cdot G^0/G^0 \cong G^0/G^0 \cap H$. Thus, we may assume
that $G$ is connected. If $\car(k) = 0$, then $G$, and hence
$G/H$, are abelian varieties and thus projective. 
If $\car(k) > 0$, then Proposition \ref{prop:frob} yields
that $G/\Ker(F^n_{G/k})$ is an abelian variety for $n \gg 0$;
thus, $G/H \cdot \Ker(F^n_{G/k})$ is an abelian variety as
well, and hence is projective. Moreover, the natural map
$f : G/H \to G/H \cdot \Ker(F^n_{G/k})$ is the quotient by
the action of the infinitesimal group scheme $\Ker(F^n_{G/k})$,
and hence is finite. It follows that $G/H$ is projective.

For an arbitrary algebraic group $G$, Theorem \ref{thm:che} 
yields an affine normal subgroup scheme $N \subseteq G$ such that 
$G/N$ is proper. Then $H \cdot N$ is a subgroup scheme of $G$ and 
$G/H \cdot N \cong (G/N)/(H \cdot N/N)$ is proper as well, 
hence projective. 

We now claim that it suffices to show the existence of a projective 
$H \cdot N$-equivariant compactification $Y$ of $H \cdot N/H$ 
having an $H \cdot N$-linearized ample line bundle. Indeed, 
by \cite[Prop.~7.1]{MFK} applied to the projection 
$p_1: G \times Y \to G$ and to the $H \cdot N$-torsor 
$q : G \to G/H \cdot N$, there exists a unique cartesian square
\[
\xymatrix{
G \times Y\ar[r]^-{p_1} \ar[d]_{r} & G \ar[d]^{q} \\ 
X \ar[r]^-{f} & G/H \cdot N,  \\}
\]
where $X$ is a $G$-scheme, $r$ and $f$ are $G$-equivariant,
and $r$ is an $H \cdot N$ torsor for the action defined by
$x  \cdot (g,y) := (g x^{-1}, xy)$, where $x \in H \cdot N$,
$g \in G$ and $y \in Y$. Moreover, $f$ is projective, and hence
so is $X$. (We may view $f : X \to G/H \cdot N$ as the homogeneous
fiber bundle with fiber $Y$ associated with the principal bundle
$q : G \to G/H \cdot N$ and the $H \cdot N$-scheme $Y$). Also, 
as in the proof of Proposition \ref{prop:quot}, we have 
a cartesian square 
\[ 
\xymatrix{
G \times H \cdot N/H \ar[r]^-{p_1} \ar[d]_{n} & G \ar[d]^{q} \\ 
G/H \ar[r]^-{f} & G/H \cdot N,  \\}
\]
where $n$ is obtained from the multiplication map 
$G \times H \cdot N \to G$ (indeed, this square is commutative 
and the horizontal arrows are $H \cdot N/H$-torsors). Since $Y$ 
is an equivariant compactification of $H \cdot N/H$, it follows 
that $X$ is the desired equivariant compactification of~$G/H$.
This proves the claim.

In view of the first step of the proof (the case of an affine
group $G$), it suffices in turn to check that $H \cdot N$ acts 
on $H \cdot N/H$ via an affine quotient group.

By Lemma \ref{lem:quotient}, $(H \cdot N)_{\ant}$
is a quotient of $(H \ltimes N)_{\ant}$. The latter is the fiber
at the neutral element of the affinization morphism
$H \ltimes N \to \Spec \,\cO(H \ltimes N)$. Also, recall that
$H \ltimes N \cong H \times N$ as schemes, $N$ is affine,
and the affinization morphism commutes with finite products; 
thus, $(H \ltimes N)_{\ant} = H_{\ant}$. As a consequence, 
we have $(H \cdot N)_{\ant} = H_{\ant}$. Since
$H \cdot N/H \cong (H \cdot N/H_\ant)/(H/H_\ant)$,
and $H \cdot N/H_\ant = H \cdot N/(H \cdot N)_\ant$ is affine, 
this completes the proof.
\end{proof}

\begin{remarks}\label{rem:qp}
(i) With the notation and assumptions of the above theorem,
the homogeneous space $G/H$ is quasi-projective. In particular, 
every algebraic group is quasi-projective.

(ii) Assume in addition that $G$ is smooth. Then $G/H$ has 
an equivariant compactification by a \textit{normal} 
projective scheme, as follows from Proposition \ref{prop:norm}.

(iii) If $\car(k) = 0$ then every homogeneous space has 
a \textit{smooth} projective equivariant compactification.
This follows indeed from the existence of an equivariant resolution 
of singularities; see \cite[Prop.~3.9.1, Thm.~3.36]{Ko2}. 

(iv) Over any imperfect field $k$, there exist smooth connected
algebraic groups $G$ having no smooth compactification
(equivariant or not).
For example, choose $a \in k \setminus k^p$, where $p := \car(k)$,
and consider the subgroup scheme $G \subseteq \bG_a^2$
defined as the kernel of the homomorphism
\[ \bG_a^2 \longrightarrow \bG_a, 
\quad (x,y) \longmapsto y^p - x - a x^p. \]
Then $G_{k_i}$ is the kernel of the homomorphism
$(x,y) \mapsto (y - a^{1/p} x)^p -x$; thus,
$G_{k_i} \cong (\bG_a)_{k_i}$ via the map 
$(x,y) \mapsto y - a^{1/p} x$. As a consequence, $G$ is smooth,
connected and unipotent. Moreover, $G$ is equipped with
a compactification $X$, the zero subscheme of 
$y^p - xz^{p-1} - a x^p$ in $\bP^2$; the complement 
$X \setminus G$ consists of a unique $k_i$-rational point
$P$ with homogeneous coordinates $(1,a^{1/p},0)$. One may
check that $P$ is a regular, non-smooth point of $X$.
Since $G$ is a regular curve, it follows that $X$ is its 
unique regular compactification. 
\end{remarks}

\subsection{Commutative algebraic groups}
\label{subsec:cag}

This section gives a brief survey of some structure results 
for the groups of the title. We will see how to build them from
two classes: the (commutative) unipotent groups and the groups
of multiplicative type, i.e., those algebraic groups $M$ such 
that $M_{\bar{k}}$ is isomorphic to a subgroup scheme of some
torus $\bG_{m,\bar{k}}^n$. Both classes are stable under 
taking subgroup schemes, quotients, commutative group extensions, 
and extensions of the base field (see 
\cite[IV.2.2.3, IV.2.2.6]{DG} for the unipotent groups,
and \cite[IV.1.2.4, IV.1.4.5]{DG} for those of multiplicative
type). Also, every homomorphism between groups of different 
classes is constant (see \cite[IV.2.2.4]{DG}).

The structure of commutative unipotent algebraic groups is 
very simple if $\car(k) = 0$: every such group $G$ is isomorphic 
to its Lie algebra via the exponential map (see \cite[IV.2.4]{DG}).
In particular, $G$ is a \textit{vector group}, i.e., the additive
group of a finite-dimensional vector space, uniquely determined
up to isomorphism of vector spaces. In characteristic $p > 0$,
the commutative unipotent groups are much more involved 
(see \cite[V.1]{DG} for structure results over a perfect field);
we just recall that for any such group $G$, the multiplication
map $p^n_G$ is zero for $n \gg 0$.

Next, we consider algebraic groups of multiplicative type. 
Any such group $M$ is uniquely determined by its
\textit{character group} $X^*(M)$, consisting of the homomorphisms 
of $k_s$-group schemes $G_{k_s} \to (\bG_m)_{k_s}$. Also, $X^*(M)$ 
is a finitely generated abelian group equipped with an action
of the Galois group $\Gamma$. Moreover, the assignment
$M \mapsto X^*(M)$ yields an anti-equivalence from the category
of algebraic groups of multiplicative type (and homomorphisms)
to that of finitely generated abelian groups equipped with a 
$\Gamma$-action (and $\Gamma$-equivariant homomorphisms).
Also, $M$ is  a torus (resp. smooth) if and only if the group 
$X^*(M)$ is torsion-free (resp.~$p$-torsion free if 
$\car(k) = p > 0$). 

It follows readily that every algebraic group of 
multiplicative type $M$ has a largest subtorus, namely, 
$M^0_\red$; its character group is the quotient of $X^*(M)$ by
the torsion subgroup. Moreover, the formation of the largest 
subtorus commutes with arbitrary field extensions (see 
\cite[IV.1.3]{DG} for these results).

\begin{theorem}\label{thm:cag}
Let $G$ be a commutative affine algebraic group.

\begin{enumerate}

\item $G$ has a largest subgroup scheme of multiplicative 
type $M$, and the quotient $G/M$ is unipotent.

\item When $k$ is perfect, $G$ also has a largest unipotent
subgroup scheme $U$, and $G = M \times U$.

\item Returning to an arbitrary field $k$, there exists 
a subgroup scheme $H \subseteq G$ such that $G = M \cdot H$ 
and $M \cap H$ is finite.

\item Any exact sequence of algebraic groups
\[ 1 \longrightarrow G_1 \longrightarrow G 
\longrightarrow G_2 \longrightarrow 1 \]
induces exact sequences
\[ 1 \to M_1 \to M \to M_2 \to 1, \quad 
1 \to U_1 \to U \to U_2 \to 1 \]
with an obvious notation.

\end{enumerate}

\end{theorem}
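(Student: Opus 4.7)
The plan is to treat the four parts in turn, with (1) being the crux. For (1), the existence of the largest multiplicative type subgroup $M$ follows from the closure of the class of multiplicative type groups under product of subgroups: if $M_1, M_2 \subseteq G$ are of multiplicative type, then $M_1 \cdot M_2$ is a quotient of the multiplicative type group $M_1 \times M_2$, hence again of multiplicative type; noetherianity of $G$ then produces a largest such $M$. To show $G/M$ is unipotent, I first observe that $G/M$ has no nontrivial multiplicative type subgroup scheme: any such $N$ would have preimage $N' \subseteq G$ fitting in an extension of multiplicative type by multiplicative type, hence $N'$ is itself of multiplicative type, so $N' \subseteq M$ by maximality, forcing $N = 1$. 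The remaining content, that a commutative affine algebraic group with no nontrivial multiplicative type subgroup scheme is unipotent, is classical over $\bar k$ via Jordan decomposition, and descends to $k$ using that the largest multiplicative subgroup of $G_{\bar k}$ is unique and hence $\mathrm{Aut}(\bar k/k)$-invariant, combining Galois descent from $k_s$ with purely inseparable descent.

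For (2), over a perfect field, the key input is the vanishing $\mathrm{Ext}^1(V, N) = 0$ for $V$ unipotent and $N$ of multiplicative type (see \cite[IV.3.1]{DG}); applied to the extension $1 \to M \to G \to G/M \to 1$ of part (1), with $G/M$ unipotent, this yields a splitting $G = M \times U$ with $U \cong G/M$ unipotent. Moreover, $U$ is the largest unipotent subgroup because any unipotent subgroup of $G$ has trivial image in $M$ (as homomorphisms between unipotent and multiplicative type groups are trivial). For (3), I apply (2) over the perfect closure $k_i$ to obtain $G_{k_i} = M_{k_i} \times U_i$; this decomposition is already defined over some finite purely inseparable extension $K/k$, producing a subgroup scheme $H' \subseteq G_K$ with $G_K = M_K \cdot H'$ and $M_K \cap H' = 1$. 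Lemma \ref{lem:ray} then furnishes a $k$-subgroup scheme $H \subseteq G$ with $H_K \supseteq H'$ and $H_K/H'$ finite; hence $(M \cap H)_K = M_K \cap H_K$ injects into $H_K/H'$ and so $M \cap H$ is finite, while $M \cdot H = G$ by faithfully flat descent from $K$.

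For (4), let $\pi : G \to G_2$ and set $\tilde M_2 := \pi^{-1}(M_2)$. Its largest multiplicative subgroup, by maximality, is $M$ itself, so by (1) the quotient $\tilde M_2/M \cong M_2/\pi(M)$ is unipotent; being simultaneously multiplicative as a quotient of $M_2$, it must be trivial, yielding $\pi(M) = M_2$. The identification $M_1 = G_1 \cap M$ is immediate: $G_1 \cap M$ is a multiplicative subgroup of $G_1$, hence inside $M_1$, and $M_1 \subseteq G$ is multiplicative, hence inside $M$. This gives the first exact sequence. Over a perfect field, the sequence for unipotent parts follows from $G = M \times U$ together with the fact that each $G_i$ is stable under both projections (the Jordan decomposition of any element lies inside the closure of the cyclic subgroup it generates, hence inside $G_i$), so $G_i = M_i \times U_i$, and the surjectivity of $G \to G_2$ restricts to surjectivity on each factor. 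The principal obstacle in the whole argument is the descent step in (1) promoting ``no multiplicative type subgroup implies unipotent'' from the algebraically closed case to arbitrary $k$.
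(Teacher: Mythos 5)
Your treatment of parts (3) and (4) is essentially the paper's own proof: the same reduction to the perfect closure $k_i$, descent to a finite purely inseparable $K/k$, and appeal to Lemma \ref{lem:ray} for (3); and for (4) the same observation that $M_1 = M\cap G_1$, that the image of $M$ in $M_2$ is of multiplicative type with unipotent cokernel (being a quotient of $G/M$), hence equals $M_2$. For parts (1) and (2) the paper gives no proof at all, citing \cite[IV.3.1.1]{DG} and \cite[XVII.7.2.1]{SGA3}; your sketch (products of multiplicative subgroups are multiplicative, the quotient by the largest one has no nontrivial multiplicative subgroup, and $\Ext^1(\mathrm{unipotent},\mathrm{mult.\ type})=0$ over a perfect field) is the standard route those references take, so you are supplying content the paper omits.

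The one step in your (1) that does not work as written is the descent. You want the largest multiplicative type subgroup of $G_{\bar k}$ to come from $k$, and you invoke its uniqueness to get $\Aut(\bar k/k)$-invariance, then ``Galois descent from $k_s$ combined with purely inseparable descent.'' Invariance under automorphisms does give Galois descent from $k_s$ to $k$, but it gives nothing across the purely inseparable layer $\bar k/k_s$: a canonically defined subgroup scheme of $G_{\bar k}$ need not be defined over a smaller field, and the paper's own Example \ref{ex:red} (where $G_\red$ fails to be a subgroup scheme over an imperfect field) is precisely a failure of this kind of descent. What actually closes the gap is a rigidity property special to multiplicative type groups: every multiplicative type subgroup scheme of $G_{\bar k}$ is already defined over $k_s$ (this is part of \cite[IV.1.4]{DG}, resp.\ SGA3, Exp.\ XI), which is the real content behind the assertion that the formation of $M$ commutes with field extensions. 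A smaller caveat: ``classical via Jordan decomposition'' over $\bar k$ only covers smooth groups, whereas here $G$ may be non-reduced (it may contain $\mu_p$ or $\alpha_p$ in characteristic $p$), so one needs the Hopf-algebraic form of the argument from \cite[IV.3.1]{DG}. Neither point derails the overall structure, but both must be supplied for (1) to be a proof.
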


\begin{proof}
The assertions (1) and (2) are proved in \cite[IV.3.1.1]{DG} and 
\cite[XVII.7.2.1]{SGA3}.

(3) We argue as in the proof of Theorem \ref{thm:cheva}.
By (2), we have $G_{k_i} = M_{k_i} \times U_i$ for a unique
unipotent subgroup scheme $U_i \subseteq G_{k_i}$. Thus, 
$U_i$ is defined over some subfield $K \subseteq k_i$, 
finite over $k$, and $G_K = M_K \times U'$
with an obvious notation. By Lemma \ref{lem:ray}, there
exists a subgroup scheme $H \subseteq G$ such that 
$H_K \supseteq U'$ and $H_K/U'$ is finite. Then
$G_K = M_K \cdot H_K$ and $M_K \cap H_K$ is
finite. This yields the assertion.

(4) We have $M_1 = M \cap G_1$ by construction. It follows
that the quotient map $q : G \to G_2$ induces a closed 
immersion of group schemes
$\iota : M/M_1 \to M_2$. Let $N$ be the scheme-theoretic
image of $\iota$; then $N$ is of multiplicative type and 
$G_2/N$ is a quotient of $G/M$, hence is unipotent. 
Thus, $N = M_2$; this yields the first exact sequence, 
and in turn the second one.
\end{proof}

The assertion (2) of the above proposition fails over any imperfect 
field, as shown by the following variant of Example \ref{ex:ray}: 

\begin{example}\label{ex:cag}
Let $k$ be an imperfect field and choose a non-trivial finite 
purely inseparable field extension $K$ of $k$. Then
$G := \R_{K/k}(\bG_{m,K})$ is a smooth connected commutative
algebraic group of dimension $[K:k]$. Moreover, there is 
a canonical exact sequence
\begin{equation}\label{eqn:cag}
1 \longrightarrow U' \longrightarrow G_K 
\stackrel{q}{\longrightarrow} \bG_{m,K} \longrightarrow 1, 
\end{equation}
where $U'$ is a non-trivial smooth connected unipotent group
(see \cite[A.5.11]{CGP}). Also, we have a closed immersion
of algebraic groups
\[ j : \bG_{m,k} \longrightarrow \R_{K/k}(\bG_{m,K}) = G \]
in view of \cite[A.5.7]{CGP}. Thus,
$j_K : \bG_{m,K} \to G_K$ a closed immersion of algebraic 
groups as well, which yields a splitting of the exact sequence 
(\ref{eqn:cag}). Moreover, we have an exact sequence
\begin{equation}\label{eqn:ns} 
1 \longrightarrow \bG_{m,k} \stackrel{j}{\longrightarrow} 
G \longrightarrow U \longrightarrow 1, 
\end{equation}
where $U$ is smooth, connected and unipotent (since 
$U_K$ is isomorphic to $U'$). 

We claim that (\ref{eqn:ns}) is not split. Indeed,
it suffices to show that every homomorphism $f : H \to G$ is
constant, where $H$ is a smooth connected unipotent group.
But as in Example \ref{ex:ray}, these homomorphisms
correspond bijectively to the homomorphisms 
$f' : H_K \to \bG_{m,K}$ via the assignment 
$f \mapsto f' := q \circ f$. Moreover, every such homomorphism
is constant; this completes the proof of the claim.
\end{example}

Next, consider a smooth connected commutative algebraic group 
(not necessarily affine) over a perfect field $k$. 
By combining Theorems \ref{thm:chev} and \ref{thm:cag}, 
we obtain an exact sequence
\begin{equation}\label{eqn:com}  
1 \longrightarrow T \times U \longrightarrow G 
\stackrel{q}{\longrightarrow} A \longrightarrow 1,
\end{equation}
where $T \subseteq G$ is the largest subtorus, and $U \subseteq G$ 
the largest smooth connected unipotent subgroup scheme. 
This yields in turn two exact sequences
\[  1 \longrightarrow T \longrightarrow G/U \longrightarrow A 
\longrightarrow 1, \quad 
1 \longrightarrow U \longrightarrow G/T \longrightarrow A 
\longrightarrow 1 \]
and a homomorphism 
\begin{equation}\label{eqn:pf} 
f : G \longrightarrow G/U \times_A G/T 
\end{equation}
which is readily seen to be an isomorphism.

Thus, the structure of $G$ is reduced to those of $G/U$
and $G/T$. The former will be described in the next section.
We now describe the latter under the assumption that 
$\car(k) = 0$; we then consider extensions
\begin{equation}\label{eqn:vec}
1 \longrightarrow U \longrightarrow G \longrightarrow A
\longrightarrow 1,
\end{equation}
where $U$ is unipotent and $A$ is a prescribed abelian variety.
As $U$ is a vector group, such an extension is called
a \textit{vector extension} of $A$.  

When $U \cong \bG_a$, every extension (\ref{eqn:vec})
yields a $\bG_a$-torsor over $A$; this defines a map
\[ \Ext^1(A,\bG_a) \to H^1(A,\cO_A) \]
which is in fact an isomorphism (see e.g. 
\cite[III.17]{Oo}). For an arbitrary
vector group $U$, we obtain an isomorphism
\[ \Ext^1(A,U) \cong H^1(A,\cO_A) \otimes_k U. \]
Viewing the right-hand side as 
$\Hom_\gpsch( H^1(A,\cO_A)^*, U)$,
it follows that there is a \textit{universal vector extension}
\begin{equation}\label{eqn:uni} 
1 \longrightarrow H^1(A,\cO_A)^* \longrightarrow
E(A) \longrightarrow A \longrightarrow 1,
\end{equation}
from which every extension (\ref{eqn:vec}) is obtained
via push-out by a unique linear map $H^1(A,\cO_A)^* \to U$.

\begin{remarks}\label{rem:cag}
(i) When $\car(k) > 0$, every abelian variety $A$ still 
has a universal vector extension $E(A)$, as shown by 
the above arguments. Yet note that $E(A)$ classifies extensions 
of $A$ by vector groups, which form a very special class 
of smooth connected unipotent commutative groups in this setting.

(ii) Little is known on the structure of commutative 
algebraic groups over imperfect fields, due to the failure 
of Chevalley's structure theorem. We will present a partial
remedy to that failure in Corollary \ref{cor:pa}, as a consequence
of a structure result for commutative algebraic groups in positive
characteristics (Theorem \ref{thm:cagc}).
\end{remarks}

\subsection{Semi-abelian varieties}
\label{subsec:sav}

\begin{definition}\label{def:com}
A \textit{semi-abelian variety} is an algebraic group 
obtained as an extension
\begin{equation}\label{eqn:sab}
1 \longrightarrow T \longrightarrow G 
\stackrel{q}{\longrightarrow} A \longrightarrow 1,
\end{equation}
where $T$ is a torus and $A$ an abelian variety.
\end{definition}

\begin{remarks}\label{rem:sab}
(i) With the above notation, $q$ is the Albanese homomorphism. 
It follows that $T$ and $A$ are uniquely determined by $G$.

(ii) Every semi-abelian variety is smooth and connected;
it is also commutative in view of Corollary \ref{cor:ros}. 
Moreover, the multiplication map $n_G$ is an isogeny for any 
non-zero integer $n$, since this holds for abelian varieties 
(by Lemma \ref{lem:mult}) and for tori.

(iii) Given a semi-abelian variety $G$ over $k$ and an extension
of fields $K$ of $k$, the $K$-algebraic group $G_K$
is a semi-abelian variety. 
\end{remarks}

In the opposite direction, we will need:

\begin{lemma}\label{lem:sabext}
Let $G$ be an algebraic group. If $G_{\bar{k}}$ is a semi-abelian
variety, then $G$ is a semi-abelian variety.
\end{lemma}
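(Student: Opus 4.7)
The plan is to construct a torus $T \subseteq G$ defined over $k$ whose base change to $\bar{k}$ recovers the torus subgroup of the semi-abelian variety $G_{\bar{k}}$; once such $T$ is found, $G/T$ will base-change to the abelian-variety quotient of $G_{\bar{k}}$, hence be an abelian variety over $k$, exhibiting $G$ as an extension of an abelian variety by a torus.

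First, $G$ is smooth (Proposition \ref{prop:smooth}, since $G_{\bar{k}}$ is), connected (being geometrically connected and containing the $k$-rational point $e$), and commutative (checked on $\bar{k}$-points). Work now over the perfect closure $k_i$ of $k$: by Theorem \ref{thm:chev}, $T_i := L(G_{k_i})$ is smooth, connected, affine and normal, with $G_{k_i}/T_i$ an abelian variety; moreover, by Theorem \ref{thm:chev}(2), $(T_i)_{\bar{k}} = L(G_{\bar{k}})$ coincides with the torus subgroup of $G_{\bar{k}}$ (a smooth connected affine normal subgroup of $G_{\bar{k}}$ has trivial image in the abelian quotient, hence lies in the torus). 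In particular $T_i$ is itself a torus. By standard spreading-out, $T_i$ descends to a torus $T_K \subseteq G_K$ defined over some finite purely inseparable extension $K$ of $k$ in $k_i$.

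The descent of $T_K$ from $K$ down to $k$ is the crux. Applying Lemma \ref{lem:ray} to $T_K \subseteq G_K$ yields a $k$-subgroup scheme $L \subseteq G$ with $L_K \supseteq T_K$ and $L_K/T_K$ finite; the relative Frobenius construction in that proof in fact makes $L_K/T_K$ infinitesimal. Hence $L_K$, and therefore $L$, is commutative, connected and affine. Over $\bar{k}$, $L_{\bar{k}}$ sits in the semi-abelian $G_{\bar{k}}$, which admits no non-trivial unipotent subgroup scheme (any such would map trivially both to the torus subgroup and to the abelian quotient). Combining this with the decomposition of commutative affine groups over the perfect field $\bar{k}$ into multiplicative type and unipotent parts (Theorem \ref{thm:cag}(2)) forces $L_{\bar{k}}$ to be of multiplicative type; in view of the definition, $L$ itself is of multiplicative type over $k$.

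Finally, let $T \subseteq L$ be the largest subtorus, which exists over $k$ by Cartier duality: the $\Gamma$-equivariant surjection $X^*(L_{k_s}) \twoheadrightarrow X^*(L_{k_s})/\mathrm{torsion}$ corresponds to a subtorus of $L$, and its base change to $\bar{k}$ is the largest subtorus of $L_{\bar{k}}$. Since $L_{\bar{k}}$ is an extension of a finite infinitesimal multiplicative type group by the torus subgroup of $G_{\bar{k}}$, the torsion-free quotient of $X^*(L_{\bar{k}})$ equals the character group of that torus subgroup, so $T_{\bar{k}}$ is precisely the torus subgroup of $G_{\bar{k}}$. Consequently $(G/T)_{\bar{k}}$ is an abelian variety, so $G/T$ is a smooth proper geometrically integral $k$-group scheme, i.e., an abelian variety, and $G$ is a semi-abelian variety. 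The hardest step is the descent from $k_i$ to $k$: the formation of $L(\cdot)$ need not commute with purely inseparable extensions (cf.\ Example \ref{ex:ray}), so one cannot simply transport $T_i$ down; the interplay of Lemma \ref{lem:ray}, which produces a nearby $k$-form $L$, and Cartier duality, which extracts its largest subtorus, is what makes the descent succeed.
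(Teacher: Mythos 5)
Your overall strategy is essentially the paper's: descend the torus part from the perfect closure to a finite purely inseparable extension $K/k$, use Lemma \ref{lem:ray} to produce a $k$-subgroup $L$ with $L_K/T_K$ infinitesimal, and extract a $k$-torus $T$ with $G/T$ proper. (The paper instead starts from $N=\Ker(\alpha)$ given by Theorem \ref{thm:cheva} and compares it with $T'$ via Remark \ref{rem:ext}, but the mechanism is the same.) However, there is a genuine gap in your middle step. You assert that a semi-abelian variety over $\bar{k}$ ``admits no non-trivial unipotent subgroup scheme'' and deduce that $L_{\bar{k}}$ is of multiplicative type. In characteristic $p>0$ this assertion is false: what is true (Proposition \ref{prop:line}, Corollary \ref{cor:extsab}) is that there is no non-trivial \emph{smooth connected} unipotent subgroup, but finite unipotent subgroup schemes do occur --- $\bZ/p$ lies in the $p$-torsion of an ordinary abelian quotient, and $\alpha_p$ lies in $\Ker(F_{A'/K})$ whenever $A'$ is not ordinary. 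Moreover your particular $L$ really does pick these up: by construction $L_K \supseteq \Ker(F^n_{G_K/K})$, which surjects onto $\Ker(F^n_{A'/K})$ (since $F^n$ is surjective on the torus $T_K$, the snake lemma applies), so $L_{\bar{k}}$ has a non-trivial unipotent factor whenever $A'$ is non-ordinary. The subsequent Cartier-duality step, which presupposes that $L$ is of multiplicative type, therefore does not apply as written.

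The repair is short and lands you on the paper's proof. Let $M\subseteq L$ be the largest subgroup scheme of multiplicative type (Theorem \ref{thm:cag}(1)); its formation commutes with field extensions, and $L_K/M_K$ is a unipotent quotient of the infinitesimal group $L_K/T_K$ (as $T_K\subseteq M_K$), hence infinitesimal. Then set $T:=M^0_\red$, the largest subtorus of $M$, whose formation also commutes with field extensions; $M/T$ is finite because the torsion subgroup of $X^*(M)$ is finite. Hence $L/T$ is finite, and $G/T$ is an extension of the abelian variety $G/L$ (note $(G/L)_K\cong A'/(L_K/T_K)$) by the finite group $L/T$, so it is smooth, connected and proper, i.e.\ an abelian variety, and $G$ is semi-abelian.
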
 

\begin{proof}
Clearly, $G$ is smooth, connected and commutative. Also, 
arguing as at the end of the proof of Theorem \ref{thm:cheva},
we obtain a finite extension $K$ of $k$ 
and an exact sequence of algebraic groups over $K$   
\[ 1 \longrightarrow T' \longrightarrow G_K 
\stackrel{q'}{\longrightarrow} A' \longrightarrow 1,\]
where $T'$ is a torus and $A'$ an abelian variety;
here $q'$ is the Albanese homomorphism.
On the other hand, Theorem \ref{thm:cheva} yields an exact sequence
\[ 1 \longrightarrow N \longrightarrow G 
\stackrel{q}{\longrightarrow} A \longrightarrow 1,\]
where $N$ is connected and affine, and $A$ is an abelian variety;
also, $q$ is the Albanese homomorphism. By Remark \ref{rem:ext},
$N_K$ contains $T'$ and the quotient $N_K/T'$ is infinitesimal;
as a consequence, $T'$ is the largest subtorus of $N_K$.
Consider the largest subgroup scheme $M \subseteq N$ of multiplicative
type; then $N/M$ is unipotent and connected. Since the formation
of $M$ commutes with field extensions, we have
$(N/M)_K = N_K/M_K$. The latter is a quotient of $N_K/T'$;
hence $N_K/M_K$ is infinitesimal, and so is $N/M$. 
Let $T := M^0_\red$; this is the largest subtorus of $N$
and $M/T$ is infinitesimal, hence so is $N/T$. It follows that
$G/T=: A$ is proper; since $G$ is smooth and connected, 
$A$ is an abelian variety. Thus, $G$ is a semi-abelian variety;
moreover, $T_K = T'$ and $A_K = A'$ in view of Remark
\ref{rem:sab} (i).
\end{proof}

\begin{remark}\label{rem:sav}
More generally, if $G_K$ is a semi-abelian variety for some field
extension $K$ of $k$, then $G$ is a semi-abelian variety as well.
This may be checked as in Remarks \ref{rem:aag} and \ref{rem:av},
by adapting the proof of \cite[1.1.9]{CGP}.
\end{remark}

Next, we obtain a geometric characterization of semi-abelian
varieties:

\begin{proposition}\label{prop:line}
Let $G$ be a smooth connected algebraic group over a perfect
field $k$. Then $G$ is a semi-abelian variety if and only
if every morphism (of schemes) $f : \bA^1 \to G$ is constant.
\end{proposition}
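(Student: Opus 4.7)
The plan is to prove the two implications separately, using Chevalley's theorem (Theorem \ref{thm:chev}) as the main reduction for the converse direction.

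For the forward direction, suppose $G$ fits in an exact sequence $1 \to T \to G \stackrel{q}{\to} A \to 1$ with $T$ a torus and $A$ an abelian variety. Given $f : \bA^1 \to G$, I would first compose with $q$ to obtain a morphism $\bA^1 \to A$; by Theorem \ref{thm:weil} this extends to a morphism $\bP^1 \to A$, which is constant by Lemma \ref{lem:rat}. Consequently $f$ factors through a single fiber $F$ of $q$. Since $f(0) \in G(k)$, this fiber sits over a $k$-rational point of $A$, making $F$ a $T$-torsor over $\Spec(k)$; base-changing to $\bar{k}$ trivializes the torsor, giving $F_{\bar{k}} \cong T_{\bar{k}} \cong \bG_m^n$. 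Since morphisms $\bA^1_{\bar{k}} \to \bG_{m,\bar{k}}^n$ correspond to $n$-tuples of units in $\bar{k}[t]$, which are constants, $f_{\bar{k}}$ is constant, and hence so is $f$.

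For the converse, assume every morphism $\bA^1 \to G$ is constant. Theorem \ref{thm:chev} yields an exact sequence $1 \to L \to G \to A \to 1$ with $L$ smooth, connected, affine, and $A$ an abelian variety, so it suffices to show that $L$ is a torus. I would argue by contradiction: if $L$ is not a torus, I plan to produce a closed $k$-subgroup scheme $\bG_a \hookrightarrow L$, whose composition with $L \hookrightarrow G$ provides a non-constant morphism $\bA^1 \to G$, contradicting the hypothesis. To construct this $\bG_a$-subgroup, I would use the structure of smooth connected linear algebraic groups over a perfect field. If the unipotent radical $R_u(L)$ (defined over $k$ by perfection) is nontrivial, then as a smooth connected unipotent group over a perfect field it is $k$-split, admitting a composition series with $\bG_a$ successive quotients whose bottom term is a closed $k$-subgroup isomorphic to $\bG_a$. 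Otherwise $L$ is reductive and not a torus, and I would combine the existence of a maximal $k$-torus $T_L \subseteq L$ with a descent argument on root subgroups of $L_{\bar{k}}$ relative to $(T_L)_{\bar{k}}$.

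The main obstacle is the reductive, non-toral case of the converse: root subgroups of $L_{\bar{k}}$ are naturally $\bar{k}$-subgroups, and descending one to a $k$-rational copy of $\bG_a$ requires a careful analysis of the Galois action on the root datum of $L$. One expects to reduce this to the algebraically closed situation, where the existence of $\bG_a$ inside a non-toral reductive group is immediate from the non-emptiness of the root system; the perfection hypothesis then enables Galois descent of the resulting $\bG_a$-subgroup, since every form of $\bG_a$ over a perfect field is trivial.
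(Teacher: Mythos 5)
Your forward implication is correct and coincides with the paper's argument: project to $A$, use Weil's extension theorem (Theorem \ref{thm:weil}) together with Lemma \ref{lem:rat} to see that $f$ lands in a single fiber of $q$, and then reduce to the fact that $\bA^1_{\bar{k}}$ admits only constant morphisms to $\bG_{m,\bar{k}}^n$. Likewise, the unipotent half of your converse (a nontrivial smooth connected unipotent group over a perfect field is split, hence contains a $k$-rational copy of $\bG_a$) is exactly the step the paper delegates to \cite[14.3.10]{Sp}.

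The gap is where you say it is, but the repair you sketch cannot work. For $L$ reductive and not a torus you propose to descend a root subgroup of $L_{\bar{k}}$ to a $k$-subgroup isomorphic to $\bG_a$. The obstruction is not that nontrivial forms of $\bG_a$ might exist over a perfect field (they do not); it is that no individual root subgroup need be stable under the Galois group: relative to a maximal $k$-torus $T_L$, the group $\Gamma$ permutes the roots and sends $U_\alpha$ to $U_{\sigma(\alpha)}$, and when $L$ is anisotropic no root subgroup is $\Gamma$-stable and $L$ contains no nontrivial smooth connected unipotent $k$-subgroup at all. Worse, in that case there may be no nonconstant $k$-morphism $\bA^1 \to L$ of any kind: take $L = \mathrm{SO}_3$ of the form $x^2+y^2+z^2$ over $k = \bR$. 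A morphism $\bA^1 \to L$ is a matrix $A(t)$ of real polynomials with ${}^tA(t)\,A(t) = I$, and the identities $\sum_i A_{ij}(t)^2 = 1$ force every entry to be constant, since the leading coefficients of the top-degree terms are squares and cannot cancel. So no argument carried out over $k$ alone can eliminate the anisotropic reductive case; the hypothesis has to be applied after base change to $\bar{k}$, where the existence of root subgroups immediately produces a copy of $\bA^1_{\bar{k}}$ in $G_{\bar{k}}$. This geometric reading is how the paper's one-line dispatch (``$G_\aff$ has no root subgroups and hence is a torus'') must be understood, and the same example shows that the criterion itself should really be stated in terms of morphisms $\bA^1_{\bar{k}} \to G_{\bar{k}}$. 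Your instinct that this case is the crux is right, but your descent strategy is a dead end; you need instead to reformulate (or reinterpret) the hypothesis geometrically before attacking the reductive case.
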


\begin{proof}
Assume that $G$ is semi-abelian and consider the exact
sequence (\ref{eqn:sab}). Then 
$q \circ f : \bA^1 \to A$ is constant by Lemma \ref{lem:rat}.
Thus, there exists $g \in G(\bar{k})$ such that $f_{\bar{k}}$
factors through the translate $g T_{\bar{k}} \subseteq G_{\bar{k}}$.
So we may view $f_{\bar{k}}$ as a morphism 
$\bA^1_{\bar{k}} \to (\bA^1_{\bar{k}} \setminus 0)^n$ for some 
positive integer $n$. Since every morphism 
$\bA^1_{\bar{k}} \to \bA^1_{\bar{k}} \setminus 0$ is constant, 
we see that $f$ is constant.

Conversely, assume that every morphism $\bA^1 \to G$ is constant. 
By \cite[14.3.10]{Sp}, it follows that every smooth connected unipotent 
subgroup of $G$ is trivial. In particular, the unipotent radical of 
$G_\aff$ is trivial, i.e., $G_\aff$ is reductive. Also, $G_\aff$ has no 
root subgroups and hence is a torus. So $G$ is a semi-abelian variety. 
\end{proof}

As a consequence, semi-abelian varieties are stable under
group extensions in a strong sense:

\begin{corollary}\label{cor:extsab}

Let $G$ be a smooth connected algebraic group and $N \subseteq G$
a subgroup scheme. 

\begin{enumerate}

\item If $G$ is a semi-abelian variety, then so is $G/N$. 
If in addition $N$ is smooth and connected, 
then $N$ is a semi-abelian variety as well. 

\item If $N$ and $G/N$ are semi-abelian varieties, then so is $G$.

\item Let $f : G \to H$ be an isogeny, where $H$ is a semi-abelian 
variety. Then $G$ is a semi-abelian variety.

\end{enumerate}

\end{corollary}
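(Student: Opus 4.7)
The plan is to handle the first half of (1) by a direct structural argument, while for (1b), (2), and (3) I will reduce to $k = \bar{k}$ and appeal to the geometric criterion of Proposition \ref{prop:line}, combined with descent via Lemma \ref{lem:sabext}. Throughout I use freely that semi-abelian varieties are smooth, connected and commutative (Remark \ref{rem:sab}) and that the property is preserved by arbitrary base change (Remark \ref{rem:sab}(iii)).

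For the first half of (1), I start from the defining sequence $1 \to T \to G \stackrel{q}{\to} A \to 1$ and form $q(N) \subseteq A$. Applying Proposition \ref{prop:prod}(3) and Proposition \ref{prop:quot}(3) to the chain $N \subseteq T \cdot N \subseteq G$ yields the exact sequence
\[
1 \longrightarrow T/(T \cap N) \longrightarrow G/N \longrightarrow A/q(N) \longrightarrow 1.
\]
Now $A/q(N)$ is smooth, connected, proper and commutative (quotient of $A$), hence an abelian variety, and $M := T/(T \cap N)$ is of multiplicative type. By \S\ref{subsec:cag}, $M$ has a largest subtorus $T_1$ with finite quotient $M/T_1$. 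Regarding $T_1$ as a subtorus of $G/N$, the further quotient $(G/N)/T_1$ is smooth, connected, and an extension of the abelian variety $A/q(N)$ by the finite group scheme $M/T_1$; by Remark \ref{rem:tower} it is proper, so it is itself an abelian variety. Thus $G/N$ is an extension of an abelian variety by the torus $T_1$, i.e., semi-abelian.

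For the remaining parts, I reduce to $\bar{k}$ in each case. In (1b), every morphism $\bA^1 \to N$ composes with the inclusion into $G$ to a constant morphism by Proposition \ref{prop:line}, hence is itself constant; the same proposition then shows $N_{\bar{k}}$ is semi-abelian, and Lemma \ref{lem:sabext} concludes. For (2), $G$ is smooth and connected by Theorem \ref{thm:hom}(2) and Remark \ref{rem:tower}; any $\phi : \bA^1 \to G$ composes to a constant map $\bA^1 \to G/N$, so lands in a single coset $gN$, and translating gives a morphism $\bA^1 \to N$, which is constant. For (3), taking $G$ smooth and connected (the intended setting for an isogeny between groups of semi-abelian type), any $\phi : \bA^1 \to G$ composes with $f$ to a constant morphism, so has image in a single fiber of $f$, which is a finite scheme; since $\bA^1$ is reduced and connected, the image must be a single point. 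In each case, Proposition \ref{prop:line} combined with Lemma \ref{lem:sabext} completes the argument.

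The main obstacle will be (1a). There Proposition \ref{prop:line} is not available by a direct lifting argument: an arbitrary morphism $\bA^1 \to G/N$ need not lift to $G$, since $N$-torsors over $\bA^1$ can be non-trivial when $N$ is infinitesimal (for instance, $\alpha_p$-torsors are parametrised by the Frobenius cokernel). This is precisely what forces the structural detour through the exact sequence above, replacing the lifting problem with the benign fact that a group of multiplicative type has a largest subtorus with finite complement.
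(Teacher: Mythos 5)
Your proposal is correct. Parts (1) and (2) follow essentially the same route as the paper: for (1a) the paper also forms the exact sequence $1 \to T/(T\cap N) \to G/N \to A/B \to 1$ from the defining extension of $G$, and for (1b) and (2) it invokes Proposition \ref{prop:line} exactly as you do, with Lemma \ref{lem:sabext} handling the descent from $\bar{k}$. Two points of genuine divergence are worth recording. In (1a) the paper simply asserts that $T/(T\cap N)$ is a torus (true, since its character group embeds in the free group $X^*(T)$), whereas you treat it only as a group of multiplicative type and absorb the finite quotient $M/T_1$ into the abelian part via Remark \ref{rem:tower}; your version is slightly longer but avoids relying on that unstated fact, and works verbatim over $k$ without base change. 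In (3) the paper argues structurally: it takes $N := \Ker(\alpha_H \circ f)$, observes that $N$ is an extension of the torus $\Ker(\alpha_H)$ by the finite group scheme $\Ker(f)$, and concludes that $G$ is an extension of the abelian variety $G/N \cong \Alb(H)$ by $N$, whose maximal subtorus has finite index. Your route instead pushes the $\bA^1$-criterion through the finite fibers of the isogeny (a morphism from the reduced connected scheme $\bA^1$ into a finite fiber is constant), which is shorter and makes (1b), (2) and (3) uniform, at the cost of first reducing to $\bar{k}$. Your closing remark on why (1a) resists the lifting argument --- non-trivial $N$-torsors over $\bA^1$ for infinitesimal $N$ --- correctly identifies the reason both you and the paper must argue structurally there.
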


\begin{proof}
We may assume $k$ algebraically closed by Lemma \ref{lem:sabext}.

(1) We have a commutative diagram of extensions
\[
\xymatrix{
1 \ar[r] & T \cap N \ar[r] \ar[d] & N \ar[r] \ar[d] &
B \ar[r] \ar[d] & 1 \\
1 \ar[r] & T \ar[r] & G \ar[r] & A \ar[r] & 1, \\}
\]
where $T$ is a torus, $A$ an abelian variety, and $B$ a subgroup 
scheme of $A$. This yields an exact sequence
\[ 1 \longrightarrow T/T \cap N \longrightarrow G/N
\longrightarrow A/B \longrightarrow 1, \]
where $T/T \cap N$ is a torus and $A/B$ is an abelian variety.
Thus, $G/N$ is a semi-abelian variety. The assertion on $N$
follows from Proposition \ref{prop:line}.

(2) Denote by $q: G \to G/N$ the quotient morphism. Let 
$f : \bA^1 \to G$ be a morphism; then $q \circ f$ is constant 
by Proposition \ref{prop:line} again. Translating by some 
$g \in G(k)$, we may thus assume that $f$ factors through $N$. 
Then $f$ is constant; this yields the assertion.

(3) Consider the Albanese homomorphism $\alpha_H : H \to \Alb(H)$
and the kernel $N$ of the composition $\alpha_H \circ f$. Then 
$N$ is an extension of $\Ker(\alpha_H)$ (a torus) by $\Ker(f)$ 
(a finite group scheme). As a consequence, $N$ is affine and
$N/T$ is finite, where $T \subseteq N$ denotes the largest 
subtorus. Since $G/N \cong \Alb(H)$ is an abelian variety, it
follows that $G/T$ is an abelian variety as well. Thus,
$G$ is a semi-abelian variety.
\end{proof}

\begin{remarks}\label{rem:line}
(i) Proposition \ref{prop:line} fails over any imperfect field $k$,
Indeed, as in Remark \ref{rem:qp} (iv), consider the subgroup scheme 
$G \subset \bG_a^2$ defined by $y^p = x + a x^p$,
where $p := \car(k)$ and $a \in k \setminus k^p$. Then 
every morphism $f : \bA^1 \to G$ is just given by 
$x(t),y(t) \in k[t]$ such that  
\[ y(t)^p =  x(t) + a \, x(t)^p. \] 
Thus, $x(t) = z(t)^p$ for a unique $z(t) \in k^{1/p}[t]$ such that 
\[ y(t) = z(t) + a^{1/p} \, z(t)^p. \]
Consider the monomial of highest degree in $z(t)$, say
$a_n t^n$. If $n \geq 1$, then the monomial of highest degree
in $y(t)$ is $a^{1/p} \, a_n^p \, t^{np}$. Since $a^{1/p} \notin k$ 
and $a_n^p \in k$, this contradicts the fact that
$y(t) \in k[t]$. So $n = 0$,
i.e., $z(t)$ is constant; then so are $x(t)$ and $y(t)$.

(ii) Consider a semi-abelian variety $G$ and 
a semi-abelian subvariety $H \subseteq G$. Then the induced
homomorphism between Albanese varieties, $\Alb(H) \to \Alb(G)$,
has a finite kernel that may be arbitrary large. For example,
let $H$ be a non-trivial abelian variety over an algebraically 
closed field; then $H$ contains a copy of the constant group 
scheme $\bZ/\ell$ for any prime number $\ell \neq \car(k)$ 
(see \cite[p.~64]{Mum}). Choosing a root of unity of order 
$\ell$ in $k$, we obtain a closed immersion 
$j : \bZ/\ell \to \bG_m$ and, in turn, a commutative diagram 
of extensions
\[
\xymatrix{
1 \ar[r] & \bZ/\ell \ar[r] \ar[d]_{j} & H \ar[r] \ar[d] &
A \ar[r] \ar[d]_{\id} & 1 \\
1 \ar[r] & \bG_m \ar[r] & G \ar[r] & A \ar[r] & 1, \\}
\]
where $A$ is an abelian variety. So $G$ is a semi-abelian variety 
containing $H$, and the kernel of the homomorphism 
$H = \Alb(H) \to \Alb(G) = A$ is $\bZ/\ell$. 

(iii) Consider again a semi-abelian variety $G$ and let
$H \subseteq G$ be a subgroup scheme. Then $H^0_\red$ is
a semi-abelian variety. To see this, we may replace
$G$, $H$ with $G/H_\ant$, $H/H_\ant$, and hence assume that
$H$ is affine. Let $T$ be the largest torus of $G$, then
$H/T \cap H$ is affine and isomorphic to a subgroup scheme 
of the abelian variety $G/T$. Thus, $H/T \cap H$ is finite;
it follows that $(H/T \cap H)^0_\red$ is trivial, and hence
$H^0_\red \subseteq T$. So $H^0_\red = (T \cap H)^0_\red$
is a torus. 
\end{remarks}

Next, we extend Lemma \ref{lem:alb} to morphisms to 
semi-abelian varieties:

\begin{lemma}\label{lem:salb}
Let $X$, $Y$ be varieties equipped with $k$-rational points
$x_0$, $y_0$ and let $f : X \times Y \to G$ be a morphism to
a semi-abelian variety. Then we have identically
\[ f(x,y) \, f(x,y_0)^{-1} \, f(x_0,y)^{-1}  \, f(x_0,y_0) = e. \]
\end{lemma}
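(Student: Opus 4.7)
The plan is to mimic the strategy used for Lemma \ref{lem:alb}, but split according to the canonical exact sequence $1 \to T \to G \to A \to 1$ of (\ref{eqn:sab}): the abelian variety quotient is handled directly by Lemma \ref{lem:alb}, while the remaining torus case will be handled via units on the product $X \times Y$.

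First, I would introduce
\[ \varphi(x,y) := f(x,y) \, f(x,y_0)^{-1} \, f(x_0,y)^{-1} \, f(x_0,y_0), \]
which is well-defined because $G$ is commutative by Remark \ref{rem:sab}(ii); the goal is then $\varphi \equiv e$. A direct computation using commutativity shows that $\varphi(x_0,y) = e$ and $\varphi(x,y_0) = e$ identically. Post-composing with the Albanese quotient $q: G \to A$ and applying Lemma \ref{lem:alb} to $q \circ f : X \times Y \to A$ (with the given base points) yields $q \circ \varphi \equiv e$. Hence $\varphi$ factors through the closed subgroup $T$, producing a morphism $\psi : X \times Y \to T$ that still satisfies $\psi(x,y_0) = \psi(x_0,y) = e$ identically. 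Choosing a closed immersion $T \hookrightarrow \bG_m^n$ and working coordinate by coordinate, it suffices to prove the following: any $u \in \cO(X \times Y)^*$ with $u|_{X \times \{y_0\}} = 1$ and $u|_{\{x_0\} \times Y} = 1$ is identically $1$.

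The hard part is this last step, which requires input not developed in the excerpt. The natural tool is Rosenlicht's unit lemma, which asserts that for geometrically integral $k$-varieties $X,Y$ equipped with $k$-rational points, the natural map induces an isomorphism
\[ \cO(X)^*/k^* \oplus \cO(Y)^*/k^* \stackrel{\cong}{\longrightarrow} \cO(X \times Y)^*/k^*. \]
Granting this, one writes $u(x,y) = \alpha \, a(x) \, b(y)$ with $\alpha \in k^*$, $a \in \cO(X)^*$, $b \in \cO(Y)^*$; the condition $u|_{X \times \{y_0\}} = 1$ forces $\alpha \, a(x) \, b(y_0) \equiv 1$, making $a$ constant, and symmetrically $b$ is constant, so that evaluation at $(x_0,y_0)$ yields $u \equiv 1$. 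I would either cite this unit lemma from Rosenlicht's original work or from Sansuc's paper on the Brauer group of a torsor, or else give a self-contained derivation by passing to compactifications $\bar X, \bar Y$ (via Nagata and normalization, as in the proof of Lemma \ref{lem:alb}) and analyzing the divisor of $u$ on $\bar X \times \bar Y$ via the see-saw principle. Reduction to algebraically closed $k$ is harmless, since the identity to be checked is scheme-theoretic and stable under base change by Lemma \ref{lem:aff}.
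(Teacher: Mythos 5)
Your proposal is correct and follows essentially the same route as the paper: reduce to the normalized map $\varphi$, push forward to the Albanese quotient and apply Lemma \ref{lem:alb} to see that $\varphi$ lands in the torus $T$, then reduce to $k$ algebraically closed and invoke the decomposition of units on a product of varieties (the paper cites \cite[Lem.~2.1]{FI} for exactly the Rosenlicht-type unit lemma you identify as the key missing input). The only cosmetic difference is your optional offer of a self-contained derivation of that unit lemma via compactifications, which the paper does not carry out.
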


\begin{proof}
We may assume that $f(x_0,y_0) = e$. Let $q : G \to A$ denote
the Albanese homomorphism. By Lemma \ref{lem:alb},
we have identically
\[ (q \circ f)(x,y) = (q \circ f)(x,y_0) \, (q \circ f)(x_0,y). \]
Thus, the morphism
\[ \varphi : X \times Y \longrightarrow G, \quad
(x,y) \longmapsto f(x,y) \, f(x,y_0)^{-1} \, f(x_0,y)^{-1} \]
factors through the torus $T:= \Ker(q)$. Also, 
we have identically $\varphi(x,y_0) = \varphi(x_0,y) = e$.
We now show that $\varphi$ is constant. For this, we
may assume $k$ algebraically closed; then $T \cong \bG_m^n$
and accordingly, 
$\varphi = \varphi_1 \times \cdots \times \varphi_n$
for some $\varphi_i : X \times Y \to \bG_m$. Equivalently,
$\varphi \in \cO(X \times Y)^*$ (the unit group of the
algebra $\cO(X \times Y)$). By \cite[Lem.~2.1]{FI},
there exists $u_i \in \cO(X)^*$ and $v_i \in \cO(Y)^*$ 
such that $\varphi_i(x,y) = u_i(x) \,v_i(y)$ identically. 
As $\varphi_i(x,y_0) = \varphi_i(x_0,y) = 1$,
it follows that $\varphi_i = 1$; thus, $\varphi$ factors
through $e$.
\end{proof}

\begin{remarks}\label{rem:salb}
(i) In view of Corollary \ref{cor:extsab} and Lemma \ref{lem:salb},
\textit{every smooth connected algebraic group
$G$ admits a universal homomorphism to a semi-abelian variety,
which satisfies the assertions of Proposition \ref{prop:alba}.}
In particular, the kernel $N$ of this homomorphism is connected.
If $k$ is perfect, then one may check that 
$N = \cR_u(G_\aff) \cdot \cD(G_\aff)$, where $\cR_u$ denotes the
unipotent radical (and $\cD$ the derived subgroup); 
as a consequence, $N$ is smooth. This fails over an arbitrary 
field $k$, as shown by Example \ref{ex:ray} again (then
$N$ is the kernel of the Albanese homomorphism).

(ii) More generally,
\textit{every pointed variety admits a universal morphism
to a semi-abelian variety}, as follows from \cite[Thm.~7]{Se1}
over an algebraically closed field, and from \cite[Thm.~A1]{Wit} 
over an arbitrary field. This universal morphism, which gives back 
that of Theorem \ref{thm:alba}, is still called the Albanese 
morphism.
\end{remarks}

Finally, we discuss the structure of semi-abelian varieties
by adapting the approach to the classification of vector 
extensions (\S \ref{subsec:cag}).
Consider first the extensions
\begin{equation}\label{eqn:dual} 
1 \longrightarrow \bG_m \longrightarrow G 
\stackrel{q}{\longrightarrow} A 
\longrightarrow 1, 
\end{equation}
where $A$ is a prescribed abelian variety. Any such extension
yields a $\bG_m$-torsor over $A$, or equivalently a line bundle
over that variety. This defines a map
\[ \Ext^1(A, \bG_m) \longrightarrow \Pic(A). \] 
By the Weil-Barsotti formula (see e.g. \cite[III.17, III.18]{Oo}), 
this map is injective and its image is the subgroup 
of translation-invariant line bundles over $A$; this is the
group of $k$-rational points of the dual abelian variety
$\widehat{A}$. We may thus identify $\Ext^1(A,\bG_m)$ with 
$\widehat{A}(k)$. Also, using a Poincar\'e sheaf, 
we will view the points of $\widehat{A}(k)$ as the
algebraically trivial invertible sheaves $\cL$ on $A$ equipped
with a rigidification at $0$, i.e., an isomorphism 
$k \stackrel{\cong}{\to} \cL_0$.

Denoting by $\cL \in \widehat{A}(k)$ the invertible sheaf that
corresponds to the extension (\ref{eqn:dual}), we obtain
an isomorphism of sheaves of algebras over $A$
\[ 
q_*(\cO_G) \cong \bigoplus_{n = -\infty}^{\infty} \cL^{\otimes n}, 
\] 
since $G$ is the $\bG_m$-torsor over $A$ associated with $\cL$.
The multiplication in the right-hand side is given by the
natural isomorphisms 
\[ \cL^{\otimes m} \otimes_{\cO_A} \cL^{\otimes n} 
\stackrel{\cong}{\longrightarrow}
\cL^{\otimes (m + n)}. \]

For an arbitrary extension (\ref{eqn:sab}), consider the
associated extension 
\begin{equation}\label{eqn:sabsep} 
1 \longrightarrow T_{k_s} \longrightarrow G_{k_s} 
\stackrel{q_{k_s}}{\longrightarrow} A_{k_s} \longrightarrow 1 
\end{equation}
and denote by $\Lambda$ the character group of $T$. Then 
the split torus $T_{k_s}$ is canonically isomorphic to 
$\Hom_\gpsch(\Lambda,(\bG_m)_{k_s})$. Moreover, every 
$\lambda \in \Lambda$ yields an extension of the form 
(\ref{eqn:dual}) via pushout by
$\lambda : T_{k_s} \to  (\bG_m)_{k_s}$. This defines a map
\[
\gamma_{k_s} : \Ext^1(A_{k_s}, T_{k_s}) \to \Hom_\gp(\Lambda,\widehat{A}(k_s))
\]
which is readily seen to be an isomorphism by identifying
$T_{k_s}$ with $(\bG_m)_{k_s}^n$ and accordingly $\Lambda$
with $\bZ^n$. Likewise, we obtain an isomorphism of
sheaves of algebras over $A_{k_s}$:
\begin{equation}\label{eqn:sh}
q_*(\cO_G)_{k_s} \cong \bigoplus_{\lambda \in \Lambda} c(\lambda),
\end{equation}
where $c: \Lambda \to \widehat{A}(k_s)$ denotes the map classifying
the extension. Here again, the multiplication of the right-hand 
side is given by the natural isomorphisms
\[ c(\lambda) \otimes_{\cO_A} c(\mu) 
\stackrel{\cong}{\longrightarrow} c(\lambda + \mu). \]
By construction, $\gamma_{k_s}$ is equivariant for the natural actions 
of $\Gamma$. Composing $\gamma_{k_s}$ with the base change map
$\Ext^1(A,T) \to \Ext^1(A_{k_s}, T_{k_s})$ (which is $\Gamma$-invariant), 
we thus obtain a map
\begin{equation}\label{eqn:cla}
\gamma : \Ext^1(A, T) \to \Hom^\Gamma_\gp(\Lambda,\widehat{A}(k_s)).
\end{equation}

By Galois descent, this yields:

\begin{proposition}\label{prop:cla}
Let $A$ be an abelian variety and $T$ a torus. Then the
map (\ref{eqn:cla}) is an isomorphism.
\end{proposition}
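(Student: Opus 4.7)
The plan is to use the factorization $\gamma = \gamma_{k_s} \circ \mathrm{bc}$, where $\mathrm{bc} : \Ext^1(A,T) \to \Ext^1(A_{k_s}, T_{k_s})$ denotes the base-change map. Since $\gamma_{k_s}$ is a $\Gamma$-equivariant isomorphism, it restricts to a bijection on $\Gamma$-invariants
\[
\Ext^1(A_{k_s}, T_{k_s})^\Gamma \;\stackrel{\cong}{\longrightarrow}\; \Hom^\Gamma_\gp(\Lambda, \widehat{A}(k_s)).
\]
It therefore suffices to show that $\mathrm{bc}$ induces a bijection $\Ext^1(A,T) \cong \Ext^1(A_{k_s}, T_{k_s})^\Gamma$, which is a Galois descent statement in the abelian category of commutative algebraic groups (\S \ref{subsec:esit}).

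For injectivity, I would take an extension $1 \to T \to G \to A \to 1$ over $k$ whose base change to $k_s$ admits a group-scheme splitting $s : A_{k_s} \to G_{k_s}$, and observe that $s$ is unique. Indeed, two such splittings differ by a group-scheme homomorphism $A_{k_s} \to T_{k_s}$; but any morphism from the proper $A_{k_s}$ (which satisfies $\cO(A_{k_s}) = k_s$) to the affine $T_{k_s}$ is constant, and the condition of being a homomorphism then forces it to be trivial. Uniqueness of $s$ makes it automatically $\Gamma$-stable, so by Galois descent of morphisms (\cite[V.20]{Se1}) it descends to a splitting of $G \to A$ over $k$, proving that the original extension is trivial.

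For surjectivity, given a $\Gamma$-equivariant homomorphism $c : \Lambda \to \widehat{A}(k_s)$, I would descend the $\Lambda$-graded sheaf of $\cO_{A_{k_s}}$-algebras $\mathcal{A}_{k_s} := \bigoplus_{\lambda \in \Lambda} c(\lambda)$ appearing in (\ref{eqn:sh}). For each $\sigma \in \Gamma$, the canonical isomorphism $\sigma^* c(\lambda) \cong c(\sigma \lambda)$---coming from the fact that $c(\lambda)$ is the pullback of the Poincar\'e sheaf, which is defined over $k$---combines with the $\Gamma$-action on $\Lambda$ to yield a descent datum on $\mathcal{A}_{k_s}$ as a $\Lambda$-graded sheaf of algebras on $A_{k_s}$. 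By fpqc (Galois) descent, $\mathcal{A}_{k_s}$ descends uniquely to a graded sheaf of $\cO_A$-algebras $\mathcal{A}$ on $A$, whose relative spectrum over $A$ is the desired extension of $A$ by $T$ mapping to $c$ under $\gamma$.

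The main obstacle I anticipate lies in the surjectivity step: one must check that the descent datum on $\mathcal{A}_{k_s}$ is compatible with the grading and the algebra structure (so that the cocycle condition holds and the resulting $T$-action is preserved), and that $\Spec_A(\mathcal{A})$ really is a $T$-torsor over $A$, hence an extension in the required sense. Once these compatibilities are verified, the assertion that the base change of the constructed extension realizes the prescribed class $c$ follows immediately from the construction of $\gamma_{k_s}$ via (\ref{eqn:sh}).
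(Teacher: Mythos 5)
Your proposal is correct and takes essentially the same approach as the paper, which disposes of this proposition with the single phrase ``by Galois descent'' applied to the $\Gamma$-equivariant isomorphism $\gamma_{k_s}$. You have merely supplied the details the paper leaves implicit --- uniqueness of splittings (via the triviality of homomorphisms $A_{k_s}\to T_{k_s}$) for injectivity, and effective descent of the $\Lambda$-graded sheaf of algebras from (\ref{eqn:sh}) for surjectivity --- and these details are sound.
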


\subsection{Structure of anti-affine groups}
\label{subsec:saag}

Let $G$ be an anti-affine group. Recall from Lemma 
\ref{lem:anti} and Proposition \ref{prop:rig} that 
$G$ is smooth, connected and commutative; also,
every quotient of $G$ is anti-affine.

\begin{proposition}\label{prop:aacarp}
When $\car(k) = p > 0$, every anti-affine group over $k$ 
is a semi-abelian variety.
\end{proposition}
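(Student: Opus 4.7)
The plan is to reduce to the case where $k$ is algebraically closed via Lemma \ref{lem:sabext}, and then to apply Chevalley's theorem (Theorem \ref{thm:chev}) to write $G$ as an extension
\[ 1 \longrightarrow L \longrightarrow G \longrightarrow A \longrightarrow 1 \]
with $A$ an abelian variety and $L$ smooth, connected, affine. Since $G$ is anti-affine it is commutative (by Proposition \ref{prop:rig}), hence so is $L$; then Theorem \ref{thm:cag}(2) yields a decomposition $L = T \times U$, where $T$ is a torus and $U$ is a smooth connected commutative unipotent group. The torus $T$ is normal in the commutative group $G$, and the quotient $H := G/T$ remains anti-affine (since $\cO(G/T) = \cO(G)^T = k$). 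The problem therefore reduces to showing that in the anti-affine extension
\[ 1 \longrightarrow U \longrightarrow H \longrightarrow A \longrightarrow 1 \]
one must have $U$ trivial.

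The essential input from positive characteristic, recalled in \S\ref{subsec:cag}, is that $p^n_U = 0$ for $n \gg 0$. Consequently $p^n_H$ vanishes on $U$, and so by the universal property of the quotient $q : H \to H/U = A$ it factors through a morphism $s : A \to H$ with $s \circ q = p^n_H$; this $s$ is a homomorphism (e.g.~by Proposition \ref{prop:alba}(1)) and satisfies $q \circ s = p^n_A$. Since $p^n_A$ is surjective (Lemma \ref{lem:mult}), the abelian subvariety $s(A) \subseteq H$ surjects onto $A$ under $q$, whence $H = s(A) \cdot U$. By the isomorphism theorem (Proposition \ref{prop:prod}(3)), the quotient $H/s(A) \cong U/(U \cap s(A))$ is simultaneously a quotient of $U$ (hence unipotent and therefore affine) and a quotient of the anti-affine group $H$ (hence anti-affine); it must therefore be trivial. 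So $U \subseteq s(A)$; but $s(A)$ is an abelian variety and $U$ is smooth, connected, affine, so $U$ is a proper affine group scheme, hence finite, hence trivial. This forces $H = A$, so $G$ is an extension of $A$ by the torus $T$, i.e., a semi-abelian variety.

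The one step where the hypothesis $\car(k) = p > 0$ is genuinely indispensable is the construction of $s$ from the vanishing $p^n_U = 0$; the conclusion indeed fails in characteristic zero (the universal vector extension of an abelian variety is an anti-affine group with non-trivial unipotent part, to be discussed in \S\ref{subsec:saag}). Once $s$ is in hand, the remainder is a clean interplay between ``affine'' and ``anti-affine'': the two properties are mutually annihilating on any common quotient, which quickly kills $U$.
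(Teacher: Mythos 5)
Your proof is correct and takes essentially the same route as the paper's: the paper also reduces to an anti-affine extension $1 \to U \to H \to A \to 1$ with $U$ unipotent (via the fiber-product isomorphism (\ref{eqn:pf}), which is built from exactly the Chevalley plus $T \times U$ decomposition you invoke), then uses $p^n_U = 0$ to produce the section-up-to-isogeny $A \to H$ and plays ``affine'' against ``anti-affine'' to kill $U$. The only quibble is the citation for $s$ being a homomorphism: Proposition \ref{prop:alba}(1) concerns morphisms \emph{to} an abelian variety, not from one; the correct reference here is Proposition \ref{prop:rig} (applicable since $A$ is anti-affine), or simply the observation that $s \circ q = p^n_H$ is a homomorphism and $q$ is a faithfully flat epimorphism.
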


\begin{proof}
By Lemma \ref{lem:sabext}, we may assume $k$ algebraically closed.
Using the isomorphism (\ref{eqn:pf}), we may further assume that
$G$ lies in an exact sequence
\[ 1 \longrightarrow U \longrightarrow G 
\stackrel{q}{\longrightarrow} A \longrightarrow 1, \]
where $U$ is unipotent and $A$ is an abelian variety. We may then
choose a positive integer $n$ such that the power map $p^n_U$ 
is zero. Then $p^n_G$ factors through a morphism 
$f : G/U = A \to G$. Since $f$ is surjective and the square
\[ 
\xymatrix{
G \ar[r]^-{p^n_G} \ar[d]_q & G \ar[d]^{q} \\ 
A \ar[r]^-{p^n_A} & A \\}
\]
commutes, the composition $q \circ f : A \to A$ equals $p^n_A$ 
and hence is an isogeny. Let $B$ be the image of $f$; then
$B$ is an abelian subvariety of $G$, isogenous to $A$ via $q$.
It follows that $G = U \cdot B$ and $U \cap B$ is finite.
Thus, $U/U \cap B \cong G/B$ is affine (as a quotient of $U$)
and anti-affine (as a quotient of $G$), hence trivial.
Since $U$ is smooth and connected, it must be trivial as well.
\end{proof}

Returning to an arbitrary field, we now characterize those 
semi-abelian varieties that are anti-affine:

\begin{proposition}\label{prop:aasab}
Let $G$ be a semi-abelian variety, extension of an abelian
variety $A$ by a torus $T$ with character group $\Lambda$, 
and let $c: \Lambda \to \widehat{A}(k_s)$ be the map classifying 
this extension. Then $G$ is anti-affine if and only if
$c$ is injective.
\end{proposition}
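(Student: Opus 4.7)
The plan is to compute $\cO(G)$ explicitly via the decomposition (\ref{eqn:sh}) and to use vanishing of cohomology of nontrivial algebraically trivial line bundles on an abelian variety.

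First, I would reduce to the case $k = k_s$. By Lemma \ref{lem:aff}, the formation of $\cO(G)$ commutes with the flat base change $k \to k_s$, so $G$ is anti-affine if and only if $G_{k_s}$ is anti-affine; moreover, the map $c$ is injective if and only if its $\Gamma$-equivariant extension to $\Lambda \to \widehat{A}(k_s)$ is injective. Thus we may assume $k$ is separably closed and show that $\cO(G) = k$ if and only if $c$ is injective.

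Next, observe that $q : G \to A$ is an affine morphism, since it is a $T$-torsor and $T$ is affine (Proposition \ref{prop:des}). Therefore
\[
\cO(G) = H^0(A, q_*\cO_G) \cong \bigoplus_{\lambda \in \Lambda} H^0(A, c(\lambda))
\]
by (\ref{eqn:sh}). It remains to compute each summand. Here I would invoke the following standard fact about abelian varieties: for any algebraically trivial invertible sheaf $\cL$ on $A$, one has $H^0(A,\cL) = k$ if $\cL \cong \cO_A$ and $H^0(A,\cL) = 0$ otherwise. (This can be proved by the see-saw principle, or by pulling back under the multiplication $n_A$ and using that $n_A^*\cL \cong \cL^{\otimes n^2}$ together with the projection formula; see \cite[\S 8]{Mum}.) Under the identification $\widehat{A}(k) = \Pic^0(A)/\cong$, the condition $c(\lambda) \cong \cO_A$ is precisely $c(\lambda) = 0$, i.e., $\lambda \in \ker(c)$.

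Combining these, we obtain
\[
\cO(G) \cong \bigoplus_{\lambda \in \ker(c)} k.
\]
This $k$-algebra equals $k$ exactly when $\ker(c) = 0$, which gives the desired equivalence. The only nontrivial ingredient is the cohomology vanishing for nontrivial elements of $\widehat{A}(k)$; everything else is bookkeeping with the decomposition already established in (\ref{eqn:sh}).
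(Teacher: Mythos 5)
Your proof is correct and follows essentially the same route as the paper: reduce to a separably/algebraically closed field, use the decomposition (\ref{eqn:sh}) to get $\cO(G) \cong \bigoplus_{\lambda \in \Lambda} H^0(A,c(\lambda))$, and conclude from the vanishing of $H^0(A,\cL)$ for non-trivial $\cL \in \widehat{A}$. The only difference is that you cite this vanishing from \cite{Mum}, whereas the paper proves it on the spot by an intersection-number argument (a non-zero effective divisor representing an algebraically trivial class would have to meet some curve positively); note also the small slip in your parenthetical: for algebraically trivial $\cL$ one has $n_A^*\cL \cong \cL^{\otimes n}$, not $\cL^{\otimes n^2}$, though this does not affect your argument since the cited fact is standard.
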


\begin{proof}
Since $G$ is anti-affine if and only if $G_{\bar{k}}$ is anti-affine,
we may assume $k$ algebraically closed. Then the isomorphism
(\ref{eqn:sh}) yields 
\[ \cO(G) = H^0(A,q_*(\cO_G)) \cong 
\bigoplus_{\lambda \in \Lambda} H^0(A, c(\lambda)). \]
Thus, $G$ is anti-affine if and only if $H^0(A,c(\lambda)) = 0$
for all non-zero $\lambda$. So it suffices to check that 
$H^0(A,\cL) = 0$ for any non-zero $\cL \in \widehat{A}(k)$.

If $H^0(A,\cL) \neq 0$, then $\cL \cong \cO_A(D)$ for some non-zero 
effective divisor $D$ on $A$. Since $\cL$ is algebraically trivial, 
the intersection number $D \cdot C$ is zero for any irreducible 
curve $C$ on $A$. Now choose a smooth point $x \in D_\red(k)$; 
then there exists an irreducible curve $C$ through $x$ which
intersects $D_\red$ transversally at that point. Then $D \cdot C > 0$,
a contradiction.
\end{proof}

\begin{corollary}\label{cor:aasab}
The anti-affine semi-abelian varieties over a field $k$ are classified
by the pairs $(A,\Lambda)$, where $A$ is an abelian variety
over $k$ and $\Lambda \subset \widehat{A}(k_s)$ is a $\Gamma$-stable
free abelian subgroup of finite rank.
\end{corollary}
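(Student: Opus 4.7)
The plan is to deduce the corollary by combining Proposition \ref{prop:cla} (which classifies extensions of $A$ by $T$ in terms of $\Gamma$-equivariant homomorphisms $\Lambda \to \widehat{A}(k_s)$) with Proposition \ref{prop:aasab} (which singles out the anti-affine ones as those corresponding to \emph{injective} classifying maps).

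First I would set up the map in one direction. Starting from an anti-affine semi-abelian variety $G$, the Albanese homomorphism gives a canonical extension $1 \to T \to G \to A \to 1$, with $A = \Alb(G)$ and $T$ the largest subtorus of $G$, both uniquely determined by $G$ (see Remark \ref{rem:sab}(i)). Let $\Lambda_T := X^*(T)$ denote the character group of $T$, viewed as a finitely generated, torsion-free abelian group equipped with a continuous action of $\Gamma$. By Proposition \ref{prop:cla}, the extension class of $G$ corresponds to a $\Gamma$-equivariant homomorphism $c : \Lambda_T \to \widehat{A}(k_s)$, and by Proposition \ref{prop:aasab} the anti-affineness of $G$ is equivalent to $c$ being injective. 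So I would set $\Lambda := c(\Lambda_T) \subseteq \widehat{A}(k_s)$; this is a $\Gamma$-stable free abelian subgroup of finite rank, and the assignment $G \mapsto (A,\Lambda)$ is the desired map.

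Conversely, given a pair $(A, \Lambda)$ with $A$ an abelian variety over $k$ and $\Lambda \subset \widehat{A}(k_s)$ a $\Gamma$-stable free abelian subgroup of finite rank, I would use the anti-equivalence between algebraic groups of multiplicative type and finitely generated $\Gamma$-modules (recalled in \S\ref{subsec:cag}) to produce a torus $T$ over $k$ with character group $X^*(T) = \Lambda$. The inclusion $\Lambda \hookrightarrow \widehat{A}(k_s)$ is then a $\Gamma$-equivariant homomorphism, so by Proposition \ref{prop:cla} it classifies an extension $1 \to T \to G \to A \to 1$; by Proposition \ref{prop:aasab}, this $G$ is anti-affine since the classifying map is injective by construction.

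Finally, I would check that these two constructions are mutually inverse up to isomorphism: in one direction, the extension attached to $(A, \Lambda)$ has invariants $(\Alb = A, \Lambda' = \Lambda)$ by the way Proposition \ref{prop:cla} is set up; in the other, the pair attached to $G$ reconstructs the extension by the same proposition applied in reverse. No step looks to be a genuine obstacle here—the work has all been done in the two previous propositions, and what remains is essentially bookkeeping, the only mildly delicate point being to match the $\Gamma$-module structure on $\Lambda$ with the $k$-form of the torus $T$, which is precisely the content of the anti-equivalence recalled in \S\ref{subsec:cag}.
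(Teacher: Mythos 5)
Your proposal is correct and follows exactly the route the paper intends: the corollary is stated without proof as an immediate consequence of Proposition \ref{prop:cla} (the isomorphism $\Ext^1(A,T)\cong\Hom^\Gamma_{\gp}(\Lambda,\widehat{A}(k_s))$) together with Proposition \ref{prop:aasab} (anti-affineness $\Leftrightarrow$ injectivity of the classifying map), combined with the canonicity of $T$ and $A$ from Remark \ref{rem:sab}(i). Your write-up supplies precisely this bookkeeping, so there is nothing to add.
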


If the ground field $k$ is finite, then the group 
$\widehat{A}(k_s) = \widehat{A}(\bar{k})$
is the union of the subgroups $\widehat{A}(K)$, where $K$ runs over
all finite extensions of $k$. Since all these subgroups are finite, 
$\widehat{A}(k_s)$ is a torsion group. In view of Proposition
\ref{prop:aacarp} and Corollary \ref{cor:aasab}, this readily yields:

\begin{corollary}\label{cor:fin}
Any anti-affine group over a finite field is an abelian variety.
\end{corollary}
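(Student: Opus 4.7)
The plan is to combine the two preceding structural results, Proposition \ref{prop:aacarp} and Corollary \ref{cor:aasab}, with a simple finiteness observation about rational points of abelian varieties over finite fields.

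First I would note that since $k$ is finite, we have $\car(k) = p > 0$, so Proposition \ref{prop:aacarp} applies and tells us immediately that $G$ is a semi-abelian variety. Write it as an extension
\[ 1 \longrightarrow T \longrightarrow G \longrightarrow A \longrightarrow 1, \]
where $T$ is a torus with character group $\Lambda$ and $A$ is an abelian variety. By Proposition \ref{prop:aasab} (or equivalently Corollary \ref{cor:aasab}), the anti-affineness of $G$ gives us a $\Gamma$-equivariant \emph{injective} homomorphism $c : \Lambda \hookrightarrow \widehat{A}(k_s)$.

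The key input is that $\widehat{A}(k_s)$ is a torsion group when $k$ is finite: this is because $k_s = \bar{k}$ is the union of finite extensions $K/k$, each of which is finite, so $\widehat{A}(K)$ is finite (being the set of $K$-points of a scheme of finite type over the finite field $K$), and hence every element of $\widehat{A}(k_s) = \bigcup_K \widehat{A}(K)$ has finite order. On the other hand, $\Lambda$ is a free abelian group of finite rank, hence torsion-free. An injection of a torsion-free group into a torsion group forces $\Lambda = 0$, which means $T$ is the trivial torus. Therefore $G = A$ is an abelian variety.

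There is essentially no obstacle here, since the machinery has already been set up in Propositions \ref{prop:aacarp} and \ref{prop:aasab}; the only point to verify carefully is that $\widehat{A}(K)$ is finite for $K/k$ a finite extension of a finite field, which follows since $\widehat{A}$ is of finite type over $k$ and $K$ is a finite field. The whole argument is just the observation that a free abelian group of positive rank cannot embed into a torsion abelian group.
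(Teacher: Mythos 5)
Your proof is correct and follows exactly the paper's argument: apply Proposition \ref{prop:aacarp} to reduce to semi-abelian varieties, then use the injectivity of the classifying map $c:\Lambda\to\widehat{A}(k_s)$ from Proposition \ref{prop:aasab} together with the fact that $\widehat{A}(k_s)$ is torsion over a finite field to force $\Lambda=0$. Nothing to add.
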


Using the Rosenlicht decomposition (Theorem \ref{thm:ros}),
this yields in turn:

\begin{corollary}\label{cor:ari}
Let $G$ be a smooth connected algebraic group over a 
finite field $k$. Then there is a unique decomposition 
$G = L \cdot A$, where $L \trianglelefteq G$ is smooth, 
connected and affine, and $A \subseteq G$ is an abelian variety. 
Moreover, $L \cap A$ is finite.
\end{corollary}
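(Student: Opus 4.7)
My plan is to identify the decomposition with the Rosenlicht decomposition (Theorem \ref{thm:ros}): set $L := G_{\aff}$ and $A := G_{\ant}$. The key input specific to the finite-field setting is Corollary \ref{cor:fin}, which forces $G_{\ant}$ to be an abelian variety; combined with the universal properties of $G_{\aff}$ and $G_{\ant}$, this should give both existence and uniqueness.

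For existence, Theorem \ref{thm:ros}(1) supplies $G = G_{\aff} \cdot G_{\ant}$. Because $k$ is finite and therefore perfect, Theorem \ref{thm:chev} identifies $G_{\aff}$ with the largest smooth connected normal affine subgroup scheme $L(G)$, so $G_{\aff}$ is smooth, connected, normal, and affine. On the other hand, $G_{\ant}$ is anti-affine, hence by Corollary \ref{cor:fin} an abelian variety. Their intersection $G_{\aff} \cap G_{\ant}$ is a subgroup scheme that is simultaneously affine (being closed in $G_{\aff}$) and proper (being closed in the abelian variety $G_{\ant}$), and so is finite.

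For uniqueness, suppose $G = L' \cdot A'$ is another such decomposition. Since $A'$ is proper, $\cO(A') = k$, so $A'$ is anti-affine; by the maximality of $G_{\ant}$ (Proposition \ref{prop:antmax}), $A' \subseteq G_{\ant}$. The finiteness of $L' \cap A'$ yields an isomorphism $G/L' \cong A'/(L' \cap A')$, which is a quotient of the abelian variety $A'$ by a finite group scheme and is therefore itself an abelian variety; the minimality of $G_{\aff}$ from Theorem \ref{thm:che} then gives $G_{\aff} \subseteq L'$. Conversely, $L'$, being smooth connected normal affine, is contained in $L(G) = G_{\aff}$ by Lemma \ref{lem:ar} and Theorem \ref{thm:chev}. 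Hence $L' = G_{\aff}$, and then Proposition \ref{prop:small} (which characterizes $G_{\ant}$ as the smallest subgroup scheme $H$ with $G = H \cdot G_{\aff}$) yields $G_{\ant} \subseteq A'$, so $A' = G_{\ant}$.

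The whole argument is a straightforward assembly of results already proved in the text; no step is particularly delicate. The only place where the structure of the ground field enters crucially is the appeal to Corollary \ref{cor:fin}, whose proof in turn relies on the torsion nature of $\widehat{A}(\bar{k})$ when $\bar{k}$ is an algebraic closure of a finite field. That the existence of $L$ as a \emph{smooth} subgroup uses perfectness of $k$ (via Theorem \ref{thm:chev}) is incidental here, since finite fields are perfect.
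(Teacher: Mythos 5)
Your proof is correct and takes essentially the same route the paper intends: the corollary is stated there as an immediate consequence of Corollary \ref{cor:fin} combined with the Rosenlicht decomposition (Theorem \ref{thm:ros}), and your write-up simply supplies the details the paper leaves implicit, in particular the uniqueness argument via the minimality of $G_\aff$ and Proposition \ref{prop:small}. No gaps.
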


\begin{remark}\label{rem:nonsplit}
Returning to an abelian variety $A$ over an arbitrary field $k$, 
assume that there exists an invertible sheaf $\cL \in \widehat{A}(k)$
of infinite order (such a pair $(A,\cL)$ exists unless $k$ is 
algebraic over a finite field, as seen in Example \ref{ex:ell}). 
Consider the associated extension
\[ 1 \longrightarrow \bG_m \longrightarrow G \longrightarrow A 
\longrightarrow 1. \]
Then $G$ is anti-affine by Proposition \ref{prop:aasab}, 
and hence the above extension is not split. In fact, 
it does not split after pull-back by any isogeny $B \to A$:
otherwise, it would split after pull-back by $n_A : A \to A$
for some positive integer $n$, or equivalently, after push-forward
under the $n$th power map of $\bG_m$. But this push-forward
amounts to replacing $\cL$ with $\cL^{\otimes n}$, which is still
of infinite order.
\end{remark}

Next, we turn to the classification of anti-affine groups
in characteristic $0$. As a first step, we obtain:

\begin{lemma}\label{lem:aa}
Let $G$ be a connected commutative algebraic group over a 
field of characteristic $0$. Let $T \subseteq G$ be the 
largest torus and $U \subseteq G$ the largest unipotent
subgroup scheme. Then $G$ is anti-affine if and only if 
$G/U$ and $G/T$ are anti-affine.
\end{lemma}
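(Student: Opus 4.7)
The forward implication is immediate: any quotient of an anti-affine group is anti-affine since $\cO(G/H) = \cO(G)^H$, so $G/U$ and $G/T$ inherit the property from $G$. For the converse, I assume $G/U$ and $G/T$ are anti-affine and aim to show $G_\ant = G$.

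Form the quotient $\bar G := G/G_\ant$, which is affine by Theorem \ref{thm:affi}; it therefore suffices to show that $\bar G$ is trivial. Consider the subgroup scheme $H := G_\ant \cdot U \subseteq G$, well-defined by commutativity and Proposition \ref{prop:prod}. The quotient $G/H$ is simultaneously a quotient of $G/U$, hence anti-affine, and a quotient of $\bar G$, hence affine; a group that is both affine and anti-affine is trivial, so $H = G$. Applying the isomorphism theorem of Proposition \ref{prop:prod}(3) then gives $\bar G \cong U/(U \cap G_\ant)$, which is unipotent as a quotient of the unipotent group $U$.

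Running the identical argument with $T$ in place of $U$, using that $G/T$ is anti-affine, shows that $\bar G$ is also a quotient of the torus $T$, hence of multiplicative type. As recalled at the beginning of \S\ref{subsec:cag}, every homomorphism between a group of multiplicative type and a unipotent one is constant; applied to $\id_{\bar G}$, this forces $\bar G = 0$, whence $G = G_\ant$ is anti-affine.

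I do not anticipate a serious obstacle here; the crux is recognizing that $\bar G$ inherits opposing characters from $U$ and $T$, so that the orthogonality between the two classes forces triviality. The characteristic zero hypothesis is not essential for the structure of the argument itself, but it streamlines the setup (it guarantees that $U$ is a vector group, that $G_\aff$ decomposes as $T \times U$, and that every relevant group scheme is smooth).
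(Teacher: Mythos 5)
Your proof is correct and follows essentially the same route as the paper: show $G = G_\ant \cdot U$ (and likewise with $T$) by noting that $G/(G_\ant\cdot U)$ is both affine and anti-affine, deduce that $G/G_\ant$ is simultaneously unipotent and of multiplicative type, and conclude it is trivial. The only cosmetic difference is that the paper states $G/G_\ant$ is a torus where you say multiplicative type, which changes nothing.
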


\begin{proof}
If $G$ is anti-affine, then so are its quotients $G/U$
and $G/T$. Conversely, assume that $G/U$ and $G/T$
are anti-affine. Then $G/G_\ant \cdot U$ is anti-affine, and
also affine as a quotient of $G/G_\ant$. Thus, $G = G_\ant \cdot U$ 
and hence $G/G_\ant \cong U/U \cap G_\ant$ is unipotent. Likewise, 
one shows that $G/G_\ant$ is a torus. Thus, $G/G_\ant$ is trivial. 
\end{proof}

With the notation and assumptions of the above lemma,
$G/U$ is a semi-abelian variety and $G/T$ is a vector 
extension of $A$. So to complete the classification,
it remains to characterize those vector extensions 
that are anti-affine:

\begin{proposition}\label{prop:aavec}
Assume that $\car(k) = 0$. 
Let $G$ be an extension of an abelian variety $A$ by
a vector group $U$ and denote by $\gamma: H^1(A,\cO_A)^* \to U$
the linear map classifying the extension. Then $G$ is
anti-affine if and only if $\gamma$ is surjective.
\end{proposition}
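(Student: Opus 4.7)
The plan is to compute $\Hom(G,\bG_a)$ in terms of $\gamma$, and then use this to control the maximal affine quotient $G/G_\ant$. Write $V := H^1(A,\cO_A)^*$ and $W := \gamma(V) \subseteq U$.

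The central step will be to establish a natural bijection
\[
\Hom(G,\bG_a) \;\cong\; (U/W)^*.
\]
Given a homomorphism $f : G \to \bG_a$, its restriction $\mu := f|_U$ is a linear form on $U$, and the push-out of the given extension along $\mu$ yields an extension
\[
0 \longrightarrow \bG_a \longrightarrow G_\mu \longrightarrow A \longrightarrow 0
\]
whose class in $\Ext^1(A,\bG_a) \cong \Hom(V,\bG_a)$ equals $\mu \circ \gamma$, by functoriality in $U$ of the identification $\Ext^1(A,U) \cong \Hom(V,U)$ coming from (\ref{eqn:uni}). The universal property of the push-out, applied to $f$ and to $\id_{\bG_a}$, yields a retraction of the inclusion $\bG_a \hookrightarrow G_\mu$; hence this extension splits, $\mu \circ \gamma = 0$, and $\mu$ vanishes on $W$. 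Conversely, any linear form $\mu$ on $U$ vanishing on $W$ produces a trivial push-out $G_\mu \cong A \times \bG_a$, and then the composite $G \to G_\mu \twoheadrightarrow \bG_a$ is a homomorphism whose restriction to $U$ equals $\mu$. These two constructions are mutually inverse.

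If $\gamma$ is not surjective, then $(U/W)^* \neq 0$, so $G$ admits a non-zero homomorphism to $\bG_a$; in particular $\cO(G) \neq k$, and $G$ is not anti-affine. Suppose instead that $\gamma$ is surjective: then $\Hom(G,\bG_a) = 0$, and likewise $\Hom(G,\bG_m) = 0$, since any such homomorphism restricts trivially to $U$ (because $\Hom(\bG_a,\bG_m) = 0$ in characteristic zero) and therefore factors through the abelian variety $A = G/U$, on which every character is trivial.

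To conclude, $G/G_\ant$ is a smooth connected commutative affine algebraic group in characteristic zero, hence isomorphic to $\bG_m^r \times \bG_a^s$ by the structure theory of \S\ref{subsec:cag}; projecting to any factor would give a non-zero homomorphism $G \to \bG_m$ or $G \to \bG_a$, which forces $r = s = 0$. Therefore $G = G_\ant$, so $G$ is anti-affine. The main obstacle is the identification $\Hom(G,\bG_a) \cong (U/W)^*$, which rests on the functoriality in $U$ of the classification of vector extensions by $\Hom(V,-)$; once this is in hand, the remainder is bookkeeping with the structure of smooth connected commutative affine groups in characteristic zero.
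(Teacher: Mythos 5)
Your proof is correct, and it takes a genuinely different route from the paper's. The paper first proves that the universal vector extension $E(A)$ is itself anti-affine, by applying the Rosenlicht decomposition to $E(A)$ to show that $(E(A)_\ant)_\aff$ must equal all of $V = H^1(A,\cO_A)^*$ (otherwise the universal property of $E(A)$ would be violated), and then deduces the general case: $G$ anti-affine forces the classifying map $E(A) \to G$, whose cokernel is $U/\gamma(V)$, to be surjective (an anti-affine group has no nontrivial affine quotient), while conversely surjectivity of $\gamma$ exhibits $G$ as a quotient of the anti-affine group $E(A)$. You instead compute $\Hom(G,\bG_a) \cong (U/\gamma(V))^*$ directly via the functoriality of push-out (the key observation that a homomorphism $f : G \to \bG_a$ splits the push-out along $f|_U$, and conversely), combine it with $\Hom(G,\bG_m)=0$, and finish using the structure of the smooth connected commutative affine group $G/G_\ant$. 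Your route avoids Theorem \ref{thm:ros} entirely and yields the extra quantitative statement $\Hom(G,\bG_a) \cong (U/\gamma(V))^*$; the paper's route yields the stronger structural fact that $E(A)$ is anti-affine and that every anti-affine vector extension of $A$ is a quotient of it, which is what feeds into the classification in Theorem \ref{thm:aa}. One small point to tidy up: over a non-algebraically closed $k$ the torus factor of $G/G_\ant$ need not be split, so "$\bG_m^r \times \bG_a^s$" is not quite right as stated, and an anisotropic torus admits no nontrivial character over $k$; either base-change to $\bar{k}$ at the outset (harmless, since anti-affineness and the surjectivity of $\gamma$ are both insensitive to field extension) or run the character argument over $k_s$.
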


\begin{proof}
We have to show that the universal vector extension $E(A)$
is anti-affine, and every anti-affine vector extension 
(\ref{eqn:vec}) is a quotient of $E(A)$. 

Let $V := H^1(A,\cO_A)^*$; then $V = E(A)_\aff$ with the
notation of the Rosenlicht decomposition. By that decomposition,
$(E(A)_\ant)_\aff \subseteq V \cap E(A)_\aff$ and the quotient
is finite. Since $V \cap E(A)_\aff$  is a vector group,
we obtain the equality $(E(A)_\ant)_\aff = V \cap E(A)_\aff =: W$. 
In view of Remark \ref{rem:ros}, this yields a commutative 
diagram of extensions
\[
\xymatrix{
1 \ar[r] & W \ar[r] \ar[d]_{\iota} & E(A)_\ant \ar[r] \ar[d] &
A \ar[r] \ar[d]_{\id} & 1 \\
1 \ar[r] & V \ar[r] & E(A) \ar[r] & A \ar[r] & 1, \\}
\]
where $\iota$ is injective.
As a consequence, every extension of $A$ by $\bG_a$ is obtained
by pushout from the top extension, i.e., the resulting map
$W^* \to \Ext^1(A, \bG_a)$ is surjective. Since the bottom
extension is universal, it follows that $W = V$ and 
$E(A)_\ant = E(A)$.

Next, let $G$ be a vector extension of $A$. If $G$ is anti-affine,
then the classifying map $\gamma : E(A) \to G$ is surjective 
by Lemma \ref{lem:quotient}. The converse assertion follows from 
the fact that every quotient of an anti-affine group is anti-affine.
\end{proof}

As a consequence, \textit{the anti-affine vector extensions of $A$ 
are classified by the linear subspaces $V \subseteq H^1(A,\cO_A)$}
by assigning to any such extension, the image of the transpose
of its classifying map. Combining this result with the isomorphism 
(\ref{eqn:pf}), Lemma \ref{lem:aa} and Proposition \ref{prop:aasab}, 
we obtain the desired classification:

\begin{theorem}\label{thm:aa}
When $\car(k) > 0$, the anti-affine groups are classified
by the pairs $(A,\Lambda)$, where $A$ is an abelian variety
over $k$ and $\Lambda \subseteq \widehat{A}(k_s)$ is a $\Gamma$-stable
free abelian subgroup of finite rank.

When $\car(k) = 0$, the anti-affine groups are classified
by the triples $(A,\Lambda,V)$, where $(A,\Lambda)$
is as above and $V \subseteq H^1(A,\cO_A)$ is a linear subspace.
\end{theorem}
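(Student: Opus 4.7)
The plan is to assemble the classification from the preparatory results already established in \S\ref{subsec:saag}, treating the two characteristic regimes separately. In both cases, an anti-affine group $G$ is automatically smooth, connected and commutative by Lemma~\ref{lem:anti} and Proposition~\ref{prop:rig}, so I may freely use the decomposition machinery developed for such groups. The only real content is to record how the structural invariants of $G$ assemble into the data $(A,\Lambda)$, respectively $(A,\Lambda,V)$, and to check that the assignment is a bijection.

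In characteristic $p>0$, I first invoke Proposition~\ref{prop:aacarp} to see that every anti-affine $G$ is a semi-abelian variety, so that Corollary~\ref{cor:aasab} applies verbatim and delivers the claimed bijection with pairs $(A,\Lambda)$. Here $A=\Alb(G)$ and $\Lambda$ is the character group of the maximal subtorus $T\subseteq G$, embedded in $\widehat{A}(k_s)$ by the classifying map of Proposition~\ref{prop:aasab}, which is injective precisely because $G$ is anti-affine.

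In characteristic $0$, I start with a connected commutative algebraic group $G$, its maximal torus $T$, and its maximal unipotent subgroup scheme $U$. The isomorphism (\ref{eqn:pf}) identifies $G$ with the fibered product $G/U\times_A G/T$, where $A=\Alb(G)$, $G/U$ is a semi-abelian variety, and $G/T$ is a vector extension of $A$. By Lemma~\ref{lem:aa}, $G$ is anti-affine iff both factors are. So, if $G$ is anti-affine, Corollary~\ref{cor:aasab} attaches to $G/U$ a pair $(A,\Lambda)$ with $\Lambda\subseteq\widehat{A}(k_s)$ a $\Gamma$-stable free abelian subgroup of finite rank, while Proposition~\ref{prop:aavec} attaches to $G/T$ a surjective classifying map $\gamma:H^1(A,\cO_A)^*\to U$; I then take $V:=\mathrm{Im}(\gamma^t)\subseteq H^1(A,\cO_A)$, where $\gamma^t$ is the transpose of $\gamma$, which is well-defined since a surjection of vector groups is determined up to isomorphism of the target by the image of its transpose.

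Conversely, given a triple $(A,\Lambda,V)$ of the allowed form, Corollary~\ref{cor:aasab} produces an anti-affine semi-abelian variety $G_1$ with Albanese $A$ and character lattice $\Lambda$, and Proposition~\ref{prop:aavec} produces an anti-affine vector extension $G_2$ of $A$ whose classifying map has transpose with image $V$ (take $\gamma$ to be any surjection $H^1(A,\cO_A)^*\to V^*$; different choices give isomorphic extensions). The fibered product $G:=G_1\times_A G_2$ is then commutative, smooth and connected with maximal torus inherited from $G_1$, maximal unipotent subgroup inherited from $G_2$, and it is anti-affine by Lemma~\ref{lem:aa}. These two constructions invert each other because $G_1\cong G/U$, $G_2\cong G/T$ and $G\cong G/U\times_A G/T$ by (\ref{eqn:pf}). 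The only step that requires any care is the well-definedness and injectivity of $V\mapsto G_2$, which I expect to be the mildest technical point: it reduces to the elementary fact that a surjection between finite-dimensional vector groups is determined up to isomorphism of the source by the kernel, dually by the image of the transpose.
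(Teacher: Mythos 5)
Your proposal is correct and follows essentially the same route as the paper: in positive characteristic, Proposition~\ref{prop:aacarp} reduces to Corollary~\ref{cor:aasab}, and in characteristic~$0$ the classification is assembled from the fibered-product decomposition (\ref{eqn:pf}), Lemma~\ref{lem:aa}, Corollary~\ref{cor:aasab} and Proposition~\ref{prop:aavec}, with $V$ the image of the transpose of the classifying map. You have merely spelled out the bijection more explicitly than the paper's one-line synthesis.
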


\subsection{Commutative algebraic groups (continued)}
\label{subsec:cagc}

We first show that every group as in the title has a 
``semi-abelian radical'': 

\begin{lemma}\label{lem:sab}
Let $G$ be a commutative algebraic group.

\begin{enumerate}

\item $G$ has a largest semi-abelian subvariety that we will
denote by $G_\sab$. Moreover, $(G/G_\sab)_\sab$ is trivial.

\item The formation of $G_\sab$ commutes with algebraic field
extensions.

\end{enumerate}

\end{lemma}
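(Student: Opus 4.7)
The plan for (1) is to mirror the strategy of Lemma~\ref{lem:ar}(1). Given two semi-abelian subvarieties $H_1, H_2 \subseteq G$, the product $H_1 \cdot H_2$ will again be semi-abelian: it is a subgroup scheme by Proposition~\ref{prop:prod}, smooth and connected as the image of the multiplication map from $H_1 \times H_2$, and semi-abelian because $H_1 \times H_2$ is (by Corollary~\ref{cor:extsab}(2)) and semi-abelianness is preserved under quotients (Corollary~\ref{cor:extsab}(1)). Choosing $H_1$ of maximal dimension among semi-abelian subvarieties, any other semi-abelian $H_2$ satisfies $H_1 \cdot H_2 = H_1$ by dimension count (both are smooth, connected, of the same dimension), so $H_2 \subseteq H_1 =: G_\sab$.

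For the second half of (1), I would let $H := (G/G_\sab)_\sab$ (just constructed) and take its preimage $\tilde H$ under $q: G \to G/G_\sab$. The short exact sequence $1 \to G_\sab \to \tilde H \to H \to 1$ has both outer terms smooth, connected and semi-abelian, so $\tilde H$ inherits these properties by Remark~\ref{rem:tower} and Corollary~\ref{cor:extsab}(2). Maximality of $G_\sab$ forces $\tilde H = G_\sab$, hence $H$ is trivial.

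For (2), given an algebraic field extension $K/k$, the inclusion $(G_\sab)_K \subseteq (G_K)_\sab$ is immediate from Remark~\ref{rem:sab}(iii). For the reverse inclusion, I would factor $K/k$ through the separable closure $K^{\mathrm{sep}}$ of $k$ in $K$ and treat the separable extension $K^{\mathrm{sep}}/k$ and the purely inseparable extension $K/K^{\mathrm{sep}}$ in turn. The separable step proceeds by Galois descent exactly as in the proof of Lemma~\ref{lem:ar}(3): reducing to a finite Galois subextension, the Galois group permutes largest semi-abelian subvarieties of the base change by uniqueness, so $(G_{K^{\mathrm{sep}}})_\sab$ is Galois-stable and descends to a subgroup scheme $M \subseteq G$; Remark~\ref{rem:sav} ensures $M$ is semi-abelian, whence $M \subseteq G_\sab$.

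The main obstacle will be the purely inseparable step, where Galois descent is unavailable. For a finite purely inseparable $K/k$, set $H' := (G_K)_\sab$ and use Lemma~\ref{lem:ray} to produce a subgroup scheme $H \subseteq G$ with $H_K \supseteq H'$ and $H_K/H'$ finite. Since $\Spec(K) \to \Spec(k)$ is a universal homeomorphism, one has $(H^0)_K = (H_K)^0$, which still contains $H'$ with finite quotient. It then suffices to prove $H^0$ is semi-abelian, since then $H^0 \subseteq G_\sab$ and $(G_\sab)_K \supseteq (H^0)_K \supseteq H' = (G_K)_\sab$. To establish this, I would pass to $\bar k$ (which is perfect) and apply Proposition~\ref{prop:line}: any morphism $f: \bA^1_{\bar k} \to (H^0)_{\bar k}$ becomes constant after projection to the finite scheme $(H^0)_{\bar k}/H'_{\bar k}$, so factors through a fiber; translating by a $\bar k$-rational point of that fiber exhibits $f$ as a map into the semi-abelian variety $H'_{\bar k}$, which is constant by Proposition~\ref{prop:line} again. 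Hence $(H^0)_{\bar k}$ is semi-abelian by the same proposition, and then $H^0$ is semi-abelian over $k$ by Remark~\ref{rem:sav}.
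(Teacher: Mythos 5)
Your part (1) and the separable (hence the entire characteristic-zero) half of part (2) are correct and follow the paper's intended route: the paper proves (1) precisely by the product argument of Lemma~\ref{lem:ar}, and treats characteristic $0$ by Galois descent. The genuine gap is in your purely inseparable step. The subgroup scheme $H$ produced by Lemma~\ref{lem:ray} is cut out by the ideal generated by $p^n$th powers of local equations of $H'$; consequently $H_K$ contains $\Ker(F^n_{G_K/K})$, and hence $H^0_K \supseteq H' \cdot \Ker(F^n_{G_K/K})$ is non-reduced whenever $\Ker(F_{G_K/K}) \not\subseteq H'$, i.e.\ whenever $\Lie(H') \neq \Lie(G_K)$ --- which is the typical situation. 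So $H^0$ is in general not smooth, hence \emph{cannot} be a semi-abelian variety (Remark~\ref{rem:sab}(ii)), and Proposition~\ref{prop:line}, whose hypothesis is that the group be smooth and connected, cannot be invoked to deduce that $(H^0)_{\bar k}$ is semi-abelian from the constancy of morphisms $\bA^1_{\bar k}\to (H^0)_{\bar k}$. Passing to $(H^0)_{\red}$ does not repair this, since over the imperfect field $k$ the reduced subscheme need not be a subgroup scheme (Example~\ref{ex:red}). Your use of Lemma~\ref{lem:ray} thus leaves the inclusion $(G_K)_\sab \subseteq (G_\sab)_K$ unproved for purely inseparable $K/k$; note that the paper only ever applies that lemma to properties (affineness, properness of the quotient) that survive non-reducedness.

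The paper sidesteps inseparable descent entirely in characteristic $p>0$: by Proposition~\ref{prop:aacarp} one has $G_\ant \subseteq G_\sab$, and Corollary~\ref{cor:extsab} gives $G_\sab/G_\ant = (G/G_\ant)_\sab$, which, $G/G_\ant$ being affine and commutative, is just the largest subtorus of $G/G_\ant$. Since the formation of $G_\ant$ (the kernel of the affinization morphism) and of the largest subtorus both commute with arbitrary field extensions, so does that of $G_\sab$. Replacing your purely inseparable step by this argument closes the gap, and in fact yields commutation with all field extensions in positive characteristic, not only algebraic ones.
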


\begin{proof}
(1) This follows from the stability of semi-abelian varieties under
taking quotients and extensions (Corollary \ref{cor:extsab}) 
by arguing as in the proof of Lemma \ref{lem:ar}.

(2) When $\car(k) = 0$, the statement is obtained by Galois descent
as in the proof of Lemma \ref{lem:ar} again. When $\car(k) = p > 0$,
we have $G_\ant \subseteq G_\sab$ by Proposition \ref{prop:aacarp};
also, $G_\sab/G_\ant = (G/G_\ant)_\sab$ as follows from Corollary 
\ref{cor:extsab} again. Since $G/G_\ant$ is affine, 
$(G/G_\ant)_\sab$ is just its largest subtorus. As the formations
of $G_\ant$ and of the largest subtorus commute with field
extensions, the assertion follows.
\end{proof}

Next, we characterize those commutative algebraic groups that
have a trivial semi-abelian radical:

\begin{lemma}\label{lem:fexp}
If $\car(k)= p > 0$, then the following conditions are equivalent 
for a commutative algebraic group $G$:

\begin{enumerate}

\item $G_\sab$ is trivial.

\item $G$ is affine and its largest subgroup of multiplicative
type is finite.

\item The multiplication map $n_G$ is zero for some positive 
integer $n$.

\end{enumerate}

If in addition $G$ is smooth and connected, these conditions 
are equivalent to $G$ being unipotent.
\end{lemma}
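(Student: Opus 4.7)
The plan is to establish the main equivalence through the cycle (3) $\Rightarrow$ (1) $\Rightarrow$ (2) $\Rightarrow$ (3), and then handle the unipotency equivalence as a separate step. For (3) $\Rightarrow$ (1), if $n_G = 0$ for some positive integer $n$, then the restriction $n_{G_\sab}$ is zero; but $G_\sab$ is a semi-abelian variety, on which multiplication by $n$ is an isogeny with finite kernel (Remark \ref{rem:sab}(ii)), so $G_\sab = \Ker(n_{G_\sab})$ is both finite and smooth connected, hence trivial.

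For (1) $\Rightarrow$ (2), suppose $G_\sab$ is trivial. By Proposition \ref{prop:aacarp}, every anti-affine group in characteristic $p > 0$ is semi-abelian, so $G_\ant \subseteq G_\sab = 0$; by Theorem \ref{thm:affi}, triviality of $G_\ant$ forces $G$ to be affine. Let $M \subseteq G$ be the largest subgroup of multiplicative type; its largest subtorus $M^0_\red$ is a subtorus of $G$, hence contained in $G_\sab = 0$, so $M^0$ is infinitesimal and $M/M^0$ is \'etale of finite type, whence $M$ is finite. For (2) $\Rightarrow$ (3), Theorem \ref{thm:cag}(1) gives an exact sequence $1 \to M \to G \to U \to 1$ with $U$ commutative unipotent; finiteness of $M$ yields some $n$ with $n_M = 0$, and the structure of commutative unipotent groups in characteristic $p > 0$ yields some $m$ with $p^m_U = 0$, so $p^m_G$ takes values in $M$ and $(np^m)_G = 0$.

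For the closing equivalence under the smooth connected hypothesis, a commutative unipotent smooth connected $G$ is affine and admits no nontrivial subgroup of multiplicative type (any such inclusion is a homomorphism between groups of different classes, hence constant), so (2) holds with $M = 0$. Conversely, assume (2) and pass to $\bar{k}$: Theorem \ref{thm:cag}(2) decomposes $G_{\bar{k}} = M_{\bar{k}} \times U_{\bar{k}}$, where one uses that formation of $M$ commutes with field extensions (if $N \subseteq G_K$ is of multiplicative type and contains $M_K$, then $N/M_K$ embeds into the unipotent quotient $G_K/M_K$ and is itself of multiplicative type, hence trivial). Connectedness and smoothness of $G_{\bar{k}}$ force the direct factor $M_{\bar{k}}$ to be smooth and connected, hence a torus; finiteness of $M$ then forces $M_{\bar{k}}$ to be zero-dimensional, so $M = 0$ and $G = U$ is unipotent.

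The main obstacle is this final step: the structural splitting from Theorem \ref{thm:cag}(2) holds only over perfect fields, so one must descend through $\bar{k}$ and verify that formation of the largest multiplicative-type subgroup is stable under field extensions. The other implications reduce to direct manipulation of the exact sequences and structural results already assembled in the excerpt.
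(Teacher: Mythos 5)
Your proof is correct and follows essentially the same route as the paper: the cycle of implications uses Proposition \ref{prop:aacarp} for (1)$\Rightarrow$(2), the exact sequence of Theorem \ref{thm:cag} together with the finite exponent of $M$ and $U$ for (2)$\Rightarrow$(3), and the fact that $n_H$ is an isogeny on a semi-abelian variety for (3)$\Rightarrow$(1). The only difference is that you spell out the final unipotency step (descent from $\bar{k}$ via Theorem \ref{thm:cag}(2) and the compatibility of $M$ with field extensions) in more detail than the paper, which simply invokes Theorem \ref{thm:cag} again; your elaboration is accurate.
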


\begin{proof}
(1)$\Rightarrow$(2) Note that $G_\ant$ is trivial in view of 
Proposition \ref{prop:aacarp}. Thus, $G$ is affine. Also, $G$ 
contains no non-trivial torus; this yields the assertion.

(2)$\Rightarrow$(3) By Theorem \ref{thm:cag},
we have an exact sequence 
\[ 1 \longrightarrow M \longrightarrow G \longrightarrow U 
\longrightarrow 1, \]
where $M$ is of multiplicative type and $U$ is unipotent.
Then $M$ is finite, and hence killed by $n_M$ for some
positive integer $n$. Also, recall that $U$ is killed by 
$p^m_U$ for some $m$. It follows that $np^m_G = 0$.

(3)$\Rightarrow$(1) This follows from the fact that
$n_H \neq 0$ for any non-trivial semi-abelian variety $H$
and any $n \neq 0$.

When $G$ is smooth and connected, the condition (2) implies
that $G$ is unipotent, in view of Theorem \ref{thm:cag}
again. Conversely, if $G$ is unipotent, then it clearly
satisfies the condition (2).
\end{proof}

We say that a commutative algebraic group $G$ 
\textit{has finite exponent} if it satisfies the above 
condition (3). When $\car(k) = 0$, this just means that
$G$ is finite; when $\car(k) > 0$, this amounts to $G$
being an extension of a unipotent algebraic group by a finite
group scheme of multiplicative type.

\begin{theorem}\label{thm:cagc}
Assume that $\car(k) = p > 0$ and consider a commutative
algebraic group $G$ and its largest semi-abelian subvariety
$G_\sab$. 

\begin{enumerate}

\item The quotient $G/G_\sab$ has finite exponent.

\item There exists a subgroup scheme $H \subseteq G$ 
such that $G = G_\sab \cdot H$ and $G_\sab \cap H$ is finite. 

\item If $G$ is smooth and connected, then $G/G_\sab$ 
is unipotent. If in addition $k$ is perfect, then we may 
take for $H$ the largest smooth connected unipotent subgroup 
scheme of $G$.

\end{enumerate}

\end{theorem}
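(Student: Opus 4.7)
The plan is to establish the three parts in sequence, relying on Lemmas \ref{lem:sab} and \ref{lem:fexp}. For (1), set $Q := G/G_\sab$. By Lemma \ref{lem:sab}(1), $Q_\sab$ is trivial, so Lemma \ref{lem:fexp} produces a positive integer $n$ with $n_Q = 0$; in other words, $Q$ has finite exponent.

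For (2), I will exhibit $H$ explicitly using this $n$. Since $n_Q = 0$, the endomorphism $n_G$ of $G$ factors as a homomorphism $G \to G_\sab$. Its restriction to $G_\sab$ is $n_{G_\sab}$, which by Remark \ref{rem:sab}(ii) is an isogeny, hence surjective with finite kernel $G_\sab[n]$. It follows that $n_G : G \to G_\sab$ is itself surjective; set $H := \Ker(n_G)$. The induced map $\bar{n} : G/H \to G_\sab$ is then an isomorphism, and the composition
\[
G_\sab \hookrightarrow G \longrightarrow G/H \stackrel{\bar n}{\longrightarrow} G_\sab
\]
equals $n_{G_\sab}$ and hence is surjective. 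Therefore $G_\sab \to G/H$ is surjective, which is equivalent to $G = G_\sab \cdot H$ at the scheme level. Moreover $G_\sab \cap H = G_\sab[n]$ is finite. The key point of this argument, which sidesteps the usual passage from $\bar k$-point equalities (note that $Q$ need not be smooth), is the factorization of $n_G$ through an isomorphism $G/H \cong G_\sab$, reducing the surjectivity of $G_\sab \to G/H$ to that of the isogeny $n_{G_\sab}$.

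For (3), the unipotence of $G/G_\sab$ is immediate from the final clause of Lemma \ref{lem:fexp}, since $G/G_\sab$ is smooth, connected, and has trivial semi-abelian radical. For the second assertion, assume $k$ perfect. I will identify the largest smooth connected unipotent subgroup scheme of $G$ with the unipotent direct factor of $G_\aff$. Theorem \ref{thm:chev} produces $G_\aff$ as the kernel of the Albanese homomorphism of $G$, smooth, connected, and affine, with abelian variety quotient; Theorem \ref{thm:cag}(2) then yields a decomposition $G_\aff = T \times U_0$, where $T$ is a torus and $U_0$ is smooth connected unipotent (both factors inherit smoothness and connectedness from $G_\aff$). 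Any smooth connected unipotent subgroup $V \subseteq G$ is affine, hence $V \subseteq G_\aff$; its projection to $T$ vanishes, since no nonzero homomorphism exists from a unipotent group to a group of multiplicative type. Therefore $V \subseteq U_0$, and $U_0$ is the largest such subgroup.

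It remains to verify that $H := U_0$ satisfies both conditions. The inclusion $T \subseteq G_\sab$ yields $G_\aff = T \cdot U_0 \subseteq G_\sab \cdot U_0$; and $G = G_\sab \cdot G_\aff$, because the quotient $G/(G_\sab \cdot G_\aff)$ is simultaneously a smooth connected quotient of the unipotent group $Q$ and of the abelian variety $G/G_\aff$, hence both affine and proper, hence finite and thus trivial. Thus $G = G_\sab \cdot U_0$. For the intersection, $G_\sab \cap U_0$ is a unipotent subgroup scheme of the semi-abelian variety $G_\sab$: its image in the abelian quotient $G_\sab/T$ is simultaneously affine and proper, hence finite, while its kernel $T \cap U_0$ is simultaneously unipotent and of multiplicative type, hence trivial. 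So $G_\sab \cap U_0$ is finite. The main technical constraint throughout is perfectness of $k$ in Theorem \ref{thm:cag}(2), which delivers the smooth-connected direct-product decomposition $G_\aff = T \times U_0$ and thereby confines the sharper statement about $H$ to the perfect case.
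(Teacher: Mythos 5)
Your proof is correct, and parts (1) and (2) follow the paper's argument exactly (the paper also takes $H = \Ker(n_G)$ after observing that $n_G$ factors through $G_\sab$ and restricts there to an isogeny; you merely spell out the surjectivity of $G_\sab \to G/H$ via the isomorphism $G/H \cong G_\sab$, which is a welcome bit of care since $G/G_\sab$ need not be smooth). The only genuine divergence is in the second assertion of (3). The paper invokes the Rosenlicht decomposition $G = G_\aff \cdot G_\ant$, notes that $T \cdot G_\ant$ is semi-abelian while $G/(T \cdot G_\ant)$ is a quotient of $U$ and hence contains no semi-abelian subvariety, and thereby \emph{identifies} $G_\sab = T \cdot G_\ant$ before concluding $G = G_\sab \cdot U$. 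You avoid $G_\ant$ entirely: you prove $G = G_\sab \cdot G_\aff$ directly (the quotient being simultaneously a smooth connected quotient of the unipotent group $G/G_\sab$ and of the abelian variety $G/G_\aff$, hence trivial) and then absorb $T$ into $G_\sab$. Both routes rest on the same two pillars — Chevalley's theorem over a perfect field and the splitting $G_\aff = T \times U$ of Theorem \ref{thm:cag}(2) — so the difference is one of bookkeeping rather than substance; the paper's version has the side benefit of exhibiting $G_\sab$ explicitly as $T \cdot G_\ant$, whereas yours gives a slightly more self-contained finiteness argument for $G_\sab \cap U$ and an explicit verification that $U$ really is the largest smooth connected unipotent subgroup scheme of $G$, a point the paper leaves implicit.
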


\begin{proof}
(1) This follows from Lemmas \ref{lem:sab} and \ref{lem:fexp}.

(2) By (1), we may choose $n > 0$ such that 
$n_{G/G_\sab} = 0$. Then $n_G$ factors through $G_\sab$;
also, recall that $n_{G_\sab}$ is an isogeny. It follows
that $G = G_\sab \cdot \Ker(n_G)$ and 
$G_\sab \cap \Ker(n_G)$ is finite.

(3) The first assertion is a consequence of Lemma 
\ref{lem:fexp} again. 

Assume $k$ perfect and consider the Rosenlicht decomposition 
$G = G_\aff \cdot G_\ant$. Then $G_\aff = T \times U$, 
where $T$ is a torus and $U$ a smooth connected unipotent group. 
Thus, $G = T \cdot U \cdot G_\ant$. Also, $T \cdot G_\ant$ 
is a semi-abelian subvariety of $G$; moreover, 
$G/T \cdot G_\ant$ is isomorphic to a quotient of $U$,
and hence contains no semi-abelian variety. Thus, 
$T \cdot G_\ant = G_\sab$ and hence $G = G_\sab \cdot U$. 
Moreover, $G_\sab \cap U$ is clearly finite.
\end{proof}

In analogy with the structure of commutative affine algebraic
groups (Theorem \ref{thm:cag}), note that the class of commutative
algebraic groups of finite exponent is stable under taking
subgroup schemes, quotients, commutative group extensions, and
extensions of the ground field. Moreover, when $G$ is a semi-abelian
variety and $H$ a commutative algebraic group of finite exponent,
every homomorphism $\varphi: G \to H$ is constant, and every 
homomorphism $\psi  : H \to G$ factors through a finite subgroup
scheme of $G$.

Also, note the following analogue of Lemma \ref{lem:quotient},
which yields that the assignment $G \mapsto G_\sab$ is close
to being exact: 

\begin{lemma}\label{lem:ex}
Assume that $\car(k) = p$. Let $G$ be a commutative algebraic
group and $H \subseteq G$ a subgroup scheme. 

\begin{enumerate}

\item $H_\sab \subseteq G_\sab \cap H$ and the quotient is finite.

\item The quotient map $q : G \to G/H$ yields an isomorphism
\[ G_\sab/G_\sab \cap H \stackrel{\cong}{\longrightarrow}
(G/H)_\sab. \]

\end{enumerate}

\end{lemma}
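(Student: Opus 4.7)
The plan is as follows. For part (1), the inclusion $H_{\sab} \subseteq G_{\sab} \cap H$ is immediate: $H_{\sab}$ is a semi-abelian subvariety of $H$, hence also of $G$, so it lands in both $H$ and $G_{\sab}$. To show that the quotient is finite, I would invoke Remark \ref{rem:line}(iii): since $G_{\sab} \cap H$ is a subgroup scheme of the semi-abelian variety $G_{\sab}$, its reduced neutral component $(G_{\sab} \cap H)^0_{\red}$ is itself a semi-abelian variety. This variety is contained in $H$, hence in $H_{\sab}$, while conversely $H_{\sab}$, being smooth and connected, sits inside $(G_{\sab} \cap H)^0_{\red}$. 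The equality $H_{\sab} = (G_{\sab} \cap H)^0_{\red}$ then forces $(G_{\sab} \cap H)/H_{\sab}$ to be finite: quotient first by the neutral component (a finite \'etale quotient) and then collapse the infinitesimal piece $(G_{\sab} \cap H)^0/(G_{\sab} \cap H)^0_{\red}$.

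For part (2), write $q : G \to G/H$ for the quotient map. One inclusion is routine: by Proposition \ref{prop:prod}(3) the image $q(G_{\sab})$ equals $G_{\sab} \cdot H/H \cong G_{\sab}/(G_{\sab} \cap H)$, which is semi-abelian by Corollary \ref{cor:extsab}(1), so $G_{\sab}/(G_{\sab} \cap H)$ embeds into $(G/H)_{\sab}$. The reverse inclusion is the heart of the matter. I would introduce $S := q^{-1}\bigl((G/H)_{\sab}\bigr)$, a subgroup scheme of $G$ containing $H$ with $S/H = (G/H)_{\sab}$ semi-abelian. The key trick is to apply Theorem \ref{thm:cagc}(1) to $S$ itself: the quotient $S/S_{\sab}$ has finite exponent, hence so does its further quotient $S/(S_{\sab} \cdot H)$. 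But this same group is also $(S/H)/(S_{\sab} H/H)$, a quotient of the semi-abelian variety $S/H$, hence semi-abelian (Corollary \ref{cor:extsab}(1) again). Any semi-abelian variety $A$ of finite exponent must be trivial: indeed, $n_A$ is an isogeny for $n\neq 0$ by Remark \ref{rem:sab}(ii), so $n_A = 0$ forces $A$ to be both finite and smooth connected. Hence $S = S_{\sab} \cdot H$, and since $S_{\sab}$ is a semi-abelian subvariety of $G$ it sits inside $G_{\sab}$, yielding $(G/H)_{\sab} = S/H \subseteq G_{\sab} \cdot H/H = G_{\sab}/(G_{\sab} \cap H)$.

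Combining the two inclusions, the natural homomorphism $G_{\sab}/(G_{\sab} \cap H) \to (G/H)_{\sab}$ (a closed immersion by Proposition \ref{prop:fact}) is surjective and thus an isomorphism of smooth connected algebraic groups. The main obstacle is the reverse inclusion in (2); the bootstrapping, applying Theorem \ref{thm:cagc} to the auxiliary group $S$ rather than to $G$, is what makes the semi-abelian-versus-finite-exponent dichotomy bite in the right place and avoids any circular use of the statement one is trying to prove.
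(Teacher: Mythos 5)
Your argument is correct and follows essentially the same route as the paper: both parts hinge on Remark \ref{rem:line}(iii) / Corollary \ref{cor:extsab} for the inclusions, and part (2) turns on the dichotomy that a quotient which is simultaneously semi-abelian and of finite exponent must be trivial, via Theorem \ref{thm:cagc}(1). The only (cosmetic) difference is that the paper applies Theorem \ref{thm:cagc} to $G$ itself and identifies the cokernel of $\iota : G_\sab/G_\sab\cap H \to G/H$ with $G/G_\sab\cdot H$, whereas you apply it to the preimage $S = q^{-1}\bigl((G/H)_\sab\bigr)$; the underlying mechanism is identical.
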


\begin{proof}
(1) This follows readily from Corollary \ref{cor:extsab} and 
Lemma \ref{lem:sab}.

(2) Observe that $q$ restricts to a closed immersion
of group schemes 
\[ \iota : G_\sab/G_\sab \cap H \longrightarrow G/H \]
that we will regard as an inclusion. 
Also, by Theorem \ref{thm:cagc}, the quotient 
$G/G_\sab \cdot H$ has finite exponent. Since 
$G/G_\sab \cdot H \cong (G/H)/ (G_\sab \cdot H/H)$
is isomorphic to the cokernel of $\iota$, it follows
that $G_\sab/G_\sab \cap H \supseteq (G/H)_\sab$.
But the opposite inclusion holds by Corollary \ref{cor:extsab} 
again; this yields the assertion.  
\end{proof}

\begin{remark}\label{rem:cagc}
When $\car(k) = 0$, the quotient $G/G_\sab$ is not necessarily
unipotent for a smooth connected commutative group $G$. This
happens for example when $G$ is the universal vector extension 
of a non-trivial abelian variety; then $G_\sab$ is trivial. 

Still, the structure of such a group $G$ reduces somehow
to that of its anti-affine part. Specifically, one may check 
that there exists a subtorus $T \subseteq G$ and a vector 
subgroup $U \subseteq G$ such that the multiplication map
$T \times U \times G_\ant \to G$ is an isogeny; moreover,
$T$ is uniquely determined up to isogeny, and $U$ is uniquely
determined.
\end{remark}

As an application of Theorem \ref{thm:cagc}, we present 
a remedy to (or a measure of) the failure of Chevalley's 
structure theorem over imperfect fields. To state it, 
we need the following:

\begin{definition}\label{def:pa}
A smooth connected algebraic group $G$ is called a
\textit{pseudo-abelian variety} if it does not contain any 
non-trivial smooth connected affine normal subgroup scheme.
\end{definition}

\begin{remarks}\label{rem:pa}
(i) By Lemma \ref{lem:ar}, every smooth connected
algebraic group $G$ lies in a unique exact sequence
\[ 1 \longrightarrow L \longrightarrow G \longrightarrow Q
\longrightarrow 1, \]
where $L$ is smooth, connected and linear, and $Q$ is a 
pseudo-abelian variety.

(ii) If $k$ is perfect, then every pseudo-abelian variety
is just an abelian variety, as follows from Theorem 
\ref{thm:chev}. But there exist non-proper pseudo-abelian
varieties over any imperfect field, as shown by Example 
\ref{ex:ray}.
\end{remarks}

\begin{corollary}\label{cor:pa}
Let $G$ be a pseudo-abelian variety. Then $G$ is commutative 
and lies in a unique exact sequence
\begin{equation}\label{eqn:pa}
1 \longrightarrow A \longrightarrow G \longrightarrow U 
\longrightarrow 1, 
\end{equation}
where $A$ is an abelian variety and $U$ is unipotent.
\end{corollary}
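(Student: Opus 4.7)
The plan is to split the proof into three parts: commutativity, existence of the exact sequence, and uniqueness.

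First, to show that $G$ is commutative, I would consider the derived subgroup $\cD(G)$. As recalled after Corollary \ref{cor:ros}, $\cD(G)$ is a smooth connected subgroup scheme of $G$, and it is normal by construction (standard fact about the derived subgroup functor). Moreover, Corollary \ref{cor:ros} gives $\cD(G) = \cD(G_\aff)$, so $\cD(G)$ is affine. Being a smooth connected affine normal subgroup scheme of a pseudo-abelian variety, $\cD(G)$ must be trivial, hence $G$ is commutative.

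For the existence of the exact sequence, I would split on the characteristic. If $\car(k) = 0$, then $k$ is perfect, so Theorem \ref{thm:chev} applies: the kernel $L = L(G)$ of the Albanese homomorphism is smooth, connected, affine and normal in $G$, so by the pseudo-abelian hypothesis $L$ is trivial, i.e., $G = \Alb(G)$ is an abelian variety and we may take $A = G$, $U = 0$. If $\car(k) = p > 0$, apply Theorem \ref{thm:cagc} to the commutative group $G$: its largest semi-abelian subvariety $G_\sab$ is such that $G/G_\sab$ is unipotent. Let $T \subseteq G_\sab$ be the largest subtorus; since $G$ is commutative, $T$ is normal in $G$, and $T$ is smooth, connected, affine. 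The pseudo-abelian hypothesis forces $T$ to be trivial, so $G_\sab$ reduces to its abelian quotient, i.e., $G_\sab =: A$ is an abelian variety. Setting $U := G/A$, which is unipotent by Theorem \ref{thm:cagc} (3), yields the desired exact sequence (\ref{eqn:pa}).

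For uniqueness, suppose $1 \to A' \to G \to U' \to 1$ is another such sequence. Then $A'$ is a semi-abelian subvariety, hence $A' \subseteq G_\sab = A$ by Lemma \ref{lem:sab}. Conversely, the composite $A = G_\sab \hookrightarrow G \twoheadrightarrow U'$ maps a semi-abelian variety to a unipotent group, hence is constant (as noted after Theorem \ref{thm:cagc}), so $A \subseteq A'$. Thus $A = A' = G_\sab$ is canonically determined by $G$, which forces $U = U'$ as well.

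I do not expect a serious obstacle: all the ingredients (the derived subgroup, Chevalley's theorem in the perfect case, Theorem \ref{thm:cagc} in positive characteristic, and the rigidity of semi-abelian versus unipotent groups) are already in place. The only point requiring a moment of care is the characteristic-$0$ case, where Theorem \ref{thm:cagc} does not apply and one must invoke Theorem \ref{thm:chev} instead; but there the statement collapses to the already known fact that every pseudo-abelian variety over a perfect field is an abelian variety.
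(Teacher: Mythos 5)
Your proposal is correct and follows essentially the same route as the paper: commutativity via the affine, smooth, connected, normal derived subgroup $\cD(G)=\cD(G_\aff)$ from Corollary \ref{cor:ros}, reduction to $\car(k)=p>0$, and Theorem \ref{thm:cagc} giving the extension of a unipotent group by $G_\sab$, which is an abelian variety since a pseudo-abelian variety contains no non-trivial torus. Your uniqueness argument (identifying $A$ with $G_\sab$ via the constancy of homomorphisms from semi-abelian varieties to groups of finite exponent) is a slightly more explicit version of the paper's one-line observation that $A = G_\ant$, but it is the same idea.
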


\begin{proof}
The commutativity of $G$ follows from Corollary \ref{cor:ros}.

For the remaining assertion, we may assume that $\car(k) = p > 0$.
By Theorem \ref{thm:cagc}, $G$ lies in a unique extension
\[ 1 \longrightarrow G_\sab \longrightarrow G \longrightarrow U
\longrightarrow 1, \]
where $G_\sab$ is a semi-abelian variety and $U$ is unipotent. 
Since $G$ contains no non-trivial torus, $G_\sab$ is an abelian 
variety. This shows the existence of the exact sequence 
(\ref{eqn:pa}); for its uniqueness, just observe that $A = G_\ant$.
\end{proof}

\medskip

\noindent
\textit{Notes and references}. 

Theorem \ref{thm:ros} is due to Rosenlicht (see 
\cite[Cor.~5, p.~140]{Ro1}). This result is very useful for 
reducing questions about general algebraic groups to the linear 
and anti-affine cases; see \cite{MZ} for a recent application. 

When $\car(k) = p >0$, characterizing Lie algebras of 
\textit{smooth} algebraic groups among $p$-Lie algebras seems 
to be an open problem. It is well-known that every 
finite-dimensional $p$-Lie algebra is the Lie algebra of some 
\textit{infinitesimal} group scheme; see \cite[II.7.3, II.7.4]{DG} 
for this result and further developments.

The proof of Theorem \ref{thm:equiv} is adapted from 
\cite[Thm.~4.13]{Br4}. The quasi-projectivity of homogeneous 
spaces is a classical result, see e.g. \cite[Cor.~VI.2.6]{Ra}. 
Also, the existence of equivariant compactifications 
of certain homogeneous spaces having no separable point 
at infinity has attracted recent interest, see \cite{Ga, GGM}. 

In positive characteristics, the existence of (equivariant 
or not) \textit{regular} projective compactifications 
of homogeneous spaces is an open question.

Example \ref{ex:cag} is due to Raynaud, see 
\cite[XVII.C.5]{SGA3}.

The algebraic group considered in Remark \ref{rem:line} (i) 
is an example of a $k$-\textit{wound} unipotent group $G$,
i.e., every morphism $\bA^1 \to G$ is constant. This notion
plays an important r\^ole in the structure of smooth connected
unipotent groups over imperfect fields, see \cite[B.2]{CGP}.

The notion of Albanese morphism extends to non-pointed varieties
by replacing semi-abelian varieties with semi-abelian torsors.
More specifically, for any variety $X$, there exists a semi-abelian
variety $\Alb^0_X$, an $\Alb^0_X$-torsor $\Alb^1_X$ (over $\Spec(k)$)
and a morphism 
\[ u_X : X \longrightarrow \Alb^1_X \]
such that for any semi-abelian variety $A^0$, any torsor $A^1$ 
under $A^0$, and any morphism $f : X \to A^1$, 
there exists a unique morphism of varieties 
\[ g^1 : \Alb^1_X \to A^1 \] 
such that $g^1 \circ u_X = f$ and there exists a unique morphism 
of algebraic groups
\[ g^0 : \Alb^0_X \to A^0 \] 
such that $g_1$ is $g_0$-equivariant (see \cite[App.~A]{Wit}).

The structure of anti-affine algebraic groups has been obtained 
in \cite{Br1} and \cite{SS} independently. Our exposition follows
that of \cite{Br1} with some simplifications. 

The notion of a pseudo-abelian variety is due to Totaro 
in \cite{To}, as well as Corollary \ref{cor:pa} and further 
results about these varieties. In particular, it is shown
that every smooth connected commutative group of exponent $p$ 
occurs as the unipotent quotient of some pseudo-abelian variety, 
see \cite[Cor.~6.5, Cor.~7.3]{To}). This yields many more
examples of pseudo-abelian varieties than those constructed
in Example \ref{ex:ray}. Yet a full description of pseudo-abelian 
varieties is an open problem.

\section{The Picard scheme}
\label{sec:tps}

\subsection{Definitions and basic properties}
\label{subsec:dbp}

\begin{definition}\label{def:pic}
Let $X$ be a scheme. The \textit{relative Picard functor},
denoted by $\Pic_{X/k}$, is the commutative group functor that assigns 
to any scheme $S$ the group $\Pic(X \times S)/p_2^* \Pic(S)$,
where $p_2 : X \times S \to S$ denotes the projection, and to
any morphism of schemes $f : S' \to S$, the homomorphism
induced by pull-back.
\end{definition}

If $X$ is equipped with a $k$-rational point $x$, then
for any scheme $S$, the map $x \times \id: S \to X \times S$ 
is a section of $p_2 : X \times S \to S$. Thus, 
\[ (x \times \id)^* : \Pic(X \times S) \to \Pic(S) \]
is a retraction of 
\[ p_2^* : \Pic(S) \to \Pic(X \times S). \]
So we may view $\Pic_{X/k}(S)$ as the group of isomorphism
classes of invertible sheaves on $X \times S$, trivial
along $x \times S$. If in addition $S$ is equipped with
a $k$-rational point $s$, then we obtain a pull-back map
$s^* : \Pic_{X/k}(S) \to \Pic_{X/k}(s) = \Pic(X)$
with kernel isomorphic to 
$\Pic(X \times S)/p_1^* \Pic(X) \times p_2^* \Pic(S)$.
Indeed, the map 
\[ s^* \times x^* : \Pic(X \times S) \longrightarrow 
\Pic(X) \times \Pic(S) \]
is a retraction of 
\[ p_1^* \times p_2^* : \Pic(X) \times \Pic(S) \longrightarrow 
\Pic(X \times S). \]
The kernel of $s^* \times x^*$ is called the group of
\textit{divisorial correspondences}.

\begin{theorem}\label{thm:pic}
Let $X$ be a proper scheme having a $k$-rational point.

\begin{enumerate}

\item $\Pic_{X/k}$ is represented by a locally algebraic group
(that will still be denoted by $\Pic_{X/k}$).

\item $\Lie(\Pic_{X/k}) = H^1(X,\cO_X)$.

\item If $H^2(X,\cO_X) = 0$ then $\Pic_{X/k}$ is smooth.

\end{enumerate}

\end{theorem}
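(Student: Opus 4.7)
For part (1), the representability of the relative Picard functor of a proper scheme (with a rational point) is one of the deep results of Grothendieck's theory, first established in the projective case via Hilbert schemes and extended to the proper case (with the output being only an algebraic space in general, but a scheme under additional mild hypotheses such as flat cohomology base change or projectivity of the fibers). The plan here is not to reprove it but to cite \cite{Kl}, in keeping with the announcement that this subsection only provides a brief overview. I would simply indicate that the functor $\Pic_{X/k}$, rigidified by the section coming from the $k$-rational point, is representable by a separated locally algebraic group scheme.

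For part (2), my plan is to compute the tangent space $T_e \Pic_{X/k} = \Lie(\Pic_{X/k})$ directly from deformation theory. By the rigidification given by $x \in X(k)$, the set $\Pic_{X/k}(k[\epsilon])$ is identified with the isomorphism classes of invertible sheaves on $X_\epsilon := X \times \Spec k[\epsilon]$, trivialized along $x$. The Lie algebra is the kernel of the evaluation map $\Pic_{X/k}(k[\epsilon]) \to \Pic_{X/k}(k)$. On the space $X_\epsilon$, which has the same underlying topology as $X$, the short exact sequence of sheaves of abelian groups
\[
0 \longrightarrow \cO_X \stackrel{a \mapsto 1 + \epsilon a}{\longrightarrow} \cO_{X_\epsilon}^{*} \longrightarrow \cO_X^{*} \longrightarrow 1
\]
gives, upon taking cohomology, an exact sequence
\[
\cO_X(X)^{*} \stackrel{\delta}{\longrightarrow} H^1(X,\cO_X) \longrightarrow \Pic(X_\epsilon) \longrightarrow \Pic(X).
\]
The connecting map $\delta$ vanishes because every global unit on $X$ lifts tautologically to a unit on $X_\epsilon$ via the inclusion $\cO_X(X)^* \subseteq \cO_{X_\epsilon}(X_\epsilon)^{*}$. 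Hence the kernel of $\Pic(X_\epsilon) \to \Pic(X)$ is canonically $H^1(X,\cO_X)$, and the same calculation restricted to sheaves trivial along $x$ gives $\Lie(\Pic_{X/k}) \cong H^1(X,\cO_X)$.

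For part (3), the plan is to obtain smoothness via an obstruction-theoretic argument combined with Proposition \ref{prop:smooth}. Given any small extension $R \twoheadrightarrow R_0$ of local Artinian $k$-algebras with square-zero kernel $I$, the exact sequence on $X$
\[
0 \longrightarrow \cO_X \otimes_k I \stackrel{a \otimes i \mapsto 1 + ai}{\longrightarrow} \cO_{X_R}^{*} \longrightarrow \cO_{X_{R_0}}^{*} \longrightarrow 1
\]
yields a cohomological obstruction map
\[
\Pic(X_R) \longrightarrow \Pic(X_{R_0}) \stackrel{\mathrm{ob}}{\longrightarrow} H^2(X,\cO_X) \otimes_k I.
\]
When $H^2(X,\cO_X) = 0$, this obstruction vanishes and every $R_0$-valued point of $\Pic_{X/k}$ lifts to an $R$-valued point. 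In particular $\Pic_{X/k}$ is formally smooth at $e$. Together with the computation of the tangent space from (2), this forces $\dim_e \Pic_{X/k} = \dim_k H^1(X,\cO_X) = \dim \Lie(\Pic_{X/k})$, so $\Pic_{X/k}$ is geometrically reduced at $e$. By Proposition \ref{prop:smooth}, $\Pic_{X/k}$ is smooth.

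The only genuinely hard ingredient is part (1), whose proof is simply outsourced to \cite{Kl}; parts (2) and (3) are then routine applications of deformation theory, with the mild subtlety in (2) being the vanishing of the connecting map via the rigidification, and in (3) the use of Proposition \ref{prop:smooth} to pass from formal smoothness at $e$ to smoothness of the group scheme.
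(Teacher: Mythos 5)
Your proposal is correct and matches the paper's treatment: the paper simply cites \cite[II.15]{Mur} for representability in the proper case (with \cite[Thm.~4.8]{Kl} as the alternative via Hilbert schemes when $X$ is projective) and \cite[Thm.~5.11]{Kl}, \cite[Prop.~5.19]{Kl} for parts (2) and (3). The deformation-theoretic computations you write out --- the truncated exponential sequence on $X_\epsilon$, the vanishing of the connecting map via the tautological lift of global units, and the $H^2(X,\cO_X)$ obstruction combined with Proposition \ref{prop:smooth} --- are precisely the standard arguments underlying those citations, so you have filled in details the paper omits rather than taken a genuinely different route.
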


\begin{proof}
(1) This is proved in \cite[II.15]{Mur} via a characterization
of commutative locally algebraic groups among commutative
group functors, in terms of seven axioms. When $X$ is projective,
there is an alternative proof via the Hilbert scheme, see
\cite[Thm.~4.8]{Kl}.

The assertion (2) follows from \cite[Thm.~5.11]{Kl}, and (3) from 
\cite[Prop.~5.19]{Kl}.
\end{proof}

With the notation and assumptions of the above theorem,
the neutral component, $\Pic_{X/k}^0$, is a connected algebraic 
group by Theorem \ref{thm:neut}. The group of connected 
components, $\pi_0(\Pic_{X/k})$, is called the 
\textit{N\'eron-Severi group}; we denote it by $\NS_{X/k}$.
The formation of $\Pic_{X/k}$ commutes with field extensions; 
hence the same holds for $\Pic^0_{X/k}$ and $\NS_{X/k}$. Also,
the commutative group $\NS_{X/k}(\bar{k})$ is finitely generated
in view of \cite[XIII.5.1]{SGA6}.

\begin{remarks}\label{rem:pic}
(i) For any pointed scheme $(S,s)$, we have a natural isomorphism
of groups
\begin{equation}\label{eqn:pic}
\Hom_{\ptsch}(S, \Pic_{X/k}) \cong 
\Pic(X \times S)/p_1^*\Pic(X) \times p_2^* \Pic(S),
\end{equation}
where the left-hand side denotes the subgroup of 
$\Hom(S,\Pic_{X/k})$ consisting of those $f$ such that $f(s) = 0$,
i.e., the kernel of $s^* : \Pic_{X/k}(S) \to \Pic(X)$.

(ii) If $X$ is an abelian variety, then $\Pic^0_{X/k}$ is the dual
abelian variety, $\widehat{X}$. 
\end{remarks}

\begin{definition}\label{def:picvar}
Let $X$ be a proper scheme over a perfect field $k$, having
a $k$-rational point. The \textit{Picard variety}
$\Pic^0(X)$ is the reduced scheme $(\Pic_{X/k}^0)_\red$.
\end{definition}

With the above notation and assumptions, $\Pic^0(X)$ is 
a smooth connected commutative algebraic group; its formation
commutes with field extensions. 

Returning to a proper scheme $X$ over an arbitrary ground field 
$k$, having a $k$-rational point, we say that 
\textit{the Picard variety of $X$ exists}
if $\Pic^0(X) := (\Pic^0_{X/k})_\red$ is a subgroup scheme of 
the Picard scheme. We will see that this holds under mild
assumptions on the singularities of $X$.

\subsection{Structure of Picard varieties}
\label{subsec:spv}

Throughout this subsection, we consider a proper variety $X$ 
equipped with a $k$-rational point $x$.

\begin{proposition}\label{prop:picalb}
The Albanese variety of $X$ is canonically isomorphic to the dual 
of the largest abelian subvariety of $\Pic^0_{X/k}$.
\end{proposition}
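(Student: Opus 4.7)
The plan is to characterize both $\widehat{B}$ and $\Alb(X)$ by the same functor on the category of abelian varieties over $k$ and conclude by Yoneda, where $B \subseteq \Pic^0_{X/k}$ denotes the largest abelian subvariety. The sum $A_1 + A_2$ of two abelian subvarieties of the commutative group $\Pic^0_{X/k}$ is the image of the addition homomorphism $A_1 \times A_2 \to \Pic^0_{X/k}$, hence a closed subgroup scheme by Proposition \ref{prop:hom}; over $\bar{k}$ this image is again an abelian variety, and thus so is $A_1 + A_2$ over $k$. A dimension argument produces a unique maximal such $B$ over $\bar{k}$, and Galois-invariance lets it descend to a $k$-subgroup scheme.

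For any abelian variety $A$ over $k$, let $\mathrm{DC}(X,A)$ denote the group of isomorphism classes of invertible sheaves on $X \times A$ rigidified along both $x \times A$ and $X \times 0_A$. I would establish two natural identifications. Using the rigidification along $x \times A$, Theorem \ref{thm:pic} presents such a sheaf as a morphism $A \to \Pic_{X/k}$; the second rigidification forces $0_A \mapsto 0$; rigidity (Proposition \ref{prop:rig}) promotes this to a group homomorphism; and its image is an abelian subvariety of $\Pic^0_{X/k}$ by the argument of the previous paragraph, hence factors through $B$. This gives
\[
\mathrm{DC}(X,A) \;\cong\; \Hom_{\gpsch}(A,B).
\]
Swapping the roles of the two factors, the rigidification along $X \times 0_A$ presents the same sheaf as a pointed morphism $(X,x) \to (\widehat{A},0)$, where $\widehat{A} = \Pic^0_{A/k}$; the universal property of the Albanese morphism (Theorem \ref{thm:alba}) then yields
\[
\mathrm{DC}(X,A) \;\cong\; \Hom_{\mathrm{pt}}(X,\widehat{A}) \;\cong\; \Hom_{\gpsch}(\Alb(X),\widehat{A}).
\]

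Chaining the two identifications and invoking biduality for abelian varieties ($\widehat{\widehat{A}} \cong A$, which gives $\Hom_{\gpsch}(\Alb(X),\widehat{A}) \cong \Hom_{\gpsch}(A,\widehat{\Alb(X)})$) produces a natural isomorphism of functors
\[
\Hom_{\gpsch}(A,B) \;\cong\; \Hom_{\gpsch}(A,\widehat{\Alb(X)})
\]
on abelian varieties $A$. Yoneda's lemma in the category of abelian varieties over $k$ then delivers a canonical isomorphism $B \cong \widehat{\Alb(X)}$, equivalently $\Alb(X) \cong \widehat{B}$. Tracking $\mathrm{id}_B$ through the chain makes the isomorphism explicit: it is induced by the pointed morphism $X \to \widehat{B}$ classified by the restriction to $X \times B$ of the universal sheaf on $X \times \Pic_{X/k}$.

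The technical difficulty will be the first identification when $\Pic^0_{X/k}$ is non-reduced or non-smooth (a genuine phenomenon in positive characteristic): one must verify that the image of a group homomorphism from an abelian variety into $\Pic^0_{X/k}$ is automatically an abelian subvariety, which hinges on smoothness of the source forcing reducedness of the image. Parallel Galois-descent arguments are required to ensure all constructions and identifications descend from $\bar{k}$ to $k$.
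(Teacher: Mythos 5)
Your proposal is correct and follows essentially the same route as the paper: both identify $\Hom_{\gpsch}(A,\Pic^0_{X/k})$ with the group of divisorial correspondences on $X\times A$ via the isomorphism (\ref{eqn:pic}) and rigidity, exploit the symmetry in $A$ and $X$ to re-read this as $\Hom_{\ptsch}(X,\widehat{A})\cong\Hom_{\gpsch}(\Alb(X),\widehat{A})$, and conclude by biduality. You merely spell out the steps the paper compresses into ``this readily yields the assertion'' (the factoring through the largest abelian subvariety $B$, and the final Yoneda argument), which is sound.
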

  
\begin{proof}
Let $A$ be an abelian variety. Then we have 
\[ \Hom_\gpsch(A,\Pic^0_{X/k}) = \Hom_\ptsch(A,\Pic^0_{X/k}) \]
in view of Proposition \ref{prop:rig}. Moreover, 
\[ \Hom_\ptsch(A,\Pic^0_{X/k}) = \Hom_\ptsch(A, \Pic_{X/k}). \]
Using the isomorphism (\ref{eqn:pic}), this yields an isomorphism
\[ \Hom_\gpsch(A,\Pic^0_{X/k}) \cong 
\Pic(A \times X)/p_1^* \Pic(A) \times p_2^* \Pic(X), \]
which is contravariant in $A$ and $X$. Exchanging the roles of
$A$, $X$ and using (\ref{eqn:pic}) again, we obtain
an isomorphism 
\[ \Hom_\gpsch(A,\Pic^0_{X/k}) \cong \Hom_\ptsch(X,\Pic^0_A), \]
contravariant in $A$ and $X$ again. This readily yields 
the assertion (and reproves the existence of the Albanese morphism 
in this setting).
\end{proof}

\begin{proposition}\label{prop:picn}
When $X$ is geometrically normal, its Picard variety
exists and is an abelian variety.
\end{proposition}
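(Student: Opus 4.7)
The plan is to show that $\Pic^0_{X/k}$ is proper, and then deduce everything from Lemma \ref{lem:prop}. Indeed, applied to the connected algebraic group $G := \Pic^0_{X/k}$, that lemma yields, under the assumption of properness, that $G_\red = G^0_\red = G_\ant$ is a smooth connected subgroup scheme whose formation commutes with field extensions, and that it is an abelian variety. Thus both assertions of the proposition follow at once, modulo establishing properness.

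To prove properness, I would apply the valuative criterion. Separatedness is automatic for group schemes of finite type (translations reduce it to the identity being closed), and finite typeness is provided by Theorem \ref{thm:pic}, so it suffices to verify the extension property over a discrete valuation ring. Let $R$ be a DVR with fraction field $K$ and residue field $\kappa$. A $K$-point of $\Pic^0_{X/k}$ corresponds, after an fppf base change and using the rigidification given by the $k$-rational point $x \in X(k)$, to an invertible sheaf $\cL_K$ on $X_K$ trivial along $x_K$. Since $X$ is geometrically normal and $R$ is regular, the scheme $X_R := X \times_k \Spec(R)$ is normal; the generic fiber $X_K$ is the complement of the special fiber $X_\kappa$, which is a Cartier divisor on $X_R$. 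An invertible sheaf on the complement of a Cartier divisor on a normal scheme extends to an invertible sheaf on the whole scheme, unique up to twists by that divisor: one takes the closure of an associated Weil divisor, which defines a reflexive sheaf on $X_R$, and verifies invertibility using normality together with the Cartier property of $X_\kappa$. Uniqueness of the extension, up to pullback from $\Spec(R)$, yields the desired morphism $\Spec(R) \to \Pic_{X/k}$ extending the given $K$-point. This is carried out in detail in \cite[Thm.~5.4]{Kl}, whose hypotheses (proper, geometrically integral, geometrically normal, with a rational point) match ours exactly.

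The main obstacle is translating the relative-Picard-functor formalism into the concrete extension-of-line-bundles statement: a $K$-point of $\Pic_{X/k}$ is not literally an invertible sheaf on $X_K$ but a class in $\Pic(X_K \times_K T)/p_2^*\Pic(T)$ for variable $T$, so one must use that the $k$-rational point $x$ provides a rigidification identifying $\Pic_{X/k}(K)$ with the group of isomorphism classes of rigidified invertible sheaves on $X_K$ (here geometric normality is essential, as it guarantees $H^0(X_K,\cO_{X_K}) = K$, removing the distinction between the relative Picard functor and its sheafification). Once this translation is made, the normality of $X_R$ delivers the extension as above. That the extension lands in the neutral component $\Pic^0_{X/k}$ rather than merely in $\Pic_{X/k}$ follows because the image of $\Spec(R)$ is irreducible and its generic point already lies in $\Pic^0_{X/k}$, forcing the closed point into the same component. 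With the valuative criterion thereby verified, $\Pic^0_{X/k}$ is proper, and Lemma \ref{lem:prop} completes the argument: $\Pic^0(X) = (\Pic^0_{X/k})_\red$ is a subgroup scheme, is smooth and connected, its formation commutes with field extensions, and it coincides with the anti-affine part, hence is an abelian variety.
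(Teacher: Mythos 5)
Your opening and closing moves (reducing everything to the properness of $\Pic^0_{X/k}$ via Lemma \ref{lem:prop}, and placing the limit point in the neutral component) agree with the paper, but the paper establishes properness by an entirely different mechanism: over $\bar{k}$ it invokes Theorem \ref{thm:chev} to reduce to showing that $\Pic^0_{X/k}$ contains no copy of $\bG_a$ or $\bG_m$, and it rules these out by computing $\Hom_{\ptsch}(\bA^1,\Pic_{X/k})$ and $\Hom_{\ptsch}(\bA^1\setminus 0,\Pic_{X/k})$ through the isomorphism (\ref{eqn:pic}) and the $\bA^1$-invariance of the divisor class group of a normal variety. No valuative criterion and no extension of invertible sheaves over a discrete valuation ring appear.

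The reason this matters is that your key step is not merely harder than you suggest; the lemma you rely on is false. It is not true that an invertible sheaf on the complement of a Cartier divisor in a normal scheme extends to an invertible sheaf, even up to twists by that divisor. Take $Y=\Spec(A)$ with $A$ the local ring at the vertex of the cone $xy=zw$ in $\bA^4$, so that $\Pic(Y)=0$ while $\Cl(Y)\cong\bZ$ is generated by the plane $V(x,z)$, and let $D=V(x+y)$: this is an irreducible principal, hence Cartier, divisor through the vertex, $U=Y\setminus D$ lies in the regular locus, and $\Pic(U)=\Cl(U)=\Cl(Y)/\langle [D]\rangle\cong\bZ$, so no nontrivial invertible sheaf on $U$ extends to $Y$. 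In your situation, the closure $\bar E$ of a Weil divisor representing $\cL_K$ does give a reflexive sheaf $\cO_{X_R}(\bar E)$ that is invertible on $X_K$ and at the generic point of $X_\kappa$ (a codimension-one, hence regular, point of the normal scheme $X_R$); but invertibility at the higher-codimension points of $X_\kappa$ demands that $[\bar E]$ be locally principal there, and those points lie exactly over the points $x\in X$ where $\cO_{X,x}$ fails to be factorial. The Cartier property of $X_\kappa$ contributes nothing to this. When $X$ is smooth the argument does close, because $X_R$ is then regular and Weil divisors are Cartier; but for merely normal $X$ --- the actual content of the proposition --- the surjectivity of $\Pic(X_R)\to\Pic(X_K)$ is equivalent to the properness of the components of $\Pic_{X/k}$, so it cannot be taken as an input without circularity. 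To repair the step you would need a Ramanujam--Samuel type statement, whose hypotheses require factoriality of the closed fibre at the relevant point, or else you should switch to the structure-theoretic argument the paper uses.
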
 

\begin{proof}
By Lemma \ref{lem:prop}, it suffices to show that $\Pic^0_{X/k}$
is proper. For this, we may assume $k$ algebraically closed. 
In view of Theorem \ref{thm:chev}, it suffices in turn 
to show that $\Pic^0_{X/k}$ contains no non-trivial smooth 
connected affine subgroup.
As any such subgroup contains a copy of $\bG_a$ or $\bG_m$ 
(see e.g. \cite[3.4.9, 6.2.5, 6.3.4]{Sp}), we are reduced 
to checking that every homomorphism from $\bG_a$ or $\bG_m$ 
to $\Pic^0_{X/k}$ is constant. But we clearly have 
\[ \Hom_\gpsch(\bG_a,\Pic^0_{X/k}) \subseteq
\Hom_\ptsch(\bA^1,\Pic_{X/k}). \]
Moreover,
\[ \Hom_\ptsch(\bA^1,\Pic_{X/k}) \cong 
 \Pic(X \times \bA^1)/p_1^* \Pic(X) \]
in view of (\ref{eqn:pic}) and the triviality of $\Pic(\bA^1)$.
Since $X$ is normal, the divisor class group
$\Cl(X \times \bA^1)$ is isomorphic to $\Cl(X)$ via 
$p_1^*$; moreover, for any Weil divisor $D$ on $X$,
the pull-back $p_1^*(D)$ is Cartier if and only if
$D$ is Cartier. Thus, the map
$p_1^* : \Pic(X) \to \Pic(X \times \bA^1)$ is an 
isomorphism. As a consequence, every homomorphism
$\bG_a \to \Pic^0_{X/k}$ is constant. 

Arguing similarly with $\bG_a$ replaced by $\bG_m$,
we obtain 
\[ \Hom_\gpsch(\bG_m,\Pic^0_{X/k}) \subseteq 
\Pic(X \times (\bA^1 \setminus 0))/p_1^* \Pic(X). \]
Also, the pull-back map
$\Cl(X \times \bA^1) \to \Cl(X \times (\bA^1 \setminus 0))$
is surjective. It follows that 
$p_1^* : \Cl(X) \to \Cl(X \times (\bA^1 \setminus 0))$ 
is an isomorphism and restricts to an isomorphism
on Picard groups. Thus, every homomorphism 
$\bG_m \to \Pic^0_{X/k}$ is constant as well.
\end{proof}

\begin{definition}\label{def:sn}
A scheme $X$ is \textit{semi-normal} if $X$ is reduced and every
finite bijective morphism of schemes $f : Y \to X$ that induces
an isomorphism on all residue fields is an isomorphism.
\end{definition}

Examples of semi-normal schemes include of course normal
varieties, and also divisors with smooth normal crossings.
Nodal curves are semi-normal; cuspidal curves are not.
By \cite[Cor.~5.7]{GT}, semi-normality is preserved under 
separable field extensions. But it is not preserved 
under arbitrary field extensions, as shown by Example 
\ref{ex:pic} below. We say that a scheme $X$ is 
\textit{geometrically semi-normal} if $X_{\bar{k}}$ is semi-normal.

\begin{proposition}\label{prop:picsn}
When $X$ is geometrically semi-normal, its Picard variety exists
and is a semi-abelian variety.
\end{proposition}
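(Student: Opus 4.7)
The plan is to reduce to the case of an algebraically closed field and then use the normalization $\nu \colon \tilde X \to X$ to exhibit $\Pic^0_{X/k}$ as an extension of the abelian variety $\Pic^0_{\tilde X/k}$ by an affine group whose reduced neutral component is forced to be a torus by semi-normality. Since the formation of $\Pic_{X/k}$ commutes with field extensions and the reduced subscheme of $\Pic^0_{X_{\bar k}/\bar k}$ is $\Gamma$-stable, once I show this subscheme is a semi-abelian variety over $\bar k$, Galois descent yields a $k$-subgroup scheme of $\Pic^0_{X/k}$; the descent of the semi-abelian structure follows from the extension of Lemma \ref{lem:sabext} indicated in Remark \ref{rem:sav}.

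Assuming $k = \bar k$, let $\nu \colon \tilde X \to X$ be the normalization; then $\tilde X$ is a normal proper variety, and by Proposition \ref{prop:picn} the group $\Pic^0_{\tilde X/k} =: A$ is an abelian variety. The short exact sequence of sheaves
\[
1 \longrightarrow \cO_X^* \longrightarrow \nu_*\cO_{\tilde X}^* \longrightarrow \cQ \longrightarrow 1,
\]
with $\cQ$ supported on the non-normal locus $Z$, relativizes over varying bases to give an exact sequence of commutative algebraic groups
\[
0 \longrightarrow H \longrightarrow \Pic^0_{X/k} \longrightarrow A \longrightarrow 0,
\]
where $H$ is affine and built from the sheaf of units of the relativization of $\cQ$. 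Surjectivity onto $A$ comes from the surjection of tangent spaces $H^1(X,\cO_X) \twoheadrightarrow H^1(\tilde X,\cO_{\tilde X})$ induced by $0 \to \cO_X \to \nu_*\cO_{\tilde X} \to \cO_Z \to 0$.

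The core step is to show that $H^0_\red$ is a torus, and here semi-normality enters. The local characterization of semi-normality is that at each $x \in Z$, the quotient $(\nu_*\cO_{\tilde X})_x/\cO_{X,x}$ is \'etale-locally generated by idempotents; equivalently, $\cQ$ is the sheaf of units of an \'etale $\cO_Z$-algebra. The associated group scheme of global sections is then an extension of a finite \'etale group by a torus, since any hypothetical $\bG_a$-factor in $H$ would correspond to a nilpotent element in $\nu_*\cO_{\tilde X}/\cO_X$ and yield a finite birational modification $Y \to X$ bijective on points with trivial residue extensions yet not an isomorphism, contradicting semi-normality of $X$. Once $H^0_\red$ is identified as a torus, Corollary \ref{cor:extsab} gives that $(\Pic^0_{X/k})_\red$ is a semi-abelian variety; being smooth over the perfect field $k = \bar k$, it is a subgroup scheme of $\Pic^0_{X/k}$ by Proposition \ref{prop:red}, establishing existence of the Picard variety. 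The main obstacle will be the rigorous implementation of this semi-normality step, requiring a careful local analysis of $\cQ$ and of the conductor, especially in higher dimensions where the geometry of $Z$ can be intricate.
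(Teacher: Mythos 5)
Your strategy --- pass to the normalization $\nu:\tilde X\to X$ and control the kernel of $\Pic^0_{X/k}\to\Pic^0_{\tilde X/k}$ --- is a recognized route (it is essentially how Alexeev and Geisser analyze the affine part of the Picard scheme), and it is genuinely different from the paper's argument, which instead shows that every pointed morphism $\bA^1\to\Pic_{X/k}$ is constant by proving that $p_1^*:\Pic(X)\to\Pic(X\times\bA^1)$ is bijective (Traverso's theorem in the affine case, globalized via the Leray spectral sequence for the unit sheaf) and then invokes Proposition \ref{prop:line}. However, your core step has a genuine gap. The assertion that semi-normality makes $\nu_*\cO_{\tilde X}^*/\cO_X^*$ ``the sheaf of units of an \'etale $\cO_Z$-algebra'' is not a correct local description in dimension $\geq 2$: a semi-normal surface singularity such as the pinch point $x^2=y^2z$ has a normalization that is $2:1$ over part of the conductor locus but collapses over the origin, so the quotient sheaf is not of this form. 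The actual local characterization of semi-normality ($b\in\cO_{X,x}$ whenever $b^2,b^3\in\cO_{X,x}$) does not hand you idempotents. Consequently the decisive implication --- that the kernel $H$ has no $\bG_a$-factor --- is not established; the sketch that a unipotent factor ``would correspond to a nilpotent element of $\nu_*\cO_{\tilde X}/\cO_X$'' is precisely where all the work lies, and carrying it out rigorously amounts to reproving Traverso's theorem that $\Pic(R)\to\Pic(R[t])$ is bijective exactly when $R_{\red}$ is semi-normal, which is the input the paper cites outright.

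Two further points. First, the surjectivity of $\Pic^0_{X/k}\to A$ via a claimed surjection $H^1(X,\cO_X)\twoheadrightarrow H^1(\tilde X,\cO_{\tilde X})$ is unjustified: in the long exact sequence of $0\to\cO_X\to\nu_*\cO_{\tilde X}\to\cF\to 0$ the sheaf $\cF$ is supported on the non-normal locus, which can be positive-dimensional, so there is no reason for the connecting map out of $H^1(X,\nu_*\cO_{\tilde X})$ to vanish. Fortunately you do not need surjectivity: the image of $(\Pic^0_{X/k})_{\red}$ in the abelian variety $\Pic^0_{\tilde X/k}$ is automatically an abelian variety, and Corollary \ref{cor:extsab} applies to the resulting extension once the kernel is known to be of multiplicative type up to a finite group scheme. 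Second, the return from $\bar k$ to $k$ needs more than ``Galois descent'': $\bar k/k$ is Galois only for $k$ perfect, and the existence of the Picard variety over an imperfect field (i.e., that $(\Pic^0_{X/k})_{\red}$ is a subgroup scheme) requires the intermediate passage through the perfect closure that the paper performs via Lemma \ref{lem:sab}.
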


\begin{proof}
Using Lemma \ref{lem:sab}, we may assume $k$ perfect.
By Proposition \ref{prop:line}, it suffices to show that
every morphism $\bA^1 \to \Pic^0_{X/k}$ is constant. By
(\ref{eqn:pic}), we have
\[ \Hom_\ptsch(\bA^1,\Pic_{X/k}) \subseteq 
\Pic(X \times \bA^1)/p_1^* \Pic(X). \] 
So it suffices in turn to show that the map
\[ p_1^* : \Pic(X) \longrightarrow \Pic(X \times \bA^1) \]
is an isomorphism. When $X$ is affine, this follows from
\cite[Thm.~3.6]{Tr}. For an arbitrary variety $X$, we
consider the first terms of the Leray spectral sequence
associated with the morphism $p_1 : X \times \bA^1 \to X$ and 
the sheaf $\cO_{X \times \bA^1}^*$ consisting of the units of 
the structure sheaf. This yields an exact sequence
\[ 0 \longrightarrow H^1(X, p_{1*}(\cO_{X \times \bA^1}^*)) 
\stackrel{p_1^*}{\longrightarrow} 
H^1(X \times \bA^1, \cO_{X \times \bA^1}^*)
\longrightarrow H^0(X, R^1 p_{1*}(\cO_{X \times \bA^1}^*)). \]
Also, the natural map $\cO_X^* \to p_{1*}(\cO_{X \times \bA^1}^*)$
is an isomorphism, since $R[t]^* = R^*$ for any integral
domain $R$. Thus, 
\[ H^1(X, p_{1*}(\cO_{X \times \bA^1}^*)) =  H^1(X,\cO_X^*)
= \Pic(X). \]
Moreover, 
$H^1(X \times \bA^1, \cO_{X \times \bA^1}^*) = \Pic(X \times \bA^1)$.
Thus, it suffices to show that 
$R^1 p_{1*}(\cO_{X \times \bA^1}^*) = 0$. Recall that 
$R^1 p_{1*}(\cO_{X \times \bA^1}^*)$ is the sheaf on $X$ 
associated with the presheaf $U \mapsto \Pic(U \times \bA^1)$.
When $U$ is affine, we already saw that the map 
$p_1^* : \Pic(U) \to \Pic(U \times \bA^1)$ is an isomorphism. 
Moreover, for any $x \in X$ and $L \in \Pic(U)$, there exists 
an open affine neighborhood $V$ of $x$ in $U$ such that 
$L\vert_V$ is trivial. This yields the desired vanishing.
\end{proof}

The above proposition does not extend to semi-normal schemes,
as shown by the following:

\begin{example}\label{ex:pic}
Let $k$ be an imperfect field. Set $p:= \car(k)$ and choose 
$a \in k \setminus k^p$. Like in Remark \ref{rem:qp} (iv),
consider the regular projective curve $X$ defined as the
zero scheme of $y^p - x z^{p-1} - a x^p$ in $\bP^2$. Then 
$X$ is not geometrically semi-normal, since 
$X_{\bar{k}}$ is a cuspidal curve: the zero scheme of 
$(y- a^{1/p} x)^p - x z^{p-1}$. We now show that $\Pic^0_{X/k}$ is 
a smooth connected unipotent group, non-trivial if $p \geq 3$.
 
The smoothness of $\Pic^0_{X/k}$ follows from Theorem \ref{thm:pic} 
(3). Also, the group $\Pic_{X/k}^0(k_s)$ is non-trivial and killed by 
$p$ when $p \geq 3$, see \cite[Thm.~6.10.1, Lem. 6.11.1]{KMT}. 
It follows that $\Pic^0_{X/k}$ is non-trivial and killed by 
$p$ as well. By using the structure of commutative
algebraic groups, e.g., Lemma \ref{lem:fexp}, 
this implies that $\Pic^0_{X/k}$ is unipotent. 
\end{example}

Next, we present a classical example of a smooth projective
surface for which the Picard scheme is not smooth:

\begin{example}\label{ex:igusa}
Assume that $k$ is algebraically closed of characteristic $2$.
Let $E$ be an ordinary elliptic curve, i.e., it has a (unique)
$k$-rational point $z_0$ of order $2$. Let $F$ be another
elliptic curve and consider the automorphism $\sigma$ of
$E \times F$ such that 
\[ \sigma(z,w) = (z + z_0, -w) \]
identically. Then $\sigma$ has order $2$ and fixes no point of
$E \times F$. Thus, there exists a quotient morphism
\[ q : E \times F \longrightarrow X \]
for the group $\langle \sigma \rangle$ generated by $\sigma$; 
moreover, $q$ is finite and \'etale, and $X$ is a smooth 
projective surface. Denote by $E/\langle z_0 \rangle$ the quotient 
of $E$ by the subgroup generated by $z_0$. Then the projection 
$E \times F \to E$  descends to a morphism 
\[ \alpha : X \longrightarrow E/\langle z_0 \rangle. \]

We claim that $\alpha$ is the Albanese morphism of the pointed
variety $(X,x)$, where $x := q(0,0)$. Consider indeed a
morphism 
\[ f : X \longrightarrow A, \] 
where $A$ is an abelian variety and $f(x) = 0$. 
Composing $f$ with $q$ yields a $\sigma$-invariant morphism 
\[ g : E \times F \longrightarrow A, \quad (0,0) \longmapsto 0. \]
By Lemma \ref{lem:alb}, we have 
$g(z,w) = g(z,0) + g(0,w)$ identically. Moreover, the map
$h : F \to A$, $w \mapsto g(0,w)$ is a homomorphism by
Proposition \ref{prop:alba}. In particular, $h(- w) = - h(w)$.
But $h(-w) = h(w)$ by $\sigma$-invariance. Thus, $h$
factors through the kernel of $2_F$, a finite group scheme.
Since $F$ is smooth and connected, it follows that $h$
is constant. Thus, $g(z,w)  = g(z,0) = g(z + z_0, 0)$;
this yields our claim.

Combining that claim with Propositions \ref{prop:picalb} 
and \ref{prop:picn}, we see that 
\[ \Pic^0(X) \cong \widehat{E/\langle z_0 \rangle} 
\cong E/\langle z_0 \rangle. \]

Next, we claim that \textit{the tangent sheaf $T_X$ is trivial}.
Indeed, since $q$ is \'etale, we have 
\begin{equation}\label{eqn:tan}
q^*(T_X) \cong T_{E \times F} \cong 
\cO_{E \times F} \otimes_k (\Lie(E) \oplus \Lie(F)). 
\end{equation}
These isomorphisms are equivariant for the natural
action of $\sigma$ on $\cO_{E \times F}$ and its trivial
action on $\Lie(E) \oplus \Lie(F)$ (recall that 
$\sigma$ acts on $E$ by a translation, and on $F$ by $-1$).
Thus, we obtain
\[ T_X \cong q_*(q^*(T_X))^{\langle \sigma \rangle} \cong 
q_*(\cO_{E \times F})^{\langle \sigma \rangle} 
\otimes_k (\Lie(E) \oplus \Lie(F))
\cong \cO_X \otimes_k (\Lie(E) \oplus \Lie(F)).\]
This yields the claim.

By that claim and the Riemann-Roch theorem, we
obtain $\chi(\cO_X) = 0$. Since $h^0(\cO_X) = 1$
and $h^2(\cO_X) = h^0(\omega_X) = h^0(\cO_X) = 1$,
this yields $h^1(\cO_X) = 2$. In other words, 
the Lie algebra of $\Pic_{X/k}$ has dimension $2$; thus,
$\Pic_{X/k}$ \textit{is not smooth}.
\end{example}

\medskip

\noindent
\textit{Notes and references}. 

As mentioned in the introduction, a detailed reference for
Picard schemes is Kleiman's article \cite{Kl}; see also
\cite{BLR}, especially Chapter 8 for general results on
the Picard functor, and Chapter 9 for applications to
relative curves.

Proposition \ref{prop:picn} is well-known, see e.g. 
\cite[Thm.~5.4]{Kl}. 

Proposition \ref{prop:picsn} is due to Alexeev when $k$
is algebraically closed, see \cite[Thm.~4.1.7]{Al}.

Example \ref{ex:igusa} is due to Igusa, see \cite{Ig}.

Assume that $k$ is perfect and consider a proper scheme $X$ 
having a $k$-rational point. Then the Picard variety of $X$ 
lies in a unique exact sequence of the form (\ref{eqn:com}),  
\[ 1 \longrightarrow T_X \times U_X \longrightarrow \Pic^0(X)
\longrightarrow A_X \longrightarrow 1, \]
where $T_X$ is a torus, $U_X$ a smooth connected commutative
algebraic group, and $A_X$ an abelian variety. The affine part
$T_X \times U_X$ has been described by Geisser in \cite{Ge};
in particular, the dual of the character group of $T_X$ is
isomorphic to the \'etale cohomology group 
$H^1_\et(X_{\bar{k}}, \bZ)$ as a Galois module (see 
\cite[Thm.~1]{Ge}, and \cite[Thm.~7.1]{We}, \cite[Cor.~4.2.5]{Al} 
for closely related results). Also, when $X$ is reduced, 
$U_X$ is the kernel of the pull-back map 
$f^* : \Pic_{X/k} \to \Pic_{X^+/k}$, where $f: X^+ \to X$ 
denotes the semi-normalization (see \cite[Thm.~3]{Ge}).

It is an open problem whether every smooth connected
algebraic group $G$ over a perfect field $k$ is 
the Picard variety of some proper scheme $X$. When $k$ 
is algebraically closed of characteristic $0$, one can
construct an appropriate scheme $X$ by using the structure 
of $G$ described in \S \ref{subsec:cag}, see 
\cite[Thm.~1.1]{Br3}. In fact, $X$ may be taken projective,
and normal except at finitely many points. Yet when
$\car(k) > 0$, the unipotent part of the Picard variety 
of a scheme satisfying these conditions is not arbitrary, 
see \cite[Thm.~1.2]{Br3}.

In fact, it is not even known whether every torus is the largest
subtorus of some Picard variety; equivalently, whether every 
free abelian group equipped with an action of the Galois 
group $\Gamma$ is obtained as $H^1_\et(X_{\bar{k}}, \bZ)$
for some proper scheme $X$ (which can be assumed semi-normal).

In another direction, the structure of Picard schemes over
imperfect fields is largely unknown; see \cite[Ex.~3.1]{To}
for a remarkable example.

\section{The  automorphism group scheme}
\label{sec:tags}

\subsection{Basic results and examples}
\label{subsec:bra}

Recall from \S \ref{subsec:ags} the automorphism group functor
of a scheme $X$, i.e., the group functor $\Aut_X$ that assigns to 
any scheme $S$ the automorphism group of the $S$-scheme $X \times S$. 
We have the following representability result for $\Aut_X$, analogous
to that for the Picard scheme (Theorem \ref{thm:pic}):

\begin{theorem}\label{thm:aut}
Let $X$ be a proper scheme. 

\begin{enumerate}

\item The group functor $\Aut_X$ is represented by a locally 
algebraic group (that will be still denoted by $\Aut_X$).

\item $\Lie(\Aut_X) = H^0(X,T_X)$, where $T_X$ denotes the
sheaf of derivations of the structure sheaf $\cO_X$.

\item If $H^1(X,T_X) = 0$, then $\Aut_X$ is smooth.

\end{enumerate}

\end{theorem}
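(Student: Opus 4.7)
The plan is to mirror the strategy used for the Picard scheme (Theorem \ref{thm:pic}): represent $\Aut_X$ as an open subfunctor of a Hilbert scheme via the graph construction, compute the Lie algebra directly, and establish smoothness via a deformation-theoretic obstruction argument.

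For (1), I would send a morphism $f : X \times S \to X \times S$ over $S$ to its graph $\Gamma_f \subset X \times X \times S$; this is a closed subscheme flat over $S$ whose first projection is an isomorphism onto $X \times S$. This identifies $\underline{\Hom}(X,X)$ with the open subfunctor of $\Hilb_{X \times X/k}$ cut out by the condition that the first projection of the universal subscheme to $X \times S$ is an isomorphism (an open condition on the base since $X$ is proper). The subfunctor $\Aut_X \subset \underline{\Hom}(X,X)$ is then the further open locus where the second projection is also an isomorphism. When $X$ is projective, $\Hilb_{X \times X/k}$ is a disjoint union of projective schemes by Grothendieck's theorem and we are done; for general proper $X$ one must invoke the representability result of Matsumura--Oort for $\Aut_X$. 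For (2), a $k[\epsilon]$-point of $\Aut_X$ lying above $e$ is a $k[\epsilon]$-algebra automorphism of $\cO_X \otimes_k k[\epsilon]$ reducing to the identity mod $\epsilon$, necessarily of the form $\id + \epsilon D$ for a unique $k$-derivation $D$ of $\cO_X$. This identifies $\Lie(\Aut_X)$ with $H^0(X,T_X)$, with bracket given by the commutator of derivations.

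For (3), it suffices to show that $\Aut_X$ is formally smooth at the identity, since left multiplication then propagates smoothness to every point. Given a surjection $R' \twoheadrightarrow R$ of Artinian local $k$-algebras with kernel $I$ satisfying $\fm_{R'} I = 0$, and an element $g \in \Aut_X(R)$ whose reduction to $\Aut_X(k)$ is trivial, the standard deformation theory of automorphisms places the obstruction to lifting $g$ to $\Aut_X(R')$ in $H^1(X,T_X) \otimes_k I$; by hypothesis this vanishes, so every such lifting problem admits a solution. The main obstacle is the representability in (1) when $X$ is merely proper rather than projective, since Grothendieck's construction of the Hilbert scheme does not directly apply; one must rely on Artin's representability theorem or the finer analysis of Matsumura--Oort. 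The deformation-theoretic input for (3) is standard and closely parallel to the corresponding analysis for the Picard scheme.
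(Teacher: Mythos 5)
Your proposal is correct and follows essentially the same route as the paper, which itself handles (1) by the graph construction into $\Hilb_{X\times X}$ in the projective case and defers to Matsumura--Oort for general proper $X$, handles (2) by the standard $k[\epsilon]$-point computation, and handles (3) by citing the deformation-theoretic lifting criterion of SGA1 with obstruction in $H^1(X,T_X)$. You have simply spelled out the standard content behind the citations, including the correct reduction of (3) to formal smoothness at the identity followed by translation.
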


\begin{proof}
(1) This is obtained in \cite[Thm.~3.7]{MO} via an axiomatic 
characterization of locally algebraic groups among group functors, 
which generalizes that of \cite{Mur}. When $X$ is projective, 
the result follows from the existence of the Hilbert scheme. 
More specifically, the functor of endomorphisms is represented 
by an open subscheme $\End_X$ of the Hilbert scheme  
$\Hilb_{X \times X}$, by sending each endomorphism $u$ to its graph 
(the image of $\id \times u: X \to X \times X$), see \cite[Thm.~I.10]{Ko1}. 
Moreover, $\Aut_X$ is represented by an open subscheme of 
$\End_X$ in view of \cite[Lem.~I.10.1]{Ko1}.

(2) See \cite[Lem.~3.4]{MO} and also \cite[II.4.2.4]{DG}.

(3) This follows from \cite[III.5.9]{SGA1}.
\end{proof}

With the above notation and assumptions, we say that $\Aut_X$ 
is the \textit{automorphism group scheme} of $X$; its neutral 
component, $\Aut_X^0$, is a connected algebraic group.
The formations of $\Aut_X$ and $\Aut_X^0$ commute with field 
extensions. For any group scheme $G$, the datum of a $G$-action 
on $X$ is equivalent to a homomorphism $G \to \Aut_X$.

\begin{example}\label{ex:curve}
Let $C$ be a smooth, projective, geometrically irreducible curve
of genus $g \geq 2$. Then $T_C$ is the dual of the canonical
sheaf $\omega_C$, and hence $\deg(T_C) = 2 - 2 g < 0$; 
it follows that $H^0(C,T_C) = 0$. Thus, $\Aut_C$ is \'etale; 
equivalently, $\Aut_C^0$ is trivial. 

In fact, $\Aut_C$ \textit{is finite}. To see this, it suffices 
to show that $\Aut_C(\bar{k})$ is finite; thus,
we may assume $k$ algebraically closed. Let $f \in \Aut_C(k)$ and
consider its graph $\Gamma_f \subset C \times C$. Choose a point
$x \in C(k)$; then $\cO_C(x)$ is an ample invertible sheaf on 
$C$, and hence $\cL := \cO_C(x) \boxtimes \cO_C(x)$ is an ample
invertible sheaf on $C \times C$. Moreover, the pull-back of
$\cL$ to $\Gamma_f \cong C$ is isomorphic to $\cO_C(x + f(x))$;
by the Riemann-Roch theorem, the Hilbert polynomial 
$P: n \mapsto \chi(C,\cO_C(n (x + f(x))))$ is independent of $f$.
As a consequence, $\Aut_C$ is equipped with an immersion into the 
Hilbert scheme $\Hilb^P_{C \times C}$. Since the latter is projective, 
this yields the assertion.

By the above argument, $\Aut_C$ is an algebraic group for any
geometrically irreducible curve $C$ (take for $x$ a smooth
closed point of $C$).
\end{example}

\begin{example}\label{ex:ab}
Let $A$ be an abelian variety. Then the action of $A$ on itself
by translation yields a homomorphism 
\[ \tau : A \longrightarrow \Aut_A^0. \]
Clearly, $\Ker(\tau)$ is trivial and hence $\tau$ is a closed
immersion. Since $A$ is smooth and $\Aut_A^0$ is an
irreducible scheme of finite type, of dimension at most 
\[ 
h^0(T_A) = h^0(\cO_A \otimes_k \Lie(A)) = \dim \Lie(A) = \dim(A), 
\]
it follows that \textit{$\tau$ is an isomorphism.}

Also, one readily checks that 
\[ \Aut_A \cong \Aut_{A,0} \ltimes A, \]
where $\Aut_{A,0} \subseteq \Aut_A$ denotes the stabilizer of
the origin. It follows that $\Aut_{A,0} \cong \pi_0(\Aut_A)$ is
\'etale. 

If $A$ is an elliptic curve, then $\Aut_{A,0}$ is finite,
as may be seen by arguing as in the preceding example.
But this fails for abelian varieties of higher dimension, for
example, when $A = B \times B$ for a non-trivial abelian variety 
$B$: then $\Aut_{A,0}$ contains the constant group scheme
$\GL_2(\bZ)$ acting by linear combinations of entries.
\end{example}
 
Next, we present an application of Theorem \ref{thm:che} to 
the structure of connected automorphism group schemes.
Recall that a variety $X$ is \textit{uniruled} if there exist
an integral scheme of finite type $Y$ and a dominant rational map 
$\bP^1 \times Y \dasharrow X$ such that the induced map
$\bP^1_y \dasharrow X$ is non-constant for some $y \in Y$.
Also, $X$ is uniruled if and only if $X_{\bar{k}}$ is uniruled,
see \cite[IV.1.3]{Ko1}.

\begin{proposition}\label{prop:uni}
Let $X$ be a proper variety. If $X$ is not uniruled, then
$\Aut^0_X$ is proper.
\end{proposition}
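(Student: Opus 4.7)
The plan is to prove the contrapositive: if $\Aut^0_X$ is not proper, then $X$ is uniruled. Since the formation of $\Aut^0_X$ commutes with field extensions and uniruledness is a geometric property (both noted just before the proposition), I may assume that $k$ is algebraically closed.

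First I will extract a nontrivial action of $H = \bG_a$ or $\bG_m$ on $X$. Applying Lemma \ref{lem:chevalley} to the algebraic group $G := \Aut^0_X$, I obtain a connected affine normal subgroup scheme $N \trianglelefteq G$ with $G/N$ proper; since $G$ is not proper, $N$ has positive dimension. As $k = \bar{k}$, the reduced identity component $(N_\red)^0$ is a smooth connected affine algebraic group of the same positive dimension, and by the standard structure theory of smooth connected affine algebraic groups over an algebraically closed field (cf.\ \cite[3.4.9, 6.2.5, 6.3.4]{Sp}, as invoked in the proof of Proposition \ref{prop:picn}), it contains a subgroup isomorphic to $\bG_a$ or $\bG_m$. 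The tautological action of $G$ on $X$ restricts to a nontrivial action $a : H \times X \to X$.

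By Theorem \ref{thm:norcent}, the fixed-point functor $X^H$ is represented by a closed subscheme of $X$; since $H$ acts nontrivially and $X$ is irreducible, the complement $U := X \setminus X^H$ is a dense open subvariety, and every point of $U$ has nontrivial $H$-orbit. Viewing $H$ as a dense open subscheme of $\bP^1$, the restriction of $a$ to $H \times U$ defines a rational map
\[
\varphi : \bP^1 \times U \dasharrow X.
\]
Its image contains $U$ (take $h = e$), so $\varphi$ is dominant; and for any $y \in U(k)$, the induced map $\bP^1_y \dasharrow X$ is the (non-constant) orbit map $h \mapsto h \cdot y$. By the definition of uniruled, $X$ is uniruled, which is the desired contradiction.

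The main obstacle, though a modest one, is the extraction of a $\bG_a$- or $\bG_m$-subgroup of a positive-dimensional affine algebraic group; this rests on the structure theory of linear algebraic groups rather than on anything proved in the text. A secondary subtlety is that $\Aut^0_X$ may be non-reduced in positive characteristic, so one must pass to $(N_\red)^0$ before invoking that structure theory — which is legitimate since over $k = \bar{k}$ the scheme $N_\red$ is a smooth subgroup scheme of $N$ of the same dimension.
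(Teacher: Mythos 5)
Your argument is correct and follows essentially the same route as the paper's proof: reduce to $k=\bar{k}$, use Theorem \ref{thm:che} to extract a positive-dimensional connected affine normal subgroup of $\Aut^0_X$, pass to its (smooth) reduced subgroup to find a copy of $\bG_a$ or $\bG_m$, and let its action produce the uniruling. You merely spell out the last step (fixed locus, dense open $U$, the rational map $\bP^1\times U\dasharrow X$) that the paper compresses into the single sentence ``the action morphism $H\times X\to X$ yields a uniruling.''
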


\begin{proof}
We may assume $k$ algebraically closed. If $\Aut^0_X$ is not
proper, then it contains a connected affine normal subgroup 
scheme $N$ of positive dimension, as follows from Theorem 
\ref{thm:che}. In turn, the reduced subgroup scheme $N_\red$
contains a subgroup scheme $H$ isomorphic to $\bG_a$ or $\bG_m$. 
Since $H$ acts faithfully on $X$, the action morphism 
$H \times X \to X$ yields a uniruling.
\end{proof}

\begin{example}\label{ex:igusabis}
Assume that $k$ is algebraically closed of characteristic $2$.
Let $X$ be the smooth projective surface constructed in Example 
\ref{ex:igusa}. We claim that $\Aut_X$ \textit{is not smooth}.

To see this, recall that the tangent sheaf $T_X$ is trivial; 
hence $\Lie(\Aut_X)$ has dimension $2$. On the other hand,
$X$ is equipped with an action of the elliptic curve $E$,
via its action on $E \times F$ by translations on itself
(which commutes with the involution $\sigma$). This yields
a homomorphism $E \to \Aut_X$ that factors through 
\[ f : E \longrightarrow (\Aut_X^0)_\red =: G. \] 
Since $X$ contains an $E$-stable curve isomorphic to $E$
(the image of $E \times y$, where $y \in F(k)$ and 
$y \neq -y$), the action of $E$ on $X$ is faithful, and
hence $f$ is a closed immersion. We now show that $f$ 
\textit{is an isomorphism}; this implies the claim for 
dimension reasons.

First, note that every morphism $\bP^1 \to X$ is constant,
since the Albanese morphism $\alpha : X \to E/\langle z_0 \rangle$
has its fibers at all closed points isomorphic to the elliptic
curve $F$, and $E/\langle z_0 \rangle$ is an elliptic curve.
In particular, $X$ is not uniruled. By the above proposition,
it follows that $G$ is an abelian variety. 

Combining this with Proposition \ref{prop:act}, we see that
$C_G(x)$ is finite for any $x \in X(k)$. Thus, the $G$-orbits
of $k$-rational points of $X$ are abelian varieties, isogenous 
to $G$. Since $X$ is not an abelian variety (as its Albanese 
morphism is not an isomorphism), it follows that 
$\dim(G) \leq 1$. Thus, $G$ must be the image of $f$.
\end{example}

\subsection{Blanchard's lemma}
\label{subsec:bl}

Consider a group scheme $G$ acting on a scheme $X$,
and a morphism of schemes $f : X \to Y$. In general,
the $G$-action on $X$ does not descend to an action
on $Y$; for example, when $G = X$ is an elliptic curve
acting on itself by translations, and $f$ is the 
quotient by $\bZ/2$ acting via $x \mapsto \pm x$.
Yet we will obtain a descent result under the 
assumption that $f$ is proper and $f_*(\cO_X) = \cO_Y$; 
then $f$ is surjective and its fibers are connected 
by \cite[III.4.3.2, III.4.3.4]{EGA}. Such a descent result
was first proved by Blanchard in the setting of
holomorphic transformation groups, see \cite[I.1]{Bl}.

\begin{theorem}\label{thm:bla}
Let $G$ be a connected algebraic group, $X$ a $G$-scheme
of finite type, $Y$ a scheme of finite type and $f : X \to Y$ 
a proper morphism such that $f^\#: \cO_Y \to f_*(\cO_X)$ 
is an isomorphism. Then there exists a unique action of $G$ 
on $Y$ such that $f$ is equivariant.
\end{theorem}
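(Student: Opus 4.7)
The plan is to construct a morphism $b: G \times Y \to Y$ satisfying $b \circ (\id_G \times f) = f \circ a$, and then to verify that $b$ defines a $G$-action on $Y$. First I would set $\pi := \id_G \times f: G \times X \to G \times Y$; this is proper and surjective (as base change of $f$), and by flat base change along the flat morphism $G \to \Spec(k)$ one obtains $\pi_* \cO_{G \times X} = \cO_{G \times Y}$. These properties make $\pi$ an epimorphism of schemes onto any separated target, so $b$ (if it exists) is unique; moreover, applying the same principle after field extensions permits a reduction to the case $k = \bar{k}$ for the existence step.

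The key step will be to show that $\varphi := f \circ a: G \times X \to Y$ is set-theoretically constant on each fiber of $\pi$. Since closed points are dense in fibers of morphisms between finite-type $\bar{k}$-schemes, it suffices to check this at a closed point $(g_0, y_0) \in G(\bar{k}) \times Y(\bar{k})$, where the fiber is $\{g_0\} \times X_{y_0}$. Set $F := (X_{y_0})_{\red}$. Because $f_* \cO_X = \cO_Y$ and $f$ is proper, the geometric fibers of $f$ are connected (Stein factorization), so $F$ is a reduced, connected, proper $\bar{k}$-scheme; consequently $\cO(F)$ is a reduced, finite-dimensional, connected $\bar{k}$-algebra, hence $\cO(F) = \bar{k}$. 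Since $G$ is geometrically irreducible by Theorem \ref{thm:neut}(3), in particular irreducible and locally noetherian, the rigidity lemma (Lemma \ref{lem:rig}, applied over $\bar{k}$ with ``$X$'' $= F$ and ``$Y$'' $= G$) applies to the morphism $\rho: F \times G \to Y$, $(x, g) \mapsto f(g \cdot x)$: at $g = e$ one has $\rho(x, e) = f(x) = y_0$ for all $x \in F$, so rigidity yields $\rho(x, g) = \rho(x_0, g)$ for any fixed $x_0 \in F(\bar{k})$. Specializing to $g = g_0$ gives $f(g_0 \cdot x) = f(g_0 \cdot x_0)$ for every $x \in X_{y_0}(\bar{k})$, as needed.

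Given this constancy on fibers, a factorization $\varphi = b \circ \pi$ by a unique morphism $b: G \times Y \to Y$ will follow from the standard consequence that, for a proper morphism $\pi$ with $\pi_* \cO = \cO$ and a separated target, set-theoretic constancy of a morphism on the geometric fibers of $\pi$ implies it factors through $\pi$; concretely, the scheme-theoretic image of $(\pi, \varphi): G \times X \to (G \times Y) \times Y$ projects to $G \times Y$ by a proper bijective morphism compatible with structure sheaves, hence by an isomorphism, and $b$ is obtained as the second projection of the inverse. The identity and associativity axioms for $b$ will then follow from the same epimorphism principle (applied also to $\id \times \id \times f: G \times G \times X \to G \times G \times Y$, likewise proper with $\pi_* \cO = \cO$) combined with the corresponding axioms for $a$. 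The hard part will be the rigidity step — specifically, passing from the potentially non-reduced fiber $X_{y_0}$ to the reduced fiber $F$ on which $\cO(F) = \bar{k}$, and justifying that set-theoretic constancy on closed points of each fiber of $\pi$ is enough to invoke the Stein-type factorization principle.
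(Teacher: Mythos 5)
Your proposal is correct and follows essentially the same route as the paper's proof: factor $f\circ a$ through $\id\times f$ using that the latter is proper with $(\id\times f)_*(\cO_{G\times X})=\cO_{G\times Y}$ (flat base change), establish constancy on the fibres via the rigidity lemma \ref{lem:rig} applied to the fibre of $f$ against the irreducible scheme $G$, and deduce the action axioms from the resulting epimorphism property of $\id\times f$. The only cosmetic differences are that you reduce to $\bar{k}$ and apply rigidity to the whole reduced fibre (where the paper works with $K$-points and the irreducible components of $f^{-1}(y)$), and that you sketch the factorization criterion which the paper simply cites from [EGA, II.8.11].
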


\begin{proof}
Let $a : G \times X \to X$ denote the action. 
We show that there is a unique morphism 
$b: G \times Y \to Y$ such that the square
\[
\xymatrix{
G \times X \ar[r]^-{a} \ar[d]_{\id \times f} & X \ar[d]^{f} \\ 
G \times Y \ar[r]^-{b} & Y  \\}
\]
commutes. By \cite[II.8.11]{EGA}, it suffices to check that
the morphism 
\[ \id \times f : G \times X \longrightarrow G \times Y \]
is proper, the map 
\[ (\id \times f)^\# : \cO_{G \times Y} \longrightarrow 
(\id \times f)_*(\cO_{G \times X}) \] 
is an isomorphism, and the composition
\[ f \circ a : G \times X \longrightarrow Y \]
is constant on the fibers of $\id \times f$.

Since $f$ is proper, $\id \times f$ is proper as well. 
Consider the square
\[
\xymatrix{
G \times X \ar[r]^-{p_X} \ar[d]_{\id \times f} & X \ar[d]^{f} \\ 
G \times Y \ar[r]^-{p_Y} & Y,  \\}
\]
where $p_X$, $p_Y$ denote the projections. As this square
is cartesian and both horizontal arrows are flat, the
natural map 
\[ p_Y^* f_* (\cO_X) \longrightarrow
(\id \times f)_* p_X^*(\cO_X), \]
is an isomorphism, and hence so is the natural map
$\cO_{G \times Y} \to (\id \times f)_*(\cO_{G \times X})$.
We now check that $f \circ a$ is constant
on the fibers of $\id \times f$. Let $K$ be a field
extension of $k$ and let $g \in G(K)$, $y \in Y(K)$. 
Then the fiber of $\id \times f$ at $(g,y)$ is just
$g \times f^{-1}(y)$. So it suffices to show that the
morphism
\[ h : G_K \times f^{-1}(y) \longrightarrow Y_K, \quad
(g,x) \longmapsto f(g \cdot x) \]
is constant on $g \times f^{-1}(y)$. But this follows
e.g. from the rigidity lemma \ref{lem:rig} applied
to the irreducible components of $f^{-1}(y)$,
since $h$ is constant on $e \times f^{-1}(y)$, and
$f^{-1}(y)$ is connected.   

It remains to show that $a$ is an action of the
group scheme $G$. Note that $e$ acts on $X$ via 
the identity; moreover, the composite morphism of sheaves
\[ \cO_Y \stackrel{b^\#}{\longrightarrow} b_*(\cO_{G \times Y}) 
\stackrel{(e \times \id)^\#}{\longrightarrow} 
b_*( \cO_{e \times Y})  \cong \cO_Y \]
is the identity, since so is the analogous morphism 
\[ \cO_X \stackrel{a^\#}{\longrightarrow} a_*(\cO_{G \times X}) 
\stackrel{(e \times \id)^\#}{\longrightarrow} 
a_*( \cO_{e \times X})  \cong \cO_X \]
and $f_*(\cO_X) = \cO_Y$. Likewise, the square
\[
\xymatrix{
G \times G \times Y \ar[r]^-{\id \times b} \ar[d]_{m \times \id} 
& G \times Y \ar[d]^{b} \\
G \times Y \ar[r]^-{b} & Y \\}
\]
commutes on closed points, and the corresponding square 
of morphisms of sheaves commutes as well, since the analogous
square with $Y$ replaced by $X$ commutes.
\end{proof}

\begin{corollary}\label{cor:dir}
Let $f : X \to Y$ be a morphism of proper schemes such that
$f^\# : \cO_Y \to f_*(\cO_X)$ is an isomorphism. 

\begin{enumerate}

\item 
$f$ induces a homomorphism 
\[ f_* : \Aut^0_X \longrightarrow \Aut^0_Y. \]

\item If $f$ is birational, then $f_*$ is a closed immersion. 

\end{enumerate}

\end{corollary}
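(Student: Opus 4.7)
My plan is as follows. For part (1), I will invoke Blanchard's lemma (Theorem \ref{thm:bla}) applied to the canonical action of the connected algebraic group $\Aut_X^0$ on $X$. The hypotheses on $f$ --- properness and the isomorphism $f^\# : \cO_Y \to f_*\cO_X$ --- are precisely those required by Blanchard's lemma, so it will yield a unique action of $\Aut_X^0$ on $Y$ making $f$ equivariant. This action corresponds to a homomorphism $\Aut_X^0 \to \Aut_Y$, which by connectedness of the source factors through $\Aut_Y^0$; this is the desired $f_*$.

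For part (2), by Proposition \ref{prop:hom} it will suffice to show that $N := \Ker(f_*)$ is trivial. I will consider a functorial point $g \in N(S)$ over an arbitrary scheme $S$. Unwinding the definition, $g$ is an $S$-automorphism of $X_S := X \times S$ satisfying $(f \times \id_S) \circ g = f \times \id_S$, i.e., $g$ is an automorphism of $X_S$ as a scheme over $Y_S$. Since $f$ is birational, there is a dense open $U \subseteq Y$ such that $f$ restricts to an isomorphism $V := f^{-1}(U) \stackrel{\cong}{\longrightarrow} U$; the restriction $g|_{V_S}$ is then an automorphism of $V_S$ over $U_S$, and must equal the identity since $V_S \to U_S$ is itself an isomorphism.

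The final step will be to upgrade this to $g = \id_{X_S}$. I will form the equalizer $Z \subseteq X_S$ of $g$ and $\id_{X_S}$, a closed subscheme containing $V_S$; the conclusion $Z = X_S$ will follow once I check that $V_S$ is schematically dense in $X_S$. Since every $k$-scheme is flat over $\Spec(k)$, flat base change reduces this to the schematic density of $V$ in $X$, which is the technical heart of the proof. My argument will be: a local section of $\cO_X$ vanishing on $V$ corresponds, under the isomorphism $f^\# : \cO_Y \stackrel{\cong}{\longrightarrow} f_*\cO_X$, to a local section of $\cO_Y$ vanishing on $U$; provided $U$ is schematically dense in $Y$ (as holds for instance when $Y$ is reduced), such a section is zero. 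This yields $g = \id_{X_S}$, so $\Ker(f_*)$ is trivial and $f_*$ is a closed immersion.
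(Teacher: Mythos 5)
Your proof follows the paper's argument essentially verbatim: part (1) is Blanchard's lemma (Theorem \ref{thm:bla}) applied to the action of $\Aut^0_X$ on $X$, and part (2) reduces to triviality of $\Ker(f_*)$ via Proposition \ref{prop:hom}, observes that an element of the kernel restricts to the identity over the locus where $f$ is an isomorphism, and concludes by density of that locus. The paper itself stops at topological density of $f^{-1}(U) \times S$ in $X \times S$; your extra care about \emph{schematic} density is reasonable, but the sub-argument you give for it does not quite close: a local section of $\cO_X$ over an arbitrary open $W \subseteq X$ is not a section of $f_*\cO_X$ unless $W = f^{-1}(W')$ for some open $W' \subseteq Y$, so the isomorphism $f^\#$ only yields injectivity of $\cO_X(f^{-1}(W')) \to \cO_X(f^{-1}(W') \cap V)$, which is weaker than schematic density of $V$ in $X$. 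In the reduced setting --- the only one in which the corollary is applied (normal varieties) --- topological density already suffices, and your explicit hedge covers this, so the proof is correct where it is needed.
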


\begin{proof}
(1) This follows readily from the above theorem applied to the action of 
$\Aut^0_X$ on $X$.

(2) By Proposition \ref{prop:hom}, it suffices to
check that $\Ker(f_*)$ is trivial.
Let $S$ be a scheme and $u \in \Aut^0_X(S)$ such that
$f_*(u) = \id$. As $f$ is birational, there exists
a dense open subscheme $V \subseteq Y$ such that $f$ pulls
back to an isomorphism $f^{-}(V) \to V$. Then $u$ pulls back
to the identity on $f^{-1}(V) \times S$. Since the latter
is dense in $X \times S$, we obtain $u = \id$.
\end{proof}

The above corollary applies to a birational morphism 
$f: X \to Y$, where $X$ and $Y$ are proper varieties
and $Y$ is normal: then $f^\# : \cO_Y \to f_*(\cO_X)$ is
an isomorphism by Zariski's Main Theorem. 

Corollary \ref{cor:dir}(1) also applies to the two projections 
$p_1 : X \times Y \to X$ and $p_2: X \times Y \to Y$,
where $X$, $Y$ are proper varieties: then $\cO(X) = k = \cO(Y)$ 
and hence $p_{1*}(\cO_{X \times Y}) = \cO_X$,
$p_{2*}(\cO_{X \times Y}) = \cO_Y$ in view of Lemma \ref{lem:aff}. 
This implies readily the following:

\begin{corollary}\label{cor:prod}
Let $X$, $Y$ be proper varieties. Then the homomorphism
\[ p_{1*} \times p_{2*} : \Aut^0_{X \times Y} \longrightarrow
\Aut^0_X \times \Aut^0_Y \]
is an isomorphism with inverse the natural homomorphism
\[ \Aut^0_X \times \Aut^0_Y \longrightarrow \Aut^0_{X \times Y},
\quad (u,v) \longmapsto ((x,y) \mapsto (u(x), v(y))). \]
\end{corollary}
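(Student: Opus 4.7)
The plan is to construct the natural product homomorphism $\mu : \Aut^0_X \times \Aut^0_Y \to \Aut^0_{X \times Y}$ and verify that it is a two-sided inverse of $p_{1*} \times p_{2*}$, using the uniqueness clause of Theorem \ref{thm:bla} to characterize the descent maps $p_{i*}$.

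To build $\mu$, I would proceed functorially. For any scheme $S$ and any pair $(u,v)$ consisting of an $S$-automorphism $u$ of $X \times S$ and an $S$-automorphism $v$ of $Y \times S$, the identification $X \times Y \times S = (X \times S) \times_S (Y \times S)$ yields a fibered product automorphism $u \times_S v$ of $X \times Y \times S$. This assignment is functorial in $S$ and respects composition, hence defines a morphism of group schemes $\Aut_X \times \Aut_Y \to \Aut_{X \times Y}$. Restricting to neutral components (the source is a connected algebraic group and the neutral pair is sent to the identity) one obtains $\mu$.

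The identity $(p_{1*} \times p_{2*}) \circ \mu = \id$ is essentially tautological. By definition of the fibered product, the equations $(p_1 \times \id_S) \circ (u \times_S v) = u \circ (p_1 \times \id_S)$ and $(p_2 \times \id_S) \circ (u \times_S v) = v \circ (p_2 \times \id_S)$ hold for any $(u,v)$. The uniqueness in Theorem \ref{thm:bla} then forces $p_{1*}(\mu(u,v)) = u$ and $p_{2*}(\mu(u,v)) = v$.

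The key step is the opposite identity $\mu \circ (p_{1*} \times p_{2*}) = \id$. Given an $S$-automorphism $\varphi$ of $X \times Y \times S$, set $u := p_{1*}(\varphi)$ and $v := p_{2*}(\varphi)$. By construction of $p_{i*}$, the compatibility squares $(p_1 \times \id_S) \circ \varphi = u \circ (p_1 \times \id_S)$ and $(p_2 \times \id_S) \circ \varphi = v \circ (p_2 \times \id_S)$ commute. Thus both $\varphi$ and $u \times_S v$, viewed as $S$-morphisms into the fibered product $(X \times S) \times_S (Y \times S) = X \times Y \times S$, have the same two projections along $p_1 \times \id_S$ and $p_2 \times \id_S$; by the universal property of the fibered product we conclude $\varphi = u \times_S v = \mu(u,v)$. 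There is no real obstacle here: the whole argument reduces to the universal property of fibered products combined with the uniqueness asserted in Theorem \ref{thm:bla}.
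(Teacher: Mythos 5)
Your proof is correct and takes essentially the same route as the paper: the paper simply notes that Corollary \ref{cor:dir}(1) applies to the two projections (since $\cO(X)=k=\cO(Y)$ gives $p_{i*}(\cO_{X\times Y})=\cO$ via Lemma \ref{lem:aff}) and declares the corollary "readily implied." Your verification --- the tautological identity $(p_{1*}\times p_{2*})\circ\mu=\id$ from the uniqueness in Theorem \ref{thm:bla}, and the converse identity from the universal property of $X\times Y\times S=(X\times S)\times_S(Y\times S)$ --- is exactly the intended filling-in of that omitted step.
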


\subsection{Varieties with prescribed connected automorphism
group}
\label{subsec:vpcag}

\begin{theorem}\label{thm:auto}
Let $G$ be a smooth connected algebraic group of dimension $n$
over a perfect field $k$. Then there exists a normal projective 
variety $X$ such that $G \cong \Aut_X^0$ and 
$\dim(X) \leq 2 n + 1$.
\end{theorem}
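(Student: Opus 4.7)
The plan is to split $G$ into its linear and abelian parts using Chevalley's theorem, realize each part, and combine them by an associated fiber bundle construction.

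First, I apply Theorem \ref{thm:chev}: since $k$ is perfect, there is an exact sequence
\[ 1 \longrightarrow L \longrightarrow G \longrightarrow A \longrightarrow 1, \]
where $L = G_{\aff}$ is smooth, connected and affine, and $A = \Alb(G)$ is an abelian variety. Set $m = \dim L$ and $a = \dim A$, so $n = a + m$. If $m = 0$, so $G = A$, take $X = A$: then $\Aut_X^0 = A = G$ by Example \ref{ex:ab}, and $\dim X = n \leq 2n+1$. The rest of the plan addresses the case $m \geq 1$.

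Next, in the general case I would first build a normal projective $L$-variety $X_L$ with $\Aut_{X_L}^0 \cong L$ and $\dim X_L \leq 2m + 1$; this is the linear analogue of the theorem, and is the principal sub-problem. Granting this, form the associated bundle
\[ X := G \times^L X_L, \]
that is, the quotient of $G \times X_L$ by the free diagonal left action $h \cdot (g,x) = (gh^{-1}, h \cdot x)$ of $L$. Using the $L$-torsor $G \to G/L = A$ and descent as in the proof of Theorem \ref{thm:equiv}, this quotient exists as a normal projective variety, fitting into a proper morphism $\pi \colon X \to A$ whose fibers are isomorphic to $X_L$. In particular $\dim X = a + \dim X_L \leq a + 2m + 1 \leq 2n + 1$, and $G$ acts on $X$ via left translation on the first factor.

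To identify $\Aut_X^0$, note that $\pi_* \cO_X = \cO_A$ (each fiber is proper, geometrically integral, so $H^0(X_L, \cO) = k$). Blanchard's lemma (Theorem \ref{thm:bla}) then implies that the $\Aut_X^0$-action on $X$ descends to one on $A$, yielding a homomorphism $\Aut_X^0 \to \Aut_A^0 = A$ (Example \ref{ex:ab}). The kernel $K$ stabilizes every fiber of $\pi$; restricting to a fiber and using $\Aut_{X_L}^0 = L$ identifies $K$ with a subgroup of $L$. A direct check, using that the bundle $X$ encodes the $L$-torsor $G \to A$, should force the composition $G \to \Aut_X^0 \to A$ to coincide with the quotient $G \to A$ and give $K = L$, whence $\Aut_X^0 = G$.

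The main obstacle is the construction of $X_L$. A natural strategy is to embed $L \hookrightarrow \GL(V)$ via Proposition \ref{prop:lin}, take a normal projective equivariant compactification $Y$ of $L$ acting on itself (Theorem \ref{thm:equiv}, Remark \ref{rem:qp}(ii)), and perform successive $L$-equivariant blowups of carefully chosen $L$-stable subschemes of the boundary $Y \setminus L$ to eliminate every extra connected automorphism. The difficulty is that $\Aut_Y^0$ typically strictly contains $L$, and the rigidification must work uniformly across tori, reductive groups, and unipotent groups (including wound unipotent groups in positive characteristic), while respecting the bound $\dim X_L \leq 2m+1$. This controlled rigidification is the technical core of the proof and requires the full flexibility of blowups inside a chosen projective model of $L$.
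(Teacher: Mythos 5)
Your plan defers rather than solves the central difficulty. The entire content of the theorem is the \emph{rigidification} step: given a group acting on a projective compactification of itself, one must kill every extra automorphism, including the infinitesimal ones that make $\Aut_X$ non-smooth in characteristic $p$ (cf.\ Example \ref{ex:igusabis}). Your reduction via Chevalley's theorem pushes exactly this problem into the sub-case of the linear group $L$, and for that sub-case you offer only ``successive equivariant blowups of carefully chosen $L$-stable subschemes,'' with no mechanism for choosing them or for verifying that $\Aut^0_{X_L}=L$ as a \emph{scheme} (equality of Lie algebras is the hard point, and smoothness of $\Aut^0_{X_L}$ cannot be assumed). The paper's proof supplies precisely this missing device, and it does so uniformly for all $G$ without splitting off the abelian part: one compactifies $G$ equivariantly for the two-sided $G\times G$-action, shows that $G$ is the centralizer in $\Aut_Y$ of the right translations by a suitable \emph{finite} \'etale subscheme $F\subset G$ (Lemmas \ref{lem:autg}, \ref{lem:autf}), encodes the graphs of the elements of $F$ as a closed subscheme $Z\subset Y\times Y$ with $\Aut^0_{Y\times Y,Z,\red}=G$ (Lemma \ref{lem:graph}), and takes $X$ to be the normalized blow-up of $Y\times Y$ along $Z$; in characteristic $p$ the identification of $\Lie(\Aut^0_X)$ with $\Lie(G)$ requires the jacobian-ideal computation of Lemma \ref{lem:lie} (and the auxiliary curve $C$ when $p\mid n-1$), which is where the bound $2n+1$ comes from. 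None of this is present in your sketch, so the proposal as written does not constitute a proof.

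A secondary gap: even granting $X_L$, your identification $K=L$ for the kernel of $\Aut^0_X\to\Aut^0_A=A$ is not a ``direct check.'' The automorphisms of $X=G\times^L X_L$ over $A$ are the global sections over $A$ of the relative automorphism group scheme $\underline{\Aut}_{X/A}$, which is the twisted form $G\times^L\Aut_{X_L}$ of $\Aut_{X_L}$ along the torsor $G\to A$ (with $L$ acting by conjugation); controlling its sections --- including infinitesimal ones and those valued in non-neutral components --- is a nontrivial descent problem, not a fiberwise restriction. Since $\Aut^0_X$ itself may be non-reduced, you cannot conclude $G=\Aut^0_X$ from an equality of dimensions or of $\bar k$-points; you would need $K=L$ scheme-theoretically, which again runs into the same Lie-algebra issue the blow-up construction is designed to resolve.
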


The proof will occupy the rest of this subsection.
We will use the action of $G \times G$ on $G$ via left
and right multiplication; this identifies $G$ 
with the homogeneous space $(G \times G)/\diag(G)$, 
and $e$ to the base point of that homogeneous space.
By Theorem \ref{thm:equiv}, we may choose a 
projective compactification $Y$ of $G$ which is 
$G \times G$-equivariant, i.e., $Y$ is equipped with two
commuting $G$-actions, on the left and on the right.
We may further assume that $Y$ is normal in view of
Proposition \ref{prop:norm}.

Denote by $\Aut^G_Y$ the centralizer of $G$ in $\Aut_Y$
relative to the right $G$-action. Then $\Aut^G_Y$ is
a closed subgroup scheme of $\Aut_Y$ by Theorem 
\ref{thm:norcent}. Also, the left $G$-action on $Y$ yields
a homomorphism 
\[ \varphi : G \longrightarrow \Aut_Y^G. \]

\begin{lemma}\label{lem:autg}
The above map $\varphi$ is an isomorphism.
\end{lemma}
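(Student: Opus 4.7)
My plan is to prove $\varphi$ is an isomorphism by constructing an explicit two-sided inverse $\psi$, given by evaluation at $e$. The one subtle point will be checking that this evaluation map actually lands in the open subscheme $G \subset Y$.

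First, I would establish that $\varphi$ is a closed immersion. For any scheme $S$ and $g \in \Ker(\varphi)(S)$, the automorphism $\varphi(g)$ of $Y \times S$ is the identity; evaluating at the $k$-rational point $e \in Y(k)$ gives $g = e$, so $\Ker(\varphi)$ is trivial. Proposition \ref{prop:hom} then shows that $\varphi$ is a closed immersion.

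Next, by the Yoneda lemma the assignment $u \mapsto p_Y \circ u \circ (e \times \id_S)$ defines a morphism $\psi' \colon \Aut_Y^G \to Y$. The key step is to check that $\psi'$ factors through $G$, i.e. that the closed subscheme $Z := \psi'^{-1}(Y \setminus G)$ of $\Aut_Y^G$ is empty. Since $\Aut_Y^G$ is locally of finite type over $k$, it suffices to see $Z(\bar k) = \emptyset$. For any $u \in \Aut_Y^G(\bar k)$, the automorphism $u$ of $Y_{\bar k}$ commutes with the right $G_{\bar k}$-action, hence permutes right $G$-orbits. Since $Y_{\bar k}$ is irreducible with $G_{\bar k}$ as a dense open orbit, any other right $G$-orbit lies in the closed $G$-stable subset $Y_{\bar k} \setminus G_{\bar k}$ of strictly smaller dimension, so $G_{\bar k}$ is the unique open right $G_{\bar k}$-orbit. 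Hence $u(G_{\bar k}) = G_{\bar k}$ and $u(e) \in G(\bar k)$, proving $Z(\bar k) = \emptyset$. Thus $\psi'$ factors through a morphism $\psi \colon \Aut_Y^G \to G$.

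Finally, I would verify that $\psi$ is two-sided inverse to $\varphi$. The identity $\psi \circ \varphi = \id_G$ is immediate: $\psi(\varphi(g)) = \varphi(g)(e) = g \cdot e = g$. Conversely, for $u \in \Aut_Y^G(S)$ and $g := \psi(u) \in G(S)$, the right $G$-equivariance of $u$ yields, for any $S$-point $h$ of $G$ (viewed inside $Y$),
\[
u(h, s) = u(e \cdot h, s) = u(e, s) \cdot h = (g(s) \cdot h, s),
\]
which is precisely $\varphi(g)(h,s)$. So $u$ and $\varphi(g)$ agree on $G \times S \subset Y \times S$. Since $G$ is schematically dense in the variety $Y$ and $Y$ is flat over $k$, flat base change (\cite[IV.11.10.10]{EGA}) gives that $G \times S$ is schematically dense in $Y \times S$. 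As $Y \times S$ is separated over $S$, the equalizer of $u$ and $\varphi(g)$ is a closed subscheme of $Y \times S$ containing a schematically dense open, so it equals $Y \times S$, giving $u = \varphi(g)$. Thus $\varphi \circ \psi = \id_{\Aut_Y^G}$ and $\varphi$ is an isomorphism. The main obstacle in this argument is precisely the factorization through $G$ in Step 2, which relies on the structural fact that $G$ is the unique open right $G$-orbit in its compactification $Y$.
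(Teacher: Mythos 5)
Your proof is correct, and it reaches the conclusion by a genuinely different route from the paper's. Both arguments share the key geometric observation that $G_{\bar k}$ is the unique open orbit of the right $G_{\bar k}$-action on $Y_{\bar k}$, so that any $u \in \Aut_Y^G(\bar k)$ preserves $G_{\bar k}$ and is there determined by $u(e)$. But where you go on to build an explicit two-sided inverse $\psi$ functorially (evaluation at $e$, landing in $G$ by the open-orbit argument, with $\varphi \circ \psi = \id$ forced by schematic density of $G \times S$ in $Y \times S$ and separatedness), the paper instead checks three things: $\varphi$ is a closed immersion, it is surjective on $\bar k$-points, and $\Lie(\varphi)$ is an isomorphism, computing $\Lie(\Aut_Y^G) = \Der^G(\cO_Y)$ and using the injectivity of the restriction $\Der(\cO_Y) \to \Der(\cO_G)$; smoothness of $G$ then closes the argument. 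The substantive difference is in how the possible non-reducedness of $\Aut_Y^G$ is handled: the paper disposes of it by the Lie algebra computation, while your functor-of-points argument handles all infinitesimal directions at once because the identity $\varphi \circ \psi = \id$ is verified on $S$-points for arbitrary (possibly non-reduced) $S$. Your route is arguably cleaner in that it needs neither the smoothness of $G$ nor the identification of $\Lie(\Aut_Y^G)$ with invariant derivations, at the modest cost of the schematic-density/base-change step; note also that your Step 1 (triviality of the kernel) becomes redundant once the two-sided inverse is in hand.
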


\begin{proof}
Since the left $G$-action on itself is faithful,
the kernel of $\varphi$ is trivial and hence $\varphi$
is a closed immersion. We show that $\varphi$ is 
surjective on $\bar{k}$-points: if 
$u \in \Aut_Y^G(\bar{k})$, then $u$ stabilizes the
open orbit of the right $G_{\bar{k}}$-action, i.e., 
$G_{\bar{k}}$. As $u$ commutes with that action, it follows 
that the pull-back of $u$ to $G_{\bar{k}} \subseteq Y_{\bar{k}}$ 
is the left multiplication by some $g \in G(\bar{k})$. 
Since $G_{\bar{k}}$ is dense in $Y_{\bar{k}}$, 
we conclude that $u = \varphi(g)$.

As $G$ is smooth, it suffices to check that 
$\Lie(\varphi)$ is an isomorphism to complete the proof. 
We have $\Lie(\Aut^G_Y) = H^0(Y,T_Y)^G = \Der^G(\cO_Y)$,
where $\Der(\cO_Y)$ denotes the Lie algebra of derivations
of $\cO_Y$, and $\Der^G(\cO_Y)$ the Lie subalgebra of 
invariants under the right $G$-action. Moreover, 
the pull-back $j : \Der(\cO_Y) \to \Der(\cO_G)$ 
is injective by the density of $G$ in $Y$. 
Also, recall that
$\Der^G(\cO_G) \cong \Lie(G)$ and this identifies the
composition $j \circ \Lie(\varphi)$ with the identity 
of $\Lie(G)$. This yields the desired statement. 
\end{proof}

For any closed subscheme $F \subseteq G$, we denote
by $\Aut_Y^F$ the centralizer of $F$ in $\Aut_Y$,
where $F$ is identified with 
$e \times F \subseteq e \times G$. Then again,
$\Aut_Y^F$ is a closed subgroup scheme of $\Aut_Y$; 
we denote its neutral component by $\Aut_Y^{F,0}$.

\begin{lemma}\label{lem:autf}
With the above notation, there exists a finite \'etale 
subscheme $F \subset G$ such that 
$\Aut_Y^{F,0} = \Aut_Y^{G,0}$.
\end{lemma}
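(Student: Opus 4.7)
The plan is to exploit the Noetherian property of the algebraic group $\Aut_Y^0$ (algebraic by Theorem~\ref{thm:aut} and Theorem~\ref{thm:neut}(3)). I restrict attention to finite reduced subschemes $F \subseteq G$, i.e., finite sets of closed points; such $F$ are automatically finite étale over $k$ because smoothness of $G$ forces every residue field $\kappa(g)$ at a closed point to be finite separable over $k$. From the identity $\Aut_Y^{F_1 \cup F_2} = \Aut_Y^{F_1} \cap \Aut_Y^{F_2}$, the collection $\{\Aut_Y^{F,0}\}$ is a filtered descending family of closed subgroup schemes of $\Aut_Y^0$; by the descending chain condition, there exists a finite set of closed points $F_0 \subseteq G$ such that $\Aut_Y^{F,0} = \Aut_Y^{F_0,0} =: H$ for every finite set of closed points $F \supseteq F_0$.

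Applied to $F = F_0 \cup \{g\}$ for an arbitrary closed point $g \in G$, this gives $H \subseteq \Aut_Y^{F_0 \cup \{g\}} = \Aut_Y^{F_0} \cap \Aut_Y^{\{g\}}$, so $H$ scheme-theoretically centralizes every closed point of $G$. The crucial step is to upgrade this to centralization of all of $G$. For this, I consider the conjugation action of $G$ on $\Aut_Y$ through $\rho$; by Theorem~\ref{thm:norcent}, the centralizer $C_G(H)$ of the closed subscheme $H \subseteq \Aut_Y$ is represented by a closed subscheme of $G$, and the preceding observation shows that $C_G(H)$ contains $\Spec \kappa(g)$ for every closed point $g$. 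Equivalently, the defining ideal sheaf of $C_G(H)$ in $\mathcal{O}_G$ is contained in the maximal ideal $\mathfrak{m}_g$ for every closed point $g$.

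The main obstacle is concluding that this ideal is zero. The naive attempt to invoke density of $G(\bar{k})$ or $G(k_s)$ fails over imperfect fields, as already indicated by examples such as~\ref{ex:ray}. The cleaner observation is that $G$, being smooth and of finite type over $k$, is reduced and Jacobson, whence $\bigcap_{g \text{ closed}} \mathfrak{m}_g$ coincides with the nilradical of $\mathcal{O}_G$ and is therefore zero. The ideal of $C_G(H)$ therefore vanishes, giving $C_G(H) = G$; in other words $H \subseteq \Aut_Y^G$, and since $H$ is connected we obtain $H \subseteq \Aut_Y^{G,0}$. The reverse inclusion $\Aut_Y^{G,0} \subseteq \Aut_Y^{F_0,0} = H$ is immediate from $F_0 \subseteq G$, so $F := F_0$ is the desired finite étale subscheme.
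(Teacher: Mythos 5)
Your overall strategy --- a minimal $\Aut_Y^{F,0}$ via the Noetherian property of $\Aut_Y^0$, then propagation from finitely many points to all of $G$ by a density-type argument --- is essentially the paper's proof transposed from $\bar{k}$ to $k$: the paper takes $F$ minimal among finite Galois-stable subsets of $G(\bar{k})$ and concludes via the density of $G(\bar{k})$ in $G_{\bar{k}}$ and Lemma \ref{lem:autg}. Your Jacobson-ring argument for $\C_G(H) = G$ is a clean substitute for that density step and is correct: the ideal of the closed subscheme $\C_G(H)$ lies in every maximal ideal of each affine chart, hence in the nilradical, hence is zero since $G$ is reduced.

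The one genuine error is your justification of the étaleness of $F_0$. Smoothness of $G$ does \emph{not} force closed points to have separable residue fields: over an imperfect field $k$ with $a \in k \setminus k^p$, the closed point $V(x^p - a)$ of the smooth group $\bG_a$ has residue field $k(a^{1/p})$, purely inseparable over $k$, so the corresponding one-point reduced subscheme is not étale. Your claim is rescued only because Theorem \ref{thm:auto} assumes $k$ perfect, so that every finite extension of $k$ is separable and every finite reduced subscheme is automatically étale --- it is perfectness, not smoothness, that you should invoke. (Over a general field one would instead run your argument over the closed points with separable residue field, i.e., the images of $G(k_s)$; these are still schematically dense in the smooth scheme $G$, so the Jacobson step survives. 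Relatedly, your aside that density of $G(k_s)$ ``fails over imperfect fields'' is mistaken --- for smooth $G$ it holds over any field, and Example \ref{ex:ray} concerns a different phenomenon.)
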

 
\begin{proof}
Since $G$ is smooth, $G(\bar{k})$ is dense in $G_{\bar{k}}$ 
and hence the above lemma yields an isomorphism 
$G_{\bar{k}} \cong \Aut^{G(\bar{k})}(Y_{\bar{k}})$ with
an obvious notation. Consider the family of
(closed) subgroup schemes 
$\Aut^{F,0}_{Y_{\bar{k}}} \subseteq \Aut^0_{Y_{\bar{k}}}$,
where $F$ runs over the finite subsets 
$F \subseteq G(\bar{k})$, stable by the Galois group
$\Gamma$. Since $\Aut^0_{Y_{\bar{k}}}$ is
of finite type, we may choose $F$ so that
$\Aut^{F,0}_{Y_{\bar{k}}}$ is minimal in this family.
Let $\Omega$ be a $\Gamma$-orbit in $G(\bar{k})$, then
$\Aut^{F \cup \Omega,0}_{Y_{\bar{k}}} \subseteq \Aut^{F,0}_{Y_{\bar{k}}}$
and hence equality holds. Thus,
$\Aut^{F,0}_{Y_{\bar{k}}} = \Aut^{G(\bar{k}),0}_{Y_{\bar{k}}} 
= G_{\bar{k}}$.
This yields the assertion by using the bijective 
correspondence between finite \'etale subschemes
of $G$ and finite $\Gamma$-stable subsets of $G(\bar{k})$. 
\end{proof}

Next, consider the diagonal homomorphism
$\Aut^0_Y \to \Aut^0_Y \times \Aut^0_Y$.
By Corollary \ref{cor:prod}, we may identify the right-hand
side with $\Aut^0_{Y \times Y}$; this yields a closed
immersion of algebraic groups
\[ \Delta : \Aut^0_Y \longrightarrow \Aut^0_{Y \times Y}. \]
Also, for any finite \'etale subscheme $F \subseteq G$, 
the morphism
\[ \Gamma: F \times Y \longrightarrow Y \times Y, \quad
(g,x) \longmapsto (x, g \cdot x)\]
is finite, and hence we may view its image $Z$ as
a closed reduced subscheme of $Y \times Y$.  
Note that $Z$ is stable by the $G$-action via 
$\Delta \circ \varphi$; also, $Z_{\bar{k}}$ is 
the union of the graphs of the automorphisms 
$g \in F(\bar{k})$ of $Y$.

\begin{lemma}\label{lem:graph}
Keep the above notation.

\begin{enumerate}

\item $\Delta$ identifies $\Aut^0_Y$ with 
$\Aut^0_{Y \times Y, \diag(Y)}$ (the neutral component of 
the stabilizer of the diagonal in $\Aut_{Y \times Y}$). 

\item If $F$ is a finite \'etale subscheme of $G$ 
that satisfies the assertion of Lemma \ref{lem:autf} 
and contains $e$, then $\Delta$ identifies $\varphi(G)$ 
with $\Aut^0_{Y \times Y, Z, \red}$ (the reduced neutral component 
of the stabilizer of $Z$ in $\Aut_{Y \times Y}$).

\end{enumerate}

\end{lemma}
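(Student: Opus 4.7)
My plan for (1) is short: via Corollary \ref{cor:prod}, I identify $\Aut^0_{Y \times Y}$ with $\Aut^0_Y \times \Aut^0_Y$, so $\Delta$ becomes the usual diagonal $u \mapsto (u,u)$. An $S$-valued pair $(u,v)$ stabilizes $\diag(Y) \times S$ exactly when $u$ and $v$ agree on every $T$-point of $\diag(Y)$, i.e., when $u = v$; this gives the identification of $\Aut^0_Y$ with $\Aut^0_{Y \times Y, \diag(Y)}$.

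For (2), I handle the two inclusions separately. The inclusion $\Delta(\varphi(G)) \subseteq \Aut^0_{Y \times Y, Z, \red}$ comes from the fact that $\varphi$ realizes $G$ as left multiplications, while $F$, embedded as $e \times F$, acts on $Y$ via the right $G$-action. Since left and right multiplications commute, $\Delta(\varphi(h)) = (\varphi(h), \varphi(h))$ preserves every graph $\Gamma_g$, hence $Z$; smoothness and connectedness of $\varphi(G) \cong G$ then put this image in the reduced neutral component.

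The reverse inclusion is the crux. Set $H := \Aut^0_{Y \times Y, Z, \red}$; since $k$ is perfect, Proposition \ref{prop:red} makes $H$ a smooth connected subgroup scheme of $\Aut^0_{Y \times Y}$. As $F$ is \'etale and $Y$ is geometrically integral, $Z_{\bar{k}}$ is the disjoint union of its irreducible components $\Gamma_g$ ($g \in F(\bar{k})$), and Proposition \ref{prop:norm}(3) forces $H_{\bar{k}}$ to stabilize each such $\Gamma_g$. Applying this to $\Gamma_e = \diag(Y)_{\bar{k}}$ (which lies in $Z$ because $e \in F$) together with (1) shows that every $\bar{k}$-point of $H$ has the form $(u,u)$ with $u \in \Aut^0_Y(\bar{k})$. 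Stabilization of the remaining $\Gamma_g$ then translates into $u$ commuting with right multiplication by each $g \in F(\bar{k})$, so $u \in \Aut_Y^F \cap \Aut^0_Y$; since $H_{\bar{k}}$ is connected and maps $e$ to $e$, its image under the first projection lies in $\Aut_Y^{F,0}$. By Lemma \ref{lem:autf} this equals $\Aut_Y^{G,0}$, which by Lemma \ref{lem:autg} is $\varphi(G)$. Hence $H$ and $\Delta(\varphi(G))$ have the same $\bar{k}$-points; both being smooth and connected, they coincide, and the identification descends to $k$.

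The one subtlety I expect to be the main obstacle is keeping the left/right conventions straight. The key point is that $Z$ is a union of graphs of \emph{right} translations by $F$, so diagonal \emph{left} translation coming from $\Delta \circ \varphi$ automatically preserves it, and the reduction to $\Aut_Y^F$ in the reverse inclusion dovetails exactly with the centralizer descriptions in Lemmas \ref{lem:autg} and \ref{lem:autf}.
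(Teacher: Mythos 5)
Your proof is correct and follows essentially the same route as the paper: part (1) is the observation that $u \times v$ preserves the diagonal if and only if $u = v$, and part (2) combines the stability of the irreducible components $\Gamma_g$ of $Z_{\bar{k}}$ under the smooth connected group $\Aut^0_{Y \times Y, Z,\red}$ (Proposition \ref{prop:norm}) with the fact that $u \times u$ preserves $\Gamma_g$ exactly when $u$ centralizes $g$, then chains Lemmas \ref{lem:autf} and \ref{lem:autg}. The only slip is cosmetic: the graphs $\Gamma_g$ need not be pairwise disjoint (distinct right translations can share fixed points on the boundary $Y \setminus G$), but your argument uses only that they are the irreducible components of $Z_{\bar{k}}$, so nothing breaks.
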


\begin{proof}
(1) Just observe that for any scheme $S$ and for any
$u,v \in \Aut_X(S)$, the automorphism $u \times v$
of $(X \times S) \times_S (X \times S)$ stabilizes
the diagonal if and only if $u = v$.

(2) We may assume $k$ algebraically closed; then
we may view $F$ as a finite subset of $G(k)$. 
Also, by Proposition \ref{prop:norm}, the reduced subgroup
$\Aut^0_{Y \times Y, Z,\red}$ stabilizes the graph 
$\Gamma_f$ of any $f \in F$, since these graphs
form the irreducible components of $Z_{\bar{k}}$. 
Now observe that for any scheme $S$ and for any 
$f,g \in \Aut_X(S)$, the automorphism $g \times g$
of $(X \times S) \times_S (X \times S)$ stabilizes
$\Gamma_f$ if and only if $g$ commutes with $f$. 
Thus, $\Delta$ identifies $\varphi(G)$ with
$\Aut^0_{Y \times Y, Z, \red}$.
\end{proof}

We now choose a finite \'etale subscheme $F \subseteq G$
that satisfies the assumption of Lemma \ref{lem:autf} 
and contains $e$. Denote by 
\[ f: X \longrightarrow Y \times Y \]
the normalization of the blowing-up of $Y \times Y$ along $Z$.
Then the $G$-action on $Y \times Y$ via $\Delta \circ \varphi$
lifts to a unique action on $X$; in other words, we have
a homomorphism
\[ f^* : G \longrightarrow \Aut^0_X. \] 
As $f$ is birational, $f^*$ is a closed immersion
by the argument of Corollary \ref{cor:dir} (ii).
Also, $Y \times Y$ is normal, since $Y$ is normal
and $k$ is perfect. Thus, the above corollary yields  
a closed immersion
\[ f_* : \Aut^0_X \longrightarrow \Aut^0_{Y \times Y}. \]
Moreover, the composition $f_* \circ f^*$ is an isomorphism
of $G$ to its image $(\Delta \circ \varphi)(G)$, since 
$f$ is birational. We will identify $G$ with its image,
and hence $f_* \circ f^*$ with $\id$.

Consider first the case where $\car(k) = 0$. Then $\Aut^0_X$ 
is smooth and hence stabilizes the exceptional locus 
$E \subset X$ of the birational morphism $f$. As a consequence,
the action of $\Aut^0_X$ on $Y \times Y$ via $f_*$ stabilizes 
the image $f(E) \subset Y \times Y$. 
If in addition $n \geq 2$, then $f(E) = Z$ and hence
$f_*$ sends $\Aut^0_X$ to $\Aut^0_{Y \times Y,Z}$, 
i.e. to $G$ by Lemma \ref{lem:graph}. 
It follows that $f^*$ is an isomorphism with inverse $f_*$. 

If $n = 1$, then $E$ is empty. We now reduce to the case 
where $n = 2$ as follows: we choose a smooth, projective, 
geometrically integral curve $C$ of genus $\geq 1$, equipped with 
a $k$-rational point $c$. Then $\Aut^0_{C,c}$ is trivial, 
as seen in Examples \ref{ex:curve} and \ref{ex:ab}. Using 
Corollary \ref{cor:prod}, it follows that the natural map
\[ \Aut^0_{Y \times Y, Z} \longrightarrow
\Aut^0_{Y \times Y \times C, Z \times c} \]
is an isomorphism; by Lemma \ref{lem:graph}, this yields an
isomorphism 
\[ G \cong \Aut^0_{Y \times Y \times C, Z \times c}. \] 
Also, note that $Y \times Y \times C$ is a normal projective variety. 
We now consider the morphism $f' : X' \to Y \times Y \times C$
obtained as the normalization of the blowing-up along $Z \times c$. 
Since the latter has codimension $2$ in $Y \times Y \times C$, 
the exceptional locus $E' \subset X'$ satisfies 
$f'(E') = Z \times c$. So the above argument yields again that 
$f'^*$ is an isomorphism with inverse $f'_*$. This completes 
the proof of Theorem \ref{thm:auto} in characteristic $0$.

Next, assume that $\car(k) = p > 0$. We will use 
the following additional result:

\begin{lemma}\label{lem:lie}
Let $f : X \to Y \times Y$ be as above and assume that 
$n - 1$ is not a multiple of $p$ (in particular, $n \geq 2$). 
Then the differential
\[ 
\Lie(f^*) : \Lie(G) \longrightarrow \Lie(\Aut^0_X)
\]
is an isomorphism.
\end{lemma}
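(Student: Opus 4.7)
The plan is to show that $\Lie(\Aut^0_X)$ has dimension at most $n = \dim G$; since $f^*$ is a closed immersion, $\Lie(f^*)$ is automatically injective, so a matching upper bound will give the result. First I would transfer the question to $Y \times Y$ using the closed immersion $f_* : \Aut^0_X \hookrightarrow \Aut^0_{Y \times Y}$ from Corollary \ref{cor:dir}(2) together with the isomorphism $\Aut^0_{Y \times Y} \cong \Aut^0_Y \times \Aut^0_Y$ of Corollary \ref{cor:prod}. Writing $\mathfrak{a} := \Lie(\Aut^0_Y) = H^0(Y, T_Y)$, this produces an injection $\Lie(\Aut^0_X) \hookrightarrow \mathfrak{a} \oplus \mathfrak{a}$ whose image consists of those pairs $(\partial_1, \partial_2)$ of global vector fields on $Y$ for which the first-order deformation $\id + \epsilon(\partial_1,\partial_2)$ of $\id_{Y \times Y}$ lifts through $f$ to a first-order deformation of $\id_X$.

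The crux will be to characterize this lifting condition. Lifting an infinitesimal automorphism from a reduced scheme to its normalization is automatic, so the problem reduces to the blow-up $\mathrm{Bl}_Z(Y \times Y) \to Y \times Y$. On a standard affine chart with coordinates $(x_1,t_2,\ldots,t_n,y_1,\ldots,y_n)$ satisfying $x_i = x_1 t_i$ for $i \ge 2$, where $Z$ is locally cut out by $(x_1,\ldots,x_n)$, I would compute explicitly when a vector field $\sum \alpha_i \partial/\partial x_i + \sum \beta_j \partial/\partial y_j$ pulled up from $Y \times Y$ has regular coefficients in the new variables. This is where the hypothesis $p \nmid n - 1$ comes in: the resulting obstruction, computed modulo $x_1$ in $\cO_X$, involves a scalar factor tied to the codimension-$n$ center $Z$ and the $(n-1)$-dimensional exceptional fibers $\bP^{n-1}$ of $E \to Z$, and that factor must be invertible in $k$ in order to conclude that the lifting condition collapses to the clean log-derivation condition $(\partial_1,\partial_2)(\cI_Z) \subseteq \cI_Z$. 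This is the step I expect to be the main obstacle.

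Granted this reduction, the remainder is geometric. At a smooth point $(x, gx) \in \Gamma_g \subseteq Z$ lying in the open dense $G \times G \subseteq Y \times Y$, the tangent space of $\Gamma_g$ is the graph of the differential $dg_x$, so the log-derivation condition translates into the pointwise identity $\partial_2 = g_* \partial_1$; by density of such points in each component, the identity holds globally on $Y$. Applying it for $g = e \in F$ gives $\partial_1 = \partial_2 =: \partial$, and applying it for the other $g \in F$ gives $g_* \partial = \partial$, so $\partial \in \Lie(\Aut^F_Y)$. By Lemma \ref{lem:autf} we have $\Aut^{F,0}_Y = \Aut^{G,0}_Y$, and by Lemma \ref{lem:autg} the latter equals $G$; hence $\Lie(\Aut^F_Y) = \Lie(\Aut^{F,0}_Y) = \Lie(G)$, yielding the desired bound $\dim \Lie(\Aut^0_X) \le n$ and completing the proof.
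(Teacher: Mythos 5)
Your endgame essentially coincides with the paper's: once one knows that every element of the image of $\Lie(f_*)$ in $\Der(\cO_{Y\times Y})$ preserves the ideal sheaf of $Z$, your translation into the pointwise identity $\partial_2=g_*\partial_1$ along each graph, followed by $\Lie(\Aut^F_Y)=\Lie(\Aut^{F,0}_Y)=\Lie(\Aut^{G,0}_Y)=\Lie(G)$ via Lemmas \ref{lem:autf} and \ref{lem:autg}, does finish the proof (the paper invokes Lemma \ref{lem:graph} here instead; your explicit version is fine and even avoids worrying about non-reducedness of the stabilizer of $Z$). The gap is exactly in the step you flag as the main obstacle, and your guess at how $p\nmid n-1$ enters there does not survive inspection. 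The chart computation you set up — when does $\sum_i\alpha_i\,\partial/\partial x_i+\sum_j\beta_j\,\partial/\partial y_j$ lift to the blow-up of a smooth codimension-$n$ center — produces no factor of $n-1$ at all: with $x_i=x_1t_i$, regularity of $D(t_i)=(\alpha_i-t_i\alpha_1)/x_1$ forces $\alpha_i|_Z=0$ for every $i$, i.e.\ tangency to $Z$, with no condition on the characteristic. So the ``scalar obstruction tied to the exceptional $\bP^{n-1}$'s'' that your argument is supposed to hinge on is not there, and as written the pivotal step is unsubstantiated.

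What the paper actually does at this step is a different device altogether: it never characterizes liftable vector fields in coordinates. It introduces the Jacobian ideal $\cJ_f$, the image of $\Hom(\Omega^{2n}_X,f^*(\Omega^{2n}_{Y\times Y}))\to\End(\Omega^{2n}_X)$. Being intrinsically attached to $f$, and since every automorphism of $X$ descends to $Y\times Y$ by Blanchard's lemma, $\cJ_f$ is $\Aut^0_X$-linearized, so every $D\in\Der(\cO_X)$ automatically stabilizes it. Over the good open set $U=f^{-1}(V)$ (blow-up of a smooth center inside a smooth variety) one computes $\cJ_{f_U}=\cO_U(-(n-1)E)$, and the hypothesis $p\nmid n-1$ is used precisely to pass from ``$D$ stabilizes $\cO_U(-(n-1)E)$'' to ``$D$ stabilizes $\cO_U(-E)$'', via $D(z^{n-1})=(n-1)z^{n-2}D(z)$; pushing $\cO_U(-E)$ forward yields $\cI_{Z\cap V}$ and hence, by density of $Z\cap V$ in $Z$, invariance of $\cI_Z$. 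You should either adopt this device, or actually carry out the chart computation you propose — in which case you will find that $p\nmid n-1$ plays no role in your version of the step, and you would then owe the reader an explanation of why your argument appears to dispense with a hypothesis the statement insists on.
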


\begin{proof}
As $\Lie(f_*) \circ \Lie(f^*) = \id$, it suffices
to show that the image of $\Lie(f_*)$ is contained
in $\Lie(G)$. For this, we may assume that $k = \bar{k}$.
We may thus view $F$ as a finite subset of $G(k)$
containing $e$.

We will use the action of $\Lie(\Aut^0_X) = \Der(\cO_X)$
on the ``jacobian ideal'' of $f$, defined as follows.
Recall that the sheaf of differentials,  
$\Omega^1_X = \cI_{\diag(X)}/\cI^2_{\diag(X)}$ is equipped
with a linearization for $\Aut_X$ (see \cite[I.6]{SGA3} 
for background on linearized sheaves). Likewise, 
$\Omega^1_{Y \times Y}$ is equipped with a linearization
for $\Aut_{Y \times Y}$ and hence for $\Aut^0_X$ acting
via $f_*$. Moreover, the canonical map 
\[ f^*(\Omega^1_{Y \times Y}) \longrightarrow \Omega^1_X \]
is a morphism of $\Aut^0_X$-linearized sheaves, since
it arises from the canonical map 
$f^{-1}(\cI_{\diag(Y \times Y)}) \to \cI_{\diag(X)}$.
Denoting by $\Omega^{2n}_{Y \times Y}$ the $2n$th exterior power
of $\Omega^1_{Y \times Y}$ and defining $\Omega^{2n}_X$ likewise,
this yields a morphism of $\Aut^0_X$-linearized sheaves
\[ f^*(\Omega^{2n}_{Y \times Y}) \longrightarrow \Omega^{2n}_X \]
and in turn, a morphism of $\Aut^0_X$-linearized sheaves
\[ Hom(\Omega^{2n}_X, f^*(\Omega^{2n}_{Y \times Y}))
\longrightarrow End( \Omega^{2n}_X). \]
The image $\cJ_f$ of this morphism is an $\Aut_X^0$-linearized
subsheaf of the sheaf of algebras $End(\Omega^{2n}_X)$. 
In particular, for any open subvariety $U$ of $X$, 
the Lie algebra $\Der(\cO_X)$ acts on the algebra 
$\End(\Omega^{2n}_U)$ by derivations that stabilize 
$\Gamma(U,\cJ_f)$. 

Denote by $Y_\reg \subseteq Y$ the regular (or smooth) locus
and consider the open subvariety 
$V \subseteq Y_\reg \times Y_\reg$ consisting of those points
that lie in at most one graph $\Gamma_g$, where $g \in F$.
Then $Z \cap V$ is a disjoint union of smooth varieties of
dimension $n$, and is dense in $Z$. Let $U := f^{-1}(V)$; 
then the pull-back \[ f_U : U \longrightarrow V \]
is the blowing-up along $Z \cap V$, and hence $U$ is smooth.
Thus, the sheaf $\Omega^{2n}_U$ is invertible and 
$\cJ_{f_U}$ is just a sheaf of ideals of $\cO_U$. 
A classical computation in local coordinates shows that
$\cJ_{f_U} = \cO_U(-(n-1)E)$, where $E$ denotes the exceptional
divisor of $f_U$. Hence we obtain an injective map
\[ \Der(\cO_X) \to \Der(\cO_U,\cJ_{f_U}) = 
\Der(\cO_U,\cO_U(-(n-1)E)) \]
with an obvious notation. Since $n-1$ is not a multiple of
$p$, we have 
\[ \Der(\cO_U,\cO_U(-(n-1)E)) = \Der(\cO_U,\cO_U(-E)). \]
(Indeed, if $D \in \Der(\cO_U,\cO_U(-(n-1)E)))$ and $z$ is
a local generator of $\cO_U(E)$ at $x \in X$, then
$D(z^{n-1}) = (n-1) z^{n-2} D(z) \in z^{n-1} \cO_{X,x}$
and hence $D(z) \in z \cO_{X,x}$). Also, the natural map
\[ \Der(\cO_U) \longrightarrow \Der(f_{U*}(\cO_U))
= \Der(\cO_V) \]   
is injective and sends $\Der(\cO_U,\cO_U(-E))$ to
$\Der(\cO_V,f_{U*}(\cO_U(-E)))$. Moreover,
$f_{U*}(\cO_U(-E))$ is the ideal sheaf of $Z \cap V$,
and hence is stable by $\Der(\cO_X)$ acting via the
composition
\[ \Der(\cO_X) \longrightarrow \Der(f_*(\cO_X)) = 
\Der(\cO_{Y \times Y}) \longrightarrow \Der(\cO_V). \]
It follows that the image of $\Lie(f_*)$ stabilizes
the ideal sheaf of the closure of $Z \cap V$ in 
$Y \times Y$, ie., of $Z$. By arguing as in the 
proof of Lemma \ref{lem:graph} (2), we conclude that 
$\Lie(f_*)$ sends $\Der(\cO_X)$ to $\Lie(G)$. 
\end{proof} 

As $f^*$ is a closed immersion and $G$ is smooth, the above 
lemma completes the proof of Theorem \ref{thm:auto} 
when $p$ does not divide $n-1$. Next, when $p$ divides $n-1$, 
we replace $Y \times Y$ (resp. $Z$) with $Y \times Y \times C$
(resp. $Z \times c$) for a pointed curve $(C,c)$ as above. 
This replaces the codimension $n$ of $Z$ in $Y \times Y$ with 
$n + 1$, and hence we obtain a normal projective variety $X'$  
of dimension $2 n + 1$ such that $G \cong \Aut^0_{X'}$.

\begin{remarks}\label{rem:auto}
(i) For any smooth connected \textit{linear} algebraic group
$G$ of dimension $n$, there exists a normal projective 
\textit{unirational} variety of dimension at most $2n + 2$ such
that $G \cong \Aut^0_X$. Indeed, the variety $G$ is unirational
(see \cite[XIV.6.10]{SGA3}) and hence so is $Y \times Y$ with the
notation of the above proof. Also, in that proof, the pointed curve 
$(C,c)$ may be replaced with a pair $(S,C)$, where $S$ is a smooth
rational projective surface such that $\Aut^0_S$ is trivial, and
$C \subset S$ is a smooth, geometrically irreducible curve. Such a
pair is obtained by taking for $S$ the blowing-up of $\bP^2$ at 
$4$ points in general position, and for $C$ an exceptional curve.

(ii) If $\car(k) = 0$ then every connected algebraic group $G$
is the connected automorphism group of some \textit{smooth} projective
variety, as follows from the existence of an equivariant resolution 
of singularities (see \cite[Prop.~3.9.1, Thm.~3.36]{Ko2}).

(iii) Still assuming $\car(k) = 0$, consider a finite-dimensional
Lie algebra $\fg$. Then $\fg$ is algebraic if and only if 
$\fg \cong \Der(\cO_X)$ for some proper scheme $X$, as follows 
by combining Corollary \ref{cor:lie}, Theorem \ref{thm:aut}
and Theorem \ref{thm:auto}. Moreover, $X$ may be chosen smooth, 
projective and unirational by the above remarks.
\end{remarks}

\medskip

\noindent
\textit{Notes and references}.

Most results of \S \ref{subsec:bra} are taken from \cite{MO}. 
Those of \S \ref{subsec:bl} are algebraic analogues of classical 
results about holomorphic transformation groups, see \cite[2.4]{Ak}.

In the setting of complex analytic varieties, the problem of 
realizing a given Lie group as an automorphism group has been
extensively studied. It is known that every finite group is 
the automorphism group of a smooth projective complex curve
(see \cite{Gr}); moreover, every compact connected real Lie group
is the automorphism group of a bounded domain (satisfying 
additional conditions), see \cite{BD,SZ}. Also, every connected
real Lie group of dimension $n$ is the automorphism group of
a Stein complete hyperbolic manifold of dimension $2n$ 
(see \cite{Win,Ka}). Theorem \ref{thm:auto}, obtained
in \cite[Thm.~1]{Br2}, may be viewed as an algebraic analogue 
of the latter result. The proof presented here is a streamlined 
version of that in \cite{Br2}. 

There are still many open questions about automorphism group
schemes. For instance, can one realize any algebraic group
over an arbitrary field as the full automorphism group scheme 
of a proper scheme? Also, very little is known on 
the group of connected components $\pi_0(\Aut_X)$, where 
$X$ is a proper scheme, or on the analogously defined 
group $\pi_0(\Aut(M))$, where $M$ is a compact complex
manifold (then $\Aut(M)$ is a complex Lie group, possibly
with infinitely many components). As mentioned in \cite{CZ}, 
it is not known whether there exists a compact complex 
manifold $M$ for which $\pi_0(\Aut(M))$ is not finitely 
generated.

\medskip

\noindent
Note added in proof: such an example (with $M$ projective algebraic) 
has been constructed by John Lesieutre, see arXiv:1609.06391.

\medskip

\noindent
{\bf Acknowledgements}.
These are extended notes of a series of talks given at Tulane 
University for the 2015 Clifford Lectures. I am grateful to 
the organizer, Mahir Can, for his kind invitation, and to 
the other speakers and participants for their interest 
and stimulating discussions.

These notes are also based on a course given at Institut Camille
Jordan, Lyon, during the 2014 special period on algebraic groups
and representation theory. I also thank this institution for its
hospitality, and the participants of the special period who made 
it so successful.

Last but not least, I warmly thank Rapha\"el Achet, Mahir Can, Bruno Laurent,
Preena Samuel and an anonymous referee for their careful reading 
of preliminary versions of this text and their very helpful comments.

\bibliographystyle{amsalpha}

\end{document}